\newtheorem{theorem}{Theorem}[section]
\newtheorem{corollary}[theorem]{Corollary}
\newtheorem{lemme}[theorem]{Lemma}
\newtheorem{proposition}[theorem]{Proposition}
\newtheorem{remark}[theorem]{Remark}
\theoremstyle{definition}
\newtheorem{definition}[theorem]{Definition}
\newcommand{\A}{\mathbb{A}}
\newcommand{\B}{\mathbb{B}}		
\newcommand{\E}{\mathbb{E}}		
\newcommand{\e}{\mathbf{e}}		
\newcommand{\K}{\mathbb{K}}		
\newcommand{\N}{\mathbb{N}}	
\newcommand{\n}{\mathbf{n}}	
\renewcommand{\P}{\mathbb{P}}	
\newcommand{\Q}{\mathbb{Q}}		
\newcommand{\q}{\mathbf{q}}		
\newcommand{\R}{\mathbb{R}}		
\newcommand{\U}{\mathbb{U}}		
\newcommand{\V}{\mathbb{V}}
\newcommand{\Z}{\mathbb{Z}}		
\newcommand{\cA}{\mathcal A}		
\newcommand{\cC}{\mathcal C}	
\newcommand{\cE}{\mathcal E}	
\newcommand{\cF}{\mathcal F}		
\newcommand{\cH}{\mathcal H}	
\newcommand{\cI}{\mathcal I}		
\newcommand{\cL}{\mathcal L}		
\newcommand{\cM}{\mathcal M}	
\newcommand{\cQ}{\mathcal Q}	
\newcommand{\cP}{\mathcal P}		
\newcommand{\cS}{\mathcal S}
\newcommand{\cT}{\mathcal T}		
\newcommand{\cU}{\mathcal U}	
\renewcommand{\d}{\mathop{}\!\mathrm{d}}
\newcommand{\dx}{\d x}
\newcommand{\dy}{\d y}
\newcommand{\1}{\mathds{1}}			
\definecolor{Myblue}{rgb}{0.2,0.2,0.7}
\newtheoremstyle{correction}{0.75em}{0.75em}{\color{Myblue}}{}{\bfseries}{.}{ }{\thmname{#1}\thmnumber{ #2}\thmnote{ (#3)}}
\theoremstyle{correction}
\title{Constructing the Brownian sphere from a continuum random unicycle}
\author{Mathieu Mourichoux}
\date{}
\begin{document}
\maketitle
\begin{abstract}
    We give an explicit construction of the Brownian sphere biased by the distance between two distinguished points, which is based on the Miermont bijection for quadrangulations. We then describe various conditionings of this object, which are related to Voronoï cells in the Brownian sphere. In particular, we give a new construction of the Brownian sphere with two distinguished points at a fixed distance. We also use this construction to derive a new representation of the bigeodesic Brownian plane.
\end{abstract}
\tableofcontents
\section{Introduction}

This work is concerned with the Brownian sphere $(\cS,D)$, which is a random compact metric space that appears as the scaling limit of several models of random planar maps (see \cite{uniqueness,convergence,ConvergenceBJM,ConvergenceSimple}). The first construction of the Brownian sphere in \cite{uniqueness,convergence} was based on Brownian motion indexed by the Continuum Random Tree (CRT), and is a continuous counterpart of the CVS bijection \cite{cori_vauquelin_1981,schaeffer}. In this construction, the Brownian sphere also comes with a root $x_0$, a distinguished point $x_*$, and a volume measure $\mu$ of total mass $1$. 

The purpose of this paper is to present another construction of the Brownian sphere. Having multiple constructions for the same object is a common feature of Brownian geometry. For instance, the Brownian disk was introduced and constructed in \cite{Browniandisk}, but three other constructions have been introduced since (see \cite{disque,Diskboundary,DiskMarkov}). Moreover, each of these constructions enables to study different geometric properties of the Brownian disk. However, in the case of the Brownian sphere, all the known constructions are essentially equivalent, since they correspond to rerooting the underlying CRT at specific points (see \cite{bessel,Bigeodesicbrownianplane}). The new construction given in this paper is inspired from the Miermont bijection \cite{tessalations}. A particular case of this bijection yields a correspondence between well-labelled unicycles (i.e. planar a map with two faces) and bi-pointed quadrangulations with an extra parameter, called the delay. Since this bijection generalizes the CVS bijection, it is natural to expect that a continuous version of the Miermont bijection exists. This will be the content of our main result. More precisely, let $(Q_n,x_1,x_2,\delta_n)$ be a uniform random variable in the set of delayed quadrangulations with $n$ faces, which are quadrangulations with two distinguished vertices $x_1,x_2$, and an extra parameter $\delta_n$. We prove that as $n\rightarrow\infty$, once properly renormalized, this random metric space converges for the Gromov-Hausdorff-Prokhorov topology towards a continuum random surface. The limiting space $(\cS_b,x_*,\overline{x}_*,\mathrm{Vol})$ has the law of a standard Brownian sphere biased by the distance between two distinguished points, so we call it the \textbf{biased Brownian sphere}. In Theorem \ref{equiv def}, we give an explicit construction of this random space, which is the continuous counterpart of the Miermont bijection, and can be interpreted as the quotient of the continuum random labelled unicycle (CRLU).

This construction enables us to investigate new geometric properties of the biased Brownian sphere. For instance, given $(\cS_b,x_*,\overline{x}_*)$, consider a random variable $\Delta$ which is uniform on $(-D(x_*,\overline{x}_*),D(x_*,\overline{x}_*))$, and the set
 \[\Theta_\Delta=\{x\in\cS_b,\,D(x,x_*)\geq D(x,\overline{x}_*)+\Delta\}.\]
This set corresponds to a generalization of the Voronoï cells, which would be obtained when $\Delta=0$, and we call it the \textbf{$\Delta$- delayed Voronoï cell of $x_*$ with respect to $\overline{x}_*$}. Among other things, in Proposition \ref{volume cells}, we show that $\mathrm{Vol}(\Theta_\Delta)$ follows a Beta distribution with parameter $(1/4,1/4)$.

Then, we also introduce the free version of the biased Brownian sphere, meaning that the volume is not fixed. It has a construction based on Poisson point measures, which allows us to perform many explicit calculations. In particular, we are able to define several singular conditionings, and to obtain a new construction of the Brownian sphere with two distinguished points at a fixed distance. More precisely, in Theorem \ref{Representation libre}, for every $b\in(0,\infty)$ and $\delta\in(-b,b)$, we give an explicit construction of the Brownian sphere $(\cS,x_*,\overline{x}_*)$ conditioned on $\{D(x_*,\overline{x}_*)=b\}$ (and with random volume) as the quotient of a random unicycle, where the faces of the unicycle correspond to $\Theta_\delta$ and its complement. Using these constructions, in Corollary \ref{dist boundary}, we show that as we move on the boundary of $\Theta_\delta$, the distances to $x_*$ and $\overline{x}_*$ evolve, up to a deterministic additive constant, as a process whose law is absolutely continuous with respect to Ito's measure of positive excursions of linear Brownian motion $\n(\d\e)$, with density function 
\[144b^3\left(\int_0^\sigma\int_0^\sigma\frac{dsdt}{(\e_s+\frac{b+\delta}{2})^3(\e_t+\frac{b-\delta}{2})^3}\right)\exp\left(-3\int_0^\sigma\left(\frac{1}{(\e_u+\frac{b+\delta}{2})^2}+\frac{1}{(\e_u+\frac{b-\delta}{2})^2}\right)du\right).\]

Finally, this new construction of the Brownian sphere has interesting applications to the study of the bigeodesic Brownian plane. This random surface was introduced in \cite{Bigeodesicbrownianplane} as the local limit of the Brownian sphere around a point of a typical geodesic. Using this new representation of the Brownian sphere, we derive a new construction of the bigeodesic Brownian plane. Moreover, this construction enables us to recover some invariance properties of this space, which were quite unnatural  from the point of view of \cite{Bigeodesicbrownianplane}.

Let us discuss some related works. In \cite{tessalations}, the author introduces the Miermont bijection, and uses it to prove that there exists a unique geodesic between two distinguished points of the Brownian sphere. Several results of this paper motivated our work, in particular in Section \ref{section bigeodesique}. We also mention that the Miermont bijection is actually much more general than what we use in our work, and holds for maps of any genus with any fixed number of distinguished vertices. We believe that the methods developed in this paper can be generalized to these cases as well. 

In the paper \cite{Compactbrowniansurfaces}, the authors prove the convergence of uniform quadrangulations of arbitrary genus towards \textit{Brownian surfaces}. To do so, they rely on building blocks, called \textit{composite slices} and \textit{quadrilaterals with geodesic sides}. They also introduce a general framework to glue metric spaces along geodesics. Their results are particularly important for this work, since our main object is obtained as the gluing of the scaling limit of quadrilaterals with geodesic sides. 

In \cite{stablesphere}, the authors prove that random planar maps with large faces converge toward some explicit random metric space. To obtain the uniqueness of geodesics between typical points, they introduce a generalization of the BDG bijection, which is very similar to the Miermont bijection. We believe that the ideas used in this paper can also be useful in this context.

This paper is organized as follows. In Section \ref{section discrete}, we present some links between the discrete objects that we will consider, and we study some properties of uniform large well-labelled unicycles. In Section \ref{Section convergence}, we prove that delayed quadrangulations converge towards the biased Brownian sphere. We also give an explicit construction of the limiting space, and study some of its geometric properties. In Section \ref{section free model}, we introduce the free version of the biased Brownian sphere. We also compute several quantities of this model, and give an explicit construction when we fix the delay. Finally, in Section \ref{section bigeodesique}, we derive a new construction of the bigeodesic Brownian plane, and recover some invariance properties of this model. 

\subsection{Acknowledgements}

I would like to thank Grégory Miermont for his support and for many enlightening discussions. I would also like to thank Armand Riera for interesting conversations about Voronoï cells in the Brownian sphere, and Sasha Bontemps for her assistance in several calculations on the blackboard. Finally, I thank Lou Le Bihan and Simon Renouf for their help in typing this article.

\section{Combinatorial tools}\label{section discrete}

In this section, we present the discrete objects considered in this work, together with several links between them. In particular, we show that delayed quadrangulations are closely related to quadrangles with geodesic sides.

\subsection{The CVS bijection and its generalizations}

We begin this section with some preliminaries.

\paragraph{The Miermont bijection for $n=2$.}

Consider the embedding of a finite connected graph into the sphere $\mathbb{S}^2$, where loops and multiple edges are allowed. A planar map is an equivalence class of such embeddings, modulo orientation-preserving homeomorphisms of the sphere. For a planar map $m$, let $V(m)$ and $E(m)$ denote the set of vertices and edges of $m$. The faces of $m$ are the connected components of the complement of its edges. The degree of a face is the number of oriented edges that are incident to it. A corner of a vertex $v\in V(m)$ is an angular sector formed by two consecutive edges (taken in clockwise order) around $v$. We say that $m$ is a quadrangulation if all its faces have degree $4$. In this paper, we will only consider rooted maps, which are maps with a distinguished oriented edge. This edge is called the root edge, and its origin is called the root vertex. The root corner is the incident to the root edge, and lying at its left. We denote by $\Q_n$ the set of rooted quadrangulations with $n$ faces. Every map $m$ can be considered as a metric space, equipped with the graph distance $d_m$.

A unicycle is a planar map with two faces labelled as $f_1$ and $f_2$. Moreover, these faces are distinguished, and called the \textbf{internal face} and the \textbf{external face}. A well-labelled unicycle $(\mathbf{u},\ell)$ is a unicycle $\mathbf{u}$ equipped with a labelling function $\ell:V(\mathbf{u})\rightarrow\Z$ such that 
\[|\ell(x)-\ell(y)|\leq1,\quad\quad\text{ for all adjacent vertices }x,y,\]
and such that the root vertex has a label $0$.  We denote by $\U_n$ the set of rooted well-labelled unicycles with $n$ edges.

We say that a tuple $(\q,v_0,v_1,d)$ is a \textbf{delayed quadrangulation} if :
\begin{itemize}
    \item $\q$ is a rooted quadrangulations,
    \item $v_0$ and $v_1$ are two distinct vertices of $\q$, 
    \item $d$ is an integer with the same parity as $d_\q(v_0,v_1)$ such that $|d|<d_\q(v_0,v_1)$.
\end{itemize}

It is shown in \cite{tessalations} that given a bi-pointed quadrangulation $(\q,v_0,v_1)$, there exist an admissible delay if and only if $v_0$ and $v_1$ are not neighbors or equal. We denote by $\Q^{(b)}$ the set of delayed quadrangulations, and by $\Q_n^{(b)}$ the set of delayed quadrangulations with $n$ faces.

\begin{figure}
\centering
\begin{subfigure}{.5\textwidth}
  \centering
  \includegraphics[width=.85\linewidth]{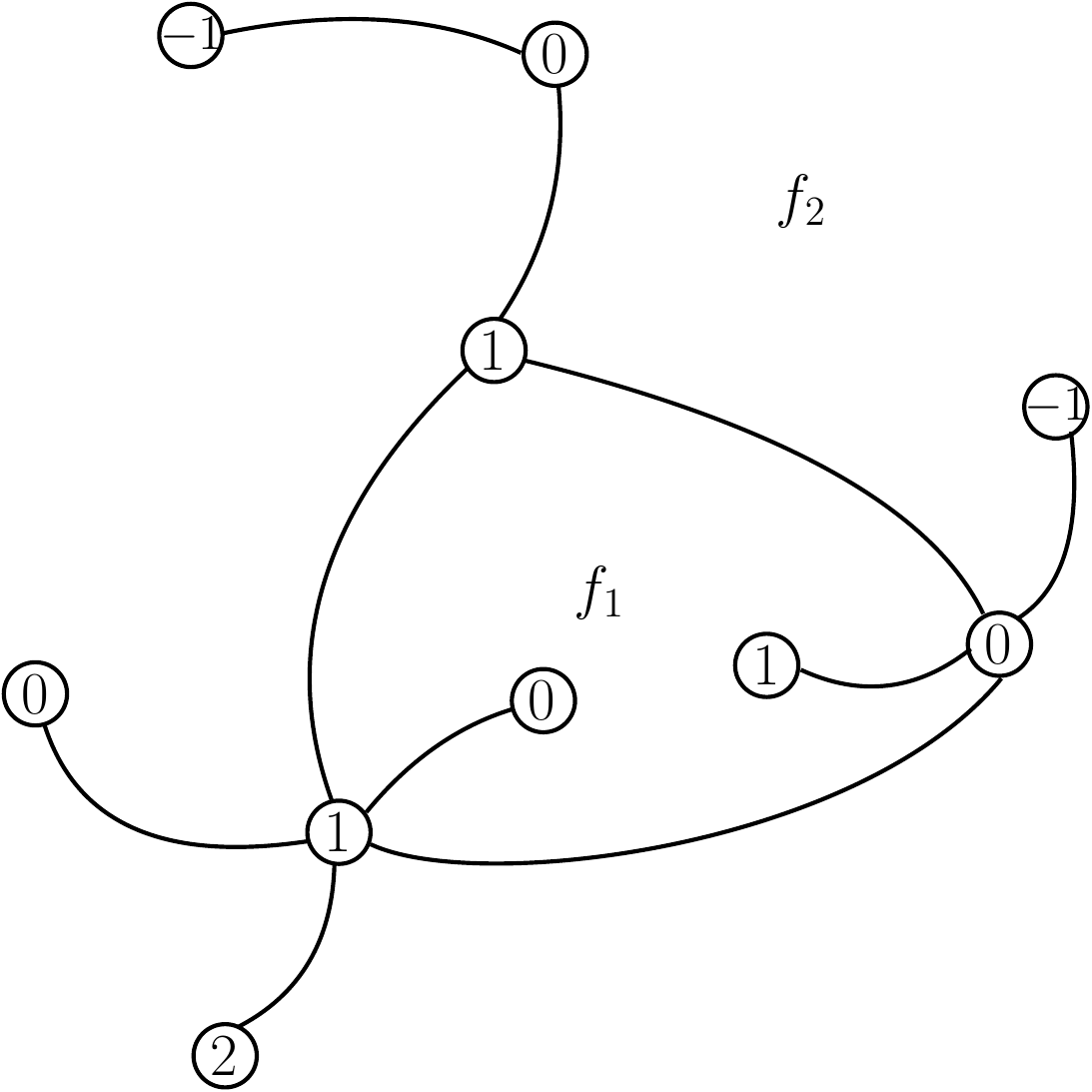}
\end{subfigure}%
\begin{subfigure}{.5\textwidth}
  \centering
  \includegraphics[width=.9\linewidth]{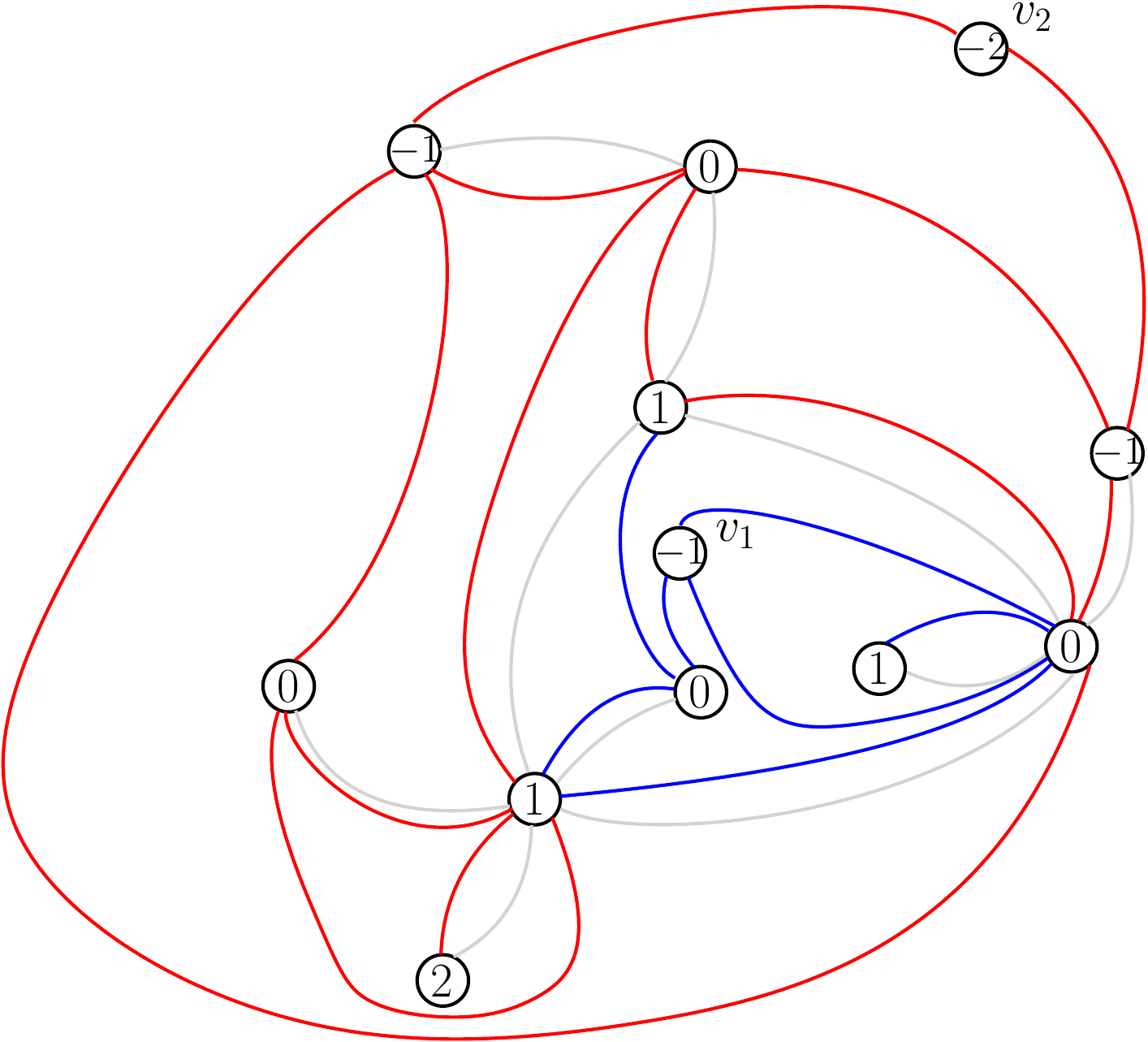}
\end{subfigure}
\caption{Illustration of the Miermont bijection. In this case, the delay is $-1$.}
\label{Miermont bijection}
\end{figure}

The Miermont bijection is a bijection between $\U_n$ and $\Q_n^{(b)}$, which goes as follows (see Figure \ref{Miermont bijection} for an example). Given a well-labelled unicycle $\mathbf{u}\in \U_n$, let $(c_0,...,c_m)$ be the corner sequence of the external face of $u$ starting from any arbitrary corner, that we extend periodically. Note that the labelling function $\ell$ can naturally be extended to the corners of $\mathbf{u}$, by $\ell(c)=\ell(v)$ if $c$ is incident to $v$. For every $0\leq i \leq m$, let
\[\sigma_i=\inf\{j>i,\,\ell(c_j)<\ell(c_i)\}\]
(this quantity can be infinite). We construct a new graph as follows. First, we add a vertex $v_1$ in the external face of $\mathbf{u}$, and we set $\ell(v_1)=\inf_{v\in f_1}\ell(v)-1$. Then, for every $i$ such that $\sigma_i<\infty$, draw an arc between $c_i$ and $c_{\sigma_i}$. On the other hand, for every $i$ such that $\sigma_i=\infty$, draw an arc between $c_i$ and $v_1$. It is possible to draw the arcs in such a way that they do not intersect each other, and do not intersect the edges of $\mathbf{u}$. Then, we repeat the exact same construction in the inner-face of $\mathbf{u}$, and we call $v_2$ extra vertex that we add in this face. Finally, erase all the edges of $\mathbf{u}$. The resulting graph is rooted at the edge between the root corner of $\mathbf{u}$ and its successor (there are two possible choices for the orientation). Moreover, this graph has two distinguished vertices, which are $v_1$ and $v_2$, and a parameter, which is $\ell(v_2)-\ell(v_1)$. We call this graph $\mathrm{CVS}(\mathbf{u})$. The following result is always a restatement of \cite[Theorem 4 and Corollary 1]{tessalations}.

\begin{theorem}
    The mapping $\mathrm{CVS}:\U_n\rightarrow\Q_n^{(b)}$ is one-to-two.
\end{theorem}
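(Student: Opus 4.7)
The plan is to establish a one-to-two correspondence by (i) checking that each $\mathbf{u}\in\U_n$ yields two valid delayed quadrangulations (indexed by the choice of root orientation), (ii) constructing an explicit inverse $\mathrm{CVS}^{-1}$ on $\Q_n^{(b)}$, and (iii) verifying that the two procedures are mutually inverse. This follows the standard scheme for CVS-type bijections; the novelty here is the two-face structure of the unicycle, the presence of two marked vertices $v_1,v_2$, and the delay parameter.

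\emph{Well-posedness of $\mathrm{CVS}(\mathbf{u})$.} I would first check that the arcs inside each face of $\mathbf{u}$ can be drawn planarly: the successor relation $c_i\mapsto c_{\sigma_i}$ is noncrossing because if $i<j<\sigma_i$, then necessarily $\sigma_j\leq\sigma_i$, so the arcs nest rather than cross. An Euler-characteristic count -- the $n$ edges of $\mathbf{u}$ contribute $2n$ corners, each corner emits exactly one arc, and each new face is bordered by four arcs -- then yields a quadrangulation with $n$ faces, the quadrilateral nature of each face being read off from the local labelling pattern at each edge of $\mathbf{u}$. Finally, admissibility of $\delta=\ell(v_2)-\ell(v_1)$ follows from bipartiteness (parity) and from exhibiting an explicit label-decreasing path linking $v_1$ to $v_2$ through the cycle of $\mathbf{u}$ (strict inequality $|\delta|<d_\q(v_1,v_2)$).

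\emph{Inverse construction.} Given $(\q,v_0,v_1,d)\in\Q_n^{(b)}$, I would define a double-source label
\[\ell(v)=\min\bigl(d_\q(v,v_0),\,d_\q(v,v_1)+d\bigr),\]
normalised so that the root vertex has label $0$. The crucial local lemma is that $|\ell(x)-\ell(y)|=1$ for every pair of adjacent $x,y$, which relies on the triangle inequality and the parity of $d$. In each face of $\q$, the four corner labels then fit one of a small number of local patterns, and each pattern prescribes a canonical descending diagonal to keep as an edge of the unicycle. Removing $v_0,v_1$ and the non-kept diagonals produces a planar graph with $n$ edges whose two faces contain the former positions of $v_0$ and $v_1$ respectively; equipped with the inherited labelling this is the desired element of $\U_n$.

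\emph{Mutual inversion and the factor two.} The heart of the argument is the distance identity: writing $\q=\mathrm{CVS}(\mathbf{u})$, for every $v$ in the closure of the external face of $\mathbf{u}$ one has
\[d_\q(v,v_1)=\ell(v)-\min_{v'\in f_1}\ell(v')+1,\]
and symmetrically for $v_2$. The lower bound follows from the $\pm1$ label rule; the upper bound is obtained by following the chain $c\mapsto c_{\sigma(c)}\mapsto\cdots$ until it reaches $v_1$. This identity implies that $\mathrm{CVS}^{-1}\circ\mathrm{CVS}=\mathrm{id}$ and, by a symmetric computation, $\mathrm{CVS}\circ\mathrm{CVS}^{-1}=\mathrm{id}$, while the factor two is precisely the orientation ambiguity at the root edge of $\mathrm{CVS}(\mathbf{u})$. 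The main obstacle, in my view, is the distance identity itself: in the classical CVS bijection one argues that a length-minimising path cannot leave the face containing $v$ because there is only one face, whereas here the path might a priori cross into the inner face, and ruling this out requires careful use of the delay $d$ and of the asymmetric labels at $v_1$ and $v_2$. Once the identity is under control, the rest reduces to combinatorial bookkeeping.
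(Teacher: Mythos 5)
The paper does not prove this theorem: immediately after the statement the text reads that the result ``is always a restatement of [Miermont, Theorem 4 and Corollary 1],'' and it explicitly defers both the proof and the description of the inverse bijection to Miermont's tessellations paper. Your sketch is therefore best judged as a reconstruction of that cited argument rather than against a proof in this paper. As such it is sound in outline: the three-step architecture (well-posedness of the forward map, distance-based inverse labelling $\ell(v)=\min\bigl(d_\q(v,v_0),\,d_\q(v,v_1)+d\bigr)$, mutual inversion via a distance identity on each delayed Voronoï cell) is precisely how Miermont's proof proceeds, you correctly locate the factor two in the two orientations of the root edge produced by the corner--successor arc, and you rightly identify the distance identity as the crux, since a priori shortest paths could leave the Voronoï cell. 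One clarification of emphasis: for the local $\pm1$ variation of $\ell$, it is the parity condition on $d$ that excludes adjacent vertices having equal labels (they must have opposite label parity in the bipartite quadrangulation), while the strict inequality $|d|<d_\q(v_0,v_1)$ serves to guarantee that both Voronoï cells are nonempty and that $v_0,v_1$ are the unique label minima in their respective cells; your phrasing conflates these two roles. Filling in these details would essentially reproduce Miermont's proof, which is what the paper implicitly relies on.
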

\begin{remark}
    The theorem in \cite{tessalations} is in fact much more general, because it deals with $k$-pointed delayed quadrangulations of arbitrary genus $g$. However, since we do not need this level of generality in this paper, we only presented the bijection for $k=2$ and $g=0$.
\end{remark}

This bijection was originally introduced to prove that there exists a unique geodesic between two typical points in the Brownian sphere (see \cite[Theorem 3]{tessalations}). We also mention that this bijection is related to the delayed Voronoï cells of elements of $\Q_n^{(b)}$, which has been used several times in the literature \cite{Guitter_Universal_law_Voronoi,Guitter_Proof_Chapuy, Guitter_statistics}. As we do not need it, we do not give more details, but we will come back to this fact in Section \ref{construction}. Similarly, we do not give the inverse bijection. The interested reader is referred to \cite{tessalations} for more information about this bijection.

\paragraph{Quadrilaterals with geodesic sides.}\label{quadrilateral}

A \textbf{vertebrate} is a plane tree with two distinguished and distinct vertices, called $\rho$ and $\overline{\rho}$. Such a tree $\mathbf{v}$ can be represented in $\R^2$ as follows :
\begin{itemize}[label=\textbullet]
    \item the floor, which is the path of length $h\geq 1$ from $\rho$ to $\overline{\rho}$, corresponds to the points $(i,0)\in\R^2$ for $0\leq i\leq h$,
    \item the upper part, which consists of $h$ trees that are contained in the upper-half plane $\R\times\R_+$, and where the $i$-th tree is grafted to $(i-1,0)$, $1\leq i\leq h$,
    \item the lower part, which consists of $h$ trees that are contained in the lower-half plane $\R\times\R_-$, and where the $i$-th tree is grafted to $(h-i+1,0)$, $1\leq i\leq h$,
\end{itemize}
\begin{figure}
\centering
\includegraphics[scale=0.7]{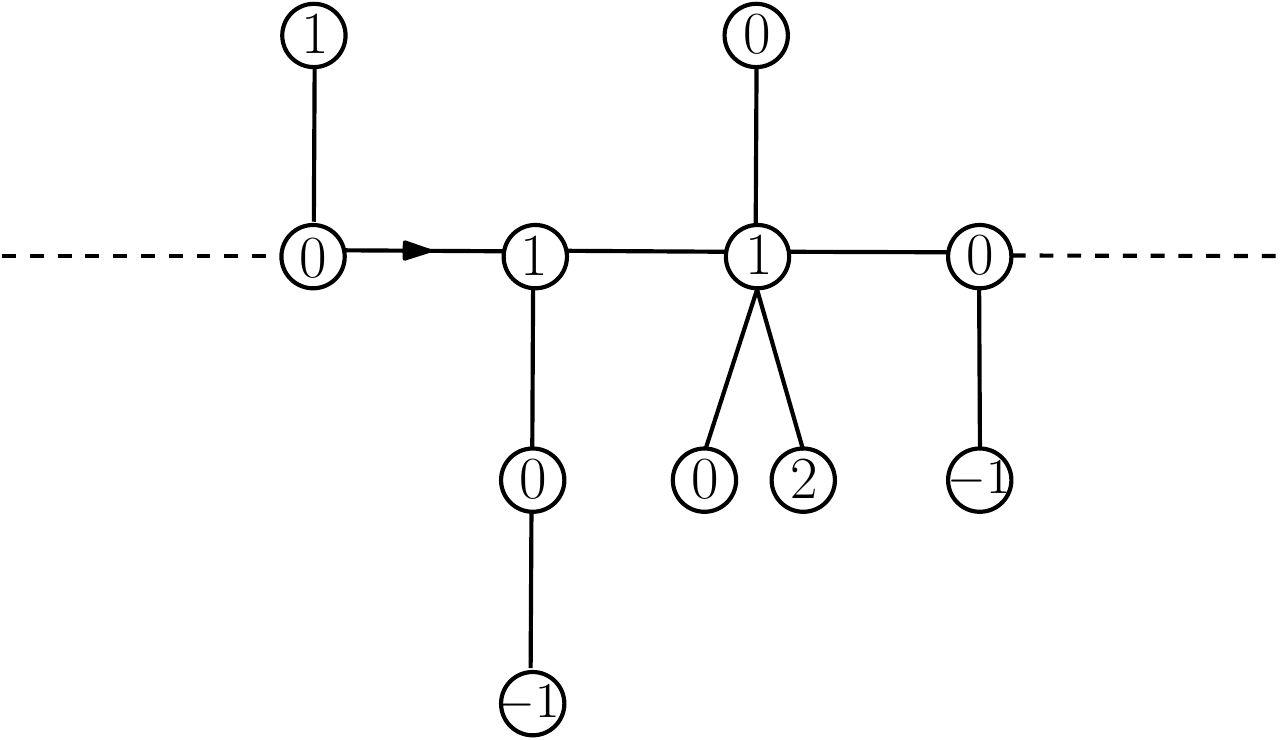}
\caption{A vertebrate with width 3,upper area 2, lower area 5 and tilt 0. The dashed line corresponds to $\R$.}
\label{vertebrate}
\end{figure}

(see Figure \ref{vertebrate} for an example). We will consider well-labelled vertebrates, which are vertebrates equipped with an integer-valued labelling function $\ell$ on their vertices such that $\ell(u)-\ell(v)\in\{-1,0,1\}$ whenever $u$ and $v$ are neighbors, and $\ell(\rho)=0$.

For a well-labelled vertebrate $\mathbf{v}$, let $a$ (resp. $\overline{a})$ stand for the number of edges that are strictly in the upper-half plane (resp. the lower-half plane). The \textit{width}, the \textit{upper area}, the \textit{lower area} and the \textit{tilt} of $\mathbf{v}$ are respectively the numbers 
\[h,\quad\quad a,\quad\quad\overline{a},\quad\quad-\ell(\overline{\rho}).\]

We can associate to every well-labelled vertebrate $\mathbf{v}$ a non-rooted quadrangulation of the plane $\mathrm{CVS}(\mathbf{v})$, called quadrilateral with geodesics sides, as follows. Let $I=\{c_0,c_1,...,c_{2a+h}\}$ be a clockwise enumeration of the corners of $\mathbf{v}$ that are incident to the upper-half plane, and set $\ell_*=\inf_{c\in I}\ell(c)-1$. Also, for every $0\leq i\leq 2a+h-1$, we define
\[\sigma_i=\inf\{j\in\{i+1,i+2,...,2a+h\}:\ell(c_j)=\ell(c_i)-1\}.\]
Then : \begin{itemize}[label=\textbullet]
    \item For every $1\leq i\leq \ell(\overline{\rho})-\ell_*+1$, we add a vertex $\xi_i$ in the upper-half plane, with label $\ell(\xi_i)=\ell(\overline{\rho})-i$. By convention, we set $\xi_0=\overline{\rho}$ and $v_*=\xi(\ell(\overline{\rho})-\ell_*+1)$.
    \item  For every  $0\leq i\leq \ell(\overline{\rho})-\ell_*$, draw an edge between $\xi_i$ and $\xi_{i+1}$. 
    \item For every  $0\leq i\leq 2a+h-1$, draw an arc between $c_i$ and $c_{\sigma_i}$ if $\sigma_i<\infty$, or between $c_i$ and $\xi(\ell(\overline{\rho})-\ell(c_i)+1)$ otherwise.
\end{itemize}

The arcs can be drawn in such a way that they remain in the upper half plane, that they do not intersect each other, and that they do not intersect the edges of $\mathbf{v}$.

\begin{figure}
\centering
\includegraphics[scale=0.6]{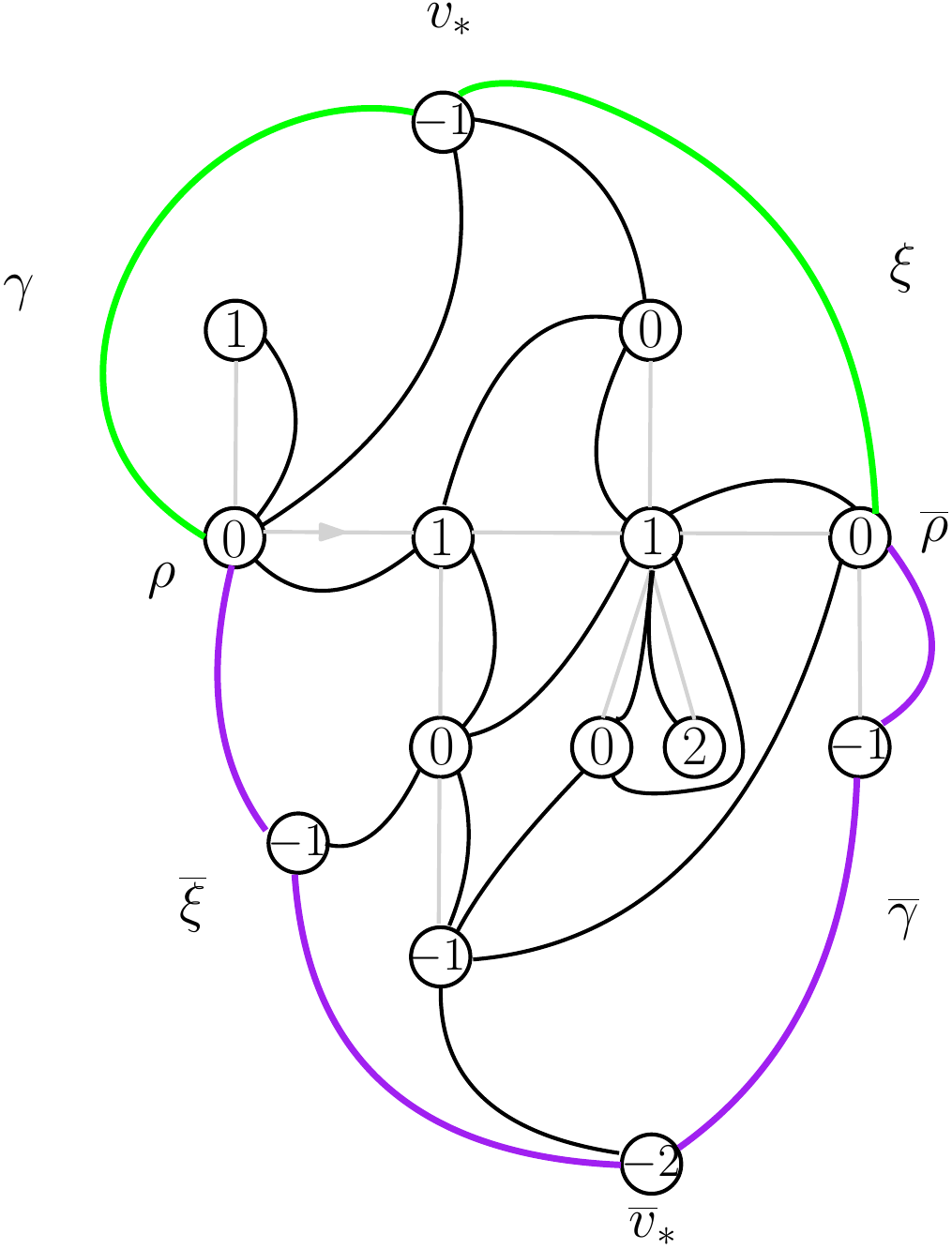}
\caption{The quadrangulation with geodesic boundaries associated to the vertebrate of Figure \ref{vertebrate}.}
\label{quad geodesic}
\end{figure}

We can then perform the same procedure for the lower part of $\mathbf{v}$, by replacing $I$ by $\overline{I}=\{\overline{c}_0,...,\overline{c}_{2\overline{a}+h}\}$, which is a clockwise enumeration of the corners of $\mathbf{v}$ incident to the lower-half plane, starting from $\overline{\rho}$. In this case, the points that we add are called $\overline{\xi}$, and the last vertex is called $\overline{v}_*$.

The resulting has only faces of degree $4$ except one, and is called a \textit{quadrilateral with geodesic sides}. These quadrangulations were introduced in \cite{Compactbrowniansurfaces}, as elementary pieces is the decomposition of quadrangulations of arbitrary genus.

The quadrilateral with geodesic sides $\mathrm{CVS}(\mathbf{v})$ naturally comes with two distinguished vertices, which are $v_*$ and $\overline{v}_*$. Furthermore, it inherits the notions of width, upper area, lower area and tilt coming from $\mathbf{v}$.
The boundary of $\mathrm{CVS}(\mathbf{v})$ is made of four geodesics, which are :
\begin{itemize}
    \item the left-most geodesic between $\rho$ and $v_*$, called $\gamma$,
    \item the left-most geodesic between $\overline{\rho}$ and $\overline{v}_*$, called $\overline{\gamma}$,
    \item the unique geodesic between $\overline{\rho}$ and $v_*$, which is $\xi$,
    \item the unique geodesic between $\rho$ and $\overline{v}_*$, which is $\overline{\xi}$
\end{itemize}
(see Figure \ref{quad geodesic}).
Finally, we equip every quadrilateral $\mathbf{qd}$ with a measure $\overline{\mu}_{\mathbf{qd}}$, which is the counting measure on the vertices of $\mathbf{qd}$ \textbf{that do not belong to $\xi$ or $\overline{\xi}$}.

\subsection{Link between delayed quadrangulations and quadrilaterals}

In this section, we present several links between unicycles, vertebrates, delayed quadrangulations and quadrilaterals. These links are based on combinatorial decompositions, and a comparison of the different variants of the CVS bijection.

First, let us introduce some definitions. Recall that a unicycle is a planar map with two faces, and that $\U_n$ stands for the set of rooted labelled unicycles with $n$ edges. We define $\U_n^\bullet$ to be the set of labelled unicycles with $n$ edges, and with a distinguished edge on their unique cycle. Note that this edge has a natural orientation, which is the one that has the external face on its right. Moreover, every element of $\U_n^\bullet$ also as a distinguished vertex on its cycle, which is the origin of its oriented distinguished edge.

Even though we are primarily interested in studying elements of $\U_n$, we will see that it is often more convenient to work with elements of $\U_n^\bullet$, which admit a simpler combinatorial decomposition. 

Then, let $\V_n$ the set of well-labelled vertebrates with $n$ edges and tilt $0$. There exists a simple bijection between the sets $\U_n^\bullet$ and $\V_n$, which is the following. 

Given a unicycle $\mathbf{u}\in\U_n^\bullet$, let $\Phi(\mathbf{u})$ be the planar graph obtained by cutting $\mathbf{u}$ at its distinguished vertex, as shown in Figure \ref{Unicycle vertebre}. It is clear that the resulting map is a well-labelled rooted vertebrate with $n$ edges and tilt $0$, and that the upper-part (respectively the lower-part) of this vertebrate corresponds to the internal face (respectively external face) of $\mathbf{u}$. Therefore, $\Phi(\mathbf{u})\in\V_n$. Similarly, for every $\mathbf{v}\in\V_n$, it is clear that $\Phi^{-1}(\mathbf{v})$ is the unicycle obtained by gluing the two vertices $\rho$ and $\overline{\rho}$ of $\mathbf{v}$, in such a way that the upper-part of $\mathbf{v}$ becomes the internal face of $\Phi^{-1}(\mathbf{v})$. 

\begin{figure}
\centering
\begin{subfigure}{.5\textwidth}
  \centering
  \includegraphics[width=1\linewidth]{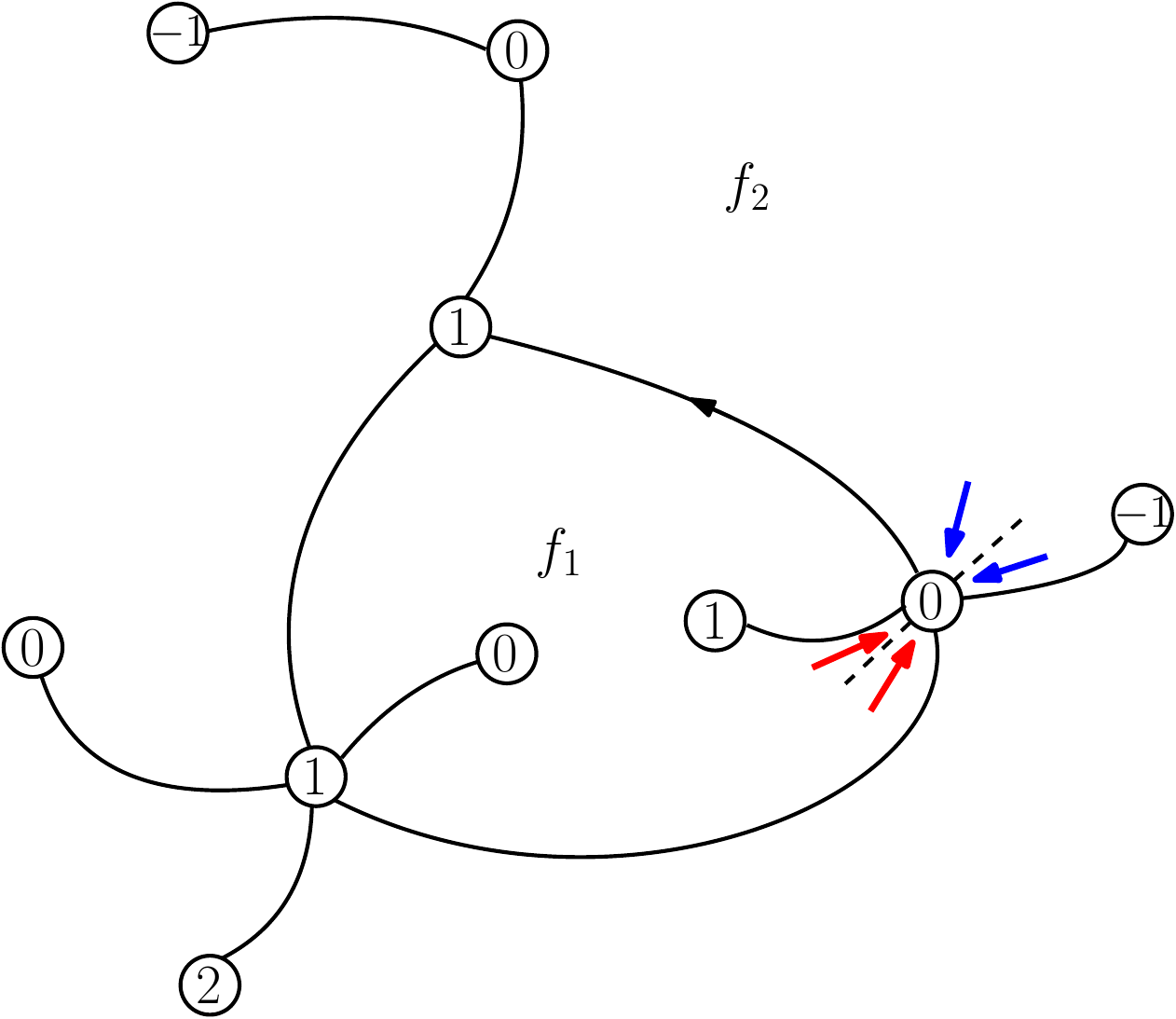}
\end{subfigure}%
\begin{subfigure}{.5\textwidth}
  \centering
  \includegraphics[width=.7\linewidth]{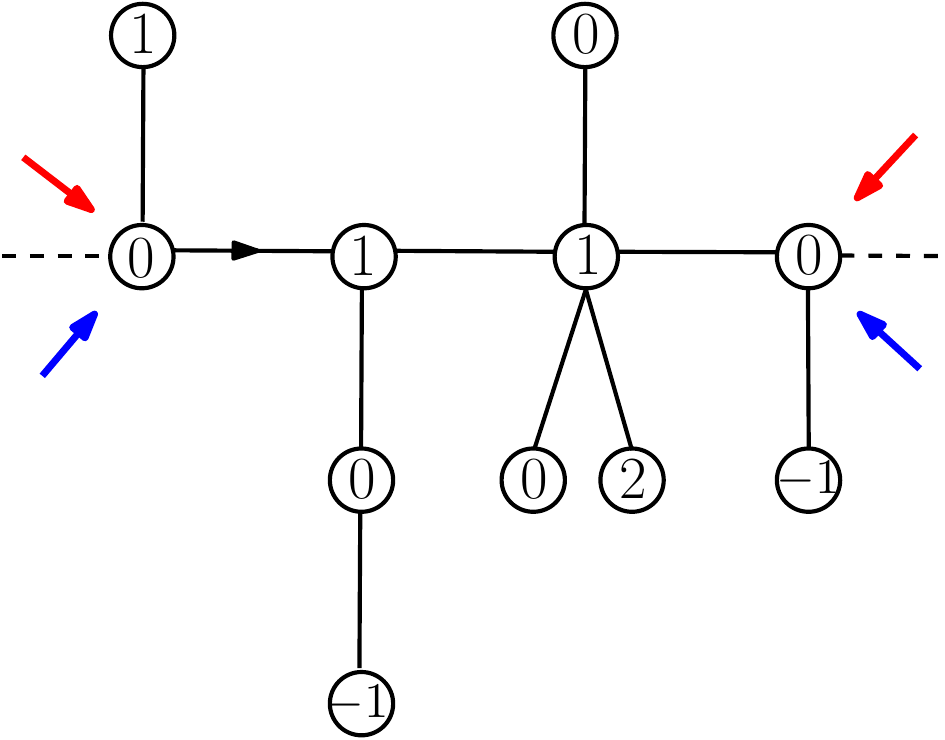}
\end{subfigure}
\caption{A unicycle $\mathbf{u}^\bullet\in U_{10}^\bullet$, and the corresponding vertebrate $\Phi(\mathbf{u}^\bullet)$.}
\label{Unicycle vertebre}
\end{figure}

\begin{remark}\label{correspondance}
    Observe that in this bijection, the length of the spine corresponds to the length of the cycle, and the area of the upper-part (respectively lower-part) corresponds to the area of the internal (respectively external) face. In particular, the law of the length of the spine and the area of the top-face of a uniform element in $\V_n$ is the same as the law of $(L_n^\bullet,A_n^\bullet)$. 
\end{remark}

We will show that the CVS bijection behaves well with respect to this bijection. For every $\mathbf{qd}\in \mathrm{CVS}(\V_n)$, recall that there are four distinguished geodesics $\gamma,\xi,\overline{\gamma},\overline{\xi}$ in $\mathbf{qd}$ between the two distinguished vertices $\rho,\overline{\rho}$ and $v_*,\overline{v}_*$. Moreover, since $\mathbf{qd}$ has tilt $0$, the geodesics $\gamma$ and $\xi$ (respectively $\overline{\gamma}$ and $\overline{\xi}$) have the same length. We define $\mathrm{Glue}(\mathbf{qd})$ as the planar map obtained by gluing $\gamma$ with $\xi$, and $\overline{\gamma}$ with $\overline{\xi}$ (see Figure \ref{Bijection cvs collage}). This planar map is naturally equipped with two distinguished vertices $v_*$ and $\overline{v}_*$ (which are the distinguished vertices of $\mathbf{qd}$), the counting measure on the vertices of $\mathrm{Glue}(\mathbf{qd})$, and a parameter defined as $\ell_{v_*}-\ell_{\overline{v}_*}$. The following proposition will be central in the proofs to come. 
\begin{proposition}\label{CVS collage}
    For every $\mathbf{u}^\bullet\in\U_n^\bullet$, we have 
    \[\mathrm{CVS}(\mathbf{u}^\bullet)=\mathrm{Glue}(\mathrm{CVS}(\Phi(\mathbf{u}^\bullet)))\]
    as elements of $\Q^{(b)}$.
\end{proposition}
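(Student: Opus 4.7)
The plan is to unpack both sides of the equality in terms of arc-drawing constructions and show they coincide. Fix $\mathbf{u}^\bullet \in \U_n^\bullet$ and set $\mathbf{v} = \Phi(\mathbf{u}^\bullet)$. Cutting the unicycle at its distinguished vertex produces two copies $\rho, \overline{\rho}$ of that vertex, which become the two ends of the spine of $\mathbf{v}$; in particular the internal face of $\mathbf{u}^\bullet$ becomes the upper half of $\mathbf{v}$, and the external face becomes the lower half. After a global label shift so that $\ell(\rho) = 0$ (which does not affect the map structure produced by either construction), one also has $\ell(\overline{\rho}) = 0$, so $\mathbf{v}$ indeed has tilt $0$, and the identification is label-preserving.

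I would then compare arc constructions. An appropriate clockwise enumeration of corners of the external face of $\mathbf{u}^\bullet$ matches the clockwise enumeration $(\overline{c}_0,\ldots,\overline{c}_{2\overline{a}+h})$ of corners of the lower half of $\mathbf{v}$, and likewise for the internal face and upper half. Because the successor function $\sigma_i$ depends only on labels and corner order, it takes the same value in both settings. Thus for each $i$ with $\sigma_i < \infty$, the two constructions draw the same arc between $c_i$ and $c_{\sigma_i}$. For each $i$ with $\sigma_i = \infty$ on the external face, $\mathrm{CVS}(\mathbf{u}^\bullet)$ draws an arc from $c_i$ to the new vertex $v_1$ whose label is the minimum label on that face minus one, while $\mathrm{CVS}(\mathbf{v})$ draws an arc from $c_i$ to the unique vertex of the chain $\overline{\xi}$ with label $\ell(c_i)-1$. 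The symmetric statement holds for the internal face of $\mathbf{u}^\bullet$ versus the upper half of $\mathbf{v}$, with $v_2$ and $\xi$ playing analogous roles.

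The third step is the gluing. Since $\mathbf{v}$ has tilt $0$, the geodesics $\gamma$ and $\xi$ (resp.\ $\overline{\gamma}$ and $\overline{\xi}$) have the same length and are both strictly label-decreasing from $\rho$ to $v_*$ (resp.\ from $\overline{\rho}$ to $\overline{v}_*$). Therefore $\mathrm{Glue}$ identifies $\xi$ with $\gamma$ and $\overline{\xi}$ with $\overline{\gamma}$ vertex-by-vertex, matching labels. After the identification, the endpoint $v_*$ becomes a single vertex incident to every upper-half arc that was routed into the chain $\xi$, and its label is exactly the minimum label of the upper half minus one, which is the label of the vertex $v_2$ produced by $\mathrm{CVS}(\mathbf{u}^\bullet)$ in the internal face. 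Symmetrically, $\overline{v}_*$ corresponds to $v_1$. The root edge (inherited from the root corner and its successor) and the delay $\ell(v_*)-\ell(\overline{v}_*) = \ell(v_2)-\ell(v_1)$ also agree, so the two delayed quadrangulations are equal as elements of $\Q^{(b)}$.

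The main difficulty will be purely diagrammatic: one must check that the arcs drawn inside the upper and lower half-planes of $\mathbf{v}$, which are required to stay in those half-planes without crossings, can be chosen so that after gluing they lie in the correct faces of $\mathbf{u}^\bullet$ without crossings, and that the orientation of the root edge is respected by $\mathrm{Glue}$. This is a routine but careful planar-embedding check, involving the clockwise-enumeration conventions for corners and the way cutting at the distinguished vertex turns a cyclic corner sequence around each face into a linear corner sequence along each half of $\mathbf{v}$.
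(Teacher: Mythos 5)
Your overall approach is the same as the paper's: identify corners of $\mathbf{u}^\bullet$ with corners of $\mathbf{v}=\Phi(\mathbf{u}^\bullet)$, compare the successor functions, and show the arcs coincide after gluing. However, there is a gap in the middle step that the paper is careful to handle and you are not.

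The claim that ``the successor function $\sigma_i$ depends only on labels and corner order, it takes the same value in both settings'' is false. In the unicycle $\mathbf{u}^\bullet$, the corner sequence around a face is cyclic (extended periodically), whereas in $\mathbf{v}$ each half is traversed linearly from one end to the other. Consequently there are corners $c$ with label strictly above the face minimum for which the vertebrate successor is infinite (and the arc is routed to an \emph{intermediate} vertex of the shuttle $\xi$ or $\overline{\xi}$), but the unicycle successor is finite, found by wrapping around past the cut vertex, so the unicycle draws an arc to a \emph{corner}. Your dichotomy ``$\sigma_i<\infty$'' versus ``$\sigma_i=\infty$'' (with $v_1$ as target in the latter case) only covers the corners that reach the face minimum and thus get connected to $v_*$ or $\overline{v}_*$; it silently drops exactly the arcs for which the gluing of $\gamma$ to $\xi$ (and $\overline{\gamma}$ to $\overline{\xi}$) along their full length, not just at the endpoints, is what makes the two constructions agree. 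The paper's proof separates out three cases and in the middle one observes that the unicycle arc lands on the corner associated to $\gamma(\ell_c-1)$, which becomes identified with $\xi(\ell_c-1)$ by $\mathrm{Glue}$; that observation is the core of the proposition and is missing from your argument (your third paragraph discusses only the identification of the two endpoint vertices, not the intermediate chain vertices). Fixing the proof requires explicitly addressing these wrap-around arcs and noting that they coincide after the geodesic identification, which also entails the corner identification across the cut that the paper spells out.
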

\begin{proof}
    Fix $\mathbf{u}^\bullet\in\U_n^\bullet$. Note that we have a correspondence between the corners of $\mathbf{u}^\bullet$ and $\Phi(\mathbf{u}^\bullet)$, as shown in Figure \ref{Unicycle vertebre}. We also identify the first corner of $\rho$ incident to the upper half-plane to the last corner of $\overline{\rho}$ incident to the upper half-plane, and do the same in the lower half-plane (these corners correspond to the arrows of Figure \ref{Unicycle vertebre}). Let $c$ be a corner of $\Phi(\mathbf{u}^\bullet)$, and let $\sigma(c)$ be its successor for the CVS bijection \textit{on $\Phi(\mathbf{u}^\bullet)$}. Without loss of generality, suppose that $c$ belongs to the top face. There are three possibilities :
    \begin{itemize}[label=\textbullet]
        \item If $\sigma(c)$ is a corner of $\Phi(\mathbf{u}^\bullet)$, then one can see that $\sigma(c)$ is also the successor of $c$ for the CVS bijection on $\mathbf{u}^\bullet$.
        \item If $\sigma(c)$ belongs to a shuttle of $\mathrm{CVS}(\Phi(\mathbf{u}^\bullet))$, then $\sigma(c)=\xi(\ell_c-1)$. This also means that the successor of $c$ in the CVS bijection on $\mathbf{u}^\bullet$ is the corner associated to $\gamma(\ell_c-1)$. However, we can see that the edge between $c$ and $\gamma(\ell_c-1)$ in $\mathrm{CVS}(\mathbf{u}^\bullet)$ has the same role as the edge between $c$ and $\xi(\ell_c-1)$ in $\mathrm{Glue}(\mathrm{CVS}(\Phi(\mathbf{u}^\bullet)))$, since we have glued $\gamma$ with $\xi$. 
        \item If $\sigma(c)=v_*$, it is clear that the successor of $c$ in the CVS bijection on $\mathbf{u}^\bullet$ is $v_1$. 
    \end{itemize}
    Either way, the edges between the different corners are the same. Finally, it is straightforward to see that the delay of both maps are the same, which gives the result. 
\end{proof}
\begin{remark}
  Since, for every quadrilateral $\mathbf{qd}$, the counting measure $\overline{\mu}_{\mathbf{qd}}$ does not put any mass on $\xi$ and $\overline{\xi}$, it is easy to see that the push forward v of $\overline{\mu}_{\mathbf{qd}}$ by the gluing operation is the counting measure on $\mathrm{Glue}(\mathbf{qd})$.
\end{remark}

\begin{figure}
\centering
\begin{subfigure}{.5\textwidth}
  \centering
  \includegraphics[width=1\linewidth]{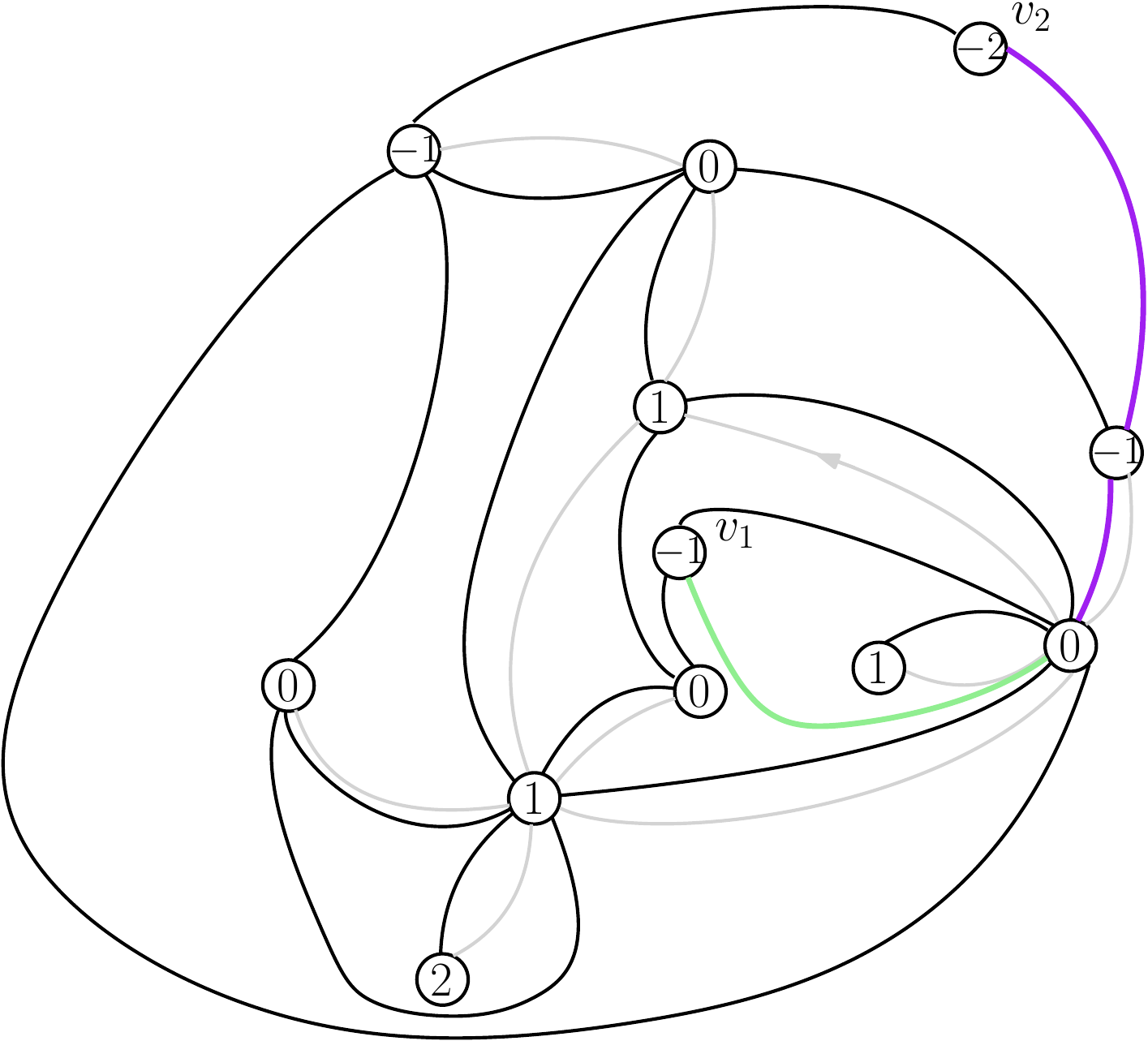}
\end{subfigure}%
\begin{subfigure}{.5\textwidth}
  \centering
  \includegraphics[width=.7\linewidth]{Images/quad_fig2.pdf}
\end{subfigure}
\caption{The correspondence between $\mathrm{CVS}(\mathbf{u}^\bullet)$ and $\mathrm{CVS}(\Phi(\mathbf{u}^\bullet))$, where $\mathbf{u}^\bullet$ is as in Figure \ref{Unicycle vertebre}.}
\label{Bijection cvs collage}
\end{figure}

We introduce some more notation. For every unicycle $\mathbf{u}$, let $L(\mathbf{u})$ denote the length of its unique cycle. If $\mathbf{u}$ has a distinguished oriented edge on its cycle, let $A(\mathbf{u})$ (resp. $\overline{A}(\mathbf{u})$) stand for the number of edges of $\mathbf{u}$ that are only incident to the external face (resp. the internal face). We emphasize that we do not count the edges of the cycle as being included in the faces. Consequently, for every $\mathbf{u}\in\U_n^\bullet$, we have 
\begin{equation}\label{somme}
    L(\mathbf{u})+A(\mathbf{u})+\overline{A}(\mathbf{u})=n.
\end{equation}

Then, let $U_n^\bullet$ be a uniform random variable on $\U_n^\bullet$. We introduce the following random variables
\begin{equation*}
    L_n^\bullet=L(U_n^\bullet),\quad A_n^\bullet=A(U_n^\bullet),\quad  \overline{A}_n^\bullet=\overline{A}(U_n^\bullet).
\end{equation*}

Finally, let $(Q^{(b)}_n,x^{(b)}_n,y^{(b)}_n,\mu_n^{(b)},\Delta_n)$ be a uniform random variable in the set $\Q_n^{(b)}$, and for every $a,a',h>0$, let $Qd_{a,a',h}$ be a uniform random variable in the set of quadrilaterals with geodesics sides with upper-area $a$, lower-area $a'$, width $h$ and tilt $0$. The following corollary is just a probabilistic reformulation of Proposition \ref{CVS collage}. 

\begin{corollary}[Link between biased quadrangulations and quadrangles]\label{lien}
    For every measurable function $F\geq 0$, we have
    \[\E\left[F(Q^{(b)}_n,x^{(b)}_n,y^{(b)}_n,\mu_n^{(b)},\Delta_n)\right]=\E\left[\frac{1}{L_n^\bullet}\right]^{-1}\E\left[\frac{F\left(\mathrm{Glue}\left(Qd_{A_n^\bullet,\overline{A}_n^\bullet,L_n^\bullet}\right)\right)}{L_n^\bullet}\right].\]
\end{corollary}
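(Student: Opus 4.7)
The plan is to unfold both sides to a common average over unrooted isomorphism classes of labelled unicycles, and then match them using Proposition \ref{CVS collage} together with the bijection $\Phi$. Since the $5$-tuple $(Q^{(b)}_n, x^{(b)}_n, y^{(b)}_n, \mu_n^{(b)}, \Delta_n)$ carries no rooting data, $F$ depends only on the unrooted data. Write $\U_n^*$ for the set of well-labelled unicycles with $n$ edges, modulo a global label shift and rooting. A unicycle with $n$ edges has $2n$ corners, so $|\U_n|=2n|\U_n^*|$, and the one-to-two CVS gives $|\Q_n^{(b)}|=2|\U_n|=4n|\U_n^*|$; for a fixed $\mathbf{u}^* \in \U_n^*$, the $4n$ elements of $\Q_n^{(b)}$ above it share the same unrooted class $\mathbf{q}^*(\mathbf{u}^*)$. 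Therefore
$$\E\bigl[F(Q_n^{(b)},x_n^{(b)},y_n^{(b)},\mu_n^{(b)},\Delta_n)\bigr] = \frac{1}{|\U_n^*|} \sum_{\mathbf{u}^* \in \U_n^*} F(\mathbf{q}^*(\mathbf{u}^*)).$$

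For the right-hand side, each $\mathbf{u}^\bullet \in \U_n^\bullet$ corresponds to a pair (underlying $\mathbf{u}^* \in \U_n^*$, distinguished oriented cycle-edge), whence $|\U_n^\bullet| = \sum_{\mathbf{u}^*} L(\mathbf{u}^*)$ and a direct computation gives $\E[1/L_n^\bullet] = |\U_n^*|/|\U_n^\bullet|$. By Proposition \ref{CVS collage}, $\mathrm{CVS}(\mathbf{u}^\bullet) = \mathrm{Glue}(\mathrm{CVS}(\Phi(\mathbf{u}^\bullet)))$ has unrooted class $\mathbf{q}^*(\mathbf{u}^*)$, since the distinguished cycle-edge of $\mathbf{u}^\bullet$ only affects the rooting of the resulting delayed quadrangulation. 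Consequently
$$\E\!\left[\frac{F(\mathrm{CVS}(U_n^\bullet))}{L_n^\bullet}\right] = \frac{1}{|\U_n^\bullet|}\sum_{\mathbf{u}^* \in \U_n^*} \frac{L(\mathbf{u}^*)}{L(\mathbf{u}^*)} F(\mathbf{q}^*(\mathbf{u}^*)) = \frac{|\U_n^*|}{|\U_n^\bullet|}\, \E[F(Q_n^{(b)})] = \E[1/L_n^\bullet]\,\E[F(Q_n^{(b)})],$$
which gives the desired identity once we replace $\mathrm{CVS}(U_n^\bullet)$ by $\mathrm{Glue}(\mathrm{CVS}(\Phi(U_n^\bullet)))$.

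It only remains to identify $\mathrm{CVS}(\Phi(U_n^\bullet))$ with $Qd_{A_n^\bullet, \overline{A}_n^\bullet, L_n^\bullet}$. Since $\Phi$ is a bijection, $\Phi(U_n^\bullet)$ is uniform on $\V_n$; by Remark \ref{correspondance} its width, upper area and lower area equal $L_n^\bullet, A_n^\bullet, \overline{A}_n^\bullet$. Conditionally on these three integers, $\Phi(U_n^\bullet)$ is uniform among vertebrates with the prescribed parameters, and the CVS construction is a bijection from this set onto quadrilaterals with the same width, upper area, lower area and tilt $0$; hence $\mathrm{CVS}(\Phi(U_n^\bullet))$ is conditionally distributed as $Qd_{A_n^\bullet, \overline{A}_n^\bullet, L_n^\bullet}$, and the measure part follows from the remark after Proposition \ref{CVS collage}. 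The main technical obstacle is simply the careful bookkeeping of the rooting and labelling conventions across $\U_n$, $\U_n^\bullet$, $\V_n$ and $\Q_n^{(b)}$; once these are aligned, the identity reduces to the arithmetic above.
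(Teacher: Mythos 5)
Your proof is correct and follows the paper's approach: unbias by cycle length, then apply Proposition \ref{CVS collage} together with the conditional identification of $\mathrm{CVS}(\Phi(U_n^\bullet))$ with $Qd_{A_n^\bullet,\overline{A}_n^\bullet,L_n^\bullet}$. Your explicit detour through the unrooted classes $\U_n^*$ is a cleaner piece of bookkeeping than the paper's direct $\U_n\leftrightarrow\U_n^\bullet$ comparison, where a factor of $2n$ from the root choice is silently absorbed into the renormalization by $\E[1/L_n^\bullet]$.
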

\begin{proof}
  For every non-negative measurable function $F$, we have 
    \begin{align*}
        \E[F(U_n)]&=\frac{1}{\#\U_n}\sum_{\mathbf{u}\in\U_n}F(\mathbf{u})\\
        &=\frac{1}{\#\U_n}\sum_{\mathbf{u}\in\U_n}\frac{1}{L(\mathbf{u})}\sum_{v\in C(\mathbf{u})}F(\mathbf{u})\\
        &=\frac{\#\U_n^\bullet}{\#\U_n}\frac{1}{\#\U_n^\bullet}\sum_{\mathbf{u}^\bullet\in\U_n^\bullet}\frac{1}{L(\mathbf{u}^\bullet)}F(\mathbf{u}^\bullet)\\
        &=\E\left[\frac{1}{L_n^\bullet}\right]^{-1}\E\left[\frac{F(U_n^\bullet)}{L_n^\bullet}\right].
    \end{align*}
It follows that 
\begin{equation}
    \E[F(Q_n^{(b)})]=\E[F(\mathrm{CVS}(U_n))]=\E\left[\frac{1}{L_n^\bullet}\right]^{-1}\E\left[\frac{F(\mathrm{CVS}(U_n^\bullet))}{L_n^\bullet}\right].
\end{equation}
    By Proposition \ref{CVS collage}, this is also equal to 
    \[\E\left[\frac{1}{L_n^\bullet}\right]^{-1}\E\left[\frac{F(\mathrm{Glue}(Qd_n))}{L_n^\bullet}\right].\]
    Finally, we can decompose the expectation according to the length of the spine and the areas of the faces. By Remark \ref{correspondance}, this gives 
     \[\E[F(Q_n^{(b)})]=\E\left[\frac{1}{L_n^\bullet}\right]^{-1}\E\left[\frac{F\left(\mathrm{Glue}\left(Qd_{A_n^\bullet,\overline{A}_n^\bullet,L_n^\bullet}\right)\right)}{L_n^\bullet}\right],\]
     which concludes the proof. 
\end{proof}

\begin{remark}
    As we will see in Section \ref{continuous quad}, the scaling limit of quadrilaterals with geodesic sides is already established. Therefore, by Corollary \ref{lien}, in order to obtain the scaling limit of delayed quadrangulations, one needs to understand the behavior of the random variables $(L_n^\bullet,A_n^\bullet,\overline{A}_n^\bullet)$.
\end{remark}

\subsection{Study of well-labelled unicycles}\label{section unicycle}

In this section, we study the properties of a uniform random element in $\U_n$ as $n$ gets very large. To do so, we rely on enumerations formulas, which are based into canonical decomposition of well-labelled unicycles into well-known objects. Using the Miermont bijection, the results of this section will allow us to compute asymptotic properties of large biased quadrangulations.

First, let us introduce some definitions and basic results. 
Let $\U_{n,k}$ (respectively $\U_{n,k}^\bullet$) be the subset of $\U_n$ (respectively $\U_n^\bullet$) composed of unicycles with a cycle of length $k$. Observe that given an element $\mathbf{u}\in \U_{n,k}$, one can distinguish an edge on the cycle and then forget the root-edge, which yields an element of $\U_{n,k}^\bullet$. Since there are $k$ possible choices for the edge to distinguish, and $2n$ possible root-edges to forget, this operation is $2n$-to-$2k$. This gives the simple but useful identity :
    \begin{equation}\label{count cycles}
        k\times\#\U_{n,k}=2n\times\#\U^\bullet_{n,k},
    \end{equation}
    which leads to 
    \begin{equation}\label{decompocycle}
        \#\U_n=\sum_{k=1}^n\#\U_{n,k}=\sum_{k=1}^n\frac{2n}{k}\#\U_{n,k}^\bullet,\quad\text{and}\quad\#\U_n^\bullet=\sum_{k=1}^n\#\U_{n,k}^\bullet.
    \end{equation}

Our first result gives asymptotic formulas for $\#\U_n$ and $\#\U_n^\bullet$.

\begin{theorem}\label{counting}
   We have 
    \[\#\U_n\underset{n\rightarrow\infty}{\sim}\frac{\sqrt{3}\times\Gamma(1/4)\times12^n}{2\pi\times n^{1/4}}\]
    and
     \[\#\U_n^\bullet\underset{n\rightarrow\infty}{\sim}\frac{\sqrt{3}\times\Gamma(3/4)\times 12^n}{4\pi\times n^{3/4}}.\]
\end{theorem}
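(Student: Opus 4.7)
The plan is to compute the ordinary generating series of $\#\U_n^\bullet$ and $\#\U_n$ via the vertebrate decomposition of Section~\ref{section discrete}, and then extract asymptotics by standard Flajolet--Odlyzko singularity analysis. By the bijection $\Phi$ established above, $\#\U_n^\bullet=\#\V_n$, and a well-labelled vertebrate of width $k$ and tilt $0$ decomposes canonically as a spine of length $k$ (whose vertex-labels form a walk from $0$ to $0$ with steps in $\{-1,0,1\}$) together with $2k$ well-labelled rooted plane trees grafted to it. The spine is counted by the central Motzkin number $M_k$, with generating function
\[M(y)\;:=\;\sum_{k\ge 0}M_k\,y^k\;=\;\bigl((1-3y)(1+y)\bigr)^{-1/2},\]
and a well-labelled rooted plane tree with $m$ edges carries weight $3^m\tfrac{1}{m+1}\binom{2m}{m}$ (three choices of label-increment per edge), with generating function $T(x)=\frac{1-\sqrt{1-12x}}{6x}$ satisfying $T=1+3xT^2$. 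Multiplying these I obtain
\[\sum_{n\ge 0}\#\U_n^\bullet\,x^n\;=\;\sum_{k\ge 0}M_k\,x^k T(x)^{2k}\;=\;M\bigl(xT(x)^2\bigr).\]

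For $\#\U_n$, identity \eqref{count cycles} rearranges to $\#\U_n=2n\,[x^n]\,H\bigl(xT(x)^2\bigr)$, where $H(y):=\sum_{k\ge 1}M_k y^k/k$ so that $H'(y)=(M(y)-1)/y$. A crucial algebraic simplification is $xT^2=(T-1)/3$ (from the functional equation), which a short calculation turns into
\[1-2xT^2-3(xT^2)^2\;=\;\tfrac{(T+2)(2-T)}{3},\qquad 1-3xT^2\;=\;2-T,\]
so that $M\bigl(xT(x)^2\bigr)=\sqrt{3/(4-T(x)^2)}$. Thus everything is reduced to the behaviour of $T$ near its unique dominant singularity $\rho=1/12$.

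Writing $\eta=\sqrt{1-12x}$, the explicit formula for $T$ collapses to $T(x)=2/(1+\eta)$, so $T(x)=2-2\eta+O(\eta^2)$, $4-T(x)^2\sim 8\eta$ and $2-T(x)\sim 2\eta$. Consequently
\[M\bigl(xT(x)^2\bigr)\;\sim\;\tfrac{\sqrt 3}{2\sqrt 2}(1-12x)^{-1/4}\qquad(x\to 1/12^-),\]
and the transfer theorem $[x^n](1-12x)^{-1/4}\sim 12^n n^{-3/4}/\Gamma(1/4)$ gives $\#\U_n^\bullet\sim\tfrac{\sqrt 6}{4\Gamma(1/4)}\,12^n n^{-3/4}$; the reflection identity $\Gamma(1/4)\Gamma(3/4)=\pi\sqrt 2$ rewrites this as the stated expression.

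For $\#\U_n$, I integrate the singular part of $M$: near $y=1/3$ one has $M(y)\sim\tfrac{\sqrt 3}{2}(1-3y)^{-1/2}$, so $H(1/3)$ is finite and
\[H(y)\;=\;H(1/3)-\sqrt{3}\,(1-3y)^{1/2}+o\bigl((1-3y)^{1/2}\bigr).\]
Substituting $1-3xT^2=2-T\sim 2\sqrt{1-12x}$ yields $H\bigl(xT(x)^2\bigr)\sim H(1/3)-\sqrt{6}\,(1-12x)^{1/4}$. Using $\Gamma(-1/4)=-4\Gamma(3/4)$, the transfer theorem now gives $[x^n]\,H(xT^2)\sim\tfrac{\sqrt 6}{4\Gamma(3/4)}\,12^n n^{-5/4}$, whence $\#\U_n=2n[x^n]H(xT^2)\sim\tfrac{\sqrt 6}{2\Gamma(3/4)}\,12^n n^{-1/4}$, and one more application of the reflection formula produces the claimed constant. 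The only real obstacle is bookkeeping: correctly tracking the prefactors when pulling the singularities of $M$ and $H$ through the composition $y=xT(x)^2$, where the simplification $xT^2=(T-1)/3$ is what makes everything computable in closed form.
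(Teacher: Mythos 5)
Your argument is correct, and it reaches the stated asymptotics by a genuinely different route than the paper's. You both start from the same combinatorial decomposition of an element of $\U_{n,k}^\bullet$ into a well-labelled spine of length $k$ (a walk from $0$ to $0$ with steps in $\{-1,0,1\}$) and a forest of $2k$ well-labelled plane trees carrying the remaining $n-k$ edges. The paper then rewrites the exact count in probabilistic form, $\#\U_{n,k}^\bullet=\tfrac{k}{n}\,12^n\,\P(M_k=0)\,\P(S_{2n}=-2k)$, and obtains the asymptotics of $\sum_k\#\U_{n,k}^\bullet$ and $\sum_k\#\U_{n,k}$ via local limit theorems and a Riemann-sum argument. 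You instead package the same decomposition as a generating-function identity $\sum_n\#\U_n^\bullet x^n=M\bigl(xT(x)^2\bigr)$ (and a $1/k$-weighted variant for $\#\U_n$ via identity \eqref{count cycles}), exploit the algebraic simplification $xT^2=(T-1)/3$ to write the composition explicitly as $\sqrt{3/(4-T(x)^2)}$, and then apply singularity analysis at $x=1/12$. The constants agree after the reflection formula $\Gamma(1/4)\Gamma(3/4)=\pi\sqrt{2}$, as you note. Two small remarks: the numbers $M_k$ you use are the central trinomial coefficients (all $\{-1,0,1\}$-walks from $0$ to $0$), not the Motzkin numbers (which stay nonnegative), though the generating function $\bigl((1-3y)(1+y)\bigr)^{-1/2}$ is indeed the correct one; and the sums over $k$ should begin at $k=1$, since a unicycle has a nonempty cycle, though this only alters the constant term and is immaterial for the singular expansion. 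As for the trade-off between the two methods: your singularity-analysis route is shorter and gives the pleasant closed form $M(xT^2)=\sqrt{3/(4-T^2)}$, whereas the paper's probabilistic reformulation is chosen precisely so that the same local-limit estimates can be reused almost verbatim to prove Proposition \ref{convergence taille} (the joint scaling limit of cycle length and internal area) and the uniform-integrability Lemma \ref{uniform integrability}, results that your approach would have to establish separately.
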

\begin{proof}
  First, let us compute $\#\U_{n,k}^\bullet$. Note that every element of this set can be decomposed as follows. First, the labels on the cycle (started from the root vertex and in the direction of the root edge) form a well-labeled path of length $k$ such that its endpoints have label $0$. Then, the subtrees grafted to the cycle, ordered in clockwise ordered from the root vertex and starting with the external face, form a well-labeled plane forest of $2k$ labeled trees with $n-k$ edges. Finally, shift the labels of each tree by the label of their root (see Figure \ref{decompo unicycle}). This decomposition is clearly a bijection. Therefore, we have :
    \begin{equation}\label{comptage}
        \#\U_{n,k}^\bullet=M_k^{0\rightarrow0}\times3^{n-k}\cF(2k,n-k),
    \end{equation}
    where $M_k^{0\rightarrow0}$ stands for the number of labeled paths from $0$ to $0$ of length $k$, and $\cF(a,b)$ for the number of forests with $a$ trees and $b$ edges. These quantities are well known and can be expressed in a probabilistic form. 
    \begin{figure}
\centering
\begin{subfigure}{.5\textwidth}
  \centering
  \includegraphics[width=.85\linewidth]{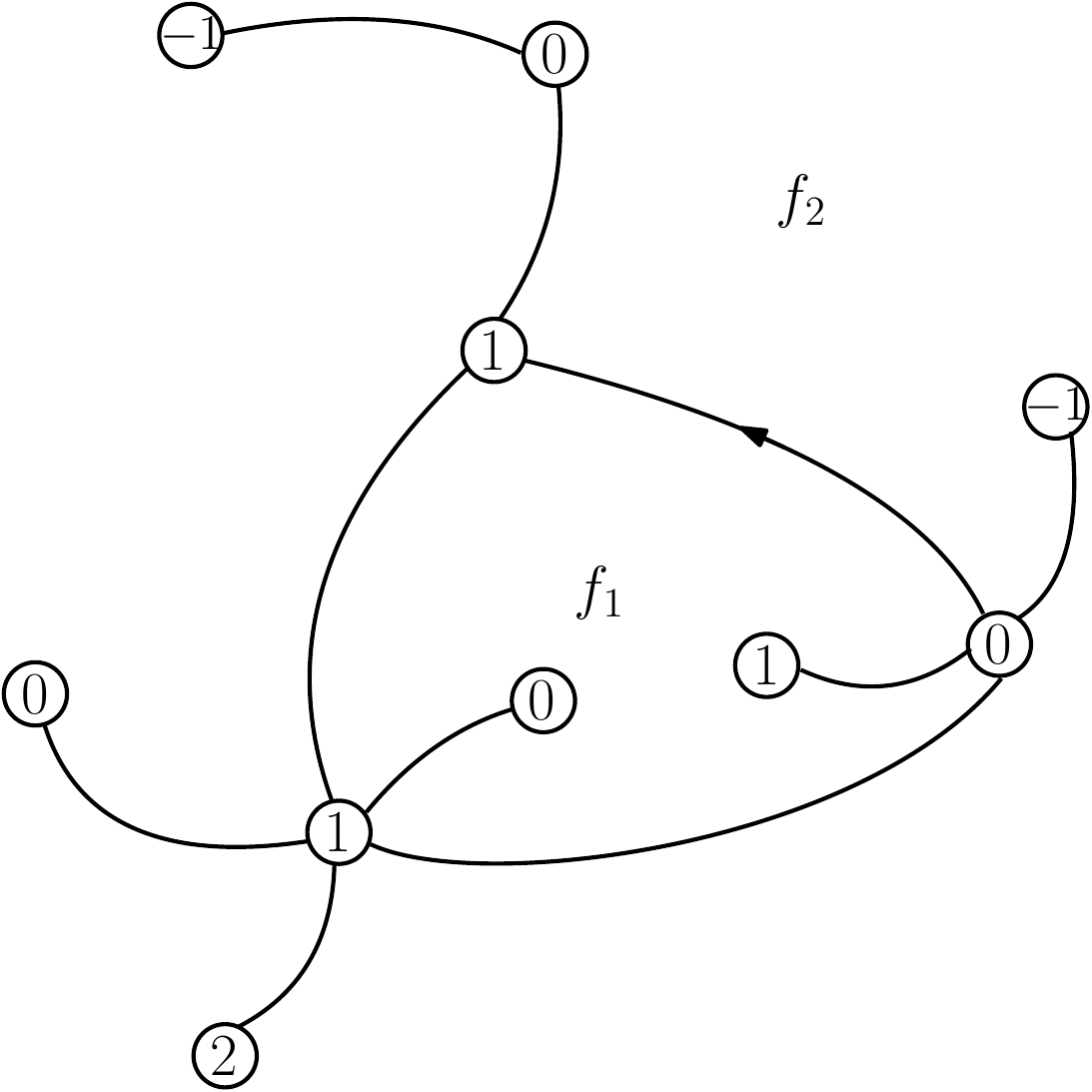}
\end{subfigure}%
\begin{subfigure}{.5\textwidth}
  \centering
  \includegraphics[width=.9\linewidth]{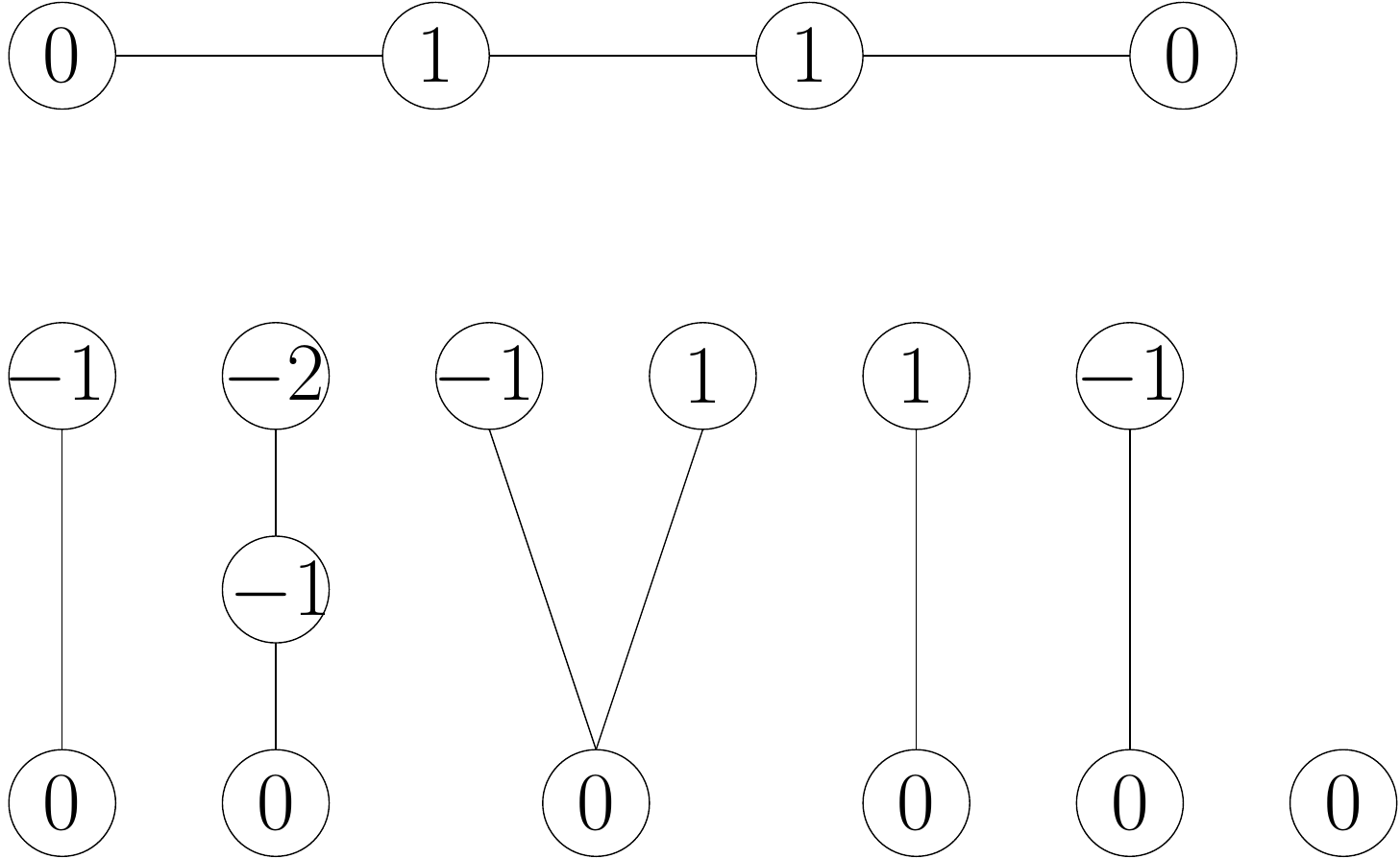}
\end{subfigure}
\caption{A unicycle $\mathbf{u}\in\U^\bullet_{10,3}$, decomposed as a path of length $3$ together with a labelled plane forest with $6$ trees.}
\label{decompo unicycle}
\end{figure}

    Thus, \eqref{comptage} can be rewritten as
    \[\#\U_{n,k}^\bullet=\frac{k}{n}\times12^n\P(M_k=0)\P(S_{2n}=-2k)\]
    where $(M_n)_{n\geq0}$ is a random walk with uniform steps in $\{-1,0,1\}$, and $(S_n)_{n\geq0}$ the simple random walk on $\Z$. 
    Hence, using \eqref{decompocycle}, we have
    \begin{equation}\label{enum1}         \#\U_n=2\times12^n\times\sum_{k=1}^n\P(M_k=0)\P(S_{2n}=-2k)
    \end{equation}  
    and 
    \begin{equation}\label{enum2}
        \#\U_n^\bullet= \frac{12^n}{n}\times\sum_{k=1}^nk\P(M_k=0)\P(S_{2n}=-2k).
    \end{equation}    
    Then, by classical local limit theorems, we know that 
    \begin{equation}\label{local limit pont}
    \sqrt{k}\P(M_k=0)\xrightarrow[k\rightarrow\infty]{}\sqrt{\frac{3}{4\pi}}
    \end{equation}
    and 
    \begin{equation}\label{local limit marche}       
    \sup_{k\in \Z}\left|\sqrt{n}\P(S_{2n}=-2k)-\frac{1}{\sqrt{\pi}}\exp\left(-\frac{k^2}{n}\right)\right|\xrightarrow[n\rightarrow\infty]{}0.
    \end{equation}
    First, let us deal with $\#\U_n$. We can rewrite \eqref{enum1} as
    \[\#\U_n=2\times12^n\times n^{-1/4}\int_0^n\P(M_{\lceil t\rceil}=0)\times n^{1/4}\times\P(S_{2n}=-2\lceil t\rceil)dt.\]
    A change of variable gives
    \[\#\U_n=2\times12^n\times n^{-1/4}\int_0^{\sqrt{n}}\P(M_{\lceil{\sqrt{n}}t\rceil}=0)\times n^{3/4}\times\P(S_{2n}=-2\lceil\sqrt{n}t\rceil)dt.\]
    Using \eqref{local limit pont} and \eqref{local limit marche}, 
an application of the dominated convergence theorem gives 
   \[\int_0^{\sqrt{n}}\P(M_{\lceil\sqrt{n}t\rceil}=0)\times n^{3/4}\times\P(S_{2n}=-2\lceil\sqrt{n}t\rceil)dt\xrightarrow[n\rightarrow\infty]{}\frac{\sqrt{3}}{2\pi}\int_0^\infty\frac{1}{\sqrt{t}}\exp(-t^2)dt.\]
  Therefore, we obtain 
    \[\#\U_n\sim\frac{\sqrt{3}\times12^n\times n^{-1/4}}{\pi}\times\int_0^\infty t^{-1/2}\exp(-t^2)dt=\frac{\sqrt{3}\times\Gamma(1/4)\times12^n}{2\pi\times n^{1/4}},\]
    which gives the desired result for $\#\U_n$. In a similar fashion, we can rewrite \eqref{enum2} as
    \begin{equation}\label{comptage pointé}
         \#\U_n^\bullet= 12^n\times n^{-3/4}\times\int_0^{\sqrt{n}}\frac{\lceil\sqrt{n}t\rceil}{\sqrt{n}}\P(M_{\lceil \sqrt{n}t\rceil}=0)\times n^{3/4}\times\P(S_{2n}=-2\lceil\sqrt{n}t\rceil)dt.
    \end{equation}
    Using the same strategy as previously, we find 
    \[\#\U_n^\bullet\sim\frac{\sqrt{3}\times12^n\times n^{-3/4}}{2\pi}\int_0^\infty t^{1/2}\exp(-t^2)dt=\frac{\sqrt{3}\times\Gamma(3/4)\times 12^n}{4\pi\times n^{3/4}},\] 
    which gives the result.
\end{proof}

Then, we prove that the random variables $\left(L_n^\bullet,A_n^\bullet,\overline{A}_n^\bullet\right)$ admit a scaling limit. In particular, Theorem \ref{counting} suggests that the length of the cycle of a typical element in $\U_n$ (or $\U_n^\bullet$) is of order $\sqrt{n}$. These results will be used in Section \ref{Section convergence} to prove the convergence of $Q_n^{(b)}$ towards a continuous object.

The following proposition identifies the scaling limit of these random variables. 

\begin{proposition}[Scaling limit of length and volume]\label{convergence taille}
   The random variables $\left(\frac{L_n^\bullet}{\sqrt{2n}},\frac{A_n^\bullet}{n}\right)$ converge in distribution as $n\rightarrow\infty$ toward a random variable $(\cL^\bullet,\cA^\bullet)$ with density 
    \[\1_{\{(x,y)\in\R_+\times [0,1]\}}\frac{2^{1/4}}{\Gamma(3/4)\sqrt{\pi}}\frac{x^{3/2}}{(y(1-y))^{3/2}}\exp\left(-\frac{x^2}{2y(1-y)}\right).\]
\end{proposition}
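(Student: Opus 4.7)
The strategy is to obtain a joint local limit theorem for $(L_n^\bullet, A_n^\bullet)$ by refining the enumeration that gave Theorem~\ref{counting}, and then to upgrade this to weak convergence by a Riemann sum argument.

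I would first refine the decomposition of unicycles used in the proof of Theorem~\ref{counting}. A unicycle $\mathbf{u}^\bullet \in \U_{n,k}^\bullet$ with $A(\mathbf{u}^\bullet) = a$ and $\overline A(\mathbf{u}^\bullet) = \bar a$ (so $a+\bar a = n-k$ by \eqref{somme}) is specified by: a labelled bridge of length $k$ from $0$ to $0$ encoding the cycle labels, a labelled plane forest of $k$ trees with $a$ edges describing the subtrees in the external face, a labelled plane forest of $k$ trees with $\bar a$ edges describing those in the internal face, and $3^{n-k}$ independent label increments on the non-cyclic edges. This gives the refinement
\[
\#\U^\bullet_{n,k,a,\bar a} = M_k^{0\to 0}\,3^{n-k}\,\cF(k,a)\,\cF(k,\bar a),
\]
which, via the ballot formula $\cF(k,a)=\frac{k}{2a+k}\binom{2a+k}{a}$, rewrites probabilistically as
\[
\#\U^\bullet_{n,k,a,\bar a} = 12^n\,\P(M_k=0)\,\frac{k}{2a+k}\P(S_{2a+k}=-k)\,\frac{k}{2\bar a+k}\P(S_{2\bar a+k}=-k).
\]

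Next I would set $k=x\sqrt{2n}$, $a=yn$, $\bar a = n-k-a$ for $x>0$ and $y\in(0,1)$. Since $k=o(n)$, the forest lengths $2a+k$ and $2\bar a+k$ are of order $n$, so the local limit theorems \eqref{local limit pont}--\eqref{local limit marche} apply and yield
\[
\P(M_k=0)\sim\sqrt{\tfrac{3}{4\pi k}},\quad
\P(S_{2a+k}=-k)\sim\sqrt{\tfrac{2}{\pi(2a+k)}}\exp\!\left(-\tfrac{k^2}{2(2a+k)}\right),
\]
and similarly for the $\bar a$ factor. Combining the two exponentials through $\frac{1}{y}+\frac{1}{1-y}=\frac{1}{y(1-y)}$, plugging into the refined count, and dividing by $\#\U_n^\bullet\sim \frac{\sqrt 3\,\Gamma(3/4)\,12^n}{4\pi n^{3/4}}$ from Theorem~\ref{counting}, an elementary calculation yields the pointwise scaling of the joint density
\[
\sqrt{2n}\cdot n\cdot\P\!\left(L_n^\bullet=k,\,A_n^\bullet=a\right)\tendvn \frac{2^{1/4}}{\Gamma(3/4)\sqrt\pi}\,\frac{x^{3/2}}{(y(1-y))^{3/2}}\exp\!\left(-\frac{x^2}{2y(1-y)}\right).
\]

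Finally, the range of $(L_n^\bullet/\sqrt{2n},A_n^\bullet/n)$ is a sublattice of $\R_+\times[0,1]$ with mesh $\frac{1}{\sqrt{2n}}\times\frac{1}{n}$, so for a bounded continuous test function $g$ the expectation $\E[g(L_n^\bullet/\sqrt{2n},A_n^\bullet/n)]$ is a Riemann sum over this lattice. The previous step gives the pointwise convergence of the summands, and a uniform version of the same local limit estimates provides an integrable dominating function; dominated convergence then gives $\E[g(L_n^\bullet/\sqrt{2n},A_n^\bullet/n)]\to\int g f$, which is the claimed weak convergence. \textbf{The main difficulty} lies not in the pointwise asymptotics but in the uniform bounds needed to justify dominated convergence: the boundary regimes $y\to 0^+$ or $y\to 1^-$ (where one forest degenerates and the Gaussian local limit theorem must be replaced by a large-deviation-type estimate for $\cF(k,a)$ with $a$ small), together with the tails $x\to 0^+$ (short bridge) and $x\to\infty$ (Gaussian tail in $k$), all have to be shown to be negligible in a way compatible with the strong Gaussian decay of the target density.
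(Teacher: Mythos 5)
Your combinatorial decomposition and the probabilistic rewriting via ballot-type formulas are precisely what the paper does; the pointwise local limit theorem computation is also the same. The divergence is in the final step, and it is where your argument has a real gap. You propose to pass from pointwise convergence of the rescaled mass function to weak convergence by a Riemann sum plus dominated convergence, and you correctly observe that this requires a uniform integrable dominating function controlling the boundary regimes $y\to 0^+$, $y\to 1^-$, $x\to 0^+$, $x\to\infty$ -- but you leave this unresolved, calling it ``the main difficulty.'' As written, the proof is incomplete.

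The paper sidesteps this entirely by invoking Scheffé's lemma. The function $(x,y)\mapsto \sqrt{2n}\cdot n\cdot\P\left(L_n^\bullet=\lceil x\sqrt{2n}\rceil,\,A_n^\bullet=\lceil yn\rceil\right)$ is itself a probability density on $\R_+\times[0,1]$ (a piecewise-constant one, since each lattice cell of size $\tfrac{1}{\sqrt{2n}}\times\tfrac{1}{n}$ carries the corresponding mass). Once you have pointwise convergence of these densities and verify that the limiting function is again a probability density -- which the paper does explicitly via the semigroup identity for Brownian hitting-time densities, $\int_0^1 q_x(y)q_x(1-y)\,dy = q_{2x}(1)$ -- Scheffé's lemma immediately gives $L^1$ convergence and hence convergence in distribution. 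No uniform dominating function is needed because the dominated convergence inside Scheffé's proof is applied to $(f - f_n)^+$, which is automatically dominated by the integrable limit $f$. So your outline is on the right track, but the last step should be replaced by a Scheffé argument together with the normalization calculation; this removes the difficulty you flagged rather than forcing you to confront it.
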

\begin{proof}
    Using the decomposition introduced in the proof of Theorem \ref{counting} (see Figure \ref{decompo unicycle}), for every $h,i$ such that $h+i\leq n$, we have
    \begin{align*}        \P(L_n^\bullet=h,A_n^\bullet=i)&=\frac{3^{n-k}M_h^{0\rightarrow0}\cF(h,i)\cF(h,n-k-i)}{\#U_n^\bullet}\\
    &=\frac{12^n\times h^2\P(M_h=0)\P(S_{h+2i}=-h)\P(S_{2n-2i-h}=-h)}{(h+2i)(2n-2i-h)\times\#U_n^\bullet}\\
        &=\frac{nh^2}{(h+2i)(2n-2i-h)}\times\frac{\P(M_h=0)\P(S_{h+2i}=-h)\P(S_{2n-2i-h}=-h)}{\sum_{k=1}^nk\P(M_k=0)\P(S_{2n}=-2k)}.
    \end{align*}
    Fix $x>0$ and $0\leq y < 1$. For $n$ large enough so that $\lceil x\sqrt{2n}\rceil+\lceil yn\rceil\leq n$, this gives
    \begin{align*}
    n^{3/2}\P(L_n^\bullet=\lceil x\sqrt{2n}\rceil,A_n^\bullet=\lceil yn\rceil)&=n^{1/4}\P(M_{\lceil x\sqrt{2n}\rceil}=0)\frac{n\lceil x\sqrt{2n}\rceil^2}{(2\lceil yn \rceil+\lceil x\sqrt{2n}\rceil)(2n-2\lceil yn\rceil-\lceil x\sqrt{2n}\rceil)}\\
    &\times \frac{ n^{1/2}\P(S_{\lceil x\sqrt{2n}\rceil+2\lceil yn\rceil}=-\lceil x\sqrt{2n}\rceil)n^{1/2}\P(S_{2n-2\lceil yn\rceil-\lceil x\sqrt{2n}\rceil}=-\lceil x\sqrt{2n}\rceil)}{n^{-1/4}\sum_{k=1}^nk\P(M_k=0)\P(S_{2n}=-2k)}.
    \end{align*}
Then, using local limit theorems and Theorem \ref{counting}, we obtain
\[n\sqrt{2n}\P(L_n^\bullet=\lceil x\sqrt{2n}\rceil,A_n^\bullet=\lceil yn\rceil)\xrightarrow[n\rightarrow\infty]{}\frac{2^{1/4}
}{\Gamma(3/4)\sqrt{\pi}}\left(\frac{x}{y(1-y)}\right)^{3/2}\exp\left(-\frac{x^2}{2y(1-y)}\right).\]
In order to conclude with Scheffé's lemma, we need to prove that this function defines a probability density on $\R_+\times [0,1]$. To see this, for every $x\geq0,t\geq0$, we write 
\begin{equation*}
    q_x(t)=\frac{x}{\sqrt{2\pi t^3}}\exp\left(-\frac{x^2}{2t}\right),
\end{equation*}
which is the density of the hitting time of $x$ at time $t$ by Brownian motion. Using the semigroup property of the hitting times, we have 
\[\int_0^1 \frac{1}{(y(1-y))^{3/2}}\exp\left(-\frac{x^2}{2y(1-y)}\right)dy=\frac{2\pi}{x^2}\int_0^1 q_x(y)q_x(1-y)dy=\frac{2\pi}{x^2}q_{2x}(1).\]
Therefore, we obtain 
\begin{align*}
    \int_{\R_+\times [0,1]}\frac{2^{1/4}
}{\Gamma(3/4)\sqrt{\pi}}\left(\frac{x}{y(1-y)}\right)^{3/2}\exp\left(-\frac{x^2}{2y(1-y)}\right)dxdy&=\frac{2^{7/4}}{\Gamma(3/4)}\int_{\R_+}x^{1/2}\exp\left(-2x^2\right)dx\\
    &=\frac{1}{\Gamma(3/4)}\int_{\R_+}u^{-1/4}\exp\left(-u\right)du\\
    &= 1,
\end{align*}
which concludes the proof.
\end{proof}
\begin{remark}
    By \eqref{somme}, we can deduce from Proposition \ref{convergence taille} that the triple $\left(\frac{L_n^\bullet}{\sqrt{2n}},\frac{A_n^\bullet}{n},\frac{\overline{A}_n^\bullet}{n}\right)$ converges in distribution toward $\left(\cL^\bullet,\cA^\bullet,1-\cA^\bullet\right)$.
\end{remark}

Before giving an interpretation for the random variable $(\cL^\bullet,\cA^\bullet)$, we need to introduce some definitions. For every $x>0,\,A>0$, let $B^{(x)}$ denote a standard Brownian motion starting at $0$ and stopped when it reaches $-x$, and let $B^{(x,A)}$ be a first passage bridge of standard Brownian motion from $0$ to $-x$ with duration $A$. We recall that $B^{(x,A)}$ corresponds to a Brownian motion ``conditioned to reach $-x$ for the first time at time $A$'' (see \cite[Section 2.1]{disque} for more details). These random variables are related by the following formula : for every positive measurable function $F$, 
\begin{equation}\label{joint law}
    \E\left[F(B^{(x)})G(T_{-x})\right]=\int_0^\infty G(a)\frac{x}{\sqrt{2\pi a^3}}\exp\left(-\frac{x^2}{2a}\right)\E\left[F(B^{(x,A)}\right]da.    
\end{equation}

We can interpret the random variable $(\cL^\bullet,\cA^\bullet)$ as follows. First, sample a random variable $X$ with density $\1_{\{x\geq 0\}}x^{1/2}\exp(-2x^2)$. Then given $X$, consider a first-passage bridge of Brownian motion hitting $-2X$ for the first time at time $1$, and let $T_{-X}$ be its first hitting time of $-X$. Then, $(\cL^\bullet,\cA^\bullet)\overset{(d)}{=}(X,T_{-X})$ (the proof essentially corresponds to the calculations made at the end of the proof of Proposition \ref{convergence taille}).

We conclude this section with a lemma that deals with the uniform integrability of the family of random variables $\left(\frac{\sqrt{2n}}{L_n^\bullet}\right)_{n\geq 1}$. This result will be used later, in the proof of Theorem \ref{Conv collage}.

\begin{lemme}[Uniform integrability of the inverse of the lengths]\label{uniform integrability}
    The family of random variables $\left(\frac{\sqrt{2n}}{L_n^\bullet}\right)_{n\geq 1}$ is uniformly integrable.
\end{lemme}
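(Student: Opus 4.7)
The plan is to establish uniform integrability by proving the tail bound
\[\P\!\left(\tfrac{\sqrt{2n}}{L_n^\bullet} > t\right) \leq \frac{C}{t^{3/2}}\]
uniformly in $n\geq 1$ and $t \geq 1$. This is enough, since the standard identity
\[\E\!\left[\tfrac{\sqrt{2n}}{L_n^\bullet}\mathbf{1}_{\{\sqrt{2n}/L_n^\bullet > K\}}\right] = K\,\P\!\left(\tfrac{\sqrt{2n}}{L_n^\bullet} > K\right) + \int_K^\infty \P\!\left(\tfrac{\sqrt{2n}}{L_n^\bullet} > t\right)dt \leq 3C\,K^{-1/2}\]
shows that the supremum over $n$ of the left-hand side goes to $0$ as $K\to\infty$, which is precisely uniform integrability. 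The exponent $3/2$ is natural: it matches the behavior of the marginal density of $\mathcal{L}^\bullet$ near the origin identified in Proposition \ref{convergence taille}, which is $\asymp x^{1/2}$ and hence $\P(\mathcal{L}^\bullet \leq y) \asymp y^{3/2}$ as $y\to 0^+$.

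To obtain the tail bound, I would start from the exact combinatorial identity derived in the proof of Theorem \ref{counting}:
\[\P(L_n^\bullet = k) = \frac{\#\U_{n,k}^\bullet}{\#\U_n^\bullet} = \frac{(k/n)\,12^n\,\P(M_k = 0)\,\P(S_{2n} = -2k)}{\#\U_n^\bullet}.\]
I would then combine this with the lower bound $\#\U_n^\bullet \geq c\,12^n/n^{3/4}$ (valid for $n$ large by Theorem \ref{counting}), the uniform estimate $\P(M_k = 0) \leq C_1/\sqrt{k}$ (which holds for every $k\geq 1$, since \eqref{local limit pont} implies that $\sqrt{k}\,\P(M_k=0)$ is a bounded function of $k$), and the trivial uniform bound $\P(S_{2n} = -2k) \leq C_2/\sqrt{n}$ (which follows from the maximum of the binomial coefficient $\binom{2n}{n-k}/2^{2n}$). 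These three ingredients combine to give
\[\P(L_n^\bullet = k) \leq \frac{C\sqrt{k}}{n^{3/4}}.\]
Summing over $k = 1,\dots,K$ yields $\P(L_n^\bullet \leq K) \leq C'\,K^{3/2}/n^{3/4}$. Specializing to $K = \lfloor\sqrt{2n}/t\rfloor$, and noting that $\P(L_n^\bullet \leq 0) = 0$ since $L_n^\bullet \geq 1$, provides the desired tail bound $\P(\sqrt{2n}/L_n^\bullet > t) \leq C''/t^{3/2}$ uniformly in $n$ and $t \geq 1$.

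I do not expect a serious obstacle here: the argument is essentially a careful bookkeeping exercise that recycles the estimates already used in the proof of Theorem \ref{counting}. In particular, the crudest uniform form of the local limit theorem for $S_{2n}$ is sufficient, because the tail bound only requires summing over small values $k \leq \sqrt{2n}/t$, so one never enters the regime where the Gaussian decay factor $\exp(-k^2/n)$ becomes relevant. The only minor care needed is at the very small $n$ (where $L_n^\bullet$ is bounded in any case and $\sqrt{2n}/L_n^\bullet$ is a finite family, trivially uniformly integrable) and when extracting $C_1$ from \eqref{local limit pont} for all $k\geq 1$ rather than only asymptotically.
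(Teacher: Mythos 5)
Your proof is correct, and it takes a somewhat different route from the paper's. The paper manipulates the truncated expectation directly: starting from the exact identity
\[
\E\left[\tfrac{\sqrt{2n}}{L_n^\bullet}\1_{\{\sqrt{2n}/L_n^\bullet>M\}}\right]=\frac{1}{\sqrt{2n}\,\#\U_n^\bullet}\sum_{k=1}^{\lfloor\sqrt{2n}/M\rfloor}\#\U_{n,k},
\]
it rewrites the sum as a Riemann integral, applies the uniform local limit estimates \eqref{local limit pont}--\eqref{local limit marche} to dominate the integrand by an $n$-independent, integrable-at-zero function, and concludes that the resulting integral over $[0,\sqrt{2}/M]$ vanishes as $M\to\infty$. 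You instead establish the pointwise tail bound $\P(\sqrt{2n}/L_n^\bullet>t)\leq C t^{-3/2}$ and feed it into the standard tail-integration characterization of uniform integrability. The ingredients are the same three estimates — the lower bound $\#\U_n^\bullet\gtrsim 12^n n^{-3/4}$ from Theorem~\ref{counting}, a uniform bound on $\sqrt{k}\,\P(M_k=0)$, and a uniform bound on $\sqrt{n}\,\P(S_{2n}=-2k)$ — but your observation that the crude binomial bound $\P(S_{2n}=-2k)\leq C/\sqrt{n}$ suffices (since only $k\leq\sqrt{2n}/t$ contribute, where the Gaussian factor is inert) is a genuine simplification: it sidesteps the Riemann-sum conversion and the dominated-convergence step entirely. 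Both arguments give the same decay rate $O(M^{-1/2})$ for the truncated expectation; yours makes the exponent $3/2$, which matches the $x^{1/2}$ behaviour of the marginal density of $\cL^\bullet$ near the origin, more transparent.
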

\begin{proof}
    For every $n\geq1$ and $M>0$, we have 
    \begin{align}\label{explicit expression}
        \E\left[\frac{\sqrt{2n}}{L_n^\bullet}\1_{\frac{\sqrt{2n}}{L_n^\bullet}>M}\right]&=\E\left[\frac{\sqrt{2n}}{L_n^\bullet}\1_{L_n^\bullet<\frac{\sqrt{2n}}{M}}\right]\nonumber\\
        &=\frac{\sqrt{2n}}{\#\U_n^\bullet}\sum_{\substack{u^\bullet\in\U_n^\bullet,\nonumber\\ L(u^\bullet)<\sqrt{2n}/M}}\frac{1}{L(u^\bullet)}\nonumber\\
        &=\frac{\sqrt{2n}}{\#\U_n^\bullet}\sum_{\substack{u\in\U_n,\\ L(u)<\sqrt{2n}/M}}\frac{1}{2n}\nonumber\\
&=\frac{1}{\sqrt{2n}\times\#\U_n^\bullet}\sum_{k=1}^{\lfloor\sqrt{2n}/M\rfloor}\#\U_{n,k}.
    \end{align}
    Then, a mere adaptation of the proof of Theorem \ref{counting} shows that
    \[\sum_{k=1}^{\lfloor\sqrt{2n}/M\rfloor}\#\U_{n,k}=2\times12^n\times n^{-1/4}\int_0^{\frac{\lfloor\sqrt{2n}/M\rfloor}{\sqrt{n}}}\P(M_{\lceil\sqrt{n}t\rceil}=0)\times n^{3/4}\times\P(S_{2n}=-2\lceil\sqrt{n}t\rceil)dt.\]
    Therefore, using the expression \eqref{comptage pointé}, we obtain 
\begin{equation*}
     \E\left[\frac{\sqrt{2n}}{L_n^\bullet}\1_{\frac{\sqrt{2n}}{L_n^\bullet}>M}\right]=\frac{\int_0^{\frac{\lfloor\sqrt{2n}/M\rfloor}{\sqrt{n}}}\P(M_{\lceil\sqrt{n}t\rceil}=0)\times n^{3/4}\times\P(S_{2n}=-2\lceil\sqrt{n}t\rceil)dt}{\int_0^{\sqrt{n}}\frac{\lceil\sqrt{n}t\rceil}{\sqrt{n}}\P(M_{\lceil \sqrt{n}t\rceil}=0)\times n^{3/4}\times\P(S_{2n}=-2\lceil\sqrt{n}t\rceil)dt}.
\end{equation*}
    Observe that, as we saw in the proof of Theorem \ref{counting}, the denominator converges towards a positive quantity. Consequently, there exists some constant $C>0$ such that for every $n\in\N$ and $M>0$, 
    \begin{equation*}
        \E\left[\frac{\sqrt{2n}}{L_n^\bullet}\1_{\frac{\sqrt{2n}}{L_n^\bullet}>M}\right]\leq C\int_0^{\frac{\lfloor\sqrt{2n}/M\rfloor}{\sqrt{n}}}\P(M_{\lceil\sqrt{n}t\rceil}=0)\times n^{3/4}\times\P(S_{2n}=-2\lceil\sqrt{n}t\rceil)dt.
    \end{equation*}
    Then, by \eqref{local limit pont} and \eqref{local limit marche}, there exists $c>0$ such that for every $n\in\N$ and $t\geq0$, 
    \begin{equation*}
      n^{1/4} \P(M_{\lceil\sqrt{n}t\rceil}=0)\leq\sqrt{\frac{3}{4\pi t}}+c\quad\text{and}\quad n^{1/2} \P(S_{2n}=-2\lceil\sqrt{n}t\rceil)\leq\frac{1}{\sqrt{\pi}}\exp\left(-t^2\right)+c.
    \end{equation*}
  Therefore, we have
    \begin{equation}\label{upper bound}
        \E\left[\frac{\sqrt{2n}}{L_n^\bullet}\1_{\frac{\sqrt{2n}}{L_n^\bullet}>M}\right]\leq C\int_0^{\frac{\sqrt{2}}{M}}\left(\sqrt{\frac{3}{4\pi t}}+c\right)\left(\frac{1}{\sqrt{\pi}}\exp\left(-t^2\right)+c\right)dt.
    \end{equation}
    Note that the integral on the right-hand side tends to $0$ as $M$ goes to infinity. Therefore, for any $\delta>0$, using \eqref{upper bound}, we can find $M>0$ large enough such that for every $n\in\N$, 
    \begin{equation*}
       \E\left[\frac{\sqrt{2n}}{L_n^\bullet}\1_{\frac{\sqrt{2n}}{L_n^\bullet}>M}\right]\leq \delta,
    \end{equation*}
   which concludes the proof.
\end{proof}

\section{The biased Brownian sphere}\label{Section convergence}

In this section, we express by two different means the scaling limit of bi-pointed delayed quadrangulations, that we call the biased Brownian sphere. The first approach essentially only relies on the convergence of uniform quadrangulations toward the Brownian sphere, but does not give an explicit construction of the limiting object. However, this approach makes a connection between the Brownian sphere and the biased Brownian sphere. On the other hand, the second approach is based on the result of Section \ref{section unicycle}, together with \cite[Theorem 14]{Compactbrowniansurfaces} which establishes the scaling limit of random quadrilaterals with geodesic sides. In this case, we can derive an explicit construction of the limiting object.

\subsection{Some preliminaries}

We begin with a number of preliminaries about the different objects that will be used throughout this section.

\subsubsection{The Gromov-Hausdorff-Prokhorov topology}

A \textbf{metric measure space} is a triple $(X,d_X,\mu_X)$, where $(X,d_X)$ is a compact metric space, and $\mu_X$ is a finite Borel measure on $X$. If $(X,d_X,\mu_X)$ and $(Y,d_Y,\mu_Y)$ are two metric measure spaces, the \textbf{Gromov-Hausdorff Prokhorov metric} (GHP metric for short) is defined by 
\begin{equation}\label{GHP}
    d_{\mathrm{GHP}}\left((X,d_X,\mu_X),(Y,d_Y,\mu_Y)\right)=\inf_{\substack{\phi:X\rightarrow Z\\\psi:Y\rightarrow Z}}\{d_Z^{\mathrm{H}}(\phi(X),\psi(Y))\vee d_Z^\mathrm{P}(\phi_*\mu_X,\psi_*\mu_Y)\},
\end{equation}
where the infimum is over all compact metric spaces $(Z,d_Z)$ and all isometries $\phi,\psi$ from $X,Y$ to $Z$, and where $d_Z^\mathrm{H}$ and $d_Z^\mathrm{P}$ are respectively the Hausdorff and Prokhorov metrics, defined as follows. 
For every $\varepsilon>0$ and every closed subset $A\subseteq Z$, set 
\[A^\varepsilon:=\left\{z\in Z\,:\,d_Z(z,A)<\varepsilon\right\}.\]
Then, for every compact subsets $A,B\subseteq Z$, the Hausdorff metric is 
\[d_Z^\mathrm{H}(A,B)=\inf\{\varepsilon>0\,:\,A\subseteq B^\varepsilon\text{ and }B\subseteq A^\varepsilon\},\]
and for every finite Borel measures $\mu,\nu$ on $Z$, the Prokhorov metric is 
\[d_Z^\mathrm{P}(\mu,\nu)=\inf\{\varepsilon>0\,:\text{ for all closed }A\subseteq Z,\,\mu(A)\leq\nu(A^\varepsilon)+\varepsilon\text{ and }\nu(A)\leq\mu(A^\varepsilon)+\varepsilon\}.\]
If $d_{\mathrm{GHP}}\left((X,d_X,\mu_X),(Y,d_Y,\mu_Y)\right)=0$, we say that $(X,d_X,\mu_X)$ and $Y,d_Y,\mu_Y)$ are isometry-equivalent. The formula \eqref{GHP} defines a metric on the set $\K$ of isometry equivalence classes of metric measure spaces. Moreover, equipped with this distance, $\K$ is a complete and separable metric space. 
More generally, for $\ell>0$, we can consider a $\ell$-marked measure metric space $(X,d_X,\A,\mu_X)$, where 
\begin{itemize}
    \item $(X,d_X,\mu_X)$ is a metric measure space,
    \item $\A$ is a $\ell$-tuple of non-empty compact subsets of $X$, called marks.
\end{itemize}
Then, as previously, we can define the $\ell$-marked Gromov-Hausdorff-Prokhorov metric by
\begin{align*}
    d_{\mathrm{GHP}}^{(\ell)}\left((X,d_X,\A,\mu_X),(Y,d_Y,\B,\mu_Y)\right)&=\inf_{\substack{\phi:X\rightarrow Z\\\psi:Y\rightarrow Z}}\left\{d_Z^{\mathrm{H}}(\phi(X),\psi(Y))\vee \max_{1\leq i\leq\ell}d_Z^\mathrm{H}(\phi(A_i),\psi(B_i))\vee d_Z^\mathrm{P}(\phi_*\mu_X,\psi_*\mu_Y)\right\}.
\end{align*}
This formula also defines a metric on the set $\K^{(\ell)}$ of isometry-equivalence classes of $\ell$-marked measured metric space. 
In this paper, we will be interested in two specific cases. The first one is when $\ell=2$ and the two marks are singletons. This subset of $\K^{(2)}$ will be denoted by $\K^{\bullet\bullet}$. The other one is where $(X,d_X)$ is a geodesic space, $\ell=4$, and $\A$ is a $4$-tuple $(\gamma_1,\gamma_2,\gamma_3,\gamma_4)$ of geodesics, and where 
\begin{equation*}
\mathrm{length}(\gamma_1)=\mathrm{length}(\gamma_2),\quad\mathrm{length}(\gamma_3)=\mathrm{length}(\gamma_4). 
\end{equation*}
Let $\K^{(4)}_{\mathrm{geo}}$ be the isometry-equivalence classes of such spaces. For a space $(X,d_X,\gamma_1,\gamma_2,\gamma_3,\gamma_4)\in\K^{(4)}_c$, we can define the gluing of $X$ along its marks as the compact metric space obtained by identifying $\gamma_1$ with $\gamma_2$, and $\gamma_3$ with $\gamma_4$, equipped with the intrinsic distance. We denote by $\mathrm{Glue}(X,d_X,\gamma_1,\gamma_2,\gamma_3,\gamma_4)$ this metric space. The following proposition is a particular case of \cite[Proposition 22]{Compactbrowniansurfaces}
\begin{proposition}\label{gluing geodesics}
    Let $(X_n,d_{X_n},\A_n,\mu_{X_n}),\,n\geq0$ be a sequence of $\K_{\mathrm{geo}}^{[4)}$ that converges toward $(X,d_X,\A,\mu_X)$ for the $4$-marked GHP topology. Then, $(X,d_X,\A,\mu_X)\in\K_{\mathrm{geo}}^{(4)}$, and 
    \[\mathrm{Glue}(X_n,d_{X_n},\A_n,\mu_{X_n})\xrightarrow[n\rightarrow\infty]{}\mathrm{Glue}(X,d_X,\A,\mu_X)\]
    for the GHP-topology. In other words, $\K_{\mathrm{geo}}^{(4)}$ is closed, and the application $\mathrm{Glue}:\K_{\mathrm{geo}}^{(4)}\rightarrow\K^{(2)}$ is continuous.
\end{proposition}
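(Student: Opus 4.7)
The plan is to deduce the statement from \cite[Proposition 22]{Compactbrowniansurfaces} (the general gluing theorem there), specialized to the case of two disjoint pairs of geodesic marks with matching lengths, without any additional hypotheses. Still, let me outline the structure of the argument. The first step is to realize the GHP-convergence by simultaneous isometric embeddings of each $(X_n,d_{X_n})$ and of $(X,d_X)$ into a common compact metric space $Z$, such that $X_n\to X$ and each $\gamma_i^n\to\gamma_i$ in the Hausdorff distance in $Z$, and $\mu_{X_n}\to\mu_X$ in Prokhorov distance. Such an embedding exists by a standard consequence of the definition of the GHP metric.

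For the closedness of $\cK^{(4)}_{\mathrm{geo}}$, parameterize each $\gamma_i^n$ by arclength on $[0,L_i^n]$ and rescale to $[0,1]$, obtaining equi-Lipschitz curves $\tilde\gamma_i^n:[0,1]\to Z$. Arzelà-Ascoli then provides, after extraction, a uniform limit $\tilde\gamma_i:[0,1]\to Z$ which is necessarily a constant-speed curve. By Hausdorff convergence of the images, $\tilde\gamma_i$ parameterizes $\gamma_i$ at constant speed $L_i:=\lim L_i^n$, so $\gamma_i$ is a geodesic of length $L_i$ in $X$. The constraints $L_1=L_2$ and $L_3=L_4$ then pass to the limit by continuity of the length functional under this convergence. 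The fact that $(X,d_X)$ itself is a geodesic space is the standard statement that a GH-limit of compact geodesic spaces is geodesic.

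For the continuity of $\mathrm{Glue}$, I work again in the common embedding $Z$. Recall that the glued distance on $\mathrm{Glue}(X_n,\mathbb{A}_n)$ between equivalence classes $\bar x,\bar y$ is the infimum, over finite chains $x_1,y_1,\ldots,x_p,y_p$ in $X_n$ with $x_1\in\bar x$, $y_p\in\bar y$ and $y_k\sim x_{k+1}$ under the identifications $\gamma_1^n\leftrightarrow\gamma_2^n$ and $\gamma_3^n\leftrightarrow\gamma_4^n$, of the total length $\sum_k d_{X_n}(x_k,y_k)$. The key estimate is that a near-optimal chain in $X$ can be perturbed into a chain in $X_n$ whose endpoints match modulo $o(1)$ in $Z$ and whose total length is within $o(1)$ of the original: this uses that the arclength reparameterizations of $\gamma_i^n$ converge uniformly to those of $\gamma_i$, so that the identification points in $X_n$ are close to those in $X$. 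Running the argument in both directions gives equality of glued distances up to $o(1)$, hence the GH-convergence of $\mathrm{Glue}(X_n)$ to $\mathrm{Glue}(X)$. The convergence of the pushforward measures then follows from Prokhorov convergence of $\mu_{X_n}$ and the fact that the quotient projection is $1$-Lipschitz uniformly in $n$.

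The main obstacle is uniformly bounding the number of jumps in a near-optimal chain, so that the perturbation argument is effective: a priori, chains may have arbitrarily many segments, and small pointwise errors on identification points could accumulate. One resolves this via the uniform bound on the diameters of $X_n$ (which follows from GHP-precompactness) together with a chain-exchange lemma that allows one to reduce to chains with a controlled number of segments. This is exactly the technical core of \cite[Proposition 22]{Compactbrowniansurfaces}, from which our statement is an immediate specialization.
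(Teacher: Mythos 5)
Your proposal is correct and takes essentially the same approach as the paper: the paper offers no proof of its own and simply notes that the proposition is a particular case of \cite[Proposition 22]{Compactbrowniansurfaces}, which is exactly what you do in your opening sentence. Your sketch of the underlying argument is reasonable supplementary detail, but the crux — reduction to the cited gluing result — matches the paper.
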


\subsubsection{The Brownian sphere}\label{brownian sphere}

In this section, we give the definition of the Brownian sphere, and recall some of its basic properties.

We start with some deterministic considerations. For every continuous function $f$ defined on a compact interval $[a,b]$ and $s,t\in [a,b]$, we define 
\begin{equation*}
    m_f(s,t)=\left\{
    \begin{array}{ll}
   \inf_{r\in[s,t]}f(r)\quad&\text{ if $s\leq t$} \\
     \inf_{r\in[a,t]\cup[s,b]}f(r)\quad&\text{ if $s\geq t$,}
    \end{array}\right.
\end{equation*}
and set 
\begin{equation}\label{distance arbre}
    d_f(s,t)=f(s)+f(t)-2m_f(s,t).
\end{equation}

Consider a pair $(f,g)$ of continuous function satisfying
\begin{itemize}[label=\textbullet]
    \item Both functions are defined on a common compact interval $[0,a]$ for some $a>0$,
    \item We have $f(0)=f(a)=0$ and $f\geq 0$, 
    \item For every $s,t\in [0,a]$, 
    \begin{equation}\label{Compability}
        d_f(s,t)=0\Longrightarrow d_g(s,t)=0.
    \end{equation}
\end{itemize}
We can associate a metric space to the pair $(f,g)$ as follows. We define
\[D_{f,g}=d_g/\{d_f=0\}.\]

More precisely, for every $s,t\in[0,a]$, we have 
 \begin{equation*}
    {D}_{f,g}(s,t)=\inf_{s_0,t_0,...,s_n,t_n}\sum_{i=0}^n d_g(s_i,t_i),
 \end{equation*}
 where the infimum is taken over all $n\in\N$ and sequences $s_0,t_0,...,s_n,t_n\in I$ such that $s_0=s$, $t_n=t$ and $d_f(t_i,s_{i+1})=0$ for every $1\leq i\leq n-1$. Observe that $D_{f,g}\leq d_g$. On the other hand, we have $D^\circ_{f,g}(s,t)\geq|g(s)-g(t)|$, which gives a simple but powerful bound 
\begin{equation}\label{bound}
    D_{f,g}(s,t)\geq|g(s)-g(t)|.
\end{equation}

Let $\left(S_{f,g},{D}_{f,g}\right)$ denote the quotient space $[0,a]/\{{D}_{f,g}=0\}$ equipped with the distance induced by ${D}_{f,g}$. The metric space 
 \[S_{f,g}=\left(\widehat{M}_{f,g},\widehat{D}_{f,g}\right)\] 
 comes with a canonical projection $p_{f,g}:[0,a]\rightarrow{S}_{f,g}$, and a volume measure $\mu_{f,g}$, which is the push forward of the Lebesgue measure on $[0,a]$ by the canonical projection.

Let us randomize the functions $(f,g)$. We define the snake driven by a deterministic function $f$ as a random centered Gaussian process $(Z_t^f,t\in I)$ with $Z_0^f=0$ and with covariances specified by
\begin{equation}\label{covariance}
    \E\left[\left(Z_t^f-Z_s^f\right)^2\right]=d_f(s,t),\quad s,t\in I.
\end{equation}
We will consider a continuous modification of $Z$, which always exists as soon as $f$ is Hölder continuous. Moreover, \eqref{covariance} implies that $Z_s^f=Z_t^f$ as soon as $d_f(s,t)=0$, so that $(f,Z^f)$ satisfies the condition \eqref{Compability}. 

\begin{definition}
    The \textbf{standard Brownian sphere} is the random metric space $\cS=[0,1]/\{D=0\}$ obtained from  the pair $(\e,Z)$, where $\e$ is a normalized Brownian excursion, and given $\e$, $Z$ is the random snake driven by $\e$. It is equipped with the distance induced by $D$ and a volume measure $\mu$, which is the pushforward of the volume measure on $\cT$ by the canonical projection $\mathbf{p}:[0,1]\rightarrow\cS$.
\end{definition}

This random space appears as the scaling limit of several models of random maps. In particular, it was proved in \cite{uniqueness,convergence} that the Brownian sphere is the scaling limit of uniform quadrangulations with $n$ faces when $n$ goes to infinity, for the Gromov-Hausdorff-Prokhorov topology.

Note that if $s,t\in[0,1]$ are such that $D(s,t)=0$, then $Z_s=Z_t$. Therefore, for every $x\in\cS$ and $t\in[0,1]$ such that $\mathbf{p}(t)=x$, we set $Z_x:=Z_t$. 
Almost surely, there exists a unique $s_*\in[0,1]$ such that $Z_{s_*}=\inf_{s\in[0,1]}Z_s$ (see \cite[Proposition 2.5]{Conditionnedbrowniantrees}). We set $x_*:=\mathbf{p}(s_*)$, and $Z_*:=Z_{s_*}$. We also let $x_0:=\mathbf{p}(0)$. Using the bound \eqref{bound} together with the inequality $D\leq D^\circ$, one can see that for every $x\in\cS$, 
\begin{equation}\label{distance a un point}
    D(x,x_*)=Z_x-Z_*.
\end{equation}

We also define the Brownian sphere of volume $a>0$, which is obtained by replacing the normalized Brownian excursion by a Brownian excursion of duration $a$. Similarly, we define the free Brownian sphere which is obtained by replacing the normalized Brownian excursion by Itô measure of positive excursions of linear Brownian motion, normalized so that the density of the duration is $t\mapsto\left(2\sqrt{2\pi t^3}\right)^{-1}$. Although it is not a random variable, it is often more convenient to work with the free Brownian sphere in several situations.

To conclude, let us mention some properties of the Brownian sphere. First, the following proposition, proved in \cite{TopologicalStructure}, characterizes the elements of $\cT$ that are identified in $\cS$. 

\begin{proposition}\label{identification}
    The following holds almost surely. For every $s,t\in[0,1]$ with $s\neq t$ such that $D(s,t)=0$, we have 
    \[d_\e(s,t)=0\quad\text{ or }\quad d_Z(s,t)=0.\]
    Moreover, the two equalities cannot hold simultaneously.
\end{proposition}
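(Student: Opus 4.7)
The plan is to handle the two assertions of the proposition separately, starting with the exclusivity clause, which is the simpler one.

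For the claim that $d_\e(s,t)=0$ and $d_Z(s,t)=0$ cannot hold simultaneously when $s\neq t$, I would first use the covariance identity~(\ref{covariance}) to observe that $Z_s = Z_t$ whenever $d_\e(s,t)=0$. Assuming in addition that $d_Z(s,t)=0$ would then force $Z_s=Z_t=m_Z(s,t)$, meaning that the snake $Z$ attains its infimum on the cyclic arc between $s$ and $t$ at (at least) two distinct endpoints. The plan here is to reduce this to the classical fact that, almost surely, $Z$ does not attain its infimum on any nontrivial arc at more than one point. By the Markov property of the snake restricted to the two subtrees of $\cT$ below the realising point, this boils down to the well-known statement that linear Brownian motion almost surely admits a unique minimum on any fixed interval; a routine Fubini argument then upgrades the pointwise exceptional set to the full statement ``for every $s \neq t$''.

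For the main implication, the plan is to show that if $D(s,t)=0$ and $d_\e(s,t)\neq 0$ then necessarily $d_Z(s,t)=0$. Starting from the definition of $D$ as an infimum over chains, the hypothesis $D(s,t)=0$ provides sequences $s=s_0^{(k)},t_0^{(k)},\ldots,s_{n_k}^{(k)},t_{n_k}^{(k)}=t$ with $d_\e(t_i^{(k)},s_{i+1}^{(k)})=0$ and $\sum_i d_Z(s_i^{(k)},t_i^{(k)})\to 0$. Using compactness of $[0,1]$ together with the continuity of $\e$ and $Z$, I would extract a subsequential limit yielding an alternating chain of $\sim_\e$ and $\sim_Z$ equivalences of total $d_Z$-length zero connecting $s$ to $t$. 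The next step is to introduce the symmetric upper bound
\[D^\circ(s,t) = Z_s + Z_t - 2\max(m_Z(s,t),\, m_Z(t,s))\]
and invoke Le~Gall's inequality $D\leq D^\circ$ to argue that such an alternating chain must collapse either to a single $d_\e$-identification (excluded by hypothesis) or to a single $d_Z$-identification, which yields the desired $d_Z(s,t)=0$.

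The main obstacle is the collapsing argument, which rests on the deep inequality $D \leq D^\circ$ obtained by constructing explicit ``simple geodesic'' paths in the Brownian sphere whose $d_Z$-lengths are controlled by $D^\circ$. This requires delicate Brownian snake computations, together with a careful passage to the limit from approximating quadrangulations. Since this is precisely the content of \cite{TopologicalStructure}, the most economical approach in the main text is to invoke that reference directly; the outline above is meant to describe what a self-contained proof would look like.
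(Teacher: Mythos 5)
The paper gives no proof of this proposition at all: it is stated with a direct citation to \cite{TopologicalStructure}, and you arrive at the same conclusion, so your proposal takes essentially the same approach as the paper. For what it is worth, your sketch of a self-contained argument is sound for the exclusivity clause, but the ``chain collapsing'' step for the main implication glosses over real difficulties — the number of links $n_k$ in the approximating chains may diverge, so a subsequential limit chain need not exist, and even a finite zero-length alternating chain does not obviously collapse to a single identification — and those are precisely the delicate points handled in \cite{TopologicalStructure}.
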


Finally, it was proved in \cite{geodesic1} that almost surely, there exists a unique geodesic $\Gamma$ between $x_0$ and $x_*$ (see \cite[Section 7.2]{tessalations} for another proof). We mention that much more is known about geodesics in the Brownian sphere, but we do not need these results in this work.

\subsubsection{Continuous quadrilaterals}\label{continuous quad}

Here, we define the continuous counterpart of quadrilaterals with geodesic sides. We start with some deterministic considerations. 

We say that a pair $(f,g)$ of continuous functions is a quadrilateral trajectory if :
\begin{itemize}[label=\textbullet]
    \item Both functions are defined on a common compact interval $I$ that contains $0$ in its interior,
    \item We have $f(\inf {I})=f(\sup {I})=\inf_I f$, 
    \item For every $s,t\in I$, 
    \begin{equation}
        d_f(s,t)=0\Longrightarrow d_g(s,t)=0.
    \end{equation}
\end{itemize}
Let $I_+=I\cap\R_+$ and $I_-=I\cap\R_-$. For every quadrilateral trajectory $(f,g)$ and $s,t\in I$, we define 
\begin{equation}
    \widehat{d}_g(s,t)=\left\{
    \begin{array}{ll}
  d_g(s,t)\quad&\text{ if $s,t\in I_+$ or $s,t\in I_-$,} \\   
     +\infty\quad&\text{ otherwise}
    \end{array}\right.
\end{equation}
and 
\[\widehat{D}_{f,g}=\widehat{d}_g/\{d_f=0\}.\]

 Let $\left(\widehat{M}_{f,g},\widehat{D}_{f,g}\right)$ denote the quotient space $I/\{\widehat{D}_{f,g}=0\}$ equipped with the distance induced by $\widehat{D}_{f,g}$. The metric space 
 \[\mathrm{Qd}_{f,g}=\left(\widehat{M}_{f,g},\widehat{D}_{f,g}\right)\] 
 is called the continuous quadrilateral coded by $(f,g)$, and we let $p_{f,g}:I\rightarrow\widehat{M}_{f,g}$ denote the canonical projection.

 The metric space $\mathrm{Qd}_{f,g}$ comes with two distinguished points and four distinguished geodesics, defined as follows. 
 First, let $s_*\in I_-$ and $\overline{s}_*\in I_+$ be such that
 \[g(s_*)=\inf_{x\in I_-}g(x)\quad\text{ and }\quad g(\overline{s}_*)=\inf_{x\in I_+}g(x),\]
 and set
 \[x_*=p_{f,g}(s_*),\quad \overline{x}_*=p_{f,g}(\overline{s}_*)\]
 (note that this does not depend on the choice of $s_*$ and $\overline{s}_*$). For every $t\in I\backslash\{0\}$, we set $I_t=I_-$ if $t<0$ and $I_t=I_+$ if $t>0$. Then, we define 
 \[\Gamma_t(r)=\inf\{s\geq t\,:\,g(s)=g(t)-r\}\quad\text{ for $r\in\R_+$ such that }\inf_{\substack{s\geq t\\s\in I_t}}g(s)\leq g(t)-r,\]
 \[\Xi_t(r)=\sup\{s\leq t\,:\,g(s)=g(t)-r\}\quad\text{ for $r\in\R_+$ such that }\inf_{\substack{s\leq t\\s\in I_t}}g(s)\leq g(t)-r.\]
 We also extend these definitions for $t=0$, by using $I_0=I_+$ in the definition of $\Gamma_0$, and $I_0=I_-$ in the definition of $\Xi_0$. Then, we define the simple geodesics in $\mathrm{Qd}_{f,g}$ by
 \[\gamma_t(r)=p_{f,g}(\Gamma_t(r)),\quad 0\leq r\leq g(t)-m_g(t,\sup I_t),\]
 \[\xi_t(r)=p_{f,g}(\Xi_t(r)),\quad 0\leq r\leq g(t)-m_g(\inf I_t,t),\] 
 where as above, we use $I_0=I_-$ in the definition of $\gamma_0$ and $I_0=I_+$ in the definition of $\Xi_0$. In particular, the geodesics $\gamma=\gamma_0$ and $\overline{\gamma}=\gamma_{\inf I_-}$ are called the maximal geodesics, whereas $\xi=\xi_{\sup I_+}$ and $\overline{\xi}=\xi_0$ are called the shuttles. Observe that $\gamma$ and $\xi$ (resp. $\overline{\gamma}$ and $\overline{\xi}$) are disjoint except at their endpoint, which is $x_*$ (resp. $\overline{x}_*$).

Let us now randomize the quadrilateral trajectory $(f,g)$. For every function $f$ defined on an interval containing $0$, we set $m'_f(t)=m_f(0\wedge t,0\vee t)$.
For every $A,\overline{A},H\in(0,\infty)$, let $\textbf{Quad}_{A,\overline{A},H}$ be the probability distribution under which 
\begin{itemize}[label=\textbullet]
    \item $\left(X_t,\,0\leq t\leq A\right)$ and $\left(X_{-t},\,0\leq t\leq \overline{A}\right)$ are two independent first-passage bridges of standard Brownian motion from $0$ to $-H$ (as defined in \eqref{joint law}), with respective durations $A$ and $\overline{A}$, 
    \item given $X$, the process $W$ has the law of $\left(Z_t+\zeta_{-m'_X(t)},\,-\overline{A}\leq t\leq A\right)$, where $Z$ is the random snake driven by $X-m'_X$, and $\zeta$ is an independent Brownian bridge of duration $H$ from $0$ to $0$.
\end{itemize}

With these definitions, $\textbf{Quad}_{A,\overline{A},H}$ is carried by quadrilateral trajectories on the interval $[-\overline{A},A]$. This allows us to define the quadrilateral with half-areas $A,\overline{A}$, width $H$ and tilt $0$, denoted by $\cQ_{A,\overline{A},H}$, as the random metric space $\mathrm{Qd}_{X,W}$ under $\textbf{Quad}_{A,\overline{A},H}$. Finally, we equip the random space $\cQ_{A,\overline{A},H}$ with a measure $\overline{\mu}_{\cQ_{A,\overline{A},H}}$, which is the pushforward of the Lebesgue measure on $[-\overline{A},A]$ by the canonical projection. These random spaces correspond to a particular case of the quadrilateral with geodesic sides studied in \cite{Compactbrowniansurfaces}.

As mentioned previously, the random variables $\cQ_{A,\overline{A},H}$ appear as the scaling limit of quadrilaterals with geodesic sides, in the following way.
Consider four sequences $(a_n),\,(\overline{a}_n)\in\left(\Z_+\right)^\N,$  $(h_n)\in\N^\N$ and $(\delta_n)\in\Z^\N$ such that when $n\rightarrow\infty$, we have the convergences 
\[\frac{a_n}{n}\rightarrow A>0,\quad\frac{\overline{a}_n}{n}\rightarrow\overline{A}>0,\quad\frac{h_n}{\sqrt{2n}}\rightarrow H>0,\quad\left(\frac{9}{8n}\right)^{1/4}\delta_n\rightarrow 0.\] For every $X=(X,D,\mu,a,b,c,d)\in \K_{\mathrm{geo}}^{(4)}$, we set 
\[\overline{\Omega}_n(X)=\left(X,\left(\frac{8}{9n}\right)^{1/4}D,\frac{1}{n}\mu,a,b,c,d\right).\]
 The following theorem was established in \cite{Compactbrowniansurfaces}. 
\begin{theorem}\label{conv quad}
    Let $Qd_n$ be uniformly distributed among quadrilaterals with half-areas $a_n$ and $\overline{a}_n$, width $h_n$ and tilt ${\delta}_n$. We have the convergence
    \[\overline{\Omega}_n(Qd_n)\xrightarrow[n\rightarrow\infty]{(d)}\cQ_{A,\overline{A},H}\] in distribution in $\K^{(4)}_{\mathrm{geo}}$.
\end{theorem}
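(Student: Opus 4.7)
The plan is to follow the strategy of scaling limit proofs for maps-to-Brownian-surfaces, adapted to the two-boundary-component setting of quadrilaterals with geodesic sides. The proof proceeds in four main steps: encoding, invariance principle, tightness plus identification of the limit, and finally convergence of the four distinguished geodesic marks.

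First I would encode $Qd_n$ by a discrete analogue of the pair $(X,W)$. Using the bijection from Section~\ref{quadrilateral}, $Qd_n$ comes from a uniform well-labelled vertebrate $\mathbf{v}_n$ with width $h_n$, upper area $a_n$, lower area $\overline{a}_n$ and tilt $\delta_n$. The vertebrate decomposes as a spine of length $h_n$ together with an upper forest of $h_n$ trees with $a_n$ edges in total and a lower forest of $h_n$ trees with $\overline{a}_n$ edges. One encodes the upper part by a contour-type walk $X_n^+:\{0,\dots,2a_n+h_n\}\to\Z$ whose running minimum records the current spine position and whose excursions above that minimum record the grafted trees; similarly one builds $X_n^-$ on the lower side, and concatenates them into $X_n$ on a discrete interval around $0$. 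The labels, shifted to $0$ at $\rho$, give a discrete snake $W_n$ on the same domain.

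Second, I would establish the joint invariance principle $\bigl((8/9n)^{1/4}W_n,(2n)^{-1/2}X_n\bigr)\Rightarrow (W,X)$ under $\textbf{Quad}_{A,\overline{A},H}$, with the time parameter rescaled by $1/n$. Since $\mathbf{v}_n$ is uniform with prescribed areas/width/tilt, $X_n^{\pm}$ is, up to reinterpretation of the forest, a simple random walk with uniform $\{-1,0,+1\}$-steps conditioned to first hit $-h_n$ exactly at time $h_n+2a_n$ (respectively $h_n+2\overline{a}_n$); Donsker's theorem for first-passage bridges (cf.\ \eqref{joint law}) yields convergence to the two independent first-passage bridges of durations $A$ and $\overline{A}$. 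Conditionally on $X_n$, the labels along the spine form a uniform $\{-1,0,+1\}$-bridge of length $h_n$ ending at $\delta_n = o(n^{1/4})$, converging to the Brownian bridge $\zeta$ of duration $H$ from $0$ to $0$; conditionally on the spine labels, the labels on the grafted trees are standard label functions on a random forest, whose $n^{-1/4}$-rescaling converges to the snake driven by $X-m'_X$, exactly as in the convergence of quadrangulations to the Brownian sphere. Putting these together gives the joint convergence.

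Third, I would upgrade this functional convergence to $4$-marked GHP convergence. The graph distance $d_n$ on $Qd_n$ satisfies the CVS-type upper bound $d_n(u,v)\le W_n(u)+W_n(v)-2\min_{[u,v]}W_n + o(n^{1/4})$ inherited from the arc construction, which, combined with the invariance principle, yields tightness for the $(8/9n)^{1/4}$-rescaled space and shows that any subsequential limit is dominated by $\widehat{D}_{X,W}$. The matching lower bound, together with the identification of the quotient relation $\{\widehat{D}_{X,W}=0\}$ in terms of $X$ and $W$, is precisely the delicate part of the Brownian-sphere convergence proof; here it is imported piece by piece on each half-plane, using that on $I_+$ and on $I_-$ separately $(X,W)$ has the law of a (first-passage) snake, and that the spine cuts the quadrilateral into two pieces which are connected only along the boundary geodesics. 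The main obstacle is ensuring that no spurious identifications occur across the spine: this is handled by noting that $\widehat{d}_g=+\infty$ between $I_+$ and $I_-$, so the two halves are only glued along $\gamma,\xi,\overline{\gamma},\overline{\xi}$, matching the discrete picture where the boundary of $\mathrm{CVS}(\mathbf{v}_n)$ is the union of four discrete geodesics.

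Finally, the four marks converge because the discrete boundary geodesics are built from the analogue of the maps $\Gamma_t,\Xi_t$ applied to $W_n$: the leftmost geodesic $\gamma$ from $\rho$ to $v_*$ corresponds to $\Gamma_0$ reading off the successive hits of decreasing label values of $W_n$ started from the root corner, and similarly for $\xi,\overline{\gamma},\overline{\xi}$. Their $(8/9n)^{1/4}$-rescalings then converge to $\gamma,\xi,\overline{\gamma},\overline{\xi}$ by continuity of these functionals at the limiting snake; together with convergence of the normalized counting measure $n^{-1}\mu_{Qd_n}$ to the pushforward of Lebesgue, this yields the stated convergence in $\K^{(4)}_{\mathrm{geo}}$.
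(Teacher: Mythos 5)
The paper does not prove this statement: immediately before Theorem~\ref{conv quad} it says that the result ``was established in \cite{Compactbrowniansurfaces}'', so there is no in-paper proof to compare against. Your sketch does reflect the general strategy of that reference and of the Brownian-sphere literature --- encode $Qd_n$ by a discrete pair $(X_n,W_n)$ via the vertebrate bijection, establish a joint invariance principle (two first-passage bridges for the two halves, a bridge along the spine, a snake for the subtree labels), and upgrade to $4$-marked GHP convergence.

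As written, though, the proposal has a genuine gap at precisely the step you yourself flag as ``the delicate part.'' Asserting that the lower bound and the identification of $\{\widehat{D}_{X,W}=0\}$ are ``imported piece by piece on each half-plane'' is not an argument: in the sphere case this step \emph{is} the Le~Gall/Miermont uniqueness theorem, and in \cite{Compactbrowniansurfaces} the analogous identification for slices and quadrilaterals occupies a substantial part of the paper rather than being a routine transfer. Similarly, Hausdorff convergence of the four marked geodesics requires more than continuity of the maps $\Gamma_t,\Xi_t$ applied to $W_n$; one must also show that the limiting paths are the \emph{unique} geodesics with those endpoints in $\cQ_{A,\overline{A},H}$, otherwise the mark convergence fails. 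Finally, there is a concrete misreading of the geometry in the step where you patch the two halves together. The upper and lower contours are joined along the \emph{spine}: the floor vertices of $\mathbf{v}_n$ appear in both $I_+$ and $I_-$, are identified under $\{d_f=0\}$, and lie in the interior of $\mathrm{Qd}_{f,g}$. Gluing $\gamma$ to $\xi$ and $\overline{\gamma}$ to $\overline{\xi}$ is a separate operation that produces the \emph{unicycle} of Proposition~\ref{CVS collage}, not the quadrilateral. So ``$\widehat{d}_g=+\infty$ between $I_+$ and $I_-$'' does not by itself prevent cross-identifications and is not what connects the two halves; the identifications through the spine are real and intended, and the difficulty is to prove that the rescaled graph metric converges to the quotient metric realizing exactly those identifications and no others.
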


\subsection{The scaling limit of delayed quadrangulations}

In this section, we obtain by two different means the scaling limit of random delayed quadrangulations. The first approach does not give an explicit construction of the limiting space, but tells us how it is related to the Brownian sphere. On the other hand, the second approach gives more explicit information on how to construct is the limiting space, but it is not clear that it is related to the Brownian sphere.

\subsubsection{First approach}

Let us begin with the first approach, which only relies on the convergence of uniform quadrangulations towards the Brownian sphere. First, let $(\cS,D,\mu,x,y)$ be a random variable distributed as a standard Brownian sphere $(\cS,D,\mu)$ with two distinguished points sampled according to the measure $\mu$ (see \cite[Section 6.5]{tessalations} for a detailed discussion about randomly marked spaces). Recall that, by the invariance under rerooting of the Brownian sphere, one may take $x=x_0$ and $y=x_*$. We define the biased Brownian sphere $(\cS_{b},D,x,y,\mu,\Delta)$ by the following formula, valid for every non-negative measurable function $f$ :

\begin{equation}\label{def biais sphere}    \E[f(\cS_{b},D,x,y,\mu,\Delta)]=\frac{1}{2\E[D(x,y)]}\E\left[\int_{-D(x,y)}^{D(x,y)}f(\cS,D,x,y,\mu,t)dt\right].
\end{equation}
This formula is well-defined, since by \cite[Corollary 2]{DelmasMoments}, we have 
   \[\E[D(x,y)]=3\frac{2^{1/4}\Gamma(5/4)}{\sqrt{\pi}}.\]
Note that the marginal $(\cS_{b},x,y)$ has the law of the standard Brownian sphere biased by the distance between $x$ and $y$, and that conditionally on $(\cS_{b},x,y)$, the random variable $\Delta$ is uniform on $[-D(x,y),D(x,y)]$. 
   
   The following proposition is just a consequence of the convergence of uniform quadrangulations toward the Brownian sphere proved in \cite{uniqueness,convergence}. For every $n\geq 1$, let $\left(Q^{(b)}_n,d_{Q_n^{(b)}},x^{(b)}_n,y^{(b)}_n,\mu^{(b)}_n,\Delta_n\right)$ be a uniform variable in the set $\Q_n^{(b)}$ of delayed bi-pointed quadrangulation with $n$ faces, equipped with the counting measure on its vertices.
\begin{proposition}[Convergence towards the biased Brownian sphere]\label{convergence biais}
    We have :
    \[\left(Q^{(b)}_n,\left(\frac{9}{8n}\right)^{1/4} d_{Q^{(b)}_n},x^{(b)}_n,y^{(b)}_n,\frac{1}{n}\mu^{(b)}_n,\left(\frac{9}{8n}\right)^{1/4}\Delta_n\right)\xrightarrow[n\rightarrow\infty]{(d)}\left(\cS_{b},D,x,y,\mu,\Delta\right),\] where the convergence is in $\K^{\bullet\bullet}\times\R$.
\end{proposition}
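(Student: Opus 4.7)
The plan is to express the law of the uniform delayed quadrangulation as a bias of the uniform bi-pointed quadrangulation endowed with a (conditionally) uniform delay, and then to pass to the scaling limit using the known convergence of uniform quadrangulations toward the Brownian sphere together with a Riemann-sum comparison. Let $\cB_n$ denote the set of rooted bi-pointed quadrangulations with $n$ faces, let $(Q_n,x_n,y_n,\mu_n)$ be a uniform element of $\cB_n$ equipped with the counting measure on vertices, set $D_n=d_{Q_n}(x_n,y_n)$, and write $\lambda_n=(9/8n)^{1/4}$. Since a bi-pointed quadrangulation $(\q,v_0,v_1)\in\cB_n$ admits exactly $(d_\q(v_0,v_1)-1)_+$ admissible delays, forming an arithmetic progression with common difference $2$ symmetric about the origin, counting each delayed quadrangulation gives, for any bounded continuous $f,g$,
\[
\E\!\left[f(Q_n^{(b)},x_n^{(b)},y_n^{(b)},\mu_n^{(b)})\,g(\lambda_n\Delta_n)\right]=\frac{\E\!\left[f(Q_n,x_n,y_n,\mu_n)\sum_{j=1}^{D_n-1}g\!\bigl(\lambda_n(-D_n+2j)\bigr)\right]}{\E[D_n-1]},
\]
where we have omitted the negligible contribution from pairs with $D_n\le 1$.

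Passing to the limit in this identity amounts to assembling three ingredients. First, \cite{uniqueness,convergence} together with the convergence of two uniform vertices of $Q_n$ toward a pair of i.i.d.\ $\mu$-distributed points in $\cS$ yields
\[
\left(Q_n,\lambda_n d_{Q_n},x_n,y_n,\tfrac{1}{n}\mu_n\right)\xrightarrow[n\to\infty]{(d)}(\cS,D,x,y,\mu)
\]
in $\K^{\bullet\bullet}$; in particular $\lambda_n D_n\xrightarrow[n\to\infty]{(d)} D(x,y)$, and moment estimates of the type of \cite{DelmasMoments} give uniform integrability of $\lambda_n D_n$, hence $\E[\lambda_n(D_n-1)]\to\E[D(x,y)]$. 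Second, the points $\lambda_n(-D_n+2j)$ form a regular grid of mesh $2\lambda_n$ in the interval $[-\lambda_n D_n,\lambda_n D_n]$, so the standard Riemann-sum comparison gives, for any continuous bounded $g$,
\[
2\lambda_n\sum_{j=1}^{D_n-1}g\!\bigl(\lambda_n(-D_n+2j)\bigr)\;\xrightarrow[n\to\infty]{(d)}\;\int_{-D(x,y)}^{D(x,y)}g(t)\,dt,
\]
jointly with the convergence of the quadrangulation. Third, multiplying numerator and denominator in the identity above by $\lambda_n$ and applying dominated convergence (justified by the boundedness of $f,g$ and the uniform integrability of $\lambda_n D_n$), the right-hand side converges to
\[
\frac{1}{2\,\E[D(x,y)]}\,\E\!\left[f(\cS,D,x,y,\mu)\int_{-D(x,y)}^{D(x,y)}g(t)\,dt\right],
\]
which by \eqref{def biais sphere} equals $\E[f(\cS_b,D,x,y,\mu)\,g(\Delta)]$. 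Since products $f\otimes g$ of bounded continuous test functions form a measure-determining class on $\K^{\bullet\bullet}\times\R$, the claimed convergence in distribution follows.

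The main obstacle will be to make the Riemann-sum comparison rigorous jointly with the metric convergence, and to pass to the limit inside the outer expectation. The cleanest way is to invoke Skorokhod's representation theorem so as to couple the convergence of $(Q_n,\ldots)$ to $(\cS,\ldots)$ and of $\lambda_n D_n$ to $D(x,y)$ almost surely; the continuity of $g$ and the almost-sure finiteness of $D(x,y)$ then yield the Riemann-sum convergence pointwise, while the uniform integrability of $\lambda_n D_n$ simultaneously controls the prefactor $1/\E[\lambda_n(D_n-1)]$ and the dependence of the integration endpoint $\lambda_n D_n$ on $n$, so that dominated convergence applies to the outer expectation and closes the argument.
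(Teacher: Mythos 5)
Your proof is correct and takes essentially the same route as the paper: write the law of the uniform delayed quadrangulation as the law of a uniform bi-pointed quadrangulation biased by the number of admissible delays, identify that number as $(D_n-1)_+$, pass to the scaling limit via the convergence of $Q_n$ to the Brownian sphere plus a Riemann-sum argument for the conditionally uniform delay, and close with uniform integrability. The one slip is your reference for uniform integrability of $\lambda_n D_n$: \cite{DelmasMoments} concerns moments of the continuum Brownian sphere, not of the rescaled discrete distance; the paper instead invokes \cite[Corollary 3]{Randomplanarlattices}, which gives both the uniform integrability and the joint convergence of the Radon--Nikodym weight $(d_{Q_n}(x_n,y_n)-1)_+/\E[(d_{Q_n}(x_n,y_n)-1)_+]$, which is the ingredient you actually need.
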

\begin{proof}
By definition, for every non-negative measurable function $F$, we have 
\[\E\left[F(Q^{(b)}_n,x^{(b)}_n,y^{(b)}_n,\mu_n^{(b)},\Delta_n)\right]=\frac{\E\left[\sum_{\delta_n\in A(Q_n,x_n,y_n)}F(Q_n,x_n,y_n,\mu_n,\delta_n)\right]}{\E\left[\left(d_{Q_n}(x_n,y_n)-1\right)_+\right]},\]
where $(Q_n,x_n,y_n)$ is uniformly distributed on the set of bi-pointed quadrangulations with $n$ faces. for every bi-pointed quadrangulation $(Q,x,y)$, $A(Q,x,y)$ is the set of admissible delays of $(Q,x,y)$. In particular, the random variable $(Q^{(b)}_n,x^{(b)}_n,y^{(b)}_n)$ has the law of $(Q_n,x_n,y_n)$ biased by the Radon-Nikodym derivative 
\[G_n=\frac{\left(d_{Q_n}(x_n,y_n)-1\right)_+}{\E\left[\left(d_{Q_n}(x_n,y_n)-1\right)_+\right]}.\]
Then, by \cite[Corollary 3]{Randomplanarlattices}, the sequence $(G_n)_{n\geq 1}$ is uniformly integrable and converges in distribution (jointly with the uniform quadrangulation with the rescaled distance) toward 
\begin{equation}\label{radon}    
\frac{D(x,y)}{\E[D(x,y)]}.
\end{equation}
    Therefore, using the convergence of uniform quadrangulation toward the Brownian sphere \cite{uniqueness,convergence}, we get that $\left(Q_n^{(b)},\left(\frac{8}{9n}\right)^{1/4}d_n^{(b)},x_n^{(b)},y_n^{(b)},\frac{1}{n}\mu_n^{(b)}\right)$ converges in distribution toward the law of $(S,D,x,y,\mu)$ biased by the Radon-Nikodym derivative \eqref{radon}. Finally, recall that conditionally on $(Q^{(b)}_n,x^{(b)}_n,y^{(b)}_n)$, the delay $\Delta_n$ is uniform in the set $\{k\in\Z\,:\,|k|<d_n(x_n^{(b)},y_n^{(b)})\text{ and }k+d_n(x_n^{(b)},y_n^{(b)})\in2\Z\}$. The desired convergence easily follows. 
\end{proof}

\subsubsection{Second approach}

Let us now prove by other means that the random variables $Q_n^{(b)}$ admit a scaling limit. This other approach relies on several results proved in Section \ref{section discrete}. Together with Proposition \ref{convergence biais}, this will give an explicit construction of the standard biased Brownian sphere $\cS_{b}$, that will be detailed in Section \ref{construction}.

First, we need to introduce a pair of random variables $(\cL,\cA)$, taking values in $\R_+\times[0,1]$, with density
\begin{equation}\label{densities}
   \1_{\{(x,y)\in\R_+\times [0,1]\}}  \frac{1}{2^{1/4}\Gamma(1/4)\sqrt{\pi}}\frac{x^{1/2}}{(y(1-y))^{3/2}}\exp\left(-\frac{x^2}{2y(1-y)}\right)
\end{equation}
(one can check that this defines a probability density just like we did at the end of the proof of Proposition \ref{convergence taille}). Observe that these random variables are related to $\left(\cL^\bullet,\cA^\bullet\right)$, introduced in Proposition \ref{convergence taille}, by the following formula :
\begin{equation*}
    \E\left[F(\cL,\cA)\right]=  \E\left[\frac{1}{\cL^\bullet}\right]^{-1} \E\left[\frac{F(\cL^\bullet,\cA^\bullet)}{\cL^\bullet}\right].
\end{equation*}
Finally, observe that for any $a,b,c>0$, the random metric measured space $\mathrm{Glue}(\cQ_{a,b,c})$ is naturally equipped with two distinguished points $x_*$ and $\overline{x}_*$ (which were the distinguished points of $\cQ_{a,b,c}$ before the gluing), together with a real parameter $W_{\overline{x}_*}-W_{x_*}$. Therefore, in what follows, we will consider $\mathrm{Glue}(\cQ_{a,b,c})$ as an element of $\K^{\bullet\bullet}\times \R$.
\begin{proposition}[Scaling limit of biased quadrangulations]\label{Conv collage}
    We have :
     \begin{equation*}
         \left(Q^{(b)}_n,\left(\frac{9}{8n}\right)^{1/4} d_{Q^{(b)}_n},x^{(b)}_n,y^{(b)}_n,\frac{1}{n}\mu^{(b)}_n,\left(\frac{9}{8n}\right)^{1/4}\Delta_n\right)\xrightarrow[n\rightarrow\infty]{(d)}\mathrm{Glue}\left(\cQ_{\cA,1-\cA,\cL}\right)
     \end{equation*}
     where the convergence takes place in $\K^{\bullet\bullet}\times \R$.
\end{proposition}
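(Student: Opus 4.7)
The plan is to combine Corollary \ref{lien}, Theorem \ref{conv quad}, Proposition \ref{convergence taille}, Proposition \ref{gluing geodesics}, and Lemma \ref{uniform integrability}. Fix a bounded continuous function $F:\K^{\bullet\bullet}\times\R\to\R$ and write $R_n$ for the rescaled delayed quadrangulation appearing on the left-hand side of the statement. Multiplying both numerator and denominator of Corollary \ref{lien} by $\sqrt{2n}$ yields
\[
\E[F(R_n)] \;=\; \E\!\left[\tfrac{\sqrt{2n}}{L_n^\bullet}\right]^{-1} \E\!\left[\tfrac{\sqrt{2n}}{L_n^\bullet}\; F\!\left(\Theta_n\!\left(\mathrm{Glue}(Qd_{A_n^\bullet,\overline{A}_n^\bullet,L_n^\bullet})\right)\right)\right],
\]
where $\Theta_n$ denotes the rescaling of an element of $\K^{\bullet\bullet}\times\R$ that multiplies the distance by $(9/(8n))^{1/4}$, the measure by $1/n$, and the scalar tilt by $(9/(8n))^{1/4}$. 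Since the gluing operation commutes with rescaling of distance, measure and tilt, one has $\Theta_n\circ\mathrm{Glue}=\mathrm{Glue}\circ\overline{\Omega}_n$, so the object inside $F$ above is $\mathrm{Glue}(\overline{\Omega}_n(Qd_{A_n^\bullet,\overline{A}_n^\bullet,L_n^\bullet}))$.

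The heart of the argument is to establish the joint convergence
\[
\Big(\mathrm{Glue}(\overline{\Omega}_n(Qd_{A_n^\bullet,\overline{A}_n^\bullet,L_n^\bullet})),\; \tfrac{\sqrt{2n}}{L_n^\bullet}\Big)
\;\xrightarrow[n\to\infty]{(d)}\;
\Big(\mathrm{Glue}(\cQ_{\cA^\bullet,1-\cA^\bullet,\cL^\bullet}),\; \tfrac{1}{\cL^\bullet}\Big)
\]
in the product space $(\K^{\bullet\bullet}\times\R)\times\R$. To see this, I would condition on $(L_n^\bullet,A_n^\bullet,\overline{A}_n^\bullet)$: Proposition \ref{convergence taille} together with \eqref{somme} gives $(L_n^\bullet/\sqrt{2n},A_n^\bullet/n,\overline{A}_n^\bullet/n)\to(\cL^\bullet,\cA^\bullet,1-\cA^\bullet)$ in distribution, with a limit whose coordinates are almost surely strictly positive. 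Conditionally on this triple, $Qd_{A_n^\bullet,\overline{A}_n^\bullet,L_n^\bullet}$ is uniform on the corresponding finite set, so Theorem \ref{conv quad} applies conditionally, and a Skorokhod coupling combined with the continuity of the gluing map supplied by Proposition \ref{gluing geodesics} produces the displayed joint convergence.

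Finally, since $F$ is bounded, Lemma \ref{uniform integrability} ensures that both $\sqrt{2n}/L_n^\bullet$ and $(\sqrt{2n}/L_n^\bullet)\cdot F(\cdots)$ are uniformly integrable, so one may pass to the limit inside the expectations in both numerator and denominator and obtain
\[
\E[F(R_n)] \;\longrightarrow\; \frac{\E\big[F(\mathrm{Glue}(\cQ_{\cA^\bullet,1-\cA^\bullet,\cL^\bullet}))/\cL^\bullet\big]}{\E[1/\cL^\bullet]} \;=\; \E\big[F(\mathrm{Glue}(\cQ_{\cA,1-\cA,\cL}))\big],
\]
the last equality being the biasing identity between $(\cL^\bullet,\cA^\bullet)$ and $(\cL,\cA)$ recorded right after the density \eqref{densities}. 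The main obstacle is precisely this passage under the integral sign: the weight $\sqrt{2n}/L_n^\bullet$ is unbounded and could a priori concentrate on configurations with atypically short cycles, which is exactly the scenario ruled out by the moderate deviation estimate of Lemma \ref{uniform integrability}. A minor additional point is checking that the combinatorial delay $\Delta_n$ corresponds, after rescaling, to the tilt $W_{\overline{x}_*}-W_{x_*}$ of the continuous quadrilateral; this is immediate because the tilt is preserved by gluing and is a continuous functional of the underlying trajectory.
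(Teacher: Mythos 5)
Your argument is correct and follows essentially the same route as the paper: rewrite via Corollary \ref{lien}, establish the joint convergence of $(\sqrt{2n}/L_n^\bullet,\,\mathrm{Glue}(\overline{\Omega}_n(Qd_{A_n^\bullet,\overline{A}_n^\bullet,L_n^\bullet})))$ by Skorokhod coupling of the triple $(L_n^\bullet,A_n^\bullet,\overline{A}_n^\bullet)$ together with Theorem \ref{conv quad} and Proposition \ref{gluing geodesics}, then pass to the limit under the expectation using the uniform integrability of Lemma \ref{uniform integrability}, and conclude via the size-biasing relation between $(\cL^\bullet,\cA^\bullet)$ and $(\cL,\cA)$. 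The only cosmetic difference is that the paper evaluates the limit of the denominator $\E[\sqrt{2n}/L_n^\bullet]$ directly from Theorem \ref{counting}, while you obtain it from uniform integrability plus convergence in distribution; both are valid and interchangeable.
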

\begin{proof}
   To simplify notations, we set
   \[\Omega_n\left(Q_n^{(b)}\right)=\left(Q^{(b)}_n,\left(\frac{9}{8n}\right)^{1/4} d_{Q^{(b)}_n},x^{(b)}_n,y^{(b)}_n,\frac{1}{n}\mu^{(b)}_n,\left(\frac{9}{8n}\right)^{1/4}\Delta_n\right).\]
   By Corollary \ref{lien}, for every continuous bounded function $F$, we have
    \[\E\left[F(\Omega_n(Q^{(b)}_n)\right]=\E\left[\frac{\sqrt{2n}}{L_n^\bullet}\right]^{-1}\times\E\left[\frac{\sqrt{2n}\times F\left(\mathrm{Glue}\left(\overline{\Omega}_n\left(Qd_{A_n^\bullet,\overline{A}_n^\bullet,L_n^\bullet}\right)\right)\right)}{L_n^\bullet}\right].\]
By Proposition \ref{convergence taille} and Skorohod's representation theorem, we can suppose that the random variables $\left(\frac{L_n^\bullet}{\sqrt{2n}},\frac{A_n^\bullet}{n}\right)$ converge almost surely toward $(\cL^\bullet,\cA^\bullet)$. Hence, we can construct a probability space $(\Omega\times\tilde{\Omega},\cF,\P\otimes\tilde{\P})$ on which all the random variables considered are defined, and such that for every $\omega\in\Omega$, we have the convergence
\[\left(\tilde{\omega}\mapsto\overline{\Omega}_n\left(Qd^{(\tilde{\omega)}}_{A_n^\bullet(\omega),\overline{A}_n^\bullet(\omega),L_n^\bullet(\omega)}\right)\right)\xrightarrow[n\rightarrow\infty]{(d)}\cQ_{\cA^\bullet(\omega),1-\cA^\bullet(\omega),\cL^\bullet(\omega)},\]
using Theorem \ref{conv quad}. By integrating over $\tilde{\Omega}$ and Proposition \ref{gluing geodesics}, this implies that 
\begin{equation}\label{conv separe}
   \left( \frac{\sqrt{2n}}{L_n^\bullet},\mathrm{Glue}\left(\overline{\Omega}_n\left(Qd_{A_n^\bullet,\overline{A}_n^\bullet,L_n^\bullet}\right)\right)\right)\xrightarrow[n\rightarrow\infty]{(d)}\left(\frac{1}{\cL^\bullet},\mathrm{Glue}(\cQ_{\cA^\bullet,1-\cA^\bullet,\cL^\bullet})\right).
\end{equation}
Then, Lemma \ref{uniform integrability} together with \eqref{conv separe} gives that for every continuous bounded function $F$,
    \begin{equation}\label{conv glue}
        \E\left[\frac{\sqrt{2n}\times F\left(\overline{\Omega}_n\left(Qd_{A_n^\bullet,\overline{A}_n^\bullet,L_n^\bullet,
}\right)\right)}{L_n^\bullet}\right]\xrightarrow[n \rightarrow{\infty}]{}\E\left[\frac{F(\mathrm{Glue}(\cQ_{\cA^\bullet,1-\cA^\bullet,\cL^\bullet}))}{\cL^\bullet}\right]
    \end{equation}
    Furthermore, using Theorem \ref{counting}, we have 
    \[\E\left[\frac{\sqrt{2n}}{L_n^\bullet}\right]=\frac{ \#\U_n}{\sqrt{2n}\times\U_n^\bullet}\xrightarrow[n\rightarrow\infty]{}\sqrt{2}\frac{\Gamma(1/4)}{\Gamma(3/4)}.\]
    Together with \eqref{conv glue}, and using the explicit relation between $\cL^\bullet$ and $\cL$, this gives 
    \[\E\left[F(\Omega_n(Q_n^{(b)}))\right]\xrightarrow[n\rightarrow\infty]{}\E\left[F(\mathrm{Glue}(\cQ_{\cA,1-\cA,\cL}))\right],\] which concludes the proof. 
\end{proof}

\subsection{The biased Brownian sphere as a quotient of a continuum unicycle}\label{construction}

In this section, we use Proposition \ref{Conv collage} to derive a geometric construction of the biased Brownian sphere. This construction is based on the construction of quadrilaterals given in Section \ref{continuous quad}.

Once again, we start with some deterministic considerations. We say that a pair $(f,g)\in\cC^2$ of function is a labelled unicycle trajectory if :
\begin{itemize}[label=\textbullet]
    \item $(f,g)$ is a quadrilateral trajectory as defined in Section \ref{continuous quad},
    \item Moreover, 
    \begin{equation}\label{condition}
        g(0)=g(\inf I)=g(\sup I).
    \end{equation}
\end{itemize}

Then, for every $s,t\in I$ with $s\geq t$, we set
\[[s,t]=[s,\sup I]\cup [\inf I,t],\]
which naturally allows us to define $m_f(s,t)$ when $s>t$.
Then, for every $s,t\in I$ let
\begin{equation}\label{distance cycle}
    d'_f(s,t)=\inf\left(d_f(s,t)\,, d_f(s,\inf I)+d_f(0,t), \,d_f(t,\inf I)+d_f(0,s)\right).
\end{equation}
and
\begin{equation}
    \widehat{d}'_g(s,t)=\left\{
    \begin{array}{ll}
  d'_g(s,t)\quad&\text{ if $s,t\in I_+$ or $s,t\in I_-$,} \\   
     +\infty\quad&\text{ otherwise}
    \end{array}\right.
\end{equation}
Observe that if $d_f(0,\inf I)>0$, the space $\cT'_f=I/\{d'_f=0\}$ is a unicycle, in the sense that $\left(\cT'_f,d'_f\right)$ is obtained from $\left(\cT_f,d_f\right)$ by identifying the equivalence class of $0$ and $\inf I$ in $\cT_f$. Moreover, the condition \eqref{condition} implies that $g$ defines a continuous function on $\cT'_f$.  Then, we set
\[\widehat{D}'_{f,g}=\widehat{d}'_g/\{d_f=0\}.\]
Finally, let $\left(\widehat{M}'_{f,g},\widehat{D}'_{f,g}\right)$ denote the quotient space $I/\{\widehat{D}'_{f,g}=0\}$ equipped with the distance induced by $\widehat{D}'_{f,g}$. To lighten notations, we write 
 \[\cU_{f,g}=\left(\widehat{M}_{f,g},\widehat{D}_{f,g}\right)\] for the metric space coded by $(f,g)$. Note that this metric space is naturally equipped with a measure $\mathrm{Vol}_{f,g}$, which is the pushforward of the Lebesgue measure on $[\inf I,\sup I]$ by the canonical projection. 
 
Let us now randomize the labelled unicycle trajectory $(f,g)$ in order to define the candidate for the biased Brownian sphere.
Let $\textbf{Unic}^{(a)}$ be the probability distribution under which the law of $(X,W)$ is the following :
\begin{itemize}[label=\textbullet]
\item First, we define a pair of random variables $(\cL^{(a)},\cA^{(a)})$ whose distribution is given by the probability density 
 \begin{equation}\label{densité jointe longueur}
     \1_{\{(x,y)\in\R_+\times [0,a]\}} \frac{a^{5/4}}{2^{1/4}\Gamma(1/4)\sqrt{\pi}}\frac{x^{1/2}}{(y(a-y))^{3/2}}\exp\left(-\frac{ax^2}{2y(a-y)}\right),
 \end{equation}
    \item Given $(\cL^{(a)},\cA^{(a)})$, $\left(X_t,\,0\leq t\leq \cA^{(a)}\right)$ and $\left(X_{-t},\,0\leq t\leq a-\cA^{(a)}\right)$ are two independent first-passage bridges of standard Brownian motion from $0$ to $-a$, with respective durations $\cA^{(a)}$ and $a-\cA^{(a)}$, 
    \item given $(\cL^{(a)},\cA^{(a)})$ and $X$, the process $W$ has the law of $\left({Z}_t+\zeta_{-m'_X(t)},\,-\overline{A}\leq t\leq A\right)$, where ${Z}$ is the random snake driven by $X-m'_X$, and $\zeta$ is an independent Brownian bridge of duration $\cL^{(a)}$ from $0$ to $0$.
\end{itemize}
\begin{remark}
    The distribution of $(\cL^{(a)},\cA^{(a)})$ is the same as the one of $(\sqrt{a}\cL,a\cA)$.
\end{remark}
It is easy to check that for every $s,t\in I$ such that $d_X(s,t)=0$, we also have $W_s=W_t$. Therefore, the process $W$ induces a labelling function on the random unicycle $\cT_X'$. Even though we do not use it directly, under $\textbf{Unic}^{(a)}$ the pair $(X,W)$ encodes a \textit{continuous random labelled unicycle} (CRLU), which is the scaling limit of uniform labelled unicycles. This point of view will be more explicit in Section \ref{coding unicycles with point measures}

We denote by $\left(\cS_{b}^{(a)},D\right)$ the random metric space $\cU_{X,W}$ under the probability measure $\textbf{Unic}^{(a)}$. Let $p_{\cS^{(a)}_{b}}:I\rightarrow\cS_{b}^{(a)}$ denote the canonical projection. These random spaces come with two distinguished points, which are defined as follows.
As in Section \ref{brownian sphere}, almost surely, there exists a unique $s_*\in I_-$ and $\overline{s}_*\in I_+$ such that 
\begin{equation*}\label{minimumm cote}
    W_{s_*}=\inf_{s\in I_-}W_s\quad \text{ and }\quad W_{\overline{s}_*}=\inf_{s\in I_+}W_s.
\end{equation*}

The two distinguished points of $\cS_b^{(a)}$ are defined as $x_*=p_{\cS_b^{(a)}}(s_*)$ and $\overline{x}_*=p_{\cS_b^{(a)}}(\overline{s}_*)$. 
We also define the delay of $\cS_b^{(a)}$ as the random variable $\Delta:=W_{\overline{s}_*}-W_{s_*}$. Finally, we write $\mathrm{Vol}^{(a)}$ for the measure associated to $\cS_b^{(a)}$, and note that $\mathrm{Vol}^{(a)}\left(\cS_b^{(a)}\right)=a$. 
\begin{theorem}\label{equiv def}
    The random metric space $\cS_b^{(1)}$ is distributed as the biased Brownian sphere.
\end{theorem}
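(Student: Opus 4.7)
The plan is to combine the two scaling-limit results of Propositions \ref{convergence biais} and \ref{Conv collage}, which both identify the scaling limit in distribution of the same sequence $\Omega_n(Q_n^{(b)})$. By uniqueness of limits in distribution, this forces the equality
\[
(\cS_b, D, x, y, \mu, \Delta) \overset{(d)}{=} \mathrm{Glue}(\cQ_{\cA, 1-\cA, \cL})
\]
in $\K^{\bullet\bullet} \times \R$. The remaining task is then to verify that the right-hand side has the same distribution as the random space $\cS_b^{(1)}$ built from $\textbf{Unic}^{(1)}$.

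The first sub-step is to match the two randomizations on the level of the coding functions. The pair $(X, W)$ under $\textbf{Unic}^{(1)}$ has first marginal $(\cL^{(1)}, \cA^{(1)})$ with density \eqref{densité jointe longueur} for $a=1$, which by the Remark immediately after that definition coincides with the joint law \eqref{densities} of $(\cL, \cA)$. Conditionally on $(\cL^{(1)}, \cA^{(1)}) = (\cL, \cA)$, the prescription for $(X, W)$ in $\textbf{Unic}^{(1)}$ is literally the definition of $\textbf{Quad}_{\cA, 1-\cA, \cL}$ given in Section \ref{continuous quad}. Therefore, as a random pair of continuous functions, $(X, W)$ under $\textbf{Unic}^{(1)}$ has the same joint law as the coding pair of $\cQ_{\cA, 1-\cA, \cL}$. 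The two distinguished points $x_*, \overline{x}_*$ of $\cS_b^{(1)}$ are defined through the minima of $W$ on $I_-$ and $I_+$, exactly as in $\cQ_{\cA, 1-\cA, \cL}$, and are preserved by both quotients; the delay $\Delta = W_{\overline{s}_*} - W_{s_*}$ matches likewise.

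The main obstacle is then purely deterministic: given a labelled unicycle trajectory $(f, g)$, one has to show that
\[
\cU_{f,g} = \mathrm{Glue}(\mathrm{Qd}_{f, g})
\]
as measured metric spaces with two distinguished points. Intuitively, the three terms inside the infimum defining $d'_f$ in \eqref{distance cycle} correspond to the three topologically distinct ways of connecting two points in the tree $\cT_f$ once the cycle is formed by identifying $0$ with $\inf I$ and $\sup I$: either directly inside $\cT_f$, or by going around the cycle along one of its two sides. The gluing operation of Proposition \ref{gluing geodesics} identifies $\gamma = \gamma_0$ with $\xi = \xi_{\sup I_+}$, and $\overline{\gamma} = \gamma_{\inf I_-}$ with $\overline{\xi} = \xi_0$, which produces precisely this cycle structure. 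The delicate point is to verify that no genuinely shorter path in the glued space is missed by $\widehat{D}'_{f,g}$: any path crossing the identified boundaries must decompose into pieces already captured by the three-term infimum in $d'_f$ composed with the chain relation that defines $\widehat{D}'_{f,g}$. This is the continuous counterpart of the combinatorial identity $\mathrm{CVS}(\mathbf{u}^\bullet) = \mathrm{Glue}(\mathrm{CVS}(\Phi(\mathbf{u}^\bullet)))$ from Proposition \ref{CVS collage}, and the agreement of the volume measures follows from the Remark right after that proposition. Once this deterministic identification is established, combining it with the matching of laws in the preceding paragraph yields $\mathrm{Glue}(\cQ_{\cA, 1-\cA, \cL}) \overset{(d)}{=} \cS_b^{(1)}$, which together with the initial application of Proposition \ref{convergence biais} gives the theorem.
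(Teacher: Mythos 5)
Your proposal is correct and follows essentially the same route as the paper's own proof: reduce to the identity $\cS_b^{(1)}\overset{(d)}{=}\mathrm{Glue}(\cQ_{\cA,1-\cA,\cL})$ via Propositions \ref{convergence biais} and \ref{Conv collage}, match the laws of the coding trajectories, and establish the deterministic identity $\cU_{f,g}=\mathrm{Glue}(\mathrm{Qd}_{f,g})$. Your sketch of the deterministic step (relating the three-term infimum in \eqref{distance cycle} to the identifications along $\mathrm{Geo}$, and noting that gluing along geodesics introduces no new shortcuts) captures the same content as the paper's argument, at a comparable level of detail.
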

\begin{proof}
    By Proposition \ref{Conv collage}, we need to show that 
    \begin{equation}\label{recollement}
        \left(\cS_b^{(1)},D,\mathrm{Vol}^{(1)},x_*,\overline{x}_*,\Delta\right)\overset{(d)}{=}\mathrm{Glue}\left(\cQ_{\cA,1-\cA,\cL}\right).
    \end{equation}

    We will prove that for every unicycle trajectory $(f,g)$, the metric spaces $\cU_{f,g}$ and $\mathrm{Glue}(\mathrm{Qd}_{f,g})$ are the same. First, note that these two spaces are constructed as the quotient of the same interval $I$. Therefore, we need to show that the equivalence relations defining these spaces are the same. 

    Recall that $\mathrm{Qd}_{f,g}$ is the quotient of $I$ by the equivalence relation $\{\widehat{D}_{f,g}=0\}$, equipped with the distance $\widehat{D}_{f,g}=\widehat{d}_g/\{d_f=0\}$. Moreover, the gluing operation consists in identifying the geodesics $\Gamma_0$, $\Xi_0$ with $\Xi_{\sup I}$, $\Gamma_{\inf I}$. Therefore, $\mathrm{Glue}(\mathrm{Qd}_{f,g})$ corresponds to $I/R$, where $R$ is the coarsest equivalence relation that contains $\{\widehat{D}_{f,g}=0\}$ and  
    \[\mathrm{Geo}:=\left\{\left(\Gamma_0(r),\Xi_{\sup I}(r)\right),\,0\leq r\leq \inf_{I_+}g\right\}\cup\left\{\left(\Xi_0(r),\Gamma_{\inf I}(r)\right),\,0\leq r\leq \inf_{I_-}g\right\},\]
    and the distance on $\mathrm{Glue}(\mathrm{Qd}_{f,g})$ is
    \[\widehat{D}_{f,g}/\mathrm{Geo}=\left(\widehat{d}_g/\{d_f=0\}\right)/\mathrm{Geo}=\widehat{d}_g/R_f,\]
    where $R_f$ is the coarsest equivalence relation that contains $\{d_f=0\}$ and $\mathrm{Geo}$.

    On the other hand, $\cU_{f,g}$ corresponds to $I/\{\widehat{D}'_{f,g}=0\}$, and $\widehat{D}'_{f,g}=\widehat{d}'_g/\{d_f=0\}$. 
  However, by comparing \eqref{distance arbre} with \eqref{distance cycle}, it is easy to see that $\widehat{d}'_{g}=\widehat{d}_g/\mathrm{Geo}$. Moreover, since we are gluing alongside geodesics, if $\widehat{d}_g'(s,t)=0$, then either $\widehat{d}_g(s,t)=0$ or $(s,t)\in\mathrm{Geo}$. Therefore, the equivalence relation $R$ and $\widehat{D}_{f,g}$ are the same, and we have
    \[\widehat{D}'_{f,g}=\widehat{d}'_g/\{d_f=0\}=\left(\widehat{d}_g/\mathrm{Geo}\right)/\{d_f=0\}=\widehat{d}_g/R_f.\]
    
Therefore, the metric spaces $\cU_{f,g}$ and $\mathrm{Glue}(\mathrm{Qd}_{f,g})$ are the same. Using the same ideas, one can show that the measures also coincide. Moreover, it is clear that the distinguished points of $\mathrm{Qd}_{f,g}$ correspond to those of $\cU_{f,g}$ after the gluing, and that the delay is preserved. To conclude, note that by definition, the trajectories encoding each side of $\eqref{recollement}$ have the same distribution, which concludes the proof. 
\end{proof}

Let us make an important remark. First, if $\zeta$ is a Brownian bridge of duration $a$ between $0$ and $0$, it is known that the Vervaat transform of $\zeta$ is a Brownian excursion of duration $a$ \cite{Vervaat}. Then, note that replacing $\zeta$ by its Vervaat transform just induces an additive shift of the labelling process $W$ in the definition of $\textbf{Unic}^{(a)}$, which does not change the resulting metric space. Therefore, by combining these observations, we see that the distribution of 
 $\cS_b$ does not change if we replace the Brownian bridge $\zeta$ by a Brownian excursion in the definition of the probability measure $\textbf{Unic}^{(a)}$ (we used the fact that the Brownian bridge is independent of the Brownian snake).

 \textbf{For the rest of this paper, we will replace the Brownian bridge by a Brownian excursion in the definition of $\textbf{Unic}^{(a)}$.}

To conclude this section, let us mention some results of the Brownian sphere that also hold for the biased Brownian sphere. We omit the proofs, since they are adaptations of the original ones. First, we give the counterpart of Proposition \ref{identification}.

\begin{proposition}\label{identification2}
    The following holds almost surely. For every $s,t\in[-\cA,1-\cA]$ with $s\neq t$ such that $D(s,t)=0$, we have 
    \[d_\e(s,t)=0\quad\text{ or }\quad \widehat{d}'_Z(s,t)=0.\]
    Moreover, the two equalities cannot hold simultaneously.
\end{proposition}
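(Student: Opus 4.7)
The plan is to adapt the proof of Proposition \ref{identification} given in \cite{TopologicalStructure} to the biased setting. By the construction of $\cS_b^{(1)}$ as the quotient $[-\cA,1-\cA]/\{\widehat{D}'_{\e,Z}=0\}$ with $\widehat{D}'_{\e,Z}=\widehat{d}'_Z/\{d_\e=0\}$, the relation $D(s,t)=0$ unfolds to the existence of a finite sequence $s=s_0,t_0,s_1,t_1,\dots,s_n,t_n=t$ such that $\widehat{d}'_Z(s_i,t_i)=0$ for every $i$ and $d_\e(t_i,s_{i+1})=0$ for every $i<n$. The aim is to show that this chain can always be reduced to $n=0$, yielding the disjunction in the statement.

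First, localize on each side of the cycle. Since $\widehat{d}'_Z(u,v)<\infty$ forces $u$ and $v$ to lie in the same half $I_+$ or $I_-$, a chain as above can pass between the two halves only through cycle points of $\cT'_\e$, whose labels are controlled by the independent Brownian excursion $\zeta$ appearing in the definition of $\textbf{Unic}^{(1)}$. On each half, the pair $(\e,Z)$ is, up to an additive shift by $\zeta$ that is locally constant on subtrees grafted at a given cycle point, exactly the pair coding a Brownian sphere with a first-passage bridge in place of the normalized excursion. Hence the classical snake level-minimum argument of \cite{TopologicalStructure} applies \emph{verbatim} inside each half and produces the required collapse for chains that remain on one side.

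The main obstacle is to rule out chains that repeatedly cross the cycle. For this one relies on the independence of $\zeta$ from the two Brownian snakes on $I_\pm$ to guarantee the almost sure absence of accidental label coincidences across the cycle, together with the extra term $d_\e(s,\inf I)+d_\e(0,t)$ in \eqref{distance cycle}, which encodes precisely the cycle identification and is compatible with the constraint $Z(0)=Z(\inf I)=Z(\sup I)$ built into the definition of a labelled unicycle trajectory. Combined with the snake level-minimum argument, this forces any alternating chain to collapse to length $0$. Finally, the exclusivity of the two equalities for $s\neq t$ follows from the same non-degeneracy: on any nondegenerate subtree of $\cT_\e$ the snake $Z$ has strictly positive oscillation almost surely, so $d_\e(s,t)=0$ and $\widehat{d}'_Z(s,t)=0$ cannot hold simultaneously.
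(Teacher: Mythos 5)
The paper itself omits the proof of this proposition, stating only that it is an adaptation of the argument in \cite{TopologicalStructure}. Your attempt is in the right spirit, but the central step is not justified.

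You claim the argument of \cite{TopologicalStructure} "applies verbatim inside each half" because each half is, up to the $\zeta$-shift, a Brownian sphere driven by a first-passage bridge. This is not accurate. Restricted to $I_+$ (resp.\ $I_-$), the pair $(X,W)$ codes a \emph{quadrilateral with geodesic sides} — a topological disk with a four-geodesic boundary — not a sphere; this is exactly the decomposition $\cU_{X,W}=\mathrm{Glue}(\mathrm{Qd}_{X,W})$ established in Theorem \ref{equiv def}. Le Gall's identification proof for the Brownian map relies in an essential way on the quotient being a topological sphere, so it does not transfer verbatim to a disk-like piece; the corresponding statement for quadrilaterals with geodesic sides should instead be taken from the structural theory of \cite{Compactbrowniansurfaces}. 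Your handling of chains crossing the cycle is also too thin: appealing only to the independence of $\zeta$ rules out accidental label coincidences across the cycle, but the real difficulty is to control the collapse of near-optimal chains that cross the cycle, which that independence alone does not address.

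The cleaner adaptation, consistent with the paper's own architecture, starts from Theorem \ref{equiv def}: first establish, using \cite{Compactbrowniansurfaces}, that $\widehat{D}_{X,W}(s,t)=0$ forces $d_X(s,t)=0$ or $\widehat{d}_W(s,t)=0$ on the quadrilateral $\mathrm{Qd}_{X,W}$; then observe that the gluing replaces $\widehat{d}_W$ by $\widehat{d}'_W$ (since, as the proof of Theorem \ref{equiv def} shows, $\widehat{d}'_W=\widehat{d}_W/\mathrm{Geo}$, and pairs in $\mathrm{Geo}$ share the same label $W$), so the dichotomy survives the gluing without introducing a new type of identification. Your last paragraph on mutual exclusivity is fine in substance — the snake almost surely has strictly positive oscillation on any nondegenerate subtree, and this a.s.\ property does carry over to the tree coded by $X-m'_X$ — but the body of the argument needs the quadrilateral-and-gluing framework, not a sphere argument on each side.
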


Then, we introduce the notion of simple geodesics in $\cS_b$. Fix $s\in I_-$. If $s\leq s_*$, we define 

\[\Gamma_s(r)=\inf\{t\geq s\,:\,W_t=W_s-r\}\quad\text{ for $r\in\R_+$ such that }W_{s_*}\leq W_s-r,\]
and if $s\geq s_*$, we define
\[\Gamma_s(r)=\sup\{t\leq s\,:\,W_t=W_s-r\}\quad\text{ for $r\in\R_+$ such that }W_{s_*}\leq W_s-r.\]
Then, for every $r\in [0,W_s-W_{s_*}]$, we set
\[\gamma_s(r)=p_\cS(\Gamma(r)).\]
Using \eqref{bound}, one can check that for every $s\in I_-$, the curve $\gamma_s$ is a geodesic from $p_{\cS_b}(s)$ to $x_*$. These geodesics are called the \textit{simple geodesics to $x_*$}. Similarly, for every $s\in I_+$, one can define the simple geodesic $\gamma_s$ from $p_{\cS_b}(s)$ to $\overline{x}_*$. The following result is the counterpart of \cite[Theorem 7.6]{geodesic1} for the biased Brownian sphere. 

\begin{proposition}\label{simple geodesics}
    Almost surely, every geodesic from an element of $p_{\cS_b}(I_-)$ to $x_*$ (respectively from an element of $p_{\cS_b}(I_+)$ to $\overline{x}_*$) are simple geodesics. 
\end{proposition}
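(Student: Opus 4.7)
The plan is to follow the strategy used for the Brownian sphere in \cite{geodesic1}, adapted to the unicycle quotient. The first step will be to establish the analogue of \eqref{distance a un point}, namely
\[D(x, x_*) = W_x - W_{s_*} \qquad \text{for every } x = p_{\cS_b}(s) \text{ with } s \in I_-.\]
For the lower bound, I combine the pointwise snake bound $d_W(u,v) \geq |W_u - W_v|$ with the fact that $W_{s_*}$ is the minimum of $W$ on $I_-$ to obtain $\widehat{d}'_W(s, s_*) \geq W_s - W_{s_*}$, and observe that any path realizing $\widehat{D}'_{X,W}(s, s_*)$ cannot save distance by crossing into $I_+$ through identifications coming from $d'_X$, since $\inf_{I_+} W = W_{\overline{s}_*} > W_{s_*}$. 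For the upper bound I simply compute the length of the candidate $\gamma_s$ directly: the minimum property in the definition of $\Gamma_s$ forces $\widehat{d}'_W(\Gamma_s(r_1), \Gamma_s(r_2)) = |r_1 - r_2|$, so $\gamma_s$ is a path from $x$ to $x_*$ of length $W_s - W_{s_*}$, which is also a geodesic.

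Having established the distance formula, I then consider an arbitrary geodesic $\alpha : [0, L] \to \cS_b$ from $x$ to $x_*$, with $L = W_x - W_{s_*}$. The bound $D(\cdot, \cdot) \geq |W_\cdot - W_\cdot|$, combined with the fact that $\alpha$ is a 1-Lipschitz length-$L$ path going from label $W_x$ to label $W_{s_*}$, forces
\[W_{\alpha(r)} = W_x - r \qquad \text{for all } r \in [0, L].\]
This is the standard starting point of Le Gall's uniqueness argument, and is where all subsequent rigidity comes from.

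Next I identify $\alpha$ with $\gamma_s$ pointwise, by showing that at each time $r$ the point $\alpha(r)$ admits a lift in $I_-$ that must coincide with $\Gamma_s(r)$. The main tool here is Proposition \ref{identification2}: if $\alpha(r) = p_{\cS_b}(u)$ for some $u \neq \Gamma_s(r)$, then exactly one of $d_X(u, \Gamma_s(r)) = 0$ or $\widehat{d}'_W(u, \Gamma_s(r)) = 0$ holds. The second alternative is ruled out by the minimality in the definition of $\Gamma_s$ together with a careful analysis of how the labels of $\alpha$ evolve on a dense set of rationals; the first alternative puts $u$ and $\Gamma_s(r)$ in the same class of $\cT'_X$, which via the snake-consistency yields $p_{\cS_b}(u) = \gamma_s(r)$. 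Continuity of $\alpha$ then upgrades the coincidence on a countable dense set to an identity on $[0, L]$.

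The main obstacle with no counterpart in the pure Brownian sphere setting is the presence of the continuum cycle, through which $\alpha$ could a priori move between $I_-$ and $I_+$ via identifications coming from $d'_X$. The delicate point is that this can only happen while the current label $W_x - r$ is still above $W_{\overline{s}_*}$, so a purely energetic argument does not immediately rule it out; one must verify that any excursion of a lift of $\alpha$ into $I_+$ can be rerouted within $I_-$ without increasing length, using again that $\widehat{d}'_W = +\infty$ across sides and that the identifications along the cycle are governed by $d'_X$. Once these crossings are excluded, the argument reduces to \cite{geodesic1}, and the symmetric statement for $s \in I_+$ and $\overline{x}_*$ follows by exchanging the roles of $I_-$ and $I_+$.
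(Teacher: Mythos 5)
Your route is genuinely different from the paper's, which is much shorter: it observes that the law of $(\cS_b,x_*,\overline{x}_*,\mu)$ is absolutely continuous with respect to that of the standard Brownian sphere with two $\mu$-typical marked points (this is exactly formula \eqref{def biais sphere}), so \cite[Corollary 7.5]{geodesic1} transfers verbatim and gives uniqueness of geodesics to $x_*$ from $\mu$-a.e.\ point of $\cS_b$; since $\gamma_s$ is a geodesic it must be \emph{the} geodesic for a.e.\ $x$, and the passage from ``a.e.\ $x$'' to ``every $x$'' is then handled by the argument of \cite[Proposition 23]{geodesicsbettinelli}. That trick sidesteps any re-derivation of Le Gall's rigidity analysis in the unicycle setting. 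You instead propose to redo \cite[Theorem 7.6]{geodesic1} from scratch, which in principle is legitimate but is a lot more work than necessary and, as written, contains a real gap.

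The gap is in your identification step. You invoke Proposition \ref{identification2} to compare a lift $u$ of $\alpha(r)$ with $\Gamma_s(r)$, but that proposition only applies once one already knows $D(u,\Gamma_s(r))=0$, i.e.\ $\alpha(r)=\gamma_s(r)$, which is exactly what you are trying to establish; the dichotomy ``$d_X=0$ or $\widehat{d}'_W=0$'' is a \emph{consequence} of being identified in $\cS_b$, not a criterion you can use to force the identification. The rigidity $W_{\alpha(r)}=W_x-r$ by itself is far from sufficient: many points of $\cS_b$ share a given label, and pinning down $\alpha(r)$ among them is the genuinely hard content of \cite[Theorem 7.6]{geodesic1} (which proceeds through an entirely different, much more delicate argument). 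In addition, your treatment of possible crossings of the cycle into $p_{\cS_b}(I_+)$ is acknowledged to be incomplete; ``one must verify that any excursion \dots can be rerouted'' is exactly the kind of statement the paper avoids having to prove by appealing to absolute continuity. If you want a self-contained argument, the crossing issue can actually be settled cleanly using Proposition \ref{frontier} together with the label rigidity and $\e\ge0$ on the cycle, but the identification step would still need the full machinery of \cite{geodesic1} rather than a single appeal to Proposition \ref{identification2}.
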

\begin{proof}
    We only sketch the proof, since this is an adaptation of known result for the Brownian sphere. Since the law of the biased Brownian sphere is absolutely continuous with respect to the law of the standard Brownian sphere \eqref{def biais sphere}, by \cite[Corollary 7.5]{geodesic1}, we know that for $\mu$-almost every $x\in p_{\cS_b}(I_-)$, there is a unique geodesic between $x$ and $x_*$, which must be the simple geodesic. Then, we can use the same strategy an in \cite[Proposition 23]{geodesicsbettinelli} to show that every geodesic from any $x\in p_{\cS_b}(I_-)$ to $x_*$ are simple geodesics. 
\end{proof}

\subsection{Delayed Voronoï cells}

In this section, we present the link between the construction of the biased Brownian sphere described in Section \ref{construction} and the delayed Voronoï cells.

For any metric space $(X,d)$ with two distinguished points $x$ and $y$, and any parameter $\delta\in (-d(x,y),d(x,y))$, we define 
\begin{align*}
    \Theta_\delta&:=\{z\in X: d(x,z)\leq d(y,z)+\delta\},\\
    \overline{\Theta}_\delta&:=\{z\in X: d(x,z)\geq d(y,z)+\delta\}.
\end{align*}

\begin{definition}
    The sets $\Theta_\delta$ and $\overline{\Theta}_\delta$ are called the $\delta$-delayed Voronoï cells of $X$ with respect to $x$ and $y$. 
\end{definition}

This notion was already considered with any number of points in \cite{tessalations} for maps of arbitrary genus. Note that when $\delta=0$, we recover the usual notion of Voronoï cells. Delayed Voronoï cells can be seen as typical Voronoï cells, but where we give an advantage to either $x$ or $y$ (depending on the parameter $\delta$).

Let us see how the explicit construction of the biased Brownian sphere gives information about delayed Voronoï cells. Let $I$ be the (random) interval used to construct the biased Brownian sphere, that is $I=[-\cA,1-\cA]$. Recall that if $s,t\in I$ are such that $p_{\cS_b}(s)=p_{\cS_b}(t)$, then $W_s=W_t$. Therefore, for any $x\in\cS_b$ we can unambiguously set $W_x:=W_s$, where $s\in I$ satisfies $p_{\cS_b}(s)=x$. We define 
 \[W_*:=\inf_{t\in I_+}W_t=W_{x_*}\quad\text{ and }\quad\overline{W}_*=\inf_{t\in I_-}W_t=W_{\overline{x}_*}.\]

The classical bound \eqref{bound} shows that for every $x\in\cS_b$, $D(x,x_*)\geq W_x-W_*$ and $D(x,\overline{x}_*)\geq W_x-\overline{W}_*$. On the other hand, by considering the simple geodesics as in Section \ref{continuous quad}, one can see that 
 \begin{equation}\label{egalite distance}
 D(x,x_*)=W_x-W_*\quad\text{ for every $x\in p_{\cS_b}(I_+)$, and }\quad D(x,\overline{x}_*)=W_x-\overline{W}_*\quad\text{ for every $x\in p_{\cS_b}(I_-)$}.
\end{equation}
Then, for every $s\in[0,\cL]$, set 
\[\tau_s=\inf\{t\in I_+:X_t=-s\}\quad\text{ and }\quad\overline{\tau}_s=\sup\{t\in I_-:X_t=-s\}.\]

It is easy to see that for every $s\in[0,\cL]$, $p_{\cS_b}(\tau_s)=p_{\cS_b}(\overline{\tau}_s)$, and that $p_{\cS_b}(\tau_0)=p_{\cS_b}(\tau_\cL)$. Set $(\eta_s)_{s\in[0,\cL]}:=p_{\cS_b}((\tau_s)_{s\in[0,\cL]})$, which is a closed path in $\cS_b$. 
\begin{proposition}\label{frontier}
    The path $\eta$ separates $p_{\cS_b}(I_+)$ from $p_{\cS_b}(I_-)$ in $\cS_b$.
\end{proposition}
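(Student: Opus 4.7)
The plan is to show that $p_{\cS_b}(I_+) \cap p_{\cS_b}(I_-) = \eta$, from which the separation property follows by a short topological argument.

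The inclusion $\eta \subseteq p_{\cS_b}(I_+) \cap p_{\cS_b}(I_-)$ is immediate from the definition: for every $s \in [0, \cL]$ one has $\eta_s = p_{\cS_b}(\tau_s) = p_{\cS_b}(\overline{\tau}_s)$ with $\tau_s \in I_+$ and $\overline{\tau}_s \in I_-$. For the reverse inclusion, I would take any $s \in I_+$ and $t \in I_-$ satisfying $p_{\cS_b}(s) = p_{\cS_b}(t)$, i.e., $D(s,t) = 0$. By the analog of Proposition~\ref{identification2} in the biased setting, either $d'_X(s,t) = 0$ or $\widehat{d}'_W(s,t) = 0$; the second option is excluded because $\widehat{d}'_W$ is identically $+\infty$ on $I_+ \times I_-$ by definition. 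Hence $s$ and $t$ code the same point of the continuum unicycle $\cT'_X$, and since they lie on opposite faces the only way this can happen is if the common point belongs to the cycle. A direct case analysis of the three terms in \eqref{distance cycle} shows that there exists $r \in [0, \cL]$ with both $s$ and $t$ equivalent to $\tau_r$ in $\cT'_X$, whence $p_{\cS_b}(s) = \eta_r$.

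Once the identity $p_{\cS_b}(I_+) \cap p_{\cS_b}(I_-) = \eta$ is established, the separation follows directly. The sets $p_{\cS_b}(I_\pm)$ are closed in $\cS_b$ as continuous images of compact subsets of $I$, and they cover $\cS_b$. Consequently, $A_\pm := p_{\cS_b}(I_\pm) \setminus \eta$ are disjoint, both closed (hence both open) in $\cS_b \setminus \eta$, and their union is $\cS_b \setminus \eta$. To confirm that this gives a non-trivial separation, I note that $A_\pm$ are almost surely non-empty: on the cycle one has $W_t = \zeta_{|X_t|}$, since the Brownian snake $Z$ vanishes at the roots of its driving function, so that $W$ is non-negative along the cycle; on the other hand $W_{s_*} = \inf_{I_-} W < 0$ almost surely by standard Brownian snake estimates, so $s_*$ is not on the cycle and thus $x_* \in A_-$, and symmetrically $\overline{x}_* \in A_+$.

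The main obstacle is the case analysis hidden in the reverse inclusion: one must carefully rule out the two non-standard ``shortcut'' terms $d_X(s, \inf I) + d_X(0, t)$ and $d_X(t, \inf I) + d_X(0, s)$ in the infimum defining $d'_X$, crucially using that $X$ is a first-passage bridge on each side of $0$ that attains $-\cL$ only at the endpoints of $I$; this forces the only vanishing case to be the standard $d_X(s,t) = 0$, which in turn pins $X_s = X_t = -r$ and locates both points on the cycle at level $r$. Once this identification lemma is in hand, the rest of the proof is a soft topological argument combined with the mild genericity statement about the snake labels.
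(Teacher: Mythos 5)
Your proposal is correct and rests on the same essential ingredient as the paper's proof, namely the biased analogue of Proposition~\ref{identification2}, which is used to show that any $D$-identification between a time in $I_+$ and a time in $I_-$ must land on $\eta$; you only repackage the reduction, proving the set identity $p_{\cS_b}(I_+)\cap p_{\cS_b}(I_-)=\eta([0,\cL])$ and deducing separation via a clopen decomposition of $\cS_b\setminus\eta$, whereas the paper follows an arbitrary continuous path from $p_{\cS_b}(I_+)$ to $p_{\cS_b}(I_-)$ and extracts a subsequence at the first entry time into $p_{\cS_b}(I_-)$. These are two equivalent soft reductions of the separation statement to the same local identification lemma. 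Two small touch-ups to your write-up: (i) $\widehat{d}'_W$ is \emph{not} identically $+\infty$ on $I_+\times I_-$ when one of $s,t$ equals $0$ (since $0\in I_+\cap I_-$), so this corner case should be treated separately as the paper does, although it is immediate because $p_{\cS_b}(0)=\eta_0$; and (ii) the ``main obstacle'' paragraph is slightly mis-stated: the two shortcut terms in $d'_X$ are not actually ruled out — they can vanish, but the first-passage-bridge structure forces $\{s,t\}\subseteq\{0,\inf I,\sup I\}$ in that event, which again projects onto $\eta_0=\eta_\cL$, so the case analysis \emph{locates} rather than \emph{excludes} those degenerate cases. Neither issue affects the validity of the argument.
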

\begin{proof}
    Fix $x\in p_{\cS_b}(I_+)$, $y\in p_{\cS_b}(I_-)$, and a continuous path $\gamma:[0,1]\rightarrow\cS_b$ such that $\gamma(0)=x,\gamma(1)=y$. We will prove that $\gamma$ must intersect $\eta$. Set
    \[\alpha=\inf\{t\in[0,1],\,\gamma(t)\in p_{\cS_b}(I_-)\}.\]
    By definition of $x$ and $y$, we know that $\alpha<\infty$. Consider $s\in[0,1]$ such that $p_{\cS_b}(s)=\gamma(\alpha)$. Since  $p_{\cS_b}(I_-)$ is a closed set, we can choose $s$ so that $s\in I_-$. On the other hand, for every $n\in\N$, consider $s_n\in I_+$ such that  $p_{\cS_b}(s_n)=\gamma((\alpha-1/n)\vee 0)$. Since $I_+$ is compact, we can extract a subsequence of $(s_n)_{n\in \N}$ that converges towards some $s_\infty\in I_+$. Moreover, by continuity of $\gamma$, it holds that $p_{\cS_b}(s_\infty)=\gamma(\alpha)$, which means that $D(s,s_\infty)=0$. By Proposition \ref{identification2}, this means that either $d_X(s,s_\infty)=0$ or $\widehat{d}'_{W}(s,s_\infty)=0$. If $s=0$ or $s_\infty=0$, then $\gamma(\alpha)=\eta(0)$, which gives the result. Otherwise, we have $\widehat{d}'_{W}(s,s_\infty)=\infty$, so that $d_X(s,s_\infty)=0$. However, it is easy to see that the image by the canonical projection of the elements $s_-\in I_-,s_+\in I_+$ satisfying $d_X(s-,s_+)=0$ is exactly $\eta$, which concludes the proof.
\end{proof}

Consequently, using the bound \eqref{egalite distance}, we obtain that for every $x\in p_{\cS_b}(I_-)$, 
\begin{equation} \label{ineq dist 1}
D(x,x_*)=\inf_{s\in[0,\cL]}D(x,\eta_s)+D(\eta_s,x_*)\geq\inf_{s\in[0,\cL]}|W_x-W_{\eta_s}|+|W_{\eta_s}-W_*|\geq W_x-W_*,
\end{equation}

and for every $x\in p_{\cS_b}(I_+)$, 
 \begin{equation}\label{ineq dist 2}
D(x,\overline{x}_*)=\inf_{s\in[0,\cL]}D(x,\eta_s)+D(\eta_s,\overline{x}_*)\geq\inf_{s\in[0,\cL]}|W_x-W_{\eta_s}|+|W_{\eta_s}-\overline{W}_*|\geq W_x-\overline{W}_*.
 \end{equation}
 Combining these inequalities with \eqref{egalite distance}, we have 
 \begin{equation}\label{delayed cells}
 p_{\cS_b}(I_+)=\{x\in\cS_b,\,D(x,x_*)\leq D(x,\overline{x}_*)-W_*+\overline{W}_*\},
 \end{equation}
 and 
 \[p_{\cS_b}(I_-)=\{x\in\cS_b,\,D(x,x_*)\geq D(x,\overline{x}_*)-W_*+\overline{W}_*\}.\]

Therefore, $p_{\cS_b}(I_-)$ and $ p_{\cS_b}(I_+)$ correspond the $\Delta$-delayed Voronoï cells of $\cS_{b}$ with respect to $x_*$ and $\overline{x}_*$.
Moreover, the inequalities \eqref{egalite distance} show that for every $s\in[0,\cL]$,
\begin{equation}\label{chepa}
D(\eta_s,x_*)=D(\eta_s,\overline{x}_*)-W_*+\overline{W}_*.
\end{equation}
The following proposition shows that these are the only points that satisfy this equality. 
\begin{proposition}\label{frontiere}
    Almost surely, we have 
    \[\{\eta_s,\,s\in[0,\cL]\}=\{x\in\cS_b,\,D(x,x_*)= D(x,\overline{x}_*)-W_*+\overline{W}_*\}.\]
\end{proposition}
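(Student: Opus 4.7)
My plan is to prove the two inclusions separately. The forward inclusion $\{\eta_s : s \in [0, \cL]\} \subseteq \{x : D(x, x_*) = D(x, \overline{x}_*) - W_* + \overline{W}_*\}$ is essentially already in \eqref{chepa}: for each $s \in [0, \cL]$, the point $\eta_s$ has representatives $\tau_s \in I_+$ and $\overline{\tau}_s \in I_-$, so applying \eqref{egalite distance} on each side gives $D(\eta_s, x_*) = W_{\eta_s} - W_*$ and $D(\eta_s, \overline{x}_*) = W_{\eta_s} - \overline{W}_*$, which subtract to the desired identity.

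For the reverse inclusion, I argue by contradiction: suppose $x \in \cS_b \setminus \{\eta_s : s\}$ satisfies the equation. By symmetry between $I_+$ and $I_-$, I may assume $x \in p_{\cS_b}(I_+) \setminus \eta$. Combining the equation with \eqref{egalite distance} forces $D(x, \overline{x}_*) = W_x - \overline{W}_*$. I then pick any geodesic $\gamma$ from $x$ to $\overline{x}_*$; by Proposition \ref{frontier} it must intersect $\eta$, at some $\eta_{s^*}$ say, and additivity of length along $\gamma$ combined with $D(\eta_{s^*}, \overline{x}_*) = W_{\eta_{s^*}} - \overline{W}_*$ (from \eqref{egalite distance} applied to $\overline{\tau}_{s^*}$) forces $D(x, \eta_{s^*}) = W_x - W_{\eta_{s^*}}$.

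Next, concatenating the initial piece $x \to \eta_{s^*}$ of $\gamma$ with the simple geodesic from $\eta_{s^*}$ to $x_*$ in $p_{\cS_b}(I_+)$ based at $\tau_{s^*}$ produces a path of total length $W_x - W_* = D(x, x_*)$, hence a geodesic from $x$ to $x_*$. Proposition \ref{simple geodesics} identifies it with a simple geodesic $\gamma_t$ for some $t \in I_+$ with $p_{\cS_b}(t) = x$, and in particular $\gamma_t$ visits $\eta_{s^*}$, i.e., $p_{\cS_b}(\Gamma_t(r)) = \eta_{s^*}$ for some $r$. Since both $\Gamma_t(r)$ and $\tau_{s^*}$ lie in $I_+$ and project to $\eta_{s^*}$, Proposition \ref{identification2}, together with a preliminary verification that a.s.\ the preimages of a point of $\eta$ reduce to the canonical pair $\{\tau_{s^*}, \overline{\tau}_{s^*}\}$, forces $\Gamma_t(r) = \tau_{s^*}$.

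The main obstacle is the final step: from the fact that the simple geodesic $\gamma_t$ visits $\tau_{s^*}$, I must derive a contradiction with $x \notin \eta$. The strategy is to exploit the hypothesis $x \notin \eta$, which means $t$ has no partner in $I_-$, together with the structural information $\Gamma_t(r) = \tau_{s^*}$ and a second application of Proposition \ref{identification2}: comparing the initial segment of $\gamma_t$ from $t$ to $\tau_{s^*}$ with the $I_-$-side simple geodesic from $\overline{\tau}_{s^*}$ toward $\overline{x}_*$ should produce a second extraneous identification between points of $I$ which Proposition \ref{identification2} forbids, yielding the contradiction and hence $x \in \eta$.
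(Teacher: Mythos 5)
Your proposal follows the same overall strategy as the paper's proof: assume the equality for some $x\in p_{\cS_b}(I_+)\setminus\eta([0,\cL])$, take a geodesic from $x$ to the distinguished point on the other side of $\eta$ (which must cross $\eta$ by Proposition \ref{frontier}), concatenate the initial segment with a simple geodesic from the crossing point back to the distinguished point on $x$'s side, identify the concatenation as a simple geodesic by Proposition \ref{simple geodesics}, and derive a contradiction from the fact that this simple geodesic visits $\eta$. Which of $x_*$ or $\overline{x}_*$ one aims at first is a symmetric notational choice with no mathematical content.

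The gap is in the final step, and you acknowledge it yourself. The paper closes by asserting that ``Proposition \ref{identification2} implies that simple geodesics do not hit the range of $\eta$''; the substance of this (made explicit in the proof of Proposition \ref{simple curve}) is that the times $\tau_s$ are one-sided local minima of the process $X$ but a.s.\ \emph{not} one-sided local minima of $W$, by the adaptation of \cite[Lemma 3.2]{sphericity}, whereas every time $\Gamma_t(r)$ traversed by a simple geodesic \emph{is} a one-sided local minimum of $W$, so a simple geodesic cannot pass through a representative of a point of $\eta$. Your last paragraph only describes a ``strategy'' that ``should produce a second extraneous identification,'' which is not a derivation. Moreover, the intermediate claim that ``a.s.\ the preimages of a point of $\eta$ reduce to the canonical pair $\{\tau_{s^*},\overline{\tau}_{s^*}\}$'' is not established and is not obviously uniform over the random $s^*$ produced by your argument (there is an exceptional set of $s$ for which $\tau_s$ is a branch point of the coding tree, and the base point $\eta_0=p_{\cS_b}(0)$ is special); your route would be cleaner if you avoided this claim and instead showed directly, as the paper does, that a time of the form $\Gamma_t(r)$ cannot coincide with, nor be $D$-identified with, a cycle-time $\tau_{s}$, using the local-minimum dichotomy above together with Proposition \ref{identification2}.
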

\begin{proof}
    Fix $x\in\cS_b\backslash\eta([0,\cL])$. By \eqref{chepa}, we need to check that
    \[D(x,x_*)\neq D(x,\overline{x}_*)-W_*+\overline{W}_*.\]
    Without loss of generality, suppose that $x\in p_{\cS_b}(I_+)$. By \eqref{delayed cells}, we just need to show that $D(x,x_*)< D(x,\overline{x}_*)-W_*+\overline{W}_*.$ First, as in \eqref{ineq dist 2}, observe that 
    \[D(x,x_*)\geq \inf_{s\in[0,\cL]}D(x,\eta_s)-W_*>-W_*.\]
    Using \eqref{egalite distance}, this proves the result when $W_x\leq 0$.
  In the case $W_x>0$, we argue by contradiction. Suppose that $D(x,x_*)=D(x,\overline{x}_*)-W_*+\overline{W}_*$, and consider a geodesic $\gamma$ between $x$ and $x_*$. First, note that for every $t\in[0,D(x,x_*)]$, it holds that $W_{\gamma(t)}=W_x-t$ (this is just a consequence of \eqref{bound} and the fact that $\gamma$ is a geodesic). Then, we define 
  \[\tau=\inf\left\{t\geq0,\,\gamma(t)\in\eta([0,\cL])\right\}.\]
  By Proposition \ref{frontier}, we have $\tau<\infty$.
  Then, consider a geodesic $\tilde{\gamma}$ between $\gamma(\tau)$ and $\overline{x}_*$. An adaptation of the main result of \cite{geodesic1} shows that for any $x'\in p_{\cS_b}(I_+)$, all the geodesics from $x'$ to $\overline{x}_*$ are simple geodesics. In particular, for every $t\in[0,W_{\gamma(\tau)}-W_*]$, we have $W_{\tilde{\gamma}(t)}=W_{\gamma(\tau)}-t$. Consequently, the concatenation of $\gamma|_{[0,\tau]}$ and $\tilde{\gamma}$ is also a geodesic between $x$ and $\overline{x}_*$. As mentioned previously, it must be a geodesic from $x$ and $\overline{x}_*$. However, by Proposition \ref{simple geodesics}, it is a simple geodesic, and Proposition \ref{identification2} implies that simple geodesics do not hit the range of $\eta$, which gives a contradiction. This concludes the proof.
\end{proof}

Hence, $\eta$ corresponds to the boundary of $\Theta_\Delta$ and $\overline{\Theta}_\Delta$. Therefore, even though the curve $\eta$ is fractal and of infinite length, we can interpret the random variable $\cL$ as the perimeter of the frontier of the $\Delta$-delayed Voronoï cells. In particular, with this interpretation, the joint distribution of the perimeter and the volume of $\Theta_\Delta$ has density \eqref{densité jointe longueur}.

\begin{remark}
    One may consider another notion of perimeter for the curve $\eta$, which is its Minkowski content. We believe that these two notions are the same, up to a deterministic multiplicative constant. Note that similar results are known for the boundary of a Brownian disk \cite[Theorem 9]{Diskboundary}, of hulls \cite[Proposition 1.1]{Hullprocess2016}, and for a portion of the boundary of the Voronoï cells with respect to a boundary segment in the Brownian disk \cite[Section 8.3]{rierathese}.
\end{remark}

\begin{corollary}\label{distance frontiere}
    As one moves along the boundary of $\Theta_\Delta$ with the parametrization described above, the distances to $x_*$ and $\overline{x}_*$ evolve, up to random additive constants, as a Brownian bridge of length $\cL$. 
\end{corollary}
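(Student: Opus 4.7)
The plan is to use the explicit construction of $\cS_b$ provided by Theorem \ref{equiv def} and evaluate the labels at the boundary curve $\eta_s$ directly in terms of the driving processes $(X, Z, \zeta)$. Since $\eta_s = p_{\cS_b}(\tau_s) = p_{\cS_b}(\overline{\tau}_s)$ with $\tau_s \in I_+$ and $\overline{\tau}_s \in I_-$, the two instances of \eqref{egalite distance} yield
\[
D(\eta_s, x_*) = W_{\tau_s} - W_*, \qquad D(\eta_s, \overline{x}_*) = W_{\overline{\tau}_s} - \overline{W}_*.
\]
The problem therefore reduces to identifying $s \mapsto W_{\tau_s}$ (equivalently $W_{\overline{\tau}_s}$) with the Brownian bridge $\zeta$ of duration $\cL$ appearing in the definition of $\textbf{Unic}^{(a)}$.

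The key step I would carry out is the identity $W_{\tau_s} = \zeta_s$. By construction $W_t = Z_t + \zeta_{-m'_X(t)}$; the first-passage definition of $\tau_s$ forces $m'_X(\tau_s) = -s$, so $\zeta_{-m'_X(\tau_s)} = \zeta_s$, and it remains to show that $Z_{\tau_s} = 0$. For this I would observe that the function $X - m'_X$ is nonnegative on $I_+$ and vanishes at both $0$ and $\tau_s$, whence $m_{X - m'_X}(0, \tau_s) = 0$ and thus $d_{X - m'_X}(0, \tau_s) = 0$; the covariance formula \eqref{covariance} applied to the Brownian snake driven by $X - m'_X$ then forces $Z_{\tau_s} = Z_0 = 0$. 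A symmetric argument on $I_-$ gives $W_{\overline{\tau}_s} = \zeta_s$, whence
\[
D(\eta_s, x_*) = \zeta_s - W_*, \qquad D(\eta_s, \overline{x}_*) = \zeta_s - \overline{W}_*,
\]
which is the desired statement with random additive constants $-W_*$ and $-\overline{W}_*$ that do not depend on $s$.

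The main subtlety will be promoting the identity $Z_{\tau_s} = Z_0$ from ``almost surely for each fixed $s$'', which is immediate from \eqref{covariance}, to ``almost surely, simultaneously for every $s \in [0, \cL]$''. This will follow from the continuity of $Z$ combined with the continuity of $s \mapsto \tau_s$ on $[0, \cL]$, which is standard for first-passage bridges of Brownian motion. A final point worth flagging is the excursion substitution adopted at the end of Section \ref{construction}: the exact same computation then gives $W_{\tau_s} = \e_s$ with $\e$ a Brownian excursion of duration $\cL$, and the corollary's ``Brownian bridge'' must be read modulo the Vervaat transform, whose global shift is absorbed into $W_*$ and $\overline{W}_*$.
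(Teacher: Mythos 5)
Your computation is exactly the one the paper's one-line proof is gesturing at: $m'_X(\tau_s)=-s$ by first passage, $Z_{\tau_s}=Z_0=0$ by the snake property for $X-m'_X$ (which vanishes at $0$ and at $\tau_s$ and is nonnegative in between), hence $W_{\tau_s}=\zeta_s$, and \eqref{egalite distance} together with Proposition \ref{frontiere} then give the statement. Two small corrections to the side remarks. First, the map $s\mapsto\tau_s$ is \emph{not} continuous for a first-passage bridge: it is the right-continuous inverse of the running infimum of $X$, a subordinator whose jumps occur at the levels of excursions of $X$ above its past minimum. Right-continuity of $\tau$ plus continuity of $Z$ already yields what you want, and in fact no promotion is needed at all, since the paper takes a continuous modification of the snake for which $Z_s=Z_t$ on the whole set $\{d_{X-m'_X}(s,t)=0\}$ simultaneously almost surely. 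Second, your flag about the excursion substitution is apt, but the Vervaat transform carries a random \emph{cyclic time shift} in addition to a vertical shift, so it cannot simply be absorbed into $W_*$ and $\overline{W}_*$: the cleanest reading of the corollary is that it invokes the original, pre-substitution version of $\mathbf{Unic}^{(a)}$, under which $(W_{\tau_s})_{s\in[0,\cL]}$ is literally the bridge $\zeta$ and the stated ``random additive constants'' are $-W_*$ and $-\overline{W}_*$.
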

\begin{proof}
    This immediately follows from the explicit construction of Section \ref{construction}, together with \eqref{frontiere}
\end{proof}

The following proposition identifies the law of $\mathrm{Vol}(\Theta_\Delta)$.
\begin{proposition}[Law of the volumes]\label{volume cells}
    Let $\Theta_\Delta$, $\overline{\Theta}_\Delta$ be the $\Delta$-delayed Voronoï cells of $x_*$ and $\overline{x}_*$ in $\left(\cS_b,D,x_*,\overline{x}_*,\Delta\right)$. Then, $\left(\mathrm{Vol}(\Theta_\Delta),\mathrm{Vol}(\overline{\Theta}_\Delta)\right)$ follows a Dirichlet distribution with parameter $(1/4,1/4)$. 
\end{proposition}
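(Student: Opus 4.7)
The approach is to use the explicit construction from Theorem \ref{equiv def} and reduce the statement to integrating out $\cL^{(1)}$ from the joint density \eqref{densité jointe longueur}. Under $\textbf{Unic}^{(1)}$, the biased Brownian sphere is realized as $\cU_{X,W}$ on the interval $I=[-(1-\cA^{(1)}),\cA^{(1)}]$, and $\mathrm{Vol}^{(1)}$ is the pushforward of Lebesgue measure on $I$ by the canonical projection $p_{\cS_b}$.

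The first step is to identify $\mathrm{Vol}(\Theta_\Delta)$ with $\cA^{(1)}$. From \eqref{delayed cells} together with $\Delta=\overline{W}_*-W_*$, and using Propositions \ref{frontier} and \ref{frontiere}, the images $p_{\cS_b}(I_+)$ and $p_{\cS_b}(I_-)$ coincide with $\Theta_\Delta$ and $\overline{\Theta}_\Delta$ up to the common frontier $\eta([0,\cL])$. Proposition \ref{identification2} ensures that any identification between a point of $I_+$ and a point of $I_-$ takes place along $\{d_X=0\}$, whose image is exactly $\eta$; since $\eta$ is a curve in the four-dimensional space $\cS_b$ and $\mathrm{Vol}^{(1)}$ has no atoms, $\mathrm{Vol}^{(1)}(\eta)=0$, so that the set of $t\in I_-$ whose $p_{\cS_b}$-image lies in $p_{\cS_b}(I_+)$ has zero Lebesgue measure. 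Consequently
\[\mathrm{Vol}(\Theta_\Delta)=\mathrm{Leb}(I_+)=\cA^{(1)},\qquad\mathrm{Vol}(\overline{\Theta}_\Delta)=1-\cA^{(1)}.\]

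The second step is the marginal computation: integrate \eqref{densité jointe longueur} (taken with $a=1$) in the variable $x$. The substitution $u=x^2/(2y(1-y))$ turns the $x$-integral into a Gamma integral yielding the factor $2^{-1/4}\Gamma(3/4)(y(1-y))^{3/4}$, so that the marginal density of $\cA^{(1)}$ at $y\in(0,1)$ equals
\[\frac{\Gamma(3/4)}{\sqrt{2\pi}\,\Gamma(1/4)}\,(y(1-y))^{-3/4}.\]
Euler's reflection formula $\Gamma(1/4)\Gamma(3/4)=\pi/\sin(\pi/4)=\pi\sqrt{2}$ rewrites the constant as $\sqrt{\pi}/\Gamma(1/4)^2=1/B(1/4,1/4)$, which is precisely the normalizing constant of the Beta$(1/4,1/4)$ density. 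Since $(\mathrm{Vol}(\Theta_\Delta),\mathrm{Vol}(\overline{\Theta}_\Delta))$ lies on the simplex $v_1+v_2=1$, this is exactly the Dirichlet$(1/4,1/4)$ distribution claimed.

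The main obstacle is really the bookkeeping of the first step: identifying $\Theta_\Delta$ with $p_{\cS_b}(I_+)$ and verifying that the interface $\eta$ carries zero Lebesgue mass in $I$, which relies on Proposition \ref{identification2} and the absence of atoms for $\mathrm{Vol}$. Once that is in place, the rest is a routine Gaussian integral and a single application of the reflection formula.
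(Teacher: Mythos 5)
Your proof is correct and follows the same approach as the paper's: identify $\mathrm{Vol}(\Theta_\Delta)$ with $\cA^{(1)}$ under $\textbf{Unic}^{(1)}$, integrate out the perimeter variable from the joint density \eqref{densité jointe longueur} via a Gamma-integral substitution, and apply Euler's reflection formula to recognize the Beta$(1/4,1/4)$ density. The only difference is that you spell out the identification $\mathrm{Vol}(\Theta_\Delta)=\cA^{(1)}$ (via Propositions \ref{frontier}, \ref{frontiere}, \ref{identification2} and a measure-zero argument for the interface $\eta$), whereas the paper takes this as already established from the discussion preceding the statement.
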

\begin{proof}
    We just need to show that $\mathrm{Vol}(\Theta_\Delta)$ is a Beta random variable of parameter $(1/4,1/4)$. As mentioned above, the law of $\mathrm{Vol}(\Theta_\Delta)$ is the same as the law of $\cA$ under $\textbf{Unic}^{(1)}$. Therefore, by \eqref{densité jointe longueur}, the density of $\mathrm{Vol}(\Theta_\Delta)$ at $y\in[0,1]$ is given by 
    \begin{align*}
       &\frac{1}{2^{1/4}\Gamma(1/4)\sqrt{\pi}}\int_{R_+}\frac{x^{1/2}}{(y(1-y))^{3/2}}\exp\left(-\frac{x^2}{2y(1-y)}\right)dx\\
       &=\frac{1}{2^{1/4}\Gamma(1/4)\sqrt{\pi}}\frac{1}{(y(1-y))^{3/2}}\int_{R_+}\frac{(y(1-y))^{3/4}}{(2u)^{1/4}}\exp(-u)du\\
        &=\frac{\Gamma(3/4)}{\Gamma(1/4)\sqrt{2\pi}}\frac{1}{(y(1-y))^{3/4}}.
    \end{align*}
    By Euler's reflection formula, we have 
    \[\frac{\Gamma(3/4)}{\Gamma(1/4)\sqrt{2\pi}}\frac{1}{(y(1-y))^{3/4}}=\frac{\sqrt{\pi}}{\Gamma(1/4)^2\sqrt{2}\sin(\pi/4)}\frac{1}{(y(1-y))^{3/4}}=\frac{1}{\mathrm{B}(1/4,1/4)}\frac{1}{(y(1-y))^{3/4}},\]
    
    which gives the result.
\end{proof}
We conclude this section by giving a geometric property of the curve $\eta$.
\begin{proposition}\label{simple curve}
    Almost surely, the curve $\eta$ is simple. 
\end{proposition}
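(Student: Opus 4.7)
The plan is to argue by contradiction: assume that $\eta_s = \eta_t$ for some $s, t \in [0, \cL)$ with $s < t$, and derive a contradiction. Let $\tau_s, \tau_t \in I_+$ be the associated preimages; these are distinct since $X$ first attains level $-s$ at $\tau_s$ and then descends to $-t < -s$ at $\tau_t > \tau_s$. From $\eta_s = \eta_t$ we have $D(\tau_s, \tau_t) = 0$, and Proposition \ref{identification2} forces either $d_X(\tau_s, \tau_t) = 0$ or $\widehat{d}'_W(\tau_s, \tau_t) = 0$. The first alternative is ruled out immediately by the bound $d_X(\tau_s, \tau_t) \ge |X_{\tau_s} - X_{\tau_t}| = t - s > 0$.

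The second alternative reads $d'_W(\tau_s, \tau_t) = 0$ since both points sit in $I_+$, and I would next unpack the three-term infimum defining $d'_W$ in \eqref{distance cycle}. The two ``cycle-wrap'' terms involve $d_W(\cdot, \inf I)$ and $d_W(0, \cdot)$. Using the boundary values $W_0 = W_{\inf I} = W_{\sup I} = 0$ (which follow from $\zeta_0 = \zeta_\cL = 0$ together with $X - m'_X$ vanishing at these times, so that $Z$ vanishes there as well) and the identity $W_{\tau_u} = \zeta_u$ for every $u \in [0, \cL]$ (valid because $\tau_u$ lies at the root of $\cT_{X - m'_X}$, hence $Z_{\tau_u} = 0$), a short calculation shows that these two terms can vanish only when $\zeta_s = \zeta_t = 0$. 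Since $\zeta$ is a Brownian excursion of duration $\cL$, this forces $\{s, t\} \subseteq \{0, \cL\}$, which is precisely the endpoint identification $\eta_0 = \eta_\cL$ built into the closed curve $\eta$, not a genuine self-intersection.

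The remaining case is $d_W(\tau_s, \tau_t) = 0$, i.e.\ $\zeta_s = \zeta_t$ together with $\inf_{[\tau_s, \tau_t]} W = \zeta_s$. I would exploit the decomposition $W_r = Z_r + \zeta_{-m'_X(r)}$ together with the fact that $r \mapsto -m'_X(r)$ is a continuous non-decreasing surjection from $[\tau_s, \tau_t]$ onto $[s, t]$. On each excursion of $X - m'_X$ anchored at a single spine position $u \in [s, t]$, the map $-m'_X$ is locally constant equal to $u$, so $W = Z + \zeta_u$ on that excursion, and the constraint $\min W \ge \zeta_s$ translates into $\min Z \ge \zeta_s - \zeta_u$. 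If $\zeta_u < \zeta_s$ at some $u$ supporting a non-trivial excursion, this lower bound is strictly positive, contradicting the fact that a Brownian snake starting from $0$ a.s.\ takes strictly negative values on any non-trivial excursion. Using the density of spine positions carrying non-trivial excursions together with the continuity of $\zeta$, one deduces $\zeta \ge \zeta_s$ throughout $[s, t]$; similarly, at the endpoints $u = s, t$ (where $\zeta_u = \zeta_s$) the grafted excursions must be trivial.

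The main obstacle will be closing this last step: ruling out the joint event ``there exists $0 < s < t < \cL$ with $\zeta_s = \zeta_t$, $\zeta \ge \zeta_s$ on $[s, t]$, and the excursions of $X - m'_X$ at spine positions $s$ and $t$ are both trivial'' as an almost sure empty set. I anticipate exploiting the conditional independence of $\zeta$ and the snake $Z$ given $X$, a Fubini-type disintegration over candidate levels of $\zeta$, and a countable-cover reduction to pairs $(s, t)$ with rational approximations, combined with the a.s.\ strict negativity of the minimum of any non-trivial Brownian-snake excursion, to conclude that the offending configuration has probability zero, thereby completing the contradiction and establishing that $\eta$ is simple.
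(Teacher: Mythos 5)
Your setup is correct: reduce via Proposition~\ref{identification2} to the two alternatives, rule out $d_X(\tau_s,\tau_t)=0$ by the lower bound $d_X(\tau_s,\tau_t)\ge |X_{\tau_s}-X_{\tau_t}|=t-s$, and discard the cycle-wrap terms in $d'_W$. The crux is then $d_W(\tau_s,\tau_t)=0$, and you correctly translate this through the decomposition $W=Z+\zeta_{-m'_X}$ into constraints on $\zeta$ and the snake excursions. But the proof stops exactly where the real difficulty begins: you explicitly acknowledge that you have not shown the resulting configuration has probability zero, and the plan you sketch (Fubini over levels, countable reduction, strict negativity of snake minima) is not carried out. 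Note also that the intermediate event you state --- $\zeta_s=\zeta_t$, $\zeta\ge\zeta_s$ on $[s,t]$, trivial excursions at $s,t$ --- is genuinely \emph{not} a.s.\ empty (any local-minimum level of the Brownian excursion produces such pairs), so the reduction you perform loses exactly the information about the intermediate excursion minima that carries the whole argument; you would have to keep the full constraint $\inf_{[\tau_s,\tau_t]}W=\zeta_s$ in play.

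The paper closes this gap with a much shorter observation: the times $\tau_s$ are, by construction, one-sided local minima of $X$ (first hitting times of levels), and $d_W(\tau_s,\tau_t)=0$ would force $\tau_t$ to also be a one-sided local minimum of $W$; an adaptation of \cite[Lemma 3.2]{sphericity} (the Le Gall--Paulin non-coincidence lemma for one-sided extrema of the contour and the label process of the Brownian snake) says this cannot happen, and the result follows. What you are attempting amounts to re-deriving that lemma from scratch in a more complicated setting, which is possible but substantially harder than the two lines the paper uses. To repair your proof, either cite and adapt that lemma directly once you reach $d_W(\tau_s,\tau_t)=0$, or carry out the probability-zero estimate rigorously with the full constraint on the excursion minima (not the weakened version); the current draft does neither.
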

\begin{proof}
By definition, the times $(\tau_s)_{s\in[0,\cL]}$ correspond to one-sided local minima of the process $X$ used to construct $\cS_b$. Then, an adaptation of \cite[Lemma 3.2]{sphericity} shows that the times $(\tau_s)_{s\in[0,\cL]}$ cannot be one-sided local minima of the process $W$. The result follows from Proposition \ref{identification2}.
\end{proof}

\section{The free model}\label{section free model}

In this section, we introduce the free version of the biased Brownian sphere, and present some of its basic properties. As we will see, this model is more flexible than the one with fixed volume. In particular, we will show how to condition the free model to have a fixed delay.

\subsection{Snake trajectories and the Brownian snake}\label{snake}

In this subsection, we recall some basic notions about snake trajectories, and the Brownian snake excursion measure (we refer to \cite{serpent} for more details).

A finite path is a continuous function $w:[0,\zeta]\rightarrow\R$, where $\zeta=\zeta(w)\geq0$ is the lifetime of $w$, and we let $\widehat{w}=w(\zeta)$ be the terminal value of $w$. We denote by $\mathfrak{W}$ the set of all finite paths in $\R$, and for every $x\in\R$, we set $\mathfrak{W}_x:=\{w\in\mathfrak{W}\,:w(0)=x\}$. We also identify the point $x\in\R$ with the element of $\mathfrak{W}_x$ with $0$ lifetime.

 Fix $x\in\R$. A \textbf{snake trajectory} is a continuous function $t\mapsto\omega_t$ from $\R_+$ to $\mathfrak{W}_x$ such that 
\begin{itemize}[label=\textbullet]
    \item $\omega_0=x$ and $\sigma(\omega)=\sup\{s\geq0\,:\,\omega_s\neq x\}<\infty$,
    \item for every $0\leq s\leq s'$, $\omega_s(t)=\omega_{s'}(t)$ for every $t\in[0,\min_{s\leq r\leq s'}\zeta(\omega_r)]$. 
\end{itemize}
The quantity $\sigma(\omega)$ is called the duration of the snake trajectory. We denote by $\mathfrak{S}_x$ the set of snake trajectories starting from $x$, and $\mathfrak{S}$ the set of all snake trajectories. It will be useful to use the notation $W_s(\omega)=\omega_s$, $\zeta_s(\omega)=\zeta(\omega_s)$ and $W_*(\omega)=\inf_{t\geq0}\widehat{W}_t(\omega)$. We mention that a snake trajectory $\omega$ is determined by its tip function $s\mapsto\widehat{W}_s(\omega)$ and its lifetime function $s\mapsto\zeta_s(\omega)$ (see \cite{Refserpent} for a proof).

The lifetime function $\zeta(\omega)$ of a snake trajectory $\omega$ encodes a compact $\R$-tree, denoted by $\cT_\omega$. More precisely, we introduce a pseudo-distance on $[0,\sigma(\omega)]$ with the formula 
\[d_{(\omega)}(s,t)=\zeta_s(\omega)+\zeta_t(\omega)-2\min_{s\wedge t\leq r\leq s\vee t}\zeta_r(\omega),\]
and $\cT_\omega$ is the quotient space $[0,\sigma(\omega)]/\{d_{(\omega)}=0\}$, equipped with the distance induced by $d_{(\omega)}$. We denote by $p_{\cT_\omega}:[0,\sigma(\omega)]\rightarrow\cT_\omega$ the canonical projection, and root the tree at $\rho_{\cT_\omega}:=p_{\cT_\omega}(0)=p_{\cT_\omega}(\sigma(\omega))$. This space also has a volume measure, which is the pushforward of the Lebesgue measure on $[0,\sigma(\omega)]$ by the canonical projection $p_{\cT_\omega}$. Moreover, note that because of the snake property, $W_s(\omega)=W_t(\omega)$ if $p_{\cT_\omega}(s)=p_{\cT_\omega}(t)$. Therefore, the mapping $s\rightarrow\widehat{W}_s(\omega)$ can be viewed as a function on $\cT_\omega$. In this paper, for $u\in\cT_\omega$, and $s\in[0,\sigma(\omega)]$ such that $p_{\cT_\omega}(s)=u$, we will often use the notation $\ell_u:=\widehat{W}_s(\omega)$. 

We also define intervals on $\cT_\omega$ as follows.
For every $s,t\in[0,\sigma(\omega)]$ with $s<t$, we use the convention $[t,s]=[t,\sigma(\omega)]\cup[0,s]$. Observe that for every $u,v\in\cT_\omega$, there exists a smallest interval $[s,t]$ such that $p_{\cT_\omega}(s)=u$ and $p_{\cT_\omega}(t)=v$, and we set 
\[[u,v]:=\{p_{\cT_\omega}(r):r\in[s,t]\}.\]

Now, we can define the Brownian snake excursion measure. For every $x\in\R$, we define a $\sigma$-finite measure on $\mathfrak{S}_x$, denoted by $\N_x$, as follows. 
Under $\N_x$ :
\begin{itemize}
    \item the lifetime function $(\zeta_s)_{s\geq0}$ is distributed according to the Itô measure of positive excursions of linear Brownian motion, normalized so that the density of $\sigma$ is $t\mapsto\left(2\sqrt{2\pi t^3}\right)^{-1}$, 
    \item conditionally on $(\zeta_s)_{s\geq0}$, the tip function $\left(\widehat{W}_s\right)_{s\geq0}$ is a Gaussian process with mean $x$ and covariance function 
    \[K(s,t)=\min_{s\wedge t\leq r\leq s\vee t}\zeta_r.\]    
\end{itemize}
    The measure $\N_x$ appears as an excursion measure away from $x$ for the Brownian snake, which is a Markov process in $\mathfrak{W}_x$. For every $t>0$, we can define the probability measure $\N_x^{(t)}=\N_x(\cdot\,|\,\sigma=t)$, which can be constructed by replacing the Itô measure used to define $\N_x$ by a Brownian excursion with duration $t$.

    For every $y<x$, we have the formula 
    \begin{equation}\label{inf}
        \N_x(W_*<y)=\frac{3}{2(x-y)^2}
    \end{equation}
    (see \cite{serpent} for a proof). Hence, we can define the conditional probability measure $\N_x(\cdot\,|\,W_*<y)$. Moreover, under $\N_x$ or $\N_x^{(t)}$, a.e, there exists a unique $s_*\in[0,\sigma]$ such that $\widehat{W}_{s_*}=W_*$ (see \cite[Proposition 2.5]{Conditionnedbrowniantrees}).
Finally, for every $\lambda>0$ and $\omega\in\mathfrak{S}_x$, we define $\Theta_\lambda(\omega)\in\mathfrak{S}_{x\sqrt{\lambda}}$ by $\Theta_\lambda(\omega)=\omega'$, with 
\[\omega'_s(t)=\sqrt{\lambda}\omega_{s/\lambda^2}(t/\lambda),\quad\text{ for }s\geq0\text{ and }0\leq t\leq \zeta_s(\omega'):=\lambda\zeta_{s\lambda^2}(\omega).\]
Then, the pushforward of $\N_x$ by $\Theta_\lambda$ is $\lambda\N_{x\sqrt{\lambda}}$, and the pushforward of $\N_x^{(t)}$ is $\N_{x\sqrt{\lambda}}^{(\lambda^2t)}$.

Finally, observe that under $\N_0$ (resp. $\N_0^{(1)}$), the lifetime function and the tip function are distributed as the pair of processes that encode the free Brownian sphere (resp. the standard Brownian sphere) in Section \ref{brownian sphere}. Therefore, in what follows, we will often use these measures to construct the Brownian sphere.

\subsection{Definition and connection with the biased Brownian sphere}

Let $\mathbf{Unic}$ be a $\sigma$-finite measure under which:
\begin{itemize}[label=\textbullet]
    \item $\e$ is distributed according to Itô's excursion measure of Brownian motion,
    \item given $\e$, $\left(X_t,\,0\leq t\leq T_{-\sigma}\right)$ and $\left(X_{-t},\,0\leq t\leq \overline{T}_{-\sigma}\right)$ are two independent Brownian motion started at 0, and stopped when they reach $-\sigma$, where $\sigma$ is the duration of $\e$,
    \item given $\e$ and $X$, the process $W$ has the law of $\left({Z}_t+\e_{-m'_X(t)},\,-\overline{T}_{-\sigma}\leq t\leq T_{-\sigma}\right)$, where ${Z}$ is the random snake driven by $X-m'_X$.
\end{itemize}

\begin{definition}
    The free biased Brownian sphere is the metric space $\cU_{X,W}$ under the measure $\mathbf{Unic}$.
\end{definition}

This space is also equipped with a volume measure $\mathrm{Vol}$. Note that the total volume of the free biased Brownian sphere is distributed as $T_{-\sigma}+\overline{T}_{-\sigma}$. Our first result about this model is to relate it two the standard biased Brownian sphere. More precisely, we show that under $\mathbf{Unic}$, we can decompose the measure according to its volume, and that $\mathbf{Unic}^{(a)}$ corresponds to the measure $\mathbf{Unic}$ ``conditionally on $\{\sigma=a\}$''. 

\begin{proposition}[Link with the biased Brownian sphere]\label{decomposition volume}
    For every positive measurable $F$ and $G$, we have 
\begin{equation*}   
    \mathbf{Unic}\left(F(\e,(X_t)_{t\geq 0},(X_{-t})_{t\geq 0},W)G(\sigma)\right)=\int_{\R_+}\frac{da}{a^{5/4}}\frac{\Gamma(1/4)}{2^{9/4}\pi}G(a)\mathbf{Unic}^{(a)}\left(F(\e,(X_t)_{t\geq 0},(X_{-t})_{t\geq 0},W)\right).
    \end{equation*}
\end{proposition}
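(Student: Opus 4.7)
The plan is to read $\sigma$ in the statement as the total duration of the snake trajectory $W$, namely $\sigma = T_{-\sigma_\e} + \overline{T}_{-\sigma_\e}$, where I write $\sigma_\e$ for the duration of the Itô excursion $\e$ appearing in the definition of $\mathbf{Unic}$. This is the only interpretation consistent with the fact that $\mathbf{Unic}^{(a)}$ has total volume $a$. With this reading, the claimed formula is simply a disintegration of $\mathbf{Unic}$ according to $\sigma$: I must compute the marginal density of $\sigma$ under $\mathbf{Unic}$ and check that it equals $\tfrac{\Gamma(1/4)}{2^{9/4}\pi}a^{-5/4}$, and then verify that the conditional law of $(\e,X,W)$ given $\sigma = a$ coincides with $\mathbf{Unic}^{(a)}$.

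For the density, the definition of $\mathbf{Unic}$ combined with \eqref{joint law} shows that the joint density of $(\sigma_\e, T_{-\sigma_\e}, \overline{T}_{-\sigma_\e})$ at $(x,y,z)$ is the product of the Itô density $(2\sqrt{2\pi})^{-1}x^{-3/2}$ with the two first-passage densities $q_x(y)q_x(z)$, which simplifies to
\[
\frac{x^{1/2}}{2(2\pi)^{3/2}(yz)^{3/2}}\exp\!\left(-\frac{x^2}{2}\left(\frac{1}{y}+\frac{1}{z}\right)\right).
\]
Changing variables from $(y,z)$ to $(y,a)$ with $a=y+z$ (Jacobian $1$) and integrating in $x$ via the substitution $u=ax^2/(2y(a-y))$ produces a factor $\Gamma(3/4)$, while the remaining integral in $y$ on $[0,a]$, handled by $y=av$, yields $a^{-1/2}\mathrm{B}(1/4,1/4) = a^{-1/2}\Gamma(1/4)^2/\sqrt{\pi}$. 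Alternatively one can use the semigroup identity $q_x \ast q_x = q_{2x}$ to reduce everything to a single Gaussian integral in $x$. In either case, Euler's reflection formula $\Gamma(1/4)\Gamma(3/4)=\pi\sqrt{2}$ provides the final constant, giving $p_\sigma(a) = \tfrac{\Gamma(1/4)}{2^{9/4}\pi}a^{-5/4}$.

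Dividing the joint density by $p_\sigma(a)$ yields the conditional density of $(\sigma_\e, T_{-\sigma_\e})$ given $\sigma = a$, which simplifies to
\[
\frac{a^{5/4}}{2^{1/4}\Gamma(1/4)\sqrt{\pi}}\,\frac{x^{1/2}}{(y(a-y))^{3/2}}\exp\!\left(-\frac{a x^2}{2 y(a-y)}\right),
\]
i.e.\ exactly the density \eqref{densité jointe longueur} of $(\cL^{(a)}, \cA^{(a)})$ under $\mathbf{Unic}^{(a)}$. To conclude, I check that, conditionally on $(\sigma_\e, T_{-\sigma_\e}, \overline{T}_{-\sigma_\e}) = (x,y,a-y)$ under $\mathbf{Unic}$, the random objects $\e$, $(X_t)_{0\le t\le y}$, $(X_{-t})_{0\le t\le a-y}$ and $W$ have the same conditional distributions as under $\mathbf{Unic}^{(a)}$ given $(\cL^{(a)}, \cA^{(a)}) = (x,y)$: indeed, $\e$ is a Brownian excursion of duration $x$; by the standard disintegration of Brownian motion stopped at $-x$ according to its hitting time (precisely \eqref{joint law}), the two pieces of $X$ are independent first-passage bridges from $0$ to $-x$ of respective durations $y$ and $a-y$; and $W$ is the same deterministic functional of $\e$, $X$ and the auxiliary snake $Z$ in both constructions.

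The main obstacle is the bookkeeping of constants in the marginal density computation, where Euler's reflection identity is what ultimately matches $\tfrac{\Gamma(1/4)}{2^{9/4}\pi}$ on the left with $\tfrac{a^{5/4}}{2^{1/4}\Gamma(1/4)\sqrt{\pi}}$ on the right; once this is done, the rest is a direct application of \eqref{joint law} together with the observation that the deterministic recipes defining $W$ in $\mathbf{Unic}$ and $\mathbf{Unic}^{(a)}$ are literally identical.
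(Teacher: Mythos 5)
Your proof is correct and follows essentially the same route as the paper: both disintegrate $\mathbf{Unic}$ over the total volume by expanding the Itô and first-passage densities, change variables to the sum $a = y + z$, and obtain the constant via the Beta integral and Euler's reflection formula, finally extending to $W$-dependent functionals by observing that $W$ is the same conditional kernel of $(\e, X)$ in both measures. The only cosmetic difference is that you explicitly compute the marginal density of $\sigma$ and then divide to obtain the conditional, whereas the paper performs the change of variables in one display and normalizes at the end; the mathematics is identical.
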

\begin{proof}
    First, we prove the result for functions that do not depend on $W$. Fix $F$ and $G$ two positive measurable functions. By definition, we have :
    \begin{align*}
     \mathbf{Unic}\left(F(\e,(X_t)_{t\geq 0},(X_{-t})_{t\geq 0})G(\sigma)\right)&=\n\left(F(\e,B^{(\sigma)},\overline{B}^{(\sigma)})G(T_{-\sigma}+\overline{T}_{-\sigma})\right)\\
     &=\int_0^\infty\frac{dx}{2\sqrt{2\pi x^3}}\E\left[F(\e^{(x)},B^{(x)},\overline{B}^{(x)})G(T_{-x}+\overline{T}_{-x})\right]    
    \end{align*}
    where $\e^{(x)}$ is a Brownian excursion of duration $x$, and $B,\overline{B}$ are independent Brownian motion stopped at their first hitting time of $-x$. Then, using the explicit densities \eqref{joint law}, we can decompose the integral according to the values of $T_{-x}$ and $\overline{T}_{-x}$ which gives     
    \[ 
 \n\left(F(\e,B^{(\sigma)},\overline{B}^{(\sigma)})G(\sigma)\right)=\int_{\R_+^3}\frac{dxdydz}{2\sqrt{2\pi x^3}}\frac{x^2}{2\pi(yz)^{3/2}}\exp\left(-\frac{x^2}{2y}-\frac{x^2}              {2z}\right)G(y+z)\E\left[F(\e^{(x)},B^{(x,y)},\overline{B}^{(x,z)})\right].\]
    This can be rewritten as 
    \begin{equation}\label{Volume décompo}        
   \int_{\R_+}daG(a)\int_{\R_+\times[0,a]}\frac{dxdy}{2(2\pi)^{3/2}}\frac{x^{1/2}}{(y(a-y))^{3/2}}\exp\left(-\frac{ax^2}{2y(a-y)}\right)\E\left[F(\e^{(x)},B^{(x,y)},\overline{B}^{(x,a-y)})\right]. \end{equation}
    To conclude, we need to renormalize the second integral in order to have a probability measure. As in the proof of Proposition \ref{convergence taille}, we get
    \begin{align*}
        \int_{\R_+\times[0,a]}\frac{x^{1/2}}{(y(a-y))^{3/2}}\exp\left(-\frac{ax^2}{2y(a-y)}\right)dxdy&=\frac{2^{1/4}\Gamma(1/4)\sqrt{\pi}}{a^{5/4}}.
    \end{align*}
    Therefore, \eqref{Volume décompo} is equal to 
    \begin{multline*}
        \int_{\R_+}\frac{da}{a^{5/4}}\frac{\Gamma(1/4)}{2^{9/4}\pi}G(a)\int_{\R_+\times[0,a]}dxdy\frac{a^{5/4}}{2^{1/4}\Gamma(1/4)\sqrt{\pi}}\\\frac{x^{1/2}}{(y(a-y))^{3/2}}\exp\left(-\frac{ax^2}{2y(a-y)}\right)\E\left[F\left(\e^{(x)},B^{(x,y)},\overline{B}^{(x,a-y)}\right)\right].
    \end{multline*}
    Finally, according to the definition of the measure $\mathbf{Unic}^{(a)}$, we obtain 
    \begin{equation}\label{intermediate}     \n\left(F(\e,B^{(\sigma)},\overline{B}^{(\sigma)})G(\sigma)\right)=\int_{\R_+}\frac{da}{a^{5/4}}\frac{\Gamma(1/4)}{2^{9/4}\pi}G(a)\mathbf{Unic}^{(a)}\left(F(\e,(X_t)_{t\geq 0},(X_{-t})_{t\geq 0})\right),
    \end{equation} which concludes the proof in this case. Finally, note that we can deduce the statement of the proposition from this result, since for every non-negative function $F$, 
    \begin{align*}
        \n(F(\e,(X_t)_{t\geq 0},(X_{-t})_{t\geq 0},(W_t)_{t\in\R}))&=\n\left(\E\left[F(\e,(X_t)_{t\geq 0},(X_{-t})_{t\geq 0},(W_t)_{t\in\R})\,|\,\e,X\right]\right)\\
        &:=\n\left(F'(\e,(X_t)_{t\geq 0},(X_{-t})_{t\geq 0},(W'_t)_{t\in\R})\right)
    \end{align*}
    where given $\e$ and $X$, $W'$ has the law of $\left({Z}_t+\e_{-m'_X(t)},\,-\overline{T}_{-\sigma}\leq t\leq T_{-\sigma}\right)$, where ${Z}$ is the random snake driven by $X-m'_X$. Therefore, we can apply the identity \eqref{intermediate} to $F'$, which gives the result.
\end{proof}

We still denote by $\cS_b$ the random metric space associated to the measure $\textbf{Unic}$. From now on, to avoid any confusions, we will always denote by $\cS_b^{(a)}$ the random space obtained under the measure $\textbf{Unic}^{(a)}$. Finally, we can use Proposition \ref{decomposition volume} to relate the measure $\mathbf{Unic}$ to the free Brownian sphere.

\begin{proposition}[Link with the free Brownian sphere]\label{link free model}
   For every non-negative function $F$, we have 
   \[\N_0\left(\int_{-D(x,y)}^{D(x,y)}F(\cS,D,x,y,\mu,t)dt\right)=3\cdot\mathbf{Unic}(F(\cS_b,D,x,y,\mathrm{Vol},\Delta))\]
   and 
   \[\N_0(D(x,y)F(\cS,x,y))=\frac{3}{2}\mathbf{Unic}(F(\cS_b,D,x,y,\mathrm{Vol})).\]
\end{proposition}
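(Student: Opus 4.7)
The plan is to combine three ingredients: the volume decomposition of $\mathbf{Unic}$ from Proposition~\ref{decomposition volume}, the identification of $\cS_b^{(a)}$ with the biased Brownian sphere of volume $a$ (Theorem~\ref{equiv def} combined with Brownian scaling), and the definition~\eqref{def biais sphere} of the biased Brownian sphere as the Brownian sphere weighted by $D(x,y)/\E[D(x,y)]$ and equipped with a uniform delay.

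First, I would upgrade Theorem~\ref{equiv def} from volume $1$ to arbitrary volume $a > 0$. Since $\mathbf{Unic}^{(a)}$ is obtained from $\mathbf{Unic}^{(1)}$ by applying Brownian scaling (time rescaled by $a$, labels by $a^{1/4}$), and the same rescaling sends the standard Brownian sphere to the Brownian sphere of volume $a$, this yields
\[\mathbf{Unic}^{(a)}(F) = \frac{1}{2\,a^{1/4}\,\E[D(x,y)]}\, \E\!\left[\int_{-D_a(x,y)}^{D_a(x,y)} F(\cS^{(a)},D_a,x,y,\mu^{(a)},t)\,dt\right],\]
where $\cS^{(a)}$ is a Brownian sphere of volume $a$ with two $\mu^{(a)}$-sampled points.

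Next, I would substitute this into Proposition~\ref{decomposition volume} applied with $G \equiv 1$ and simplify the resulting constant. Collecting the powers of $a$ gives $a^{-5/4} \cdot a^{-1/4} = a^{-3/2}$, and using $\E[D(x,y)] = 3 \cdot 2^{1/4}\Gamma(5/4)/\sqrt{\pi}$ together with $\Gamma(1/4) = 4\,\Gamma(5/4)$, the prefactor reduces to $\tfrac{1}{3} \cdot \tfrac{1}{2\sqrt{2\pi a^3}}$. Recognizing $\tfrac{1}{2\sqrt{2\pi a^3}}\,da$ as the density under $\N_0$ of the duration $\sigma$, and that conditionally on $\sigma = a$ the measure $\N_0$ produces the Brownian sphere of volume $a$, yields the first identity. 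The second identity is then immediate: taking $F$ independent of the delay $t$ in the first identity, the inner integral equals $2D(x,y)F(\cS,x,y)$, giving the factor $3/2$ after dividing by~$2$.

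The only non-computational point will be the scaling argument at the first step: one must check that the biasing relation~\eqref{def biais sphere} commutes with the $\Theta_\lambda$-scaling of Section~\ref{snake}, i.e.\ that Theorem~\ref{equiv def} really extends to arbitrary volumes. Given the explicit product structure of $\mathbf{Unic}^{(a)}$ and the known scaling of first-passage bridges, snakes, and Brownian excursions, this is a routine verification, and the rest of the proof is just the bookkeeping of constants sketched above.
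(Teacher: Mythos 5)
Your argument is correct and is, in effect, the paper's own proof run in the reverse direction: the paper starts from $\N_0$, decomposes it over the volume $\sigma=a$, applies \eqref{def biais sphere} (implicitly via Theorem~\ref{equiv def} and scaling) to rewrite each $\N_0^{(a)}$-term in terms of $\mathbf{Unic}^{(a)}$, and then reassembles via Proposition~\ref{decomposition volume}, whereas you start from $\mathbf{Unic}$, apply Proposition~\ref{decomposition volume}, substitute the scaled version of Theorem~\ref{equiv def}, and then recognize the $\N_0$-disintegration over duration. The ingredients (Proposition~\ref{decomposition volume}, \eqref{def biais sphere}, the value of $\E[D(x,y)]$, the $\Theta_\lambda$-scaling, and the trivial reduction of the second identity to the first by taking $F$ independent of the delay) and the bookkeeping of constants are identical, so this is the same proof.
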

\begin{proof}
To lighten notation, we omit the notation of the distance $D$ and the volume measure $\mathrm{Vol}$ in the proof. Using a standard decomposition of the measure $\N_0$, we have 
   \[\N_0\left(\int_{-D(x,y)}^{D(x,y)}F(\cS,x,y,t)dt\right)=\int_{\R_+}\frac{1}{2\sqrt{2\pi a^3}}\N^{(a)}_0\left(\int_{-D(x,y)}^{D(x,y)}F(\cS,x,y,t)dt\right)da.\]
   Using the definition of the biased Brownian sphere \eqref{def biais sphere}, this can be rewritten as
   \begin{equation}\label{calcul biais}
       \N_0\left(\int_{-D(x,y)}^{D(x,y)}F(\cS,x,y,t)dt\right)=\int_{\R_+}\frac{2}{2\sqrt{2\pi a^3}}\N_0^{(a)}(D(x,y))\mathbf{Unic}^{(a)}(F(\cS_b^{(a)},x,y,\Delta))da.
   \end{equation}
   However, for every $a>0$, the scaling property of the Brownian sphere and \cite[Corollary 2]{DelmasMoments} gives 
   \[\N_0^{(a)}(D(x,y))=a^{1/4}\N_0^{(1)}(D(x,y))=3\frac{(2a)^{1/4}\Gamma(5/4)}{\sqrt{\pi}}.\] Therefore, the expression \eqref{calcul biais} is equal to 
   \[ \N_0\left(\int_{-D(x,y)}^{D(x,y)}F(\cS,x,y,t)dt\right)=3\frac{\Gamma(5/4)}{2^{1/4}\pi}\int_{\R_+}\frac{da}{a^{5/4}}\mathbf{Unic}^{(a)}(F(\cS_b^{(a)},x,y,\Delta)).\]
   Using the identity $\Gamma(x+1)=x\Gamma(x)$ and Proposition \ref{decomposition volume}, we obtain 
   \[\N_0\left(\int_{-D(x,y)}^{D(x,y)}F(\cS,x,y,t)dt\right)=3\cdot\mathbf{Unic}(F(\cS_b,x,y,\Delta)),\]
   which concludes the proof of the first statement. The second statement follows from the first one by taking a function $F$ that does not depend on the delay.
\end{proof}

\subsection{Coding unicycles with point measures}\label{coding unicycles with point measures}

A useful alternative to the random process description of Brownian surfaces is to use triples $\left(X,\cM,\cM'\right)$, where $X$ is a random path and $\cM$ and $\cM'$ are two Poisson point measures. In this section, we explain how to construct a unicycle and a surface from such a coding triple. This construction is very similar to the one in \cite{Spinerepresentation}, which deals with triples that encode non-compact random surfaces. 

For the rest of the section, everything will be deterministic. Consider a coding triple $\left(w,\cP,\cP'\right)$, where : 
\begin{itemize}
    \item $(w(t))_{0\leq t\leq\sigma}\in\mathfrak{W}_0$ and $w(0)=w(\sigma)$, 
    \item $\cP=\sum_{i\in I}\delta_{t_i,\omega_i}$ and $\cP'=\sum_{j\in J}\delta_{t_j,\omega_j}$ are two point measures on $[0,\sigma]\times\mathfrak{S}$ (the sets $I$ and $J$ are disjoint) such that for every $i\in I\cup J$, $\omega_i\in\mathfrak{S}_{w(t_i)}$ and $\sigma(\omega_i)>0$,
    \item The numbers $t_i,\,i\in I\cup J$ are distinct.
    \item the functions 
    \begin{equation}\label{processus explo}
        s\mapsto \beta_s:=\sum_{i\in I}\1_{\{t_i\leq s\}}\sigma(\omega_i)\quad\text{ and }\quad s\mapsto\beta'_s:=\sum_{j\in J}\1_{\{t_j\leq s\}}\sigma(\omega_j)
    \end{equation}
    take finite values, and are monotone increasing on $[0,\sigma]$
    \item for every $\varepsilon>0$, we have 
    \[\#\left\{i\in I\cup J,\sup_{0\leq s\leq \sigma(\omega_i)}\left|\widehat{W}_s(\omega_i)-w(t_i)\right|>\varepsilon\right\}<\infty\]
\end{itemize}
We mention that these conditions already appeared in \cite{Spinerepresentation}.
The unicycle $\mathfrak{U}$ associated to the coding triple $\left(w,\cP,\cP'\right)$ is obtained from the disjoint union 
\[[0,\sigma]\sqcup\left(\bigsqcup_{i\in I\cup J}\cT_{(\omega_i)}\right),\]
where we identify the point $0$ with $\sigma$, and the point $t_i\in[0,\sigma]$ with the root $\rho_{(\omega_i)}$ of $\cT_{(\omega_i)}$. By convention, we will consider that the trees $\cT_{(\omega_i)},\,i\in I$ form the \textbf{external face} $\mathfrak{U}_{ext}$ of $\mathfrak{U}$, whereas the trees $\cT_{(\omega_j)},\,j\in J$ form the \textbf{internal face} $\mathfrak{U}_{int}$. We define a metric $d_\mathfrak{U}$ on $\mathfrak{U}$ as follows. 
First, the restriction of $d_\mathfrak{U}$ to a tree $\cT_{\omega_i}$ is the metric $d_{\cT_{\omega_i}}$. Similarly, for $s,t\in[0,\sigma]$ with $s\leq t$, we have $d_{\mathfrak{U}}=\inf\left(|s-t|,\sigma-|s-t|\right)$. If $u\in[0,\sigma]$ and $v\in\cT_{\omega_i}$, we set $d_\mathfrak{U}(u,v)=\inf(|u-t_i|,\sigma-|u+t_i|)+d_{\omega_i}(\rho_{\omega_i},v)$. Finally, if $u\in\cT_{\omega_i}$ and $v\in\cT_{\omega_j}$, we set $d_\mathfrak{U}(u,v)=d_{\omega_i}(\rho_{\omega_i},u)+d_\mathfrak{U}(t_i,t_j)+d_{\omega_j}(\rho_{\omega_j},v)$. We root the metric space $\mathfrak{U}$ at $0$. It is also equipped with a volume measure $\mu$, which is the sum of the volume of the trees $\cT_{(\omega_i)},\,I\cup J$.

It will be convenient to define exploration processes of $\mathfrak{U}$, as the concatenation of the exploration processes of the trees $\cT_{(\omega_i)}$. More precisely, we will define a process $\cE$ to explore the external face of $\mathfrak{U}$, and a process $\cE'$ to explore the internal face. For $s\in[0,\sigma]$, consider $\beta_s$ and $\beta'_s$ as defined in \eqref{processus explo}.
Then, if we set $\Delta=\sum_{i\in I}\sigma(\omega_i)$, for every $t\in[0,\Delta]$, we define $\cE_t\in\mathfrak{U}$ as follows.
Observe that there exists a unique $s\in[0,\sigma]$ such that $\beta_{s-}\leq t\leq \beta_s$. Then, 
\begin{itemize}[label=\textbullet]
    \item Either we have $s=t_i$ for some $i\in I$, and we set $\cE_t=p_{\cT_{\omega_i}}(t-\beta_{t_i-})$,
    \item or there is no such $i$, and we set $\cE_t=s$.
\end{itemize}
We define $\cE$ in a very similar manner, by replacing $\beta$ by $\beta'$, and the set $I$ by $J$. 

These exploration processes allow us to define intervals in $\mathfrak{U}$. First, we make the convention that if $s>t$, the interval $[s,t]$ is defined by $[s,\sigma]\cup[0,t]$. Let $u,v\in\mathfrak{U}$ be two elements that lie on the same face. First, suppose that either $u$ or $v$ does not belong to $[0,\sigma]$.
Without loss of generality, suppose that they are in the external face. Then, there exists a smallest interval $[s,t]$ with $s,t\in[0,\Delta]$ such that $\cE_s=u$ and $\cE_t=v$, and we define 
\[[u,v]=\{\cE_r,\,r\in[s,t]\}.\]
Note that, generally, we do not have $[u,v]=[v,u]$.     Then, if $s,t\in [0,\sigma]$, let $a,b\in[0,\Delta]$ (resp. $a',b'\in[0,\Delta']$), we set
\[[s,t]_{ext}=\{\cE_r,\,r\in[a,b]\}\quad\text{and}\quad [s,t]_{int}=\{\cE'_r,\,r\in[a',b']\}.\]

Next, we assign labels to the points of $\mathfrak{U}$. First, for $s\in[0,\sigma]$, we set $\ell_s=w(s)$ (note that it is consistent with the fact that $0$ and $\sigma$ are identified, since $w(0)=w(\sigma)$). Then, if $u\in\cT_{(\omega_i)}$, we set $\ell_u=\ell_u(\omega_i)$. Note that, when the conditions stated at the beginning of the section are fulfilled, the function $u\mapsto\ell_u$ is continuous on $\mathfrak{U}$.

Using these labels, we define a function $D^\circ $ as follows. First, for $s,t\in[0,\sigma]$, we set
\[D^\circ (s,t)=\ell_t+\ell_s-2\max\left(\inf_{w\in[s,t]_{ext}}\ell_w,\inf_{w\in[t,s]_{ext}}\ell_w,\inf_{w\in[s,t]_{int}}\ell_w,\inf_{w\in[t,s]_{int}}\ell_w\right).\] Then, if $u,v\in\mathfrak{U}_{ext}$ or $u,v\in\mathfrak{U}_{int}$, and one of them does not belong to $[0,\sigma]$, we set
\[D^\circ(u,v)=\ell_u+\ell_v-2\max\left(\inf_{w\in[u,v]}\ell_w,\inf_{w\in[v,u]}\ell_w\right).\]
Finally, if $u,v$ are not in the previous cases, we set $D^\circ(u,v)=\infty$.

    Using $D^\circ$, we define a pseudo-distance $D$ on $\mathfrak{U}$ by 
    \begin{equation*}        D(u,v)=\inf_{u_0,u_1,...,u_n}\sum_{i=1}^n D^\circ(u_{i-1},u_i),
    \end{equation*}
    where the infimum is taken over all the choices of $n\in\N$ and sequences $u_0,u_1,...,u_n$ of $\mathfrak{U}$ such that $u_0=u$ and $u_n=v$.
    Then, we define a metric space $\cS_{w,\cP,\cP'}$ by $\mathfrak{U}/\{D=0\}$, equipped with the distance $D $, and rooted at the equivalence class of $0$. Once again, this metric space naturally comes with a measure $\mathrm{Vol}$, which is the pushforward of the measure $\mu$ on $\mathfrak{U}$ by the canonical projection. 

\subsection{Controlling the delay}

The goal of this section is to construct the free biased Brownian sphere conditionally on the delay and the distance between the distinguished points.

First, we give an alternative definition of the free biased Brownian sphere with a coding triple. Consider a coding triple $\left(\e,\cM,\overline{\cM}\right)$, where
\begin{itemize}[label=\textbullet]
    \item $\e$ is distributed under Ito's excursion measure of Brownian motion,
    \item given $\e$, $\cM$ and $\overline{\cM}$ are two independent Poisson point measures on $[0,\sigma]\times\mathfrak{S}$, with intensity 
    \begin{equation}\label{intensity poisson}
        2\1_{[0,\sigma]}(t)dt\N_{\e_t}(dW).
    \end{equation}
\end{itemize}

We denote by $\U$ the measure associated to this triple. Note that this is not a probability measure, but a $\sigma$-finite measure. Note that under $\U$, this triple satisfies a.e. the conditions of Section \ref{coding unicycles with point measures}. As previously, a.e. there exists a unique $u_*\in\mathfrak{U}_{int}$ and a unique $\overline{u}_*\in\mathfrak{U}_{ext}$ such that 
\[\ell_{u_*}=\inf_{w\in\mathfrak{U}_{int}}\ell_w\quad\text{ and }\quad\ell_{\overline{u}_*}=\inf_{w\in\mathfrak{U}_{ext}}\ell_w.\]
Therefore, the metric space $\cS_{\e,\cM,\overline{\cM}}$ is naturally equipped with two distinguished points $x_*$ and $\overline{x}_*$, which are the images of $u_*$ and $\overline{u}_*$ by the canonical projection. We also set $\ell_*:=\ell_{u_*}$ and $\overline{\ell}_*:=\ell_{\overline{u}_*}$.

Our first result on this model is to view it as another construction of the biased Brownian sphere.

\begin{proposition}
    Under $\U$, the metric space $\cS_{\e,\cM,\overline{\cM}}$ is distributed as the free biased Brownian sphere $\cS_b$ under $\textbf{Unic}$.
\end{proposition}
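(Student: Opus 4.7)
The plan is to show that the coding triple $(\e, \cM, \overline{\cM})$ under $\U$ has the same joint distribution as an analogous triple one can extract from the $\textbf{Unic}$ construction, and then to verify that the resulting metric measured spaces agree deterministically in terms of such a coding triple.

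First I would extract a coding triple from the $\textbf{Unic}$ construction. Given $(\e, X, W)$ under $\textbf{Unic}$, let $(\alpha_i, \beta_i)_{i \in I}$ enumerate the excursion intervals of $X - m'_X$ away from $0$ on $[0, T_{-\sigma}]$, and set $s_i = -m'_X(\alpha_i) \in [0, \sigma]$. To each such interval attach the snake trajectory $\omega_i \in \mathfrak{S}_{\e_{s_i}}$ with lifetime $t \mapsto X_{\alpha_i + t} - X_{\alpha_i}$ and tip function $t \mapsto W_{\alpha_i + t}$, for $t \in [0, \beta_i - \alpha_i]$. Define
\[
\cM^{\textbf{Unic}} := \sum_i \delta_{(s_i, \omega_i)},
\]
and $\overline{\cM}^{\textbf{Unic}}$ analogously from $(X_{-t}, W_{-t})_{t \in [0, \overline{T}_{-\sigma}]}$. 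The key lemma is that, conditionally on $\e$, the pair $(\cM^{\textbf{Unic}}, \overline{\cM}^{\textbf{Unic}})$ is distributed as a pair of independent Poisson point measures with intensity $2 \mathbf{1}_{[0,\sigma]}(t)\, dt\, \N_{\e_t}(dW)$. This follows by combining two classical decompositions: (a) by Lévy's theorem and Itô's excursion theory, the excursions of $X - m'_X$ indexed by $-m'_X \in [0, \sigma]$ form, under $\textbf{Unic}$, a Poisson point measure on $[0, \sigma] \times $ (positive Brownian excursion space) with intensity $2\, ds \otimes \mathbf{n}^+(d\omega)$, where the factor $2$ accounts for the relation between $-m'_X$ and the local time at $0$ of the reflected process together with the normalization chosen in Section \ref{snake}; and (b) conditionally on $X$, the snake $Z$ driven by $X - m'_X$ restricted to each such excursion is, by the snake property, an independent Brownian snake with that excursion as lifetime, and translating its tip by $\e_{s_i}$ as in the definition of $W$ turns the associated snake trajectory into one starting at $\e_{s_i}$, whose law matches $\N_{\e_{s_i}}$. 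Independence of the two sides follows from independence of $(X_t)_{t \geq 0}$ and $(X_{-t})_{t \geq 0}$ under $\textbf{Unic}$.

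Next I would verify that, deterministically for any coding triple satisfying the conditions of Section \ref{coding unicycles with point measures}, the metric measured space $\cU_{X,W}$ built as in Section \ref{construction} agrees with $\cS_{\e, \cM, \overline{\cM}}$ built from the unicycle $\mathfrak{U}$. The cyclic real tree $\cT'_X$ coded by $d'_X$ is canonically isometric to $\mathfrak{U}$: the cycle, which is the image of $\{t : X_t = -m'_X(t)\}$, is parametrized by $[0, \sigma]/(0 \sim \sigma)$ via $t \mapsto -m'_X(t)$, and the excursions of $X$ above $-m'_X$ on either side of the origin are mapped to the trees $\cT_{\omega_i}$ grafted to the appropriate face of $\mathfrak{U}$. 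Under this identification, the label process $W$ coincides with the label function $\ell$ on $\mathfrak{U}$, and the pseudo-distance $\widehat{d}'_W$, which is forced to be $+\infty$ between the two sides outside the cycle, agrees with the function $D^\circ$ used in Section \ref{coding unicycles with point measures}, since both are given by the same infimum of labels over intervals contained in a single face. Passing to the quotient metrics and pushing forward Lebesgue measure (which is concentrated on the union of excursion intervals) then identifies the two metric measured spaces together with their distinguished points.

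The main obstacle I anticipate is pinning down the constant $2$ in the Poisson intensity. This factor depends on the precise normalization of $\mathbf{n}^+$ used in Section \ref{snake}, where the density of $\sigma$ under $\N_0$ is $(2\sqrt{2\pi t^3})^{-1}$, together with the convention that the local time at $0$ of the reflected process $X - m'_X$ coincides with $-m'_X$ (rather than some multiple of it). Aligning these conventions so that the Poisson intensity matches exactly the expression in \eqref{intensity poisson} requires careful bookkeeping, but is otherwise standard. Once this is in place, the remainder of the proof is an essentially deterministic identification of two parallel metric constructions.
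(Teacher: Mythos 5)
Your proposal is correct and takes essentially the same route as the paper: decompose $(X,W)$ under $\textbf{Unic}$ into excursions of $X-m'_X$ above the running minimum via Lévy's theorem and Itô's excursion theory (yielding the factor $2$ and the intensity $2\,\mathds{1}_{[0,\sigma]}(t)\,dt\,\N_{\e_t}(dW)$ exactly as in \eqref{intensity poisson}), then check deterministically that the quotient metric of Section \ref{construction} and the point-measure construction of Section \ref{coding unicycles with point measures} produce the same measured metric space. The paper's own proof is a one-paragraph version of precisely this argument.
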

\begin{proof}
    This follows from Itō's excursion theory of Brownian motion. Indeed, the excursions of $X|_{I_+}$ and $X|_{I_-}$ together with their labels (under \textbf{Unic})  can be represented by two independent Poisson point measures $\cM$, $\overline{\cM}$ with intensity \eqref{intensity poisson}. The result is obtained by comparing the constructions of Sections \ref{construction} and \ref{coding unicycles with point measures}.
\end{proof}

In view of that  result, we still denote the metric space obtained under $\U$ by $\cS_b$. This Poissonian representation of the free biased Brownian sphere is more suitable for explicit computations, as we will see in this section.

First, we recall a useful lemma, which will be used in the next proof. This lemma follows from Itô's formula (see \cite[Lemma 8.1]{rierathese} for a proof).

\begin{lemme}\label{Calcul espérance}
    Let $f:\R_+\rightarrow\R_+$ be a continuous function, and suppose that there exists a twice differentiable bounded positive function $H_f$ such that 
    \[H_f(x)''=2f(x)H(x).\]
    Then, we have
    \begin{equation}
        \E_h\left[\exp\left(-\int_0^{T_0}f(B_t)dt\right)\right]=\frac{H_f(h)}{H_f(0)}
    \end{equation} and 
    \begin{equation}
        \mathbf{n}\left(1-\exp\left(-\int_0^{\sigma}f(\mathbf{e}_t)dt\right)\right)=-\frac{H'_f(0)}{2H_f(0)}.
    \end{equation}
\end{lemme}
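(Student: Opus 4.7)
The plan is to prove the two formulas separately, the first via Itô's formula applied to an explicit martingale, and the second by reducing to the first via the standard approximation relating $\mathbf{n}$ to $\E_h[\cdot\,;\,T_0<\infty]$ as $h\to 0^+$.

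For the first equation, I would consider the process
\[M_t := H_f(B_{t\wedge T_0})\exp\left(-\int_0^{t\wedge T_0}f(B_s)\,ds\right),\]
and apply Itô's formula on $[0,T_0]$. Writing $E_t:=\exp(-\int_0^{t\wedge T_0}f(B_s)\,ds)$, the drift part of $dM_t$ equals $\bigl(\tfrac12 H_f''(B_t)-f(B_t)H_f(B_t)\bigr)E_t\,dt$, which vanishes thanks to the ODE $H_f''=2fH_f$. Hence $M$ is a local martingale; since $H_f$ is bounded and $E_t\in[0,1]$, $M$ is bounded, and $T_0<\infty$ almost surely under $\P_h$, so optional stopping gives $\E_h[M_{T_0}]=M_0$, that is,
\[H_f(0)\,\E_h\!\left[\exp\left(-\int_0^{T_0}f(B_t)\,dt\right)\right]=H_f(h),\]
which is the first formula.

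For the second equation, my plan is to use the standard entrance-law identity for Itô's measure under the normalization fixed in Section \ref{snake}: for any bounded continuous functional $F$ on paths that vanishes sufficiently fast as the duration goes to $0$,
\[\mathbf{n}(F(\mathbf{e}))=\lim_{h\to 0^+}\frac{1}{2h}\,\E_h\!\left[F\left((B_{t})_{0\le t\le T_0}\right)\right].\]
This is the standard comparison between the entrance law of $\mathbf{n}$ (with density of $\sigma$ equal to $(2\sqrt{2\pi t^3})^{-1}$) and the law of Brownian motion killed at $0$ starting from $h$; the factor $1/(2h)$ can be checked by matching $\mathbf{n}(\sigma>t)=1/\sqrt{2\pi t}$ with the reflection-principle estimate $\P_h(T_0>t)\sim h\sqrt{2/(\pi t)}$ as $h\to 0^+$. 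Applying this with $F(w)=1-\exp(-\int_0^{\zeta(w)}f(w_t)\,dt)$ and inserting the first formula gives
\[\mathbf{n}\!\left(1-\exp\left(-\int_0^\sigma f(\mathbf{e}_t)\,dt\right)\right)=\lim_{h\to 0^+}\frac{1}{2h}\!\left(1-\frac{H_f(h)}{H_f(0)}\right)=\lim_{h\to 0^+}\frac{H_f(0)-H_f(h)}{2hH_f(0)},\]
and a first-order Taylor expansion of $H_f$ at $0$ (legitimate since $H_f$ is twice differentiable) yields $-H_f'(0)/(2H_f(0))$, as required.

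The main obstacles are minor: the Taylor expansion and the martingale argument are routine once the ansatz $M_t$ is identified, and the only step requiring care is the justification of the limit identity exchanging $\mathbf{n}$ with the vanishing entrance law. The latter is classical and can be justified either by the Markov property of $\mathbf{n}$ combined with the explicit entrance law, or by a dominated convergence argument after noting that $1-e^{-\int_0^\sigma f(\mathbf{e}_t)dt}\le \int_0^\sigma f(\mathbf{e}_t)\,dt$ and using $\mathbf{n}(\int_0^\sigma f(\mathbf{e}_t)\,dt)<\infty$ (when $f$ is, say, compactly supported, with the general case obtained by monotone limits). No further difficulty is expected.
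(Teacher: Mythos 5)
Your proof is correct and matches the route the paper itself indicates: the paper does not give a proof of this lemma but simply says ``This lemma follows from It\^o's formula (see Lemma~8.1 of the cited thesis for a proof).'' Your first step — applying It\^o's formula to $M_t=H_f(B_{t\wedge T_0})\exp(-\int_0^{t\wedge T_0}f(B_s)\,ds)$, noting that the ODE $H_f''=2fH_f$ (the paper has a typo, writing $H(x)$ instead of $H_f(x)$ on the right-hand side) kills the drift, and using boundedness of $H_f$ together with $T_0<\infty$ a.s.\ to invoke optional stopping — is exactly the It\^o-formula argument the paper alludes to. Your second step, deriving the $\mathbf{n}$-formula from the $\E_h$-formula via $\mathbf{n}(F)=\lim_{h\to0^+}\tfrac{1}{2h}\E_h[F((B_t)_{0\le t\le T_0})]$, is also the standard passage, and your sanity check of the $\tfrac{1}{2h}$ normalization against $\mathbf{n}(\sigma>t)=(2\pi t)^{-1/2}$ and $\P_h(T_0>t)\sim h\sqrt{2/(\pi t)}$ correctly reflects the paper's convention for the duration density. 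The only place requiring additional care is the justification of that limit for the specific functional $F=1-e^{-\int_0^\sigma f(\mathbf{e}_t)dt}$: your proposed domination by $\int_0^\sigma f(\mathbf{e}_t)\,dt$ together with the small-$t$ bound $F\le C\sigma$ (for $f$ locally bounded) is the right idea, and then monotone convergence handles the general case; this is adequate for a sketch and, in any event, is no less rigorous than the paper, which offers only a citation. No gap.
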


The following proposition gives a very simple formula for the distribution of both $\ell_*$ and $\overline{\ell}_*$, which is the counterpart of \eqref{inf} for the measure $\U$.

\begin{proposition}[Control of both minima]\label{Control min}
    For every $a,b>0$, we have
    \[\U(\ell_*<-a,\,\overline{\ell}_*<-b)=\frac{1}{a+b}.\]
\end{proposition}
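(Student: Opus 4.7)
The plan is to condition on the Brownian excursion $\e$ under $\U$, use the Poisson structure of $\cM$ and $\overline{\cM}$, and then reduce everything to three applications of Lemma \ref{Calcul espérance}. I will use the fact that on the spine the labels are given by $\e\geq 0$, so the events $\{\ell_*<-a\}$ and $\{\overline{\ell}_*<-b\}$ are determined solely by the snake trajectories attached to each side.

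First, given $\e$, the measures $\cM$ and $\overline{\cM}$ are independent Poisson point measures with intensity $2\1_{[0,\sigma]}(t)dt\,\N_{\e_t}(dW)$. Using formula \eqref{inf}, the $\cM$-intensity of trajectories with $W_*<-a$ equals
\[\int_0^\sigma 2\,\N_{\e_t}(W_*<-a)\,dt=\int_0^\sigma \frac{3}{(\e_t+a)^2}\,dt=:A,\]
so that $\U(\ell_*\ge -a\mid\e)=e^{-A}$, and similarly $\U(\overline{\ell}_*\ge -b\mid\e)=e^{-B}$ with $B=\int_0^\sigma \frac{3}{(\e_t+b)^2}dt$. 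By conditional independence,
\[\U(\ell_*<-a,\overline{\ell}_*<-b\mid\e)=(1-e^{-A})(1-e^{-B}).\]

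To integrate this against $\mathbf{n}$, rewrite the product using the identity $(1-e^{-A})(1-e^{-B})=(1-e^{-A})+(1-e^{-B})-(1-e^{-A-B})$, which ensures that each term is $\mathbf{n}$-integrable. I then apply Lemma \ref{Calcul espérance} three times. For the function $f(x)=3(x+a)^{-2}$, the bounded positive solution of $H_f''=2fH_f$ is $H_f(x)=(x+a)^{-2}$ (one root of the characteristic equation $\alpha(\alpha-1)=6$), which gives $\mathbf{n}[1-e^{-A}]=1/a$; symmetrically $\mathbf{n}[1-e^{-B}]=1/b$.

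The main obstacle is the third integral, which requires finding a bounded positive $H$ with
\[H''(x)=\Bigl(\tfrac{6}{(x+a)^2}+\tfrac{6}{(x+b)^2}\Bigr)H(x).\]
The asymptotics (solutions behave like $x^{-3}$ at infinity) suggest the ansatz $H(x)=\frac{x+c}{(x+a)^2(x+b)^2}$. Taking the logarithmic derivative and matching the cross terms $(x+c)^{-1}(x+a)^{-1}$ and $(x+c)^{-1}(x+b)^{-1}$ against the $(x+a)^{-1}(x+b)^{-1}$ contributions forces $c=(a+b)/2$. A direct computation of $H(0)$ and $H'(0)$ then yields $-H'(0)/(2H(0))=(a^2+ab+b^2)/(ab(a+b))$, hence $\mathbf{n}[1-e^{-A-B}]=(a^2+ab+b^2)/(ab(a+b))$.

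Putting the three pieces together,
\[\U(\ell_*<-a,\overline{\ell}_*<-b)=\frac{1}{a}+\frac{1}{b}-\frac{a^2+ab+b^2}{ab(a+b)}=\frac{b(a+b)+a(a+b)-(a^2+ab+b^2)}{ab(a+b)}=\frac{1}{a+b},\]
which is the desired identity.
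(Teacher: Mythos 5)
Your proof is correct and follows the same route as the paper: condition on $\e$, use the independence of the two Poisson point measures together with \eqref{inf} to write the conditional probability as $(1-e^{-A})(1-e^{-B})$, expand into three terms, and apply Lemma~\ref{Calcul espérance} to each. For the mixed term, your ansatz $H(x)=\frac{x+(a+b)/2}{(x+a)^2(x+b)^2}$ is, up to the constant factor $2(b-a)$, the same as the paper's choice $\frac{1}{(x+a)^2}-\frac{1}{(x+b)^2}$, so the two computations of $-H'(0)/(2H(0))=(a^2+ab+b^2)/(ab(a+b))$ agree; your form has the minor cosmetic advantage of being manifestly symmetric in $a,b$ and non-degenerate at $a=b$.
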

\begin{proof}
    According to the description of the measure $\U$, we have 
    \begin{align*}
         \U\left(\ell_*<-a,\,\overline{\ell}_*<-b\right)&=\U\left(\exists i\in I,j\in J,\,(W_i)_*<-a\text{ and }(W_j)_*<-b\right)\\
         &=\n\left(\left(1-\exp\left(-3\int_0^\sigma\frac{1}{(\e_t+a)^2}dt\right)\right)\left(1-\exp\left(-3\int_0^\sigma\frac{1}{(\e_t+b)^2}dt\right)\right)\right)   
    \end{align*}       
  where the second line follows from the independence between $\cM$ and $\overline{\cM}$ given $\n$. This last expression can be reformulated as 
  \begin{multline*}
          \n\left(1-\exp\left(-3\int_0^\sigma\frac{1}{(\e_t+a)^2}dt\right)\right)+\n\left(1-\exp\left(-3\int_0^\sigma\frac{1}{(\e_t+b)^2}dt\right)\right)\\-\n\left(1-\exp\left(-3\int_0^\sigma\frac{1}{(\e_t+a)^2}+\frac{1}{(\e_t+b)^2}dt\right)\right).
  \end{multline*}
We can compute the first two terms with Lemma \ref{Calcul espérance}, by taking $H(x)=\frac{1}{(x+c)^2}$. Similarly, the last integral can be computed by taking $H(x)=\frac{1}{(x+a)^2}-\frac{1}{(x+b)^2}$ (if $a<b$). This gives
\begin{align*} 
\U(\ell_*<-a,\overline{\ell}_*<-b)&=\frac{1}{a}+\frac{1}{b}-\frac{\frac{1}{a^3}-\frac{1}{b^3}}{\frac{1}{a^2}-\frac{1}{b^2}}\\
&=\frac{1}{a+b}.
\end{align*}
\end{proof}

We also give the Laplace transform of the delayed Voronoï cells under $\U$ (which could have also been deduced from \ref{volume cells}). Indeed, just like in Section \ref{construction}, it is not hard to see that the $\left(\overline{\ell}_*-\ell_*\right)$-delayed Voronoï cells of $x_*$ and $\overline{x}_*$ in $\cS$ are sets 
$p_{\cS}\left(\mathfrak{U}_{int}\right)$ and $p_{\cS}\left(\mathfrak{U}_{ext}\right)$.
Therefore, we set 
\[\sigma_1=\mathrm{Vol}\left(p_{\cS}\left(\mathfrak{U}_{int}\right)\right)\quad\text{ and }\quad\sigma_2=\mathrm{Vol}\left(p_{\cS}\left(\mathfrak{U}_{ext}\right)\right).\]

\begin{proposition}[Laplace transform of the volumes]
    For every $\lambda_1,\lambda_2>0$, we have 
    \[\U\left(1-\exp(-\lambda_1\sigma_1-\lambda_2\sigma_2)\right)=2^{-1/4}\left(\sqrt{\lambda_1}+\sqrt{\lambda_2}\right)^{1/2}.\]
\end{proposition}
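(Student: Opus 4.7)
The plan is to exploit the Poissonian description of the free biased Brownian sphere and reduce the computation to the elementary Laplace transform of the duration of a Brownian snake excursion. Under the measure $\U$, conditionally on $\e$ the point measures $\cM = \sum_{i\in I}\delta_{t_i,\omega_i}$ and $\overline{\cM} = \sum_{j \in J}\delta_{t_j,\omega_j}$ are independent Poisson point measures with intensities $2\1_{[0,\sigma]}(t)\,dt\,\N_{\e_t}(dW)$, and the construction of Section \ref{coding unicycles with point measures} identifies
\[\sigma_1 = \sum_{j \in J}\sigma(\omega_j), \qquad \sigma_2 = \sum_{i \in I}\sigma(\omega_i),\]
since the volume measure on $\mathfrak{U}$ puts no mass on the cycle $[0,\sigma]$. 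The conditional independence will allow one to factor the exponential and apply the exponential formula for Poisson point measures.

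The first step is to note that under $\N_x$ the duration $\sigma$ has density $(2\sqrt{2\pi t^3})^{-1}\,dt$ regardless of $x$, so
\[\N_x\!\left(1 - e^{-\lambda \sigma}\right) = \int_0^\infty (1 - e^{-\lambda t})\,\frac{dt}{2\sqrt{2\pi t^3}} = \sqrt{\lambda/2},\]
using the standard identity $\int_0^\infty (1-e^{-\lambda t}) t^{-3/2}\,dt = 2\sqrt{\pi \lambda}$. The exponential formula for Poisson point measures then gives, for any $\lambda > 0$,
\[\E\!\left[\exp\!\Big(-\lambda \sum_{i \in I}\sigma(\omega_i)\Big)\,\Big|\,\e\right] = \exp\!\left(-2\int_0^\sigma \N_{\e_t}(1-e^{-\lambda \sigma})\,dt\right) = \exp\!\left(-\sigma\sqrt{2\lambda}\right),\]
and similarly for the sum over $J$ with $\lambda_1$ in place of $\lambda$.

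The second step combines the two factors through the conditional independence of $\cM$ and $\overline{\cM}$ given $\e$:
\[\U\!\left(1 - e^{-\lambda_1 \sigma_1 - \lambda_2 \sigma_2}\right) = \n\!\left(1 - \exp\!\left(-\sigma\big(\sqrt{2\lambda_1} + \sqrt{2\lambda_2}\big)\right)\right).\]
Applying the same Laplace formula once more, this time at the level of the Itô measure $\n$ of the driving excursion, yields
\[\n\!\left(1 - \exp\!\left(-\sigma(\sqrt{2\lambda_1} + \sqrt{2\lambda_2})\right)\right) = \sqrt{\frac{\sqrt{2\lambda_1} + \sqrt{2\lambda_2}}{2}} = 2^{-1/4}\big(\sqrt{\lambda_1} + \sqrt{\lambda_2}\big)^{1/2},\]
which is the announced identity.

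There is no serious obstacle: the only non-routine ingredient is the identification $\sigma_i$ as a pure sum over snake trajectories (which follows from the definition of the volume measure on $\mathfrak{U}$), and the verification that the Laplace transform of $\sigma$ under $\N_x$ is independent of $x$. Both points should be stated briefly before the two-line computation above. As a sanity check, setting $\lambda_1 = \lambda_2 = \lambda$ gives $\U(1 - e^{-\lambda(\sigma_1 + \sigma_2)}) = \sqrt{\lambda}$, which matches $\n(1-e^{-\lambda \sigma}) = \sqrt{\lambda/2}$ up to the factor $\sqrt{2}$ arising from $\sigma_1 + \sigma_2$ being the total volume of a two-faced unicycle whose "driving" excursion has duration $\sigma$ and which contributes two independent exponential discounts.
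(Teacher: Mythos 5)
Your proof is correct and follows essentially the same scheme as the paper's: condition on the duration $\sigma$ of the driving excursion, compute the conditional Laplace transform $\exp\bigl(-\sigma(\sqrt{2\lambda_1}+\sqrt{2\lambda_2})\bigr)$ by conditional independence, then integrate against the $\n$-density of $\sigma$. The only organizational difference is that you derive the conditional Laplace transform from the Poissonian description of $\U$ via the exponential formula for Poisson point measures (together with $\N_x(1-e^{-\lambda\sigma})=\sqrt{\lambda/2}$), whereas the paper passes through the $\mathbf{Unic}$ description, where $\sigma_1,\sigma_2$ are first hitting times of $-\sigma$ by independent Brownian motions, and quotes $\E[e^{-\lambda T_x}]=e^{-x\sqrt{2\lambda}}$; the two are of course the same excursion-theoretic fact. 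Your reuse of the formula $\n(1-e^{-\lambda\sigma})=\sqrt{\lambda/2}$ for the outer integral is a small streamlining over the paper's integration by parts. One small error in the closing aside: with $\lambda_1=\lambda_2=\lambda$ your formula gives $2^{-1/4}(2\sqrt{\lambda})^{1/2}=(2\lambda)^{1/4}$, not $\sqrt{\lambda}$, and the claimed comparison with $\n(1-e^{-\lambda\sigma})=\sqrt{\lambda/2}$ does not hold (the exponents in $\lambda$ differ, as they must, since $\sigma_1+\sigma_2$ is a first-passage time and not a Brownian excursion duration). This does not affect the body of the argument, which is sound.
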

\begin{proof}
    By the definition of the measure $\n$, we have 
    \begin{align*}
        \U\left(1-\exp(-\lambda_1\sigma_1-\lambda_2\sigma_2)\right)&=\int_0^\infty\frac{dx}{2\sqrt{2\pi x^3}}\E\left[1-\exp(-\lambda_1 T_x-\lambda_2 T'_x)\right]\\
        &=\int_0^\infty\frac{dx}{2\sqrt{2\pi x^3}}\left(1-\E\left[\exp(-\lambda_1 T_x\right]\E\left[\exp(-\lambda_2 T'_x)\right]\right),
    \end{align*}
    where $T_x$ and $T'_x$ are the first hitting time of $-x$ by two independent Brownian motions starting at $0$. Using the well known formula 
    \[\E\left[\exp(-\lambda T_x)\right]=\exp(-x\sqrt{2\lambda}),\]
    an integration by part gives
    \begin{align*}
          \U\left(1-\exp(-\lambda_1\sigma_1-\lambda_2\sigma_2)\right)&=\int_0^\infty\frac{1}{2\sqrt{2\pi x^3}}\left(1-\exp(-x(\sqrt{2\lambda_1}+\sqrt{2\lambda_2})\right)dx\\
          &=\int_0^\infty\frac{\sqrt{2\lambda_1}+\sqrt{2\lambda_2}}{\sqrt{2\pi x}}\left(\exp(-x(\sqrt{2\lambda_1}+\sqrt{2\lambda_2})\right)dx\\
    \end{align*}
    Finally, using some classical identities, we obtain 
    \[\U\left(1-\exp(-\lambda_1\sigma_1-\lambda_2\sigma_2)\right)=2^{-1/4}\left(\sqrt{\lambda_1}+\sqrt{\lambda_2}\right)^{1/2},\]
    which concludes the proof.
\end{proof}

Before stating our disintegration result, we need to construct a family of probability measures $\left(\U^{(x,y)}\right)_{x,y\geq 0}$. As we will show in Proposition \ref{Decompo minimum}, this probability measure corresponds to the measure $\U$ ``conditioned to have $\ell_*=-x$ and $\overline{\ell}_*=-y$''. 

Fix $x,y\geq 0$. First, consider a random variable $\tilde{\e}^{(x,y)}$ defined by the following formula
\begin{equation}\label{prob measure}
    \E[F(\tilde{\e}^{(x,y)})]=18(x+y)^3\n\left(F(\e)\left(\int_0^\sigma\int_0^\sigma\frac{dsdt}{(\e_s+x)^3(\e_t+y)^3}\right)\exp\left(-3\int_0^\sigma\left(\frac{1}{(\e_u+x)^2}+\frac{1}{(\e_u+y)^2}\right)du\right)\right).
\end{equation}
Note that this is a well-defined probability measure. Indeed, by Proposition \ref{Control min}, we have 
\begin{align*}
    &18(x+y)^3\n\left(\left(\int_0^\sigma\int_0^\sigma\frac{dsdt}{(\e_s+x)^3(\e_t+y)^3}\right)\exp\left(-3\int_0^\sigma\left(\frac{1}{(\e_u+x)^2}+\frac{1}{(\e_u+y)^2}\right)du\right)\right)\\
    &=\frac{(x+y)^3}{2}\frac{\partial^2}{\partial x\partial y}\n\left(\left(1-\exp\left(-3\int_0^\sigma\frac{1}{(\e_t+x)^2}dt\right)\right)\left(1-\exp\left(-3\int_0^\sigma\frac{1}{(\e_t+y)^2}dt\right)\right)\right)   \\
    &=\frac{(x+y)^3}{2}\frac{\partial^2 \U(\ell_*<-x,\overline{\ell}_*<-y)}{\partial x\partial y}\\
    &=1.
\end{align*}
Then, given $\tilde{\e}^{(x,y)}$, consider two random independent Poisson point measures $\cM_{>-x}$ and $\overline{\cM}_{>-y}$ with respective intensities 
\[2\1_{\{t\in[0,\sigma]\}}\1_{\{W_*>-x\}}dt\N_{\tilde{\e}^{(x,y)}_t}(dW)\quad\text{ and }\quad2\1_{\{t\in[0,\sigma]\}}\1_{\{W_*>-y\}}dt\N_{\tilde{\e}^{(x,y)}_t}(dW).\]
Moreover, given $\tilde{\e}^{(x,y)}$, consider two random point measures $\delta_{t_x,\omega^{(x)}}$ and $\delta_{\overline{t}_y,\overline{\omega}^{(y)}}$ made of a single atom, independent of each other and of $\cM_{>-x},\,\overline{\cM}_{>-y}$, such that for every continuous bounded function $F$,
\begin{align*}    
\E\left[F\left(t_x,\omega^{(x)},\overline{t}_y,\overline{\omega}^{(x)}\right)\bigg|\,\tilde{\e}^{(x,y)}\right]&=\frac{1}{\left(\int_0^\sigma\int_0^\sigma\frac{dsdt}{(\e_s+x)^3(\e_t+y)^3}\right)}\bigg(\int_{[0,\sigma]\times[0,\sigma]}dsdt\\&\int_{\mathfrak{S}\times\mathfrak{S}} \N_{\e_s}(dW|W_*=-x)\N_{\e_s}(dW'|W'_*=-y)\frac{F(s,W,t,W')}{(\e_s+x)^3(\e_t+y)^3}\bigg).
\end{align*}
Finally, set
\[\cM_{-x}:=\cM_{>-x}+\delta_{t_x,\omega^{(x)}}\quad\text{and}\quad\overline{\cM}_{-y}:=\overline{\cM}_{>-y}+\delta_{\overline{t}_y,\overline{\omega}^{(y)}}.\]
\begin{definition}\label{conditioned measure}
    For every $x,y>0$, the probability measure $\U^{(x,y)}$ is defined by the formula 
\[\U^{(x,y)}\left(F\left(\e,\cM,\overline{\cM}\right)\right)=\E\left[F\left(\tilde{\e}^{(x,y)},\cM_{-x},\overline{\cM}_{-y}\right)\right].\]
\end{definition}
The random metric space associated to the coding triple $\left(\e,\cM,\overline{\cM}\right)$ is denoted by $\left(\cS^{(x,y)},D^{(x,y)}\right)$. As for the measure $\U$, these metric spaces are equipped with two distinguished points $x_*$ and $\overline{x}_*$. Let us mention that these surfaces satisfy the following scaling property : for every $x,y>0$ and $\lambda>0$, we have 
\begin{equation}\label{scaling conditioned}
    \left(\cS^{(x,y)},\lambda\cdot D^{(x,y)}\right)\stackrel{(d)}{=}  \left(\cS^{(\lambda x,\lambda y)}, D^{(\lambda x,\lambda y)}\right).
\end{equation}
This property follows from the scaling properties of the processes in the coding triple. 
\begin{proposition}[Disintegration with respect to both minima]\label{Decompo minimum}
    Let $F$ and $G$ be measurable non-negative functions. Then
    \[\U\left(F(-\ell_*,-\overline{\ell}_*)G(\e,\cM,\overline{\cM})\right)=\int_{\R_+\times \R_+}F(x,y)\frac{2}{(x+y)^3}\U^{(x,y)}\left(G(\e,\cM,\overline{\cM})\right)dxdy.\]
\end{proposition}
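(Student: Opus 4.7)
The plan is to compute the left-hand side by conditioning on $\e$ and performing a Palm-style disintegration of the Poisson point measures $\cM$ and $\overline{\cM}$ at the atoms that realize $\overline{\ell}_*=\inf_i(\omega_i)_*$ and $\ell_*=\inf_j(\omega_j)_*$, respectively. Since, given $\e$, the two PPMs are conditionally independent, it suffices to describe the disintegration for one and tensor the results.

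Fix $\e$ and work with $\cM$, whose conditional intensity on $[0,\sigma]\times\mathfrak{S}$ is $2\,dt\,\N_{\e_t}(dW)$. Differentiating the classical identity $\N_{y}(W_*<-z)=\tfrac{3}{2(y+z)^2}$ shows that, under $\N_{\e_t}$, $-W_*$ has density $\tfrac{3}{(\e_t+y)^3}\,dy$ on $\R_+$; consequently the pushforward of the intensity of $\cM$ by $(t,\omega)\mapsto-\omega_*$ has density $6\bigl(\int_0^\sigma\frac{dt}{(\e_t+y)^3}\bigr)dy$ on $\R_+$. By \cite[Proposition 2.5]{Conditionnedbrowniantrees}, $W_*$ is a.s.\ uniquely attained under $\N_{\e_t}$, so the minimum of the marks over the atoms of $\cM$ is a.s.\ attained at a unique atom $(T,\Omega)$. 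A standard disintegration of a Poisson point measure at the global minimum of a functional then gives, for nonnegative measurable $\phi$ and $G$,
\begin{equation*}
\U\bigl[\phi(-\overline{\ell}_*)\,G(\cM)\,\big|\,\e\bigr]=\int_0^\infty 6\Bigl(\!\int_0^\sigma\!\frac{dt}{(\e_t+y)^3}\Bigr)\exp\!\Bigl(\!-3\!\int_0^\sigma\!\frac{du}{(\e_u+y)^2}\Bigr)\phi(y)\,\U\bigl[G(\cM_{-y})\,\big|\,\e\bigr]\,dy,
\end{equation*}
where $\cM_{-y}$ is, conditionally on $\e$, the independent superposition of a PPM with intensity $2\,\1_{\{W_*>-y\}}dt\,\N_{\e_t}(dW)$ and a single extra atom $(T,\Omega)$ with $T$ of density proportional to $(\e_T+y)^{-3}$ on $[0,\sigma]$ and $\Omega$ distributed as $\N_{\e_T}(\,\cdot\,|\,W_*=-y)$ given $T$. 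This is exactly the conditional description built into Definition \ref{conditioned measure}.

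Applying the same decomposition to $\overline{\cM}$ (with $x$ in place of $y$), using conditional independence and Fubini, and integrating against $\n(d\e)$ yields
\begin{equation*}
\U\bigl(\phi(-\ell_*,-\overline{\ell}_*)\,G\bigr)=\int_{\R_+^2}\!\phi(x,y)\cdot 36\,\n\!\bigl[R(x,y,\e)\,\U\bigl(G\,\big|\,\e,x,y\bigr)\bigr]\,dx\,dy,
\end{equation*}
where
\begin{equation*}
R(x,y,\e):=\Bigl(\!\int_0^\sigma\!\!\int_0^\sigma\!\frac{ds\,dt}{(\e_s+x)^3(\e_t+y)^3}\Bigr)\exp\!\Bigl(\!-3\!\int_0^\sigma\!\bigl(\tfrac{1}{(\e_u+x)^2}+\tfrac{1}{(\e_u+y)^2}\bigr)du\Bigr),
\end{equation*}
and $\U(G\,|\,\e,x,y)$ denotes the conditional expectation of $G$ after replacing $\cM$ and $\overline{\cM}$ by the two decorated objects above. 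Writing $36=\tfrac{2}{(x+y)^3}\cdot 18(x+y)^3$, the factor $18(x+y)^3$ multiplied by the $\n$-average of $R(x,y,\cdot)$ is precisely the change of measure in \eqref{prob measure} defining $\tilde{\e}^{(x,y)}$; with this reweighting, the conditional object is $\U^{(x,y)}(G)$ by Definition \ref{conditioned measure}, giving the result.

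The main technical point is the Palm-type disintegration at the global minimum used in the first step. A clean way to bypass an abstract Palm formula is to verify the identity first for exponential functionals $G(\cM)=\exp(-\sum_i f(t_i,\omega_i))$, for which both sides can be computed explicitly from the Laplace functional of a Poisson point measure by grouping atoms according to the level of $W_*$; a monotone class argument then extends to general $G$. Uniqueness of the minimizing atom, needed to define $(T,\Omega)$, follows from the cited continuity of the distribution of $W_*$ under $\N_{\e_t}$ together with the fact that only finitely many atoms of $\cM$ lie below any level $-a<0$ almost surely.
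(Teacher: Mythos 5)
Your proof is correct and follows essentially the same route as the paper: the key step in both is the Poissonian disintegration at the atom realizing the minimum (the paper phrases it via Palm's formula applied jointly to $\cM$ and $\overline{\cM}$; you do it marginally per point measure and then tensor using conditional independence given $\e$, which is an equivalent packaging). Your closing remark about verifying the one-PPM disintegration via Laplace functionals and a monotone class argument, and your uniqueness argument for the minimizing atom from the continuity of the law of $W_*$ together with the finiteness of atoms below any negative level, are both sound and fill in the standard facts the paper leaves implicit.
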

\begin{proof}
For every $F$ and $G$ measurable non-negative functions, we have :
    \begin{align*}
        \U\left(F(-\ell_*,-\overline{\ell}_*)G(\e,\cM,\overline{\cM})\right)&
        =\n\left(\Pi_\e\left(F(-\ell_*,-\overline{\ell}_*)G(\e,\cM,\overline{\cM})\right)\right)\\
       &=\n\left(\Pi_\e\left(\sum_{i\in I,j\in J}F(-W_*^{(i)},-W_*^{(j)})\1_{\{W_*^{(i)}=\ell_*,W_*^{(j)}=\overline{\ell}_*\}}G(\e,\cM,\overline{\cM})\right)\right)
    \end{align*}
where under the probability measure $\Pi_\e$, $\cM$ and $\overline{\cM}$ are two independent Poisson point measure, with intensities given by \eqref{intensity poisson}. By Palm's formula, this last term is equal to 
\begin{align*}
    &4\n\bigg(\int_0^\sigma\int_0^\sigma \int_{\mathfrak{S}\times\mathfrak{S}}dsdt\N_{\e_s}(dW)\N_{\e_t}(dW')F(-W_*,-W'_*)\\
    &\qquad\qquad\qquad\qquad\qquad\qquad\times\Pi_\e\left(G(\e,\cM+\delta_{s,W},\overline{\cM}+\delta_{t,W'})\1_{\{\inf W_*^{(i)}>\ell_*,\inf W_*^{(j)}>\overline{\ell}_*\}}\right)\bigg)\\
    &=4\n\bigg(\int_0^\sigma\int_0^\sigma \int_{\mathfrak{S}\times\mathfrak{S}}dsdt\N_{\e_s}(dW)\N_{\e_t}(dW')F(-W_*,-W'_*){\Pi}^{(\ell_*,\overline{\ell}_*)}_\e\left(G(\e,{\cM}+\delta_{s,W},{\overline{\cM}}+\delta_{t,W'})\right)\\
    &\qquad\qquad\qquad\qquad\qquad\qquad\times\Pi_\e\left(\inf W_*^{(i)}>\ell_*,\inf W_*^{(j)}>\overline{\ell}_*\right)\bigg)
\end{align*}

where, conditionally on $\e$ and under ${\Pi}^{(\ell_*,\overline{\ell}_*)}_\e$, ${\cM}$ and ${\overline{\cM}}$ are two independent Poisson point measures with respective intensities
\[2\1_{[0,\sigma]}\1_{W_*>\ell_*}ds\,\N_{\e_s}(dW)\]
and 
\[2\1_{[0,\sigma]}\1_{W_*>\overline{\ell}_*}ds\,\N_{\e_s}(dW).\]
Then, using the fact that 
\[\N_0(F(W))=\int_0^\infty\frac{3}{x^3}\N_0(F(W)|W_*=-x)dx,\]
we obtain 
\begin{multline}
 \U\left(F(-\ell_*,-\overline{\ell}_*)G(\e,\cM,\overline{\cM})\right)=36\n\Bigg(\int_{\R_+\times\R_+}dsdt\int_{[0,\sigma]\times[0,\sigma]}\dx\dy\\\int_{\mathfrak{S}\times\mathfrak{S}}\frac{\N_{\e_s}(dW|W_*=-x)\N_{\e_t}(dW'|W'_*=-y)}{(\e_t+x)^3(\e_s+y)^3}F(x,y)
 \Phi_{G,\e,x,y}(s,W,t,W')\Bigg)
 \end{multline}
where 
\begin{equation}
    \Phi_{G,\e,x,y}(s,W,t,W')={\Pi}^{(-x,-y)}_\e\left(G(\e,{\cM}+\delta_{s,W},{\overline{\cM}}+\delta_{t,W'})\right)\exp\left(-3\int_0^\sigma\frac{1}{(\e_t+x)^2}+\frac{1}{(\e_t+y )^2}dt\right).
\end{equation} Finally, using Fubini theorem, we have
\[\U\left(F(-\ell_*,-\overline{\ell}_*)G(\e,\cM,\overline{\cM})\right)=\int_{\R_+\times\R_+}dxdyF(x,y)\n\left(\Psi(G,\e,x,y)\right),\]
where \[\Psi(G,\e,x,y)=\int_0^\sigma\int_0^\sigma\int_{\mathfrak{S}\times\mathfrak{S}}dsdt\frac{36\N_{\e_t}(dW|W_*=-x)\N_{\e_s}(dW'|W'_*=-y)}{(\e_t+x)^3(\e_s+y)^3}\Phi_{G,\e,x,y}(s,W,t,W').\]
Now, by comparing this formula with the definition of the measure $\U^{(x,y)}$, we get

\[\U\left(F(-\ell_*,-\overline{\ell}_*)G(\e,\cM,\overline{\cM})\right)=\int_{\R_+\times \R_+}F(x,y)\frac{2}{(x+y)^3}\U^{(x,y)}\left(G(\e,\cM,\overline{\cM})\right)dxdy,\] which is the desired result. 
\end{proof}

\subsection{Link with the unbiased Brownian sphere}

The purpose of this section is to relate the random metric spaces $\cS^{(x,y)}$ to the Brownian sphere \textit{without bias}. More precisely, we will prove that the random surfaces constructed under the probability measures $\U^{(x,y)}$ and $\N_0(\cdot\,|\,W_*=-x-y)$ have the same distribution. This result yields a new construction of the Brownian sphere with random volume and two points at a fixed distance.

\begin{theorem}\label{Representation libre}
    For every $a>0$, $0\leq t \leq a$ and every non-negative function $F$, 
    \[\N_0\left(F(\cS,x,y)\,|\, W_*=-a\right)=\U^{(t,a-t)}\left(F(\cS,x_*,\overline{x}_*)\right).\]
\end{theorem}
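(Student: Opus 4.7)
The plan is to apply the first identity of Proposition \ref{link free model} with a test function that depends on the delay variable, to disintegrate the right-hand side via Proposition \ref{Decompo minimum}, and to isolate the theorem's identity by varying the test function. A continuity argument then upgrades the resulting almost-everywhere equality to a pointwise one.

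Concretely, let $G$ be a bounded continuous functional of a compact measured metric space with two marked points and let $h:\R\to\R_+$ be continuous compactly supported. Applied to $F(\cS,x,y,u) = G(\cS,x,y)h(u)$, the left-hand side of Proposition \ref{link free model} reads $\N_0\bigl(G(\cS,x_0,x_*) \int_{-D(x_0,x_*)}^{D(x_0,x_*)} h(u)\,du\bigr)$. Disintegrating with respect to $D(x_0,x_*) = -W_*$, whose density under $\N_0$ is $3/b^3$ on $(0,\infty)$ by differentiating \eqref{inf}, I rewrite this as
\[
\int_0^\infty \frac{3}{b^3}\,\N_0(G \mid W_* = -b)\Bigl(\int_{-b}^b h(u)\,du\Bigr) db.
\]

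For the right-hand side, Proposition \ref{Decompo minimum} combined with the fact that $D(x_*,\overline{x}_*)=\alpha+\beta$ and $\Delta=\beta-\alpha$ are deterministic under $\U^{(\alpha,\beta)}$, followed by the change of variables $b=\alpha+\beta$, $u=\beta-\alpha$ (whose inverse Jacobian equals $1/2$), gives
\[
\int_0^\infty\int_{-b}^b \frac{3}{b^3}\,h(u)\,\U^{((b-u)/2,(b+u)/2)}(G)\,du\,db.
\]
Equating the two expressions and exploiting the arbitrariness of $h$ forces
\[
\N_0(G \mid W_* = -b) = \U^{((b-u)/2,(b+u)/2)}(G)
\]
for Lebesgue-almost every $(b,u)$ with $b>0$ and $u\in(-b,b)$. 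Reparametrizing $s=(b-u)/2\in(0,b)$ rewrites this as $\N_0(G\mid W_*=-b) = \U^{(s,b-s)}(G)$ for a.e. $(b,s)$, which is exactly the statement of the theorem after setting $b=a$ and $s=t$.

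To promote this identity to one valid for every $(b,s)$ with $b>0$ and $s\in(0,b)$, I would rely on continuity in the parameters. The Poissonian construction of Definition \ref{conditioned measure} shows that $(\alpha,\beta)\mapsto\U^{(\alpha,\beta)}(G)$ is continuous on $(0,\infty)^2$, since the density \eqref{prob measure} and the intensities of the associated Poisson point measures depend continuously on $(\alpha,\beta)$; on the other side, $b\mapsto\N_0(G\mid W_*=-b)$ is continuous by the scaling invariance of the Brownian snake. The boundary cases $t\in\{0,a\}$ would then be recovered by a limiting procedure. The main technical obstacle is precisely this continuity-based extension step: the bulk of the argument is a direct application of the two propositions and a change of variables, but excluding the measure-zero set of exceptional parameters requires the continuous dependence of $\U^{(\alpha,\beta)}$, which must be extracted from a careful reading of its Poissonian description.
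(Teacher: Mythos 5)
Your overall architecture matches the paper's: apply Proposition \ref{link free model} and Proposition \ref{Decompo minimum}, change variables, equate, obtain the identity for Lebesgue-a.e.\ $(a,t)$, and then try to upgrade to pointwise. The change-of-variables computation and the observation that $b\mapsto\N_0(G\mid W_*=-b)$ is continuous by scaling are both correct and match the paper.

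The gap is in your continuity step for the right-hand side, and it is exactly the step the paper singles out as non-trivial. You write that $(\alpha,\beta)\mapsto\U^{(\alpha,\beta)}(G)$ is continuous for $G$ a functional of the surface ``since the density \eqref{prob measure} and the intensities of the associated Poisson point measures depend continuously on $(\alpha,\beta)$.'' But continuity of the law of the coding triple $(\tilde{\e}^{(\alpha,\beta)},\cM_{-\alpha},\overline{\cM}_{-\beta})$ does \emph{not} imply continuity of the law of the resulting metric space: the passage from a coding triple to a metric space involves an infimum over chains of pseudo-distances and a quotient, and this map is not continuous in any topology on the space of triples that follows from weak convergence of the driving processes. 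The paper explicitly warns that ``even though it is clear that for every continuous function ... this does not directly imply that the associated surfaces also converge in distribution.'' To close this, the paper builds an explicit coupling: for $(a',b')$ near $(a,b)$ one couples the two triples so that, on a high-probability event, all components agree exactly \emph{except} the two distinguished atoms $\omega^{(a)},\overline{\omega}^{(b)}$, and those are related to $\omega^{(a')},\overline{\omega}^{(b')}$ by a snake dilation $\Theta_\lambda$ with $\lambda\in[1-\varepsilon,1+\varepsilon]$. This yields a bijection $\Phi$ between the unicycles that preserves intervals and distorts labels, hence $D^\circ$, hence the glued distances, by a factor in $[1-\varepsilon,1+\varepsilon]$, which gives the needed GHP closeness. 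Your phrase ``which must be extracted from a careful reading of its Poissonian description'' flags the right obstacle, but the justification you offer is precisely the insufficient one; the coupling-plus-dilation argument is the missing content. A secondary, minor point: $\U^{(0,a)}$ and $\U^{(a,0)}$ are not defined (Definition \ref{conditioned measure} takes $x,y>0$), so there is nothing to recover at the boundary $t\in\{0,a\}$ --- the paper's phrasing $0\le t\le a$ should be read as $0<t<a$.
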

\begin{proof}
    By Propositions \ref{link free model} and \ref{Decompo minimum}, for every non-negative functions $F,G$ and $H$, we have 
    \[\N_0\left(\int_{-D(x,y)}^{D(x,y)}F(\cS,x,y)G(t)H(D(x,y))dt\right)=\int_{\R_+\times \R_+}\frac{6}{(b+b')^3}\U^{(b,b')}\left(F(\cS,x_*,\overline{x}_*)G(b-b')H(b+b')\right)dbdb'.\]
    By disintegrating the measure $\N_0$ according to $W_*$, and after rearranging the term on the right-hand side, we obtain
    \begin{align*}
       & \int_0^\infty\frac{3}{a^3}\left(\int_{-a}^{a}\N_0(F(\cS,x,y)\,|\,W_*=-a)G(t)H(a)dt\right)da\\&=\int_{0}^\infty\frac{6}{a^3}\left(\int_{0}^a\U^{(t,a-t)}\left(F(\cS,x_*,\overline{x}_*)\right)G(a-2t)H(a)dt\right)da\\
        &=\int_{0}^\infty\frac{3}{a^3}\left(\int_{-a}^a\U^{((a-t)/2,(a+t)/2)}\left(F(\cS,x_*,\overline{x}_*)\right)G(t)H(a)dt\right)da.
    \end{align*}     
     This means that for a.e $a\geq0$ and $0\leq t\leq a$, for every non-negative function $F$, we have 
     \[\N_0\left(F(\cS,x,y)\,|\, W_*=-a\right)=\U^{(t,a-t)}\left(F(\cS,x_*,\overline{x}_*)\right).\]
    In this situation, one can often rely on a scaling argument to obtain the equality for every parameter. However, since we have two parameters and that every change of scale affects both of them in the same way, this method does not apply in our situation. Instead, we will use a continuity argument.

    We will show that for every $a,b>0$ and $(a_n,b_n)_{n\in\N}\in \R_+^\N$ such that $a_n\rightarrow a$ and $b_n\rightarrow b$, it holds that 
    \begin{equation}\label{continuite}
        \cS^{(a_n,b_n)}\xrightarrow[n\rightarrow \infty]{(d)}\cS^{(a,b)}.
    \end{equation}
    Note that even though it is clear that for every continuous function, we have 
    \[\U^{(a_n,b_n)}\left(F(\e,\cM,\overline{\cM})\right)\xrightarrow[n\rightarrow\infty]{}\U^{(a,b)}\left(F(\e,\cM,\overline{\cM})\right),\]
    this does not directly imply that the associated surfaces also converge in distribution.

    Fix $a,b>0$. We will show that for every $\varepsilon>0$, there exists $\delta>0$ such that for every $a',b'>0$ with $|a-a'|+|b-b'|\leq\delta$, we can couple $\cS^{(a',b')}$ and $\cS^{(a,b)}$ so that with probability at least $1-\varepsilon$, there exists a bijection $\Phi: \cS^{(a,b)}\rightarrow \cS^{(a',b')}$ such that for every $x,y\in\cS^{(a,b)}$, we have 
    \[(1-\varepsilon)D^{(a',b')}(\Phi(x),\Phi(y))\leq D^{(a,b)}(x,y)\leq (1+\varepsilon)D^{(a',b')}(\Phi(x),\Phi(y)).\] Since this result implies \eqref{continuite}, this will complete the proof.

Fix $\varepsilon>0$, and consider the processes $A(a,b):=\left(\tilde{\e}^{(a,b)},\cM_{>-a}+\delta_{t_a,\omega^{(a)}},\overline{\cM}_{>-b}+\delta_{\overline{t}_b,\overline{\omega}^{(b)}}\right)$ that encode $\cS^{(a,b)}$. Using the explicit densities that define these random variables, one can see that there exists $\delta>0$ such that for every $a',b'>0$ with $|a-a'|+|b-b'|\leq\delta$, we can couple $\left(\tilde{\e}^{(a,b)},\cM_{>-a},t_a,\overline{\cM}_{>-b},\overline{t}_b\right)$ and $\left(\tilde{\e}^{(a',b')},\cM_{a'},t_{a'},\overline{\cM}_{b'},\overline{t}_{b'}\right)$ in such a way that these random variables are equal with probability at least $1-\varepsilon$ (we omit the details, but we mention that similar proofs will be done in Section \ref{local structure}). We call this event $\cH_\delta$. On this event, note that given $\left(\tilde{\e}^{(a,b)},\cM_{>-a},t_a,\overline{\cM}_{>-b},\overline{t}_b\right)$, the law of $\omega^{(a)}$ and $\overline{\omega}^{(b)}$ are respectively 
\[\N_{\tilde{\e}^{(a,b)}_{t_a}}(\cdot\,|\,W_*=-a)\quad\text{ and }\quad\N_{\tilde{\e}^{(a,b)}_{\overline{t}_b}}(\cdot\,|\,W_*=-b).\]
Set 
\[g_a(a')=\left(\frac{a'+\tilde{\e}^{(a,b)}_{t_a}}{a+\tilde{\e}^{(a,b)}_{t_a}}\right)^2\quad\text{ and }\quad g_b(b')=\left(\frac{b'+\tilde{\e}^{(a,b)}_{\overline{t}_b}}{b+\tilde{\e}^{(a,b)}_{\overline{t}_b}}\right)^2.\]
Then, using the scaling properties of the measure $\N$, we see that $\left(\omega^{(a')},\overline{\omega}^{(b')}\right)$
have the same distribution as 
\begin{equation*}\label{dilate}
    \left(\Theta_{g_a(a')}\left(\omega^{(a)}-\tilde{\e}^{(a,b)}_{t_a}\right)+\tilde{\e}^{(a,b)}_{t_a},\Theta_{g_b(b')}\left(\overline{\omega}^{(b)}-\tilde{\e}^{(a,b)}_{\overline{t}_b}\right)+\tilde{\e}^{(a,b)}_{\overline{t}_b}\right).
\end{equation*}
Moreover, we can suppose that $1-\varepsilon\leq g_a(a')\leq 1+\varepsilon$ and $1-\varepsilon\leq g_b(b')\leq 1+\varepsilon$ (we can still reduce $\delta$ if it is not the case).
Hence, we can use this formula to obtain a coupling of $\left(\omega^{(a)},\overline{\omega}^{(b)}\right)$ and $\left(\omega^{(a')},\overline{\omega}^{(b')}\right)$. More precisely, conditionally on $\cH_\delta$, we can couple these random variables in such a way that almost surely, $\left(\omega^{(a')},\overline{\omega}^{(b')}\right)$ is equal to \eqref{dilate}. Together with the previous coupling, we have constructed a coupling of $A(a,b)$ and $A(a',b')$. Let $\mathfrak{U}_{a,b}$ and $\mathfrak{U}_{a',b'}$ be the random unicycles obtained from this coupling. By construction, on $\cH_\delta$, we have a bijection $\Phi:\mathfrak{U}_{a,b}\rightarrow\mathfrak{U}_{a',b'}$ which is the identity map everywhere except on the trees encoded by $\left(\omega^{(a)},\overline{\omega}^{(b)}\right)$ and $\left(\omega^{(a')},\overline{\omega}^{(b')}\right)$, where it acts as a dilatation. Moreover, it is clear that $\Phi$ preserves intervals, and that on $\cH_\varepsilon$, for every $u\in\mathfrak{U}_{a,b}$, we have
\[(1-\varepsilon)\ell_{\Phi(u)}\leq\ell_u\leq(1+\varepsilon)\ell_{\Phi(u)}.\] Since $\Phi$ preserves intervals, this implies that for every $u,v\in\mathfrak{U}_{a,b}$
\[(1-\varepsilon)D^{(a',b')}(\Phi(x),\Phi(y))\leq D^{(a,b)}(x,y)\leq (1+\varepsilon)D^{(a',b')}(\Phi(x),\Phi(y)),\]
which concludes the proof.
\end{proof}

As a direct consequence, we obtain the following counterpart of Corollary \ref{distance frontiere} for the (non-biased) Brownian sphere.

\begin{corollary}\label{dist boundary}
    For any $\alpha>0$, consider a Brownian Sphere with two distinguished points $(\cS,x,y)$ conditioned on $D(x,y)=\alpha$. Then, for any $\beta\in(-\alpha,\alpha)$, the distances from $x$ and $y$ to the boundary of their $\beta$-delayed Voronoï cells evolve, up to deterministic additive constants, as the random variable $\tilde{\e}^{(\frac{\alpha+\beta}{2},\frac{\alpha-\beta}{2})}$.
\end{corollary}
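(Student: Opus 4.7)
The plan is to combine Theorem~\ref{Representation libre} with the geometric description of the boundary of the delayed Voronoï cells given in Section~\ref{construction}, and then read off the result directly from the explicit coding.

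First, I would parametrize: set $x = (\alpha+\beta)/2$ and $y = (\alpha-\beta)/2$, so that $x+y = \alpha$ and $x-y = \beta$. By Theorem~\ref{Representation libre}, conditionally on $\{D(x,y) = \alpha\}$ (equivalently $\{W_* = -\alpha\}$, since by rerooting invariance we may take the two marked points of $\cS$ to be $(x_0, x_*)$), the marked space $(\cS, x, y)$ has the same law as $(\cS_b, x_*, \overline{x}_*)$ constructed under $\U^{(x,y)}$. A short simple-geodesic argument through the cycle $\eta$, using Proposition~\ref{frontier} for the lower bound, shows $D(x_*, \overline{x}_*) = -\ell_* - \overline{\ell}_* = x+y = \alpha$, confirming consistency.

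Second, I would translate the $\beta$-delayed Voronoï cells through this identification. Under $\U^{(x,y)}$ the delay of $\cS_b$ equals $\Delta = \overline{W}_* - W_* = \overline{\ell}_* - \ell_* = x-y = \beta$, so by \eqref{delayed cells} and Proposition~\ref{frontiere} the cells $\Theta_\beta$ and $\overline{\Theta}_\beta$ of $(\cS, x, y)$ coincide with $p_{\cS_b}(I_+)$ and $p_{\cS_b}(I_-)$ respectively, and their common boundary is the closed curve $\eta = (\eta_s)_{s \in [0,\cL]}$ obtained from the cycle of the continuum unicycle.

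Third, I would read off the distance processes along $\eta$. In the Poissonian coding of Section~\ref{coding unicycles with point measures}, a boundary point $\eta_s$ is the image of the cycle point with label $\e_s$, and the simple-geodesic identity \eqref{egalite distance} gives, for every $s \in [0, \cL]$,
\[
D(\eta_s, x_*) = \e_s - W_* = \e_s + \frac{\alpha+\beta}{2}, \qquad D(\eta_s, \overline{x}_*) = \e_s - \overline{W}_* = \e_s + \frac{\alpha-\beta}{2}.
\]
By Definition~\ref{conditioned measure}, under $\U^{(x,y)}$ the excursion $\e$ is distributed as $\tilde{\e}^{((\alpha+\beta)/2,(\alpha-\beta)/2)}$, and the conclusion follows.

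I do not expect a genuine obstacle, since the whole argument is essentially unpacking the explicit construction. The only delicate point is the bookkeeping: one must check carefully that the delay parameter $\beta$ appearing in the definition $\Theta_\beta = \{z : D(x,z) \leq D(y,z) + \beta\}$ matches the quantity $\overline{\ell}_* - \ell_*$ governing $\Theta_\Delta$ in Section~\ref{construction}, and that the additive constants in the two distance formulas assemble correctly to produce a single process with the advertised law.
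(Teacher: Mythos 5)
Your proposal is correct and follows the same route as the paper's own (very terse) proof, which simply invokes Theorem \ref{Representation libre}, Definition \ref{conditioned measure}, and the boundary description underlying Corollary \ref{distance frontiere}; you have unpacked precisely those three ingredients — the identification of the conditioned sphere with the $\U^{(x,y)}$-surface, the translation of the delay parameter to $\beta = x-y$, and the reading-off of the cycle labels as $\tilde{\e}^{(x,y)}$ — without introducing any new ideas. The one point worth flagging is that the paper's notation for which of $\ell_*$ and $\overline{\ell}_*$ is pinned to $-x$ versus $-y$ is not entirely consistent between the definition of $\U^{(x,y)}$ and Proposition \ref{Decompo minimum}; you have followed the convention of Proposition \ref{Decompo minimum}, which is the one that makes the additive constants come out as $\frac{\alpha+\beta}{2}$ and $\frac{\alpha-\beta}{2}$, matching the density displayed in the introduction.
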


\begin{proof}
    This readily follows from the same arguments as in the proof of Corollary \ref{distance frontiere}, together with Definition \ref{conditioned measure} and Theorem \ref{Representation libre}.  
\end{proof}

\section{Application : a new construction of the bigeodesic Brownian plane}\label{section bigeodesique}

In this section, we use Corollary \ref{Representation libre} to obtain a new construction of the bigeodesic Brownian plane. 

Recall that the bigeodesic Brownian plane is the local limit of the Brownian sphere around a point of its unique geodesic between two distinguished points $x_0$ and $x_*$ (or around a point of any ``typical geodesic''). More precisely, by \cite[Theorem 5.11]{Bigeodesicbrownianplane}, we have the following convergence
\begin{equation}   (\cS,\varepsilon^{-1}D,\Gamma(1))\xrightarrow[\varepsilon\rightarrow 0]{(d)}(\overline{\mathcal{BP}},\overline{D}_\infty,\overline{\rho}_\infty)
\end{equation}
    in distribution for the local Gromov-Hausdorff topology, where the left-hand side is distributed under $\N_0(\cdot\,|\,W_*=-2)$. We mention that the choices of $\Gamma(1)$ and $W_*=-2$ are totally arbitrary, and the same result holds with $\Gamma(s)$ and $W_*=-t$ as long as $0<s<t$. This random space has an explicit construction in terms of labelled trees, but we do not need it in this paper. However, by Theorem \ref{Representation libre}, the same convergence holds if we replace the measure $\N_0(\cdot\,|\,W_*=-2)$ by $\U^{(1,1)}$. Our goal is to study $\cS$ (under $\U^{(1,1)}$) around $\Gamma(1)$ to obtain a new construction of the limiting object. The proof is composed of three steps :
    \begin{itemize}[label=\textbullet]
        \item First, we present a new random surface $\cS_\infty$, which is our candidate for the new construction of the bigeodesic Brownian plane.
        \item Then, we prove that we can couple the processes that encode $\cS$ and $\cS_\infty$ in such a way that they are locally equal with high probability.
        \item Finally, we show that this coupling induces a coupling of balls in $\cS$ and $\cS_\infty$, which implies the local Gromov-Hausdorff-Prokhorov convergence.
    \end{itemize}

\subsection{The construction}\label{section construction bigeo}

In this section, we present the new construction of the bigeodesic Brownian plane. This construction is essentially the ``infinite volume version'' of the construction given in Section \ref{coding unicycles with point measures}. The equivalence with the definition used in \cite{Bigeodesicbrownianplane} will be proved in Theorem \ref{equivalence definition}. 

Consider a random process $(X_t)_{t\in\R}$ such that $(X_t)_{t\geq0}$ and $(X_{-t})_{t\geq0}$ are two independent Bessel processes of dimension $3$ started from $0$. Then, conditionally on $X$, let $\cM_\infty$ and $\overline{\cM}_\infty$ be two independent Poisson point measures on $\R\times\cS$ with intensity 
\[2dt\N_{X_t}(dW).\]
Let $I$ and $J$ be two disjoint sets indexing the atoms of these measures, so that we write 
\[\cM_\infty=\sum_{i\in I}\delta_{t_i,\omega_i}\quad\text{ and }\quad\overline{\cM}_\infty=\sum_{j\in J}\delta_{t_j,\omega_J}.\]
Consider the random space $\mathfrak{U}_\infty$ obtained from the disjoint union 
\[\R\sqcup\left(\bigsqcup_{i\in I}\cT_{\omega_i}\right)\sqcup\left(\bigsqcup_{j\in J}\cT_{\omega_j}\right),\]
where we identify the root $\rho_{\omega_i}$ of the tree $\cT_{\omega_i}$ with $t_i\in\R$. We define a metric $d_\mathfrak{U}$ on $\mathfrak{U}$ as follows. 
First,  the restriction of $d_\mathfrak{U}$ to $\R$ is the Euclidean metric, and the restriction of $d_\mathfrak{U}$ to a tree $\cT_{\omega_i}$ is the metric $d_{\cT_{\omega_i}}$. If $u\in\R$ and $v\in\cT_{\omega_i}$, we set $d_\mathfrak{U}(u,v)=|u-t_i|+d_{\omega_i}(\rho_{\omega_i},v)$. Finally, if $u\in\cT_{\omega_i}$ and $v\in\cT_{\omega_j}$, we set $d_\mathfrak{U}(u,v)=d_{\omega_i}(\rho_{\omega_i},u)+|t_i-t_j|+d_{\omega_j}(\rho_{\omega_j},v)$. The metric space $\mathfrak{U}_\infty$ is also equipped with a volume measure $\overline{\mu}$, which is the sum of the volume measure of the trees.

We also assign labels to the points of $\mathfrak{U}$. If $u\in\R$, we set $\ell_u=X_u$. Otherwise, if $u\in\cT_{\omega_i}$, we set $\ell_u=X_{t_i}+\ell_u(\omega_i)$. As in Section \ref{coding unicycles with point measures}, we also define two exploration processes $(\cE_t)_{t\in\R}$ and $(\overline{\cE}_t)_{t\in\R}$. Once again, these exploration processes allow us to define intervals in $\mathfrak{U}_\infty$. When $s,t\in\R\subset\mathfrak{U}_\infty$, we use the notations $[s,t]$ and $\overline{[s,t]}$ for the different intervals induced by $\cE$ and $\overline{\cE}$.
We also define a function $D^\circ_\infty$ on $\mathfrak{U}_\infty$ exactly as we did in Section \ref{coding unicycles with point measures}. Finally, we define a pseudo-distance $D_\infty$ on $\mathfrak{U}$ by 
    \begin{equation*}        D_\infty(u,v)=\inf_{u_0,u_1,...,u_n}\sum_{i=1}^nD_\infty^\circ(u_{i-1},u_i),
    \end{equation*}
    where the infimum is taken over all the choices of $n\in\N$ and sequences $u_0,u_1,...,u_n$ of $\mathfrak{U}_\infty$ such that $u_0=u$ and $u_n=v$.
    Then, we define a random metric space $\cS_\infty$ by $\mathfrak{U}_\infty/\{D_\infty=0\}$, equipped with the distance $D_\infty$, and rooted at the equivalence class of $0$. This space also comes with a volume measure $\overline{\mathrm{Vol}}$, which is the pushforward of the volume $\overline{\mu}$ by the canonical projection $p_{\cS_\infty}:\mathfrak{U}_\infty\rightarrow\cS_\infty$.  

    Note that this space is scale-invariant, meaning that for every $\lambda>0$, we have the equality in distribution 
    \[\left(\cS_\infty,\lambda\cdot D_\infty,\rho_\infty\right)=^{(d)}\left(\cS_\infty,D_\infty,\rho_\infty\right).\]
This property follows from the scale invariance properties of the processes that encode $\cS_\infty$. 

The rest of the section is devoted to the proof of the following result.

\begin{theorem}\label{equivalence definition}
        The random metric space $\left(\cS_\infty,D_\infty,p_{\cS_\infty}(0)\right)$ is distributed as the bigeodesic Brownian plane. 
    \end{theorem}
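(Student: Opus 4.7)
The plan is to follow the three-step strategy outlined at the start of Section~\ref{section bigeodesique}, combined with the observation that Theorem~\ref{Representation libre} provides a direct link between the pre-limit in \cite[Theorem 5.11]{Bigeodesicbrownianplane} and our Poissonian construction. By Theorem~\ref{Representation libre} and the invariance of the Brownian sphere under rerooting, $(\cS, x_0, x_*)$ under $\N_0(\cdot\mid W_*=-2)$ has the same law as $(\cS, x_*, \overline{x}_*)$ under $\U^{(1,1)}$, and $\Gamma(1)$ corresponds to the midpoint $M$ of the unique geodesic between $x_*$ and $\overline{x}_*$; by the constructions of Sections~\ref{construction} and \ref{coding unicycles with point measures}, $M$ is precisely the image of the identified endpoint $0\sim\sigma$ of $\tilde{\e}^{(1,1)}$. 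Using the scaling property \eqref{scaling conditioned}, it then suffices to show
\[
\left(\cS^{(\alpha,\alpha)}, D^{(\alpha,\alpha)}, M_\alpha\right) \xrightarrow[\alpha\to\infty]{(d)} (\cS_\infty, D_\infty, \rho_\infty)
\]
for the local Gromov--Hausdorff--Prokhorov topology, since the left-hand side also converges to the bigeodesic Brownian plane, and limits in distribution are unique.

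For the local convergence, I fix $T>0$ and consider the coding triples restricted to the window $[0,T]\cup[\sigma-T,\sigma]$ around the identified endpoint, naturally identified with $[-T,T]$. The core technical step is to couple $(\tilde{\e}^{(\alpha,\alpha)}, \cM_{-\alpha}, \overline{\cM}_{-\alpha})$ and $(X, \cM_\infty, \overline{\cM}_\infty)$ so that they agree on this window with probability tending to $1$ as $\alpha\to\infty$. For the excursion, the density \eqref{prob measure} shows that as $\alpha\to\infty$, the factor $\exp(-6\int_0^\sigma(\e_u+\alpha)^{-2}du)$ differs from $1$ only through the bulk of $\e$, while the two-point density $144\alpha^3\int\int(\e_s+\alpha)^{-3}(\e_t+\alpha)^{-3}ds\,dt$ forces $\tilde{\e}^{(\alpha,\alpha)}$ to be small at two marked times near the endpoints. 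Combined with the classical fact that Itô's excursion measure $\n$ is locally a Bessel~$3$ process near each endpoint (Bismut's decomposition), this yields convergence of $\tilde{\e}^{(\alpha,\alpha)}$ on the window to a two-sided Bessel~$3$ process $X$. For the Poissonian parts, given the excursion, the non-heavy part $\cM_{>-\alpha}$ restricted to the window is Poisson with intensity $2\,\1_{\{W_*>-\alpha\}}\,dt\,\N_{\tilde{\e}^{(\alpha,\alpha)}_t}(dW)$; as $\alpha\to\infty$ the indicator becomes trivial and $\tilde{\e}^{(\alpha,\alpha)}_t\to X_t$, so the intensity converges to $2\,dt\,\N_{X_t}(dW)$, giving the desired coupling by a standard thinning argument.

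The heavy atoms $\delta_{t_\alpha,\omega^{(\alpha)}}$ and $\delta_{\overline{t}_\alpha,\overline{\omega}^{(\alpha)}}$ are the main obstacle: by Definition~\ref{conditioned measure}, $t_\alpha$ has density proportional to $(\tilde{\e}^{(\alpha,\alpha)}_{t_\alpha}+\alpha)^{-3}$, which concentrates near the endpoints, so these atoms typically fall inside the window. However, the snakes $\omega^{(\alpha)}$ are conditioned on $W_*=-\alpha$, an increasingly singular constraint as $\alpha\to\infty$. Using the scaling $\Theta_\lambda$ from Section~\ref{snake}, under $\N_x(\cdot\mid W_*=-\alpha)$ the snake concentrates its mass at labels near $-\alpha$, and the portion reaching any fixed label window $[-R,\tilde{\e}_{t_\alpha}]$ has volume and range tending to $0$ as $\alpha\to\infty$. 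Hence the heavy atoms contribute negligibly to any fixed label window and can be absorbed into the coupling.

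Finally, the coupling of coding triples must be transferred to a coupling of metric balls. For any $R>0$, the simple-geodesic bound $D\geq|\ell_\cdot-\ell_\cdot|$ (with $M_\alpha$ and $\rho_\infty$ both having label~$0$) implies that the ball of radius $R$ around $M_\alpha$ is contained in the image under the canonical projection of $\{u\in\mathfrak{U}:|\ell_u|\leq R\}$, and similarly for $\cS_\infty$. With high probability for $T$ large enough, this set involves only the cycle portion in the window together with the non-heavy atoms therein whose snake ranges are contained in $[-R,R]$. Under the previous coupling these coding pieces coincide exactly, and the distance and volume on the ball, computed from the coding as in Section~\ref{coding unicycles with point measures}, agree. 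A standard compactness argument then yields convergence of balls in the GHP topology, hence the local GHP convergence of the full spaces, which identifies $(\cS_\infty, D_\infty, \rho_\infty)$ with the bigeodesic Brownian plane.
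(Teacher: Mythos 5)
Your high-level strategy coincides with the paper's: use Theorem~\ref{Representation libre} together with the scaling relation \eqref{scaling conditioned} to reduce the claim to a local coupling between the coding triple of $\cS$ under $\U^{(1,1)}$ near $p_\cS(0)$ and the coding triple of $\cS_\infty$ near $0$, then transfer this to a coupling of metric balls. The Bessel\,$3$ description of the excursion near its endpoints and the thinning argument for the non-heavy Poisson atoms are both consistent with the paper (Propositions~\ref{couplage cycle} and~\ref{couplage mesure}). However, there are two genuine errors in the execution.

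First, the claim that the heavy atoms $t_\alpha,\overline{t}_\alpha$ ``concentrate near the endpoints'' and ``typically fall inside the window'' is incorrect. Given $\tilde\e^{(\alpha,\alpha)}$, the conditional density of $t_\alpha$ is proportional to $(\tilde\e^{(\alpha,\alpha)}_{t_\alpha}+\alpha)^{-3}$, and since $\tilde\e^{(\alpha,\alpha)}$ has height of order $\alpha$, this factor varies only within a bounded ratio over the whole excursion. By the scaling identity $\tilde\e^{(\alpha,\alpha)}(t)\stackrel{(d)}{=}\alpha\,\tilde\e^{(1,1)}(t/\alpha^2)$, the random variable $t_\alpha/\alpha^2$ has the law of $t_1$ for $\tilde\e^{(1,1)}$, so $t_\alpha$ is of order $\alpha^2$ and lies \emph{outside} any fixed window $[0,T]$ with probability tending to $1$ as $\alpha\to\infty$. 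This is exactly the event $H_\varepsilon$ used in Proposition~\ref{couplage mesure}. Rather than being ``the main obstacle,'' the heavy atoms simply vanish from the window. Your proposed workaround for this non-issue --- that the snake $\omega^{(\alpha)}$ conditioned on $W_*=-\alpha$ contributes negligibly to a fixed label window --- is also not obviously correct, since the structure of such a conditioned snake near its root does not trivialize as $\alpha\to\infty$; but it is moot once the atoms are outside the window.

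Second, and more seriously, the localization step is missing. The bound $D\geq|\ell_\cdot-\ell_\cdot|$ only gives $B_R(M_\alpha)\subset p_\cS\bigl(\{u:|\ell_u|\leq R\}\bigr)$, and this label set is strictly larger than $p_\cS(\mathfrak{U}^{\text{window}})$: trees grafted at cycle positions $t$ with $\e_t\gg R$, far outside the window, can dip down to labels in $[-R,R]$, and the expected number of such trees is bounded away from zero. Your assertion that ``this set involves only the cycle portion in the window together with the non-heavy atoms therein whose snake ranges are contained in $[-R,R]$'' is therefore false. Proving that the \emph{ball} itself is contained in the image of the window is precisely the content of Proposition~\ref{localisation}, whose proof rests on Lemma~\ref{a l'infini}: one exhibits a deep subtree beyond the window whose associated Brownian slice separates $0$ from everything further out, and then argues that any chain realizing a small $D$-distance must cross this slice, which costs a definite amount. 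This topological separation argument is essential and cannot be replaced by the pointwise label bound; without it the coupling of coding triples does not yield a coupling of balls, and the proof does not close.
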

  
    Let us identify the unique bi-geodesic in this construction. For every $t\geq0$, we define 
    \[\tau_t=\inf\{s\geq0,\ell_{\cE_s}=-t\},\quad\tau_{-t}=\sup\{s\leq0,\ell_{\cE_s}=-t\}\]
    and 
    \[\overline{\tau}_t=\inf\{s\geq0,\ell_{\overline{\cE}_s}=-t\},\quad\overline{\tau}_{-t}=\sup\{s\leq0,\ell_{\overline{\cE}_s}=-t\}.\]
    Then, for every $t\in\R$, set
    \[\theta_t=\cE_{\tau_t}\quad\text{ and }\quad\overline{\theta}_t=\overline{\cE}_{\overline{\tau}_t}.\]
    These elements can be seen as the first hitting times of $-t$ in $\mathfrak{U}_\infty$, each one corresponding to one exploration direction of a side of $\mathfrak{U}$. In particular, $\theta_0=\overline{\theta}_0=0$. Note that for every $t\geq0$, $D_\infty^\circ(\theta_t,\theta_{-t})=0$ and $D_\infty^\circ(\overline{\theta}_t,\overline{\theta}_{-t})=0$.
    Finally, for every $t\in\R$, we define 
    \begin{equation*}
        \gamma_\infty(t)=
         \left\{\begin{array}{ll}
  p_{\cS_\infty}(\theta_t)\quad&\text{ if $t\geq0$},\\
     p_{\cS_\infty}(\overline{\theta}_t)\quad&\text{ if $t\leq0$}
    \end{array}\right.
    \end{equation*}
    Let us check that $\gamma_\infty$ is an infinite bigeodesic. First, if $s,t\geq0$, we have 
    \[D_\infty(\gamma_\infty(s),\gamma_\infty(t))\geq\left|\ell_{\gamma_\infty(s)}-\ell_{\gamma_\infty(t)}\right|=|s-t|.\]
    On the other hand, since $D_\infty\leq D^\circ_\infty$, we also have 
    \[D_\infty(\gamma_\infty(s),\gamma_\infty(t))\leq D_\infty^\circ(\theta_s,\theta_t)=|s-t|.\]
    Of course the same result holds if $s,t\leq0$. Then, if $s\leq0\leq t$, we have
\[D(\gamma_\infty(t),\gamma_\infty(s))\leq D(\gamma_\infty(t),\gamma_\infty(0))+D(\gamma_\infty(0),\gamma_\infty(s))=|t-s|.\]
Finally, as in Proposition \ref{frontier}, one can show that every continuous curve from a point $x\in p_{\cS_\infty}\left(\cM_\infty\right)$ to a point $y\in p_{\cS_\infty}\left(\overline{\cM}_\infty\right)$ must intersect the path $\left(p_{\cS_\infty}(s)\right)_{s\in\R}$. We omit the proof of this result, since it is similar to the one of Proposition \ref{frontier}, except that some additional care is needed since the space is not compact in this case (a similar result will be proved later, see Proposition \ref{a l'infini}).

Consequently, we also have 
\begin{align*}    
D(\gamma_\infty(t),\gamma_\infty(s))&=\inf_{h\in\R}\left\{D(\gamma_\infty(t),p_{\cS_\infty}(h))+D(p_{\cS_\infty}(s),\gamma_\infty(s))\right\}\\
&\geq\inf_{h\in\R}\{|X_h-t|+|X_h-s|\}\\
&=|t-s|,
\end{align*} which proves the claim. 

\subsection{Local structure of the unicycle}\label{local structure}

This section is devoted to the study of the unicycle $\mathfrak{U}$ (under $\U^{(1,1)}$) around some specific point. Recall that by Theorem \ref{Representation libre}, the random unicycle $\mathfrak{U}$ encodes a Brownian sphere with two points at distance $2$. Therefore, as mentioned in Section \ref{brownian sphere}, there exists a unique geodesic $\Gamma$ between $x_*$ and $\overline{x}_*$.

Since our goal is to study $\cS$ around $\Gamma(1)$, we need to understand which point of $\mathfrak{U}$ corresponds to $\Gamma(1)$. 

\begin{proposition}
    We have $p_\cS^{-1}(\Gamma(1))=\{0\}.$ 
\end{proposition}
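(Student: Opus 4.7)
The plan is to proceed in two stages: first identify the point $\Gamma(1)$ explicitly as $p_\cS(0)$ using the decomposition of $\Gamma$ into simple geodesics, and then rule out any other preimage in $\mathfrak{U}$ by combining the (analogue of the) identification proposition with the uniqueness of $\Gamma$.

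For the first stage, I would use that under $\U^{(1,1)}$ we have $\ell_* = \overline{\ell}_* = -1$, so $D(x_*,\overline{x}_*) \ge \ell_* - \overline{\ell}_* $ forces $D(x_*,\overline{x}_*) = 2$. The cycle point $0$ has label $\ell_0 = \tilde{\e}^{(1,1)}(0) = 0$ and sits on the boundary of both faces. Applying the simple-geodesic construction of Section \ref{continuous quad} on the internal side gives a geodesic of length $\ell_0 - \ell_* = 1$ from $p_\cS(0)$ to $x_*$, and symmetrically on the external side a geodesic of length $\ell_0 - \overline{\ell}_* = 1$ from $p_\cS(0)$ to $\overline{x}_*$. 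Concatenating these yields a path from $x_*$ to $\overline{x}_*$ of length $2 = D(x_*,\overline{x}_*)$, which is therefore a geodesic. By the uniqueness of $\Gamma$ (coming from the construction via $\U^{(1,1)}$ together with the corresponding uniqueness for the standard Brownian sphere), this concatenation is $\Gamma$, and reading off the midpoint gives $\Gamma(1) = p_\cS(0)$.

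For the second stage, suppose $u \in \mathfrak{U}$ with $u \neq 0$ and $p_\cS(u) = p_\cS(0)$. The label identification $\ell_u = \ell_0 = 0$ is forced by the bound $D \ge |\ell \cdot - \ell \cdot|$. By the analogue of Proposition \ref{identification2} for the Poisson-coded space (where the role of $d_\e$ is played by the tree structure of $\mathfrak{U}$, and the role of $\widehat{d}'_Z$ by $\widehat{d}'_W$), and since $0$ is only tree-identified with $\sigma$, we must have $\widehat{d}'_W(u,0) = 0$. This means $u$ and $0$ lie on the same face and there is an arc between them along which the labels stay $\ge 0$, with minimum exactly $0$. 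If $u$ lies on the cycle, then $\ell_u = \tilde{\e}^{(1,1)}(u) > 0$ for $u \in (0,\sigma)$, contradicting $\ell_u = 0$; so the only cycle preimage is $\{0\}$ itself (after the identification $0 \sim \sigma$).

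It remains to exclude the case where $u$ lies in a tree, say on the internal face, which is the main obstacle. Here the plan is to use uniqueness of $\Gamma$ as a rigidity tool: the simple geodesic $\gamma_u^{\mathrm{int}}$ from $u$ to $x_*$ in the internal face has length $\ell_u - \ell_* = 1$, and concatenating it through the identification $p_\cS(u) = p_\cS(0)$ with the external simple geodesic $\gamma_0^{\mathrm{ext}}$ produces a length-$2$ geodesic from $x_*$ to $\overline{x}_*$. By uniqueness, this curve must coincide with $\Gamma$ as a subset of $\cS$, and in particular $\gamma_u^{\mathrm{int}}$ and $\gamma_0^{\mathrm{int}}$ must have identical images. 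Since simple geodesics in the internal face are leftmost geodesics coded by the contour process starting at their initial point, this identity would mean that for every $r \in (0,1)$ the points $\Gamma_u(r)$ and $\Gamma_0(r)$ of $\mathfrak{U}$ are identified in $\cS$. An uncountable family of such label-based identifications along two distinct contour positions is ruled out by the same snake analysis used to establish Proposition \ref{simple curve}: the one-sided-local-minimum argument of \cite[Lemma 3.2]{sphericity}, together with Proposition \ref{identification2}, forces these two contour positions to be the same in $\mathfrak{U}$ at $r = 0$, i.e.\ $u = 0$. This contradicts $u \neq 0$ and completes the proof.
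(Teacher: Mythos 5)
Your proof is correct in both stages, but it takes a genuinely different route from the paper's, and the second stage is more circuitous than it needs to be.

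For the identification $\Gamma(1)=p_\cS(0)$, you build the geodesic directly: two simple geodesics of length $1$, one in each face, concatenated at $p_\cS(0)$, then invoke uniqueness of $\Gamma$. The paper goes the other way: it uses the separating property of the cycle (Proposition \ref{frontier}) to force $\Gamma$ to hit $p_\cS([0,\sigma])$, and then the distance formula $D(p_\cS(s),x_*)=D(p_\cS(s),\overline{x}_*)=\e_s+1$ together with $D(x_*,\overline{x}_*)=2$ pins down $s\in\{0,\sigma\}$. Both arguments are valid; yours trades the topological separation lemma for an explicit geodesic construction plus uniqueness, which is slightly more self-contained but relies on uniqueness of $\Gamma$ rather than just its existence.

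For injectivity, the tree case is where you overshoot. You have already established, via Proposition \ref{identification2}, that any preimage $u\neq 0,\sigma$ satisfies $\widehat{d}'_W(u,0)=0$. That is already the whole story: $0$ is a one-sided local minimum of the cycle contour $X$, and the adaptation of \cite[Lemma 3.2]{sphericity} quoted in the proof of Proposition \ref{simple curve} rules out the simultaneous one-sided-local-minimum behaviour of $W$ that $\widehat{d}'_W(u,0)=0$ would require. The subsequent excursion through coinciding simple geodesics $\gamma_u^{\mathrm{int}}=\gamma_0^{\mathrm{int}}$ and the ``uncountable family of label identifications'' is not wrong, but it re-derives exactly this same fact in a less transparent way, and the concluding sentence (that the geodesic identity ``forces $u=0$ at $r=0$'') is really an appeal back to the same one-sided-local-minimum criterion applied to the pair $(u,0)$. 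Dropping the geodesic detour and applying the local-minimum argument directly to $\widehat{d}'_W(u,0)=0$ would make the argument shorter and match what the paper does implicitly (the paper's reference to Proposition \ref{frontiere} at this point is best read as a pointer to the simple-curve argument of Proposition \ref{simple curve}).
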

\begin{proof}
    As explained at the end of Section \ref{construction}, for every $s\in [0,\sigma]$, $x= p_{\cS}({s}),$ we have 
\begin{equation}\label{cycle distance}
        D(x,x_*)=D(x,\overline{x}_*)=\e_s+1.
\end{equation}
Furthermore, as in Proposition \ref{frontier}, we know that $\Gamma$ must intersect $p_{\cS}([0,\sigma])$. Therefore, since $D(x_*,\overline{x}_*)=2$, we deduce from \eqref{cycle distance} that 
\[\Gamma\cap p_{\cS}([0,\sigma])=\{\Gamma(1\})=\{p_{\cS}(0)\}.\]
Moreover, as in Proposition \ref{frontiere}, it is clear that $0$ is the only antecedent of $\Gamma(1)$ by $p_\cS^{-1}$, which concludes the proof.
\end{proof}

Our next goal is to compare the local behavior of $\mathfrak{U}$ around $0$ to the local behavior of $\mathfrak{U}_\infty$ around $0$. Recall the definition of the triple $(\e,\cM,\overline{\cM})$ under $\U^{(1,1)}$ from Definition \ref{conditioned measure}. To simplify notations, we set $\tilde{\e}:=\tilde{\e}^{(1,1)}$. 

We also define a truncated version of the random variable $\tilde{\e}$. For every $\varepsilon>0$, set  
\[T_\varepsilon=\inf\{t\geq0,\,\e_t=\varepsilon\}\quad\text{ and }\quad L_\varepsilon=\sup\{t\geq0,\,\e_t=\varepsilon\}.\]
For every $\varepsilon\geq 0$, we define a random variable $\tilde{\e}^{(\varepsilon)}$ by the formula 
\begin{equation}\label{excursion tronquee}
\E[F(\tilde{\e}^{(\varepsilon)})]=\frac{1}{Z_\varepsilon}\n\left(F(\e)\1_{\{\sup \e>\varepsilon\}}\left(\int_{T_{\varepsilon}}^{L_{\varepsilon}}\int_{T_{\varepsilon}}^{L_{\varepsilon}}\frac{dsdt}{(\e_s+1)^3(\e_t+1)^3}\right)\exp\left(-6\int_{T_{\varepsilon}}^{L_{\varepsilon}}\frac{du}{(\e_u+1)^2}\right)\right)
\end{equation}
where $Z_\varepsilon$ is a normalizing constant. As we will see in the following proof, these random variables are well-defined, i.e. $Z_\varepsilon<\infty$. Note that $\tilde{\e}^{(0)}$ has the same law as $\tilde{\e}$, and Definition \ref{conditioned measure} gives $Z_0=\frac{1}{144}$.

\begin{proposition}[Approximation in total variation distance]\label{dtv tronque}
    We have 
    \[d_{TV}\left(Law(\tilde{\e}),Law(\tilde{\e}^{(\varepsilon)})\right)\xrightarrow[\varepsilon\rightarrow0]{}0.\]
\end{proposition}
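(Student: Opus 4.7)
The approach is via Scheffé's lemma applied to the densities of $\tilde{\e}$ and $\tilde{\e}^{(\varepsilon)}$ with respect to Itô's excursion measure $\n$. Writing
\[
A_\varepsilon(\e) = \int_{T_\varepsilon}^{L_\varepsilon}\!\int_{T_\varepsilon}^{L_\varepsilon}\frac{dsdt}{(\e_s+1)^3(\e_t+1)^3},\qquad B_\varepsilon(\e) = \exp\!\left(-6\int_{T_\varepsilon}^{L_\varepsilon}\frac{du}{(\e_u+1)^2}\right),
\]
and defining $A_0, B_0$ analogously with integration over $[0,\sigma]$, both laws are absolutely continuous with respect to $\n$, with densities
\[
h_0(\e) = 144\, A_0(\e)\, B_0(\e), \qquad h_\varepsilon(\e) = Z_\varepsilon^{-1}\, \1_{\{\sup\e>\varepsilon\}}\, A_\varepsilon(\e)\, B_\varepsilon(\e),
\]
both integrating to $1$ by construction (with the convention $A_\varepsilon = 0$ when $\sup\e \leq \varepsilon$).

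The first step is pointwise convergence $h_\varepsilon \to h_0$ $\n$-almost everywhere. For $\n$-a.e.\ excursion $\e$ one has $T_\varepsilon(\e) \to 0$ and $L_\varepsilon(\e) \to \sigma(\e)$ as $\varepsilon \to 0$, and continuity of $\e$ combined with the boundedness of the integrands on $[0,\sigma]$ yields $A_\varepsilon(\e) \to A_0(\e)$ and $B_\varepsilon(\e) \to B_0(\e)$. What remains is to check that the normalizing constants satisfy $Z_\varepsilon \to 1/144 = Z_0$. Fatou's lemma immediately gives $\liminf_\varepsilon Z_\varepsilon \geq \n(A_0 B_0) = 1/144$, while for the matching upper bound I plan to use the estimate
\[
\1_{\{\sup\e > \varepsilon\}}\,A_\varepsilon B_\varepsilon \leq A_0 B_0 \exp\!\bigl(6\rho_\varepsilon(\e)\bigr),\qquad \rho_\varepsilon(\e) := T_\varepsilon + (\sigma - L_\varepsilon),
\]
obtained from $A_\varepsilon \leq A_0$ together with $B_\varepsilon/B_0 = \exp\!\bigl(6\!\int_{[0,T_\varepsilon]\cup[L_\varepsilon,\sigma]} (\e_u+1)^{-2}\, du\bigr) \leq \exp(6\rho_\varepsilon)$, combined with a truncation. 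Concretely, given $\delta > 0$, since $A_0 B_0 \in L^1(\n)$ with total mass $1/144$, one can choose $R$ large enough that $\n(\1_{E_R^c} A_0 B_0) < \delta$ where $E_R := \{1/R \leq \sigma \leq R,\, \sup\e \leq R\}$. On $E_R$ the integrand $A_\varepsilon B_\varepsilon$ is uniformly bounded and $\n(E_R) < \infty$ (using the explicit density of $\sigma$ under $\n$), so dominated convergence applies directly; the contribution of $E_R^c$ is then handled by combining the above domination with a separate tail estimate in the spirit of Lemma \ref{Calcul espérance}, using that the exponential factor $B_\varepsilon$ still provides decay on long excursions.

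Once $h_\varepsilon \to h_0$ pointwise $\n$-a.e.\ is established, Scheffé's lemma (which holds verbatim when the dominating measure is merely $\sigma$-finite) yields $\int |h_\varepsilon - h_0|\, d\n \to 0$, which is exactly twice the total variation distance. The main obstacle in this program is the tail control needed to prove $Z_\varepsilon \to 1/144$: because $\n$ is only $\sigma$-finite and puts infinite mass on both very short and very long excursions, no single $\n$-integrable function dominates $\1_{\{\sup\e>\varepsilon\}} A_\varepsilon B_\varepsilon$ globally, so the truncation on $E_R$ together with the matching estimate on $E_R^c$ is essential to safely apply dominated convergence and close the upper bound on $\limsup_\varepsilon Z_\varepsilon$.
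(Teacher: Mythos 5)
Your overall framework (densities with respect to $\n$, pointwise convergence, Scheffé) is sound and would close the argument, but there is a conceptual error in the step you identify as the ``main obstacle,'' and a genuine gap at the point where that error forces you into a truncation you do not carry out. You claim that because $\n$ is only $\sigma$-finite, ``no single $\n$-integrable function dominates $\1_{\{\sup\e>\varepsilon\}}A_\varepsilon B_\varepsilon$ globally.'' This is false. You already noted $A_\varepsilon \leq A_0$, and trivially $B_\varepsilon = \exp(-6\int_{T_\varepsilon}^{L_\varepsilon}(\e_u+1)^{-2}\,du) \leq 1$; combining these gives the uniform deterministic bound
\[
\1_{\{\sup\e>\varepsilon\}}A_\varepsilon B_\varepsilon \;\leq\; A_0,
\]
and $A_0 = \int_0^\sigma\!\int_0^\sigma (\e_s+1)^{-3}(\e_t+1)^{-3}\,ds\,dt$ is $\n$-integrable: a Bismut-type decomposition of the excursion measure gives $\n(A_0) = \int_{\R_+^3}(x+y+1)^{-3}(x+z+1)^{-3}\,dx\,dy\,dz = 1/12$. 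The infinite mass of $\n$ on short excursions is killed because $A_0$ vanishes quadratically as $\sigma\to 0$, and on long excursions the polynomial decay of the integrand is enough. So dominated convergence applies directly to show $Z_\varepsilon\to Z_0$, with no truncation and no appeal to the tail estimate you leave unspecified.

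The source of the overcomplication is that you compare $B_\varepsilon$ to $B_0$ (writing $B_\varepsilon = B_0\exp(6\rho_\varepsilon)$ with $\rho_\varepsilon$ unbounded), when there is no need for the dominating function to involve $B_0$ at all: any $\n$-integrable majorant works, and $A_0$ alone suffices. Once $Z_\varepsilon\to Z_0$ is in hand your Scheffé argument does go through; alternatively — and this is what the paper does — one can skip Scheffé entirely and apply dominated convergence to $|Z_0^{-1}A_0 B_0 - Z_\varepsilon^{-1}\1_{\{\sup\e>\varepsilon\}}A_\varepsilon B_\varepsilon|$, dominated by $(Z_0^{-1}+Z_\varepsilon^{-1})A_0$, where the prefactor is uniformly bounded since $Z_\varepsilon\to Z_0>0$. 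As it stands, your proposal has a gap (the unproven tail estimate) and rests on an incorrect assessment of why that gap is necessary; with the observation $B_\varepsilon\leq 1$ both the gap and the truncation machinery disappear.
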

\begin{proof}
    For every $\varepsilon>0$, given $\e$, we define
    \[\phi(\varepsilon)=\1_{\{\sup \e>\varepsilon\}}\int_{T_{\varepsilon}}^{L_{\varepsilon}}\int_{T_{\varepsilon}}^{L_{\varepsilon}}\frac{dsdt}{(\e_s+1)^3(\e_s+1)^3}\quad\text{ and }\quad\psi(\varepsilon)=\1_{\{\sup \e>\varepsilon\}}\exp\left(-6\int_{T_{\varepsilon}}^{L_{\varepsilon}}\frac{du}{(\e_u+1)^2}\right).\]
    Since both laws are absolutely continuous with respect to Itô's excursion measure, we need to show that
    \begin{equation}\label{convergence absolue}
        \n\left(\left|Z_0^{-1}\phi(0)\psi(0)-Z_\varepsilon^{-1}\phi(\varepsilon)\psi(\varepsilon)\right|\right)\xrightarrow[n\rightarrow\infty]{}0.
    \end{equation}
    First, let us show that $Z_\varepsilon$ tends to $Z_0=\frac{1}{144}$. Note that we have the deterministic bound 
    \[\phi(\varepsilon)\psi(\varepsilon)\leq\phi(0),\]
    which is integrable with respect to $\n$. Indeed, using Bismut's decomposition of the Brownian excursion, we have 
    \begin{align*}
      \n( \phi(0))= \n\left(\int_0^\sigma\int_0^\sigma\frac{dsdt}{(\e_s+1)^3(\e_t+1)^3}\right)&=\int_{\R_+^3}\frac{dxdydz}{(x+y+1)^3(x+z+1)^3}\\
        &=\int_{\R_+}\frac{1}{4}\frac{dx}{(x+1)^4}\\
        &=\frac{1}{12}.
    \end{align*}
    Therefore, using the dominated convergence dominated and the fact that the equation \eqref{prob measure} defines a probability measure, we obtain 
    \begin{equation}\label{convergence partition}
        Z_\varepsilon=\n\left(\1_{\{\sup \e>\varepsilon\}}\left(\int_{T_{\varepsilon}}^{L_{\varepsilon}}\int_{T_{\varepsilon}}^{L_{\varepsilon}}\frac{dsdt}{(\e_s+1)^3(\e_t+1)^3}\right)\exp\left(-6\int_{T_{\varepsilon}}^{L_{\varepsilon}}\frac{du}{(\e_u+1)^2}\right)\right)\xrightarrow[\varepsilon\rightarrow0]{}144.
    \end{equation}    
   Let us prove \eqref{convergence absolue}. First, we have 
   \begin{equation}
       \left|Z_0^{-1}\phi(0)\psi(0)-Z_\varepsilon^{-1}\phi(\varepsilon)\psi(\varepsilon)\right|\leq \left(Z_0^{-1}+Z_\varepsilon^{-1}\right)\phi(0).
   \end{equation}
    Using \eqref{convergence partition}, we see that $\left(Z_0^{-1}+Z_\varepsilon^{-1}\right)$ is uniformly bounded, so that the term on the right-hand side is integrable with respect to $\n$. Therefore, we can use the dominated convergence theorem, which gives \eqref{convergence absolue}.
\end{proof}
The following lemma is probably known, and is similar to William's decomposition of the Bessel process of dimension 3 \cite[Theorem 3.11]{revuzyor}, but we could not find a precise reference.
\begin{lemme}[Decomposition of a Brownian excursion]\label{decompo trois bouts}
    Fix $a>0$. Then, under the probability measure $\n(\cdot\,|\,\sup\e>a)$, the excursion $\e$ can be decomposed in three independent pieces as follows : 
    \begin{itemize}[label=\textbullet]
        \item The path between $0$ and $T_a$ is a Bessel process $R^{(1)}$ of dimension $3$, starting at $0$ and stopped when it reaches $a$. 
        \item The path between $T_a$ and $L_a$ is a Bessel process $R^{(2)}$ of dimension $3$, starting at $a$ and stopped when it hits $a$ for the last time.
        \item The path between $
        L_a$ and $\sigma$ is the time reversal of a Bessel process $R^{(3)}$ of dimension $3$, starting at $0$ and stopped when it reaches $a$.
    \end{itemize}
\end{lemme}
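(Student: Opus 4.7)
The plan is to decompose $\n(\cdot \mid \sup \e > a)$ by the strong Markov property of $\n$ at the stopping time $T_a$, and then refine the resulting post-$T_a$ Brownian motion by means of the classical time-reversal identity for Brownian motion killed at $0$. The three pieces $R^{(1)}$, $R^{(2)}$, $R^{(3)}$ are extracted one by one, and mutual independence follows from the two successive Markov applications.

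For the pre-$T_a$ piece, I would invoke the standard entrance law for $\n$: on the event $\{T_a < \infty\}$, the process $(\e_s)_{0 \leq s \leq T_a}$ has the law of a Bessel process of dimension $3$ starting from $0$ and stopped at its first hitting time of $a$. This is a consequence of the description of $\n$ as an $h$-transform with $h(x) = x$ of Brownian motion killed at $0$, together with the identification of Bessel$(3)$ from $0$ as Brownian motion conditioned to stay positive. By the strong Markov property of $\n$ at $T_a$, the process $B := (\e_{T_a + s})_{0 \leq s \leq \sigma - T_a}$ is conditionally independent of the pre-$T_a$ piece and has the law of a standard Brownian motion starting from $a$ and killed at its first visit to $0$.

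To identify $R^{(2)}$ and $R^{(3)}$, I would use the classical time-reversal identity: for $B$ a Brownian motion from $a$ killed at $T_0$, the reversed process $\hat{B}_s := B_{T_0 - s}$ has the distribution of a Bessel$(3)$ process $R$ starting from $0$ observed up to its last visit to $a$; moreover, the first hitting time $\sigma_a$ of $a$ by $R$ corresponds to $T_0 - L_a$ in the time variable of $B$. Applying the strong Markov property of $R$ at $\sigma_a$ then splits $R$ into an initial Bessel$(3)$ from $0$ stopped at $a$ (which, reversed, gives the post-$L_a$ piece as the time reversal of $R^{(3)}$) and an independent Bessel$(3)$ from $a$ run up to its last visit to $a$ (which, reversed, has the distribution of the middle piece).

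The main obstacle is to conclude that the middle piece itself, rather than its time reversal, has the law of a Bessel$(3)$ from $a$ run up to its last visit to $a$. To close this gap I would invoke the time-reversal invariance of $\n$: since the involution $\e \mapsto (\e_{\sigma - t})_{0 \leq t \leq \sigma}$ preserves $\n$ and exchanges $T_a$ with $\sigma - L_a$, the middle piece $(\e_{T_a + s})_{0 \leq s \leq L_a - T_a}$ has a distribution that is symmetric under time reversal; combined with the identification of its reversal obtained in the previous step, this yields the desired equality in law and completes the proof.
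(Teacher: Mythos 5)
Your proof is correct and follows essentially the same path as the paper: the Markov property of $\n$ at $T_a$, Williams' time-reversal of the post-$T_a$ killed Brownian motion into a Bessel$(3)$ from $0$, the strong Markov property of that Bessel process at its first hit of $a$, and finally time-reversal invariance of $\n$ to see that the middle piece's law is its own reversal. The only cosmetic difference is that you additionally invoke the entrance law of $\n$ to identify the pre-$T_a$ piece directly as a Bessel$(3)$, whereas the paper extracts this for free from the same final time-reversal-invariance step.
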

\begin{proof}
    Fix $a>0$, and three non-negative functions $F,G,H$. First, by the Markov property of Ito's excursion measure, we have 
    \begin{align*}
        &\n\left(F((\e(s))_{0\leq s\leq T_a})G((\e(s+T_a))_{0\leq s\leq L_a-T_a})H((\e(s+L_a))_{0\leq s\leq\sigma-L_a})\,\big|\,\sup \e > a\right)\\
        &=\n\left(F((\e(s))_{0\leq s\leq T_a})\,\big|\,\sup \e > a\right)\E_{a}\left[G((B(s))_{0\leq s\leq L_a})H((B(s+L_a))_{0\leq s\leq T_0-L_a})\right],
    \end{align*}
    where under $\P_a$, $B$ is a Brownian motion starting at $a$ and stopped when it reaches $0$ for the first time. Then, using a particular case of William's time reversal theorem, we know that we can replace $B$ under $\P_a$ by the time-reversal of a Bessel process $R$ of dimension $3$, starting at $0$ and stopped when it reaches $a$ for the last time (which we denote by $L^{(3)}_a$). Observe that after these transformations $L_a$ corresponds to the first hitting time of $a$ by $R$, denoted by $T^{(3)}_a$. This gives 
    \begin{align*}
        &\n\left(F((\e(s))_{0\leq s\leq T_a})G((\e(s+T_a))_{0\leq s\leq L_a-T_a})H((\e(s+L_a))_{0\leq s\leq\sigma-L_a})\,\big|\,\sup \e > a\right)\\
        &=\n\left(F((\e(s))_{0\leq s\leq T_a})\,\big|\,\sup \e > a\right)\E_a\left[G((R(L_a^{(3)}-\cdot))_{0\leq s\leq T^{(3)}_a-T^{(3)}_a})H((R(T^{(3)}_a-\cdot))_{0\leq s\leq T^{(3)}_a})\right],\\
        &=\n\left(F((\e(s))_{0\leq s\leq T_a})\,\big|\,\sup \e > a\right)\E_a\left[G((R(L_a-\cdot))_{0\leq s\leq L_a-L_a})H((R(L_a-\cdot))_{0\leq s\leq L_a})\right]
    \end{align*}
    Finally, using the strong Markov property of $R$ at $T^{(3)}_a$, we obtain 
    \begin{align*}
       &\n\left(F((\e(s))_{0\leq s\leq T_a})G((\e(s+T_a))_{0\leq s\leq L_a-T_a})H((\e(s+L_a))_{0\leq s\leq\sigma-L_a})\,\big|\,\sup \e > a\right)\\
        &=\n\left(F((\e(s))_{0\leq s\leq T_a})\,\big|\,\sup \e > a\right)\E_{a}\left[G((R(L^{(3)}_a-\cdot))_{0\leq s\leq L^{(3)}_a})\right]\E_0\left[H((R(T^{(3)}_a-\cdot))_{0\leq s\leq T^{(3)}_a})\right].
    \end{align*}
    The result follows from the time-reversal invariance of Ito's excursion measure.
\end{proof}
This decomposition allows us to compare the local behavior of $\tilde{\e}$ near its endpoints to that of two independent Bessel processes. To simplify notations, we set 
\[\overline{T}_\varepsilon=\sigma-L_\varepsilon.\]
\begin{proposition}\label{couplage cycle}
    Let $(R,\overline{R})$ be two independent Bessel processes of dimension $3$ starting from $0$. Then, for every $a>0$, the total variation distance between 
    \[\left((R_t)_{0\leq t\leq T_a^{(3)}},(\overline{R}_t)_{0\leq t\leq \overline{T}_a^{(3)}})\right)\quad\text{ and }\quad\left((\varepsilon^{-1}\Tilde{\e}_{\varepsilon^2 t})_{0\leq t\leq \varepsilon^{-2}T_{\varepsilon a}},(\varepsilon^{-1}\Tilde{\e}_{\sigma-\varepsilon^2 t})_{0\leq t\leq \varepsilon^{-2}\overline{T}_{\varepsilon a}}\right)\]
    goes to $0$ as $\varepsilon$ goes to $0$.
\end{proposition}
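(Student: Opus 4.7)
The plan is to combine the total variation approximation of Proposition \ref{dtv tronque} with the three-piece decomposition of Lemma \ref{decompo trois bouts} and a Brownian scaling argument. The key observation is that the density defining $\tilde{\e}^{(\varepsilon a)}$ in \eqref{excursion tronquee} is a functional of $(\e_s)_{T_{\varepsilon a}\leq s\leq L_{\varepsilon a}}$ alone, so it biases only the middle piece of the decomposition while leaving the two extremal pieces untouched.

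More precisely, under $\n(\cdot\,|\,\sup\e>\varepsilon a)$, Lemma \ref{decompo trois bouts} asserts that the restrictions of $\e$ to $[0,T_{\varepsilon a}]$, $[T_{\varepsilon a},L_{\varepsilon a}]$, and $[L_{\varepsilon a},\sigma]$ are independent, distributed respectively as a Bessel process of dimension $3$ started at $0$ and stopped when hitting $\varepsilon a$, a middle piece whose law will not matter, and the time-reversal of an independent copy of the same Bessel process. Since biasing by a function of the middle piece preserves both the independence of the three pieces and the marginal laws of the first and third ones, under $\tilde{\e}^{(\varepsilon a)}$ the first and last pieces remain independent Bessel-$3$ processes run until they hit $\varepsilon a$.

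Applying the scale invariance of the Bessel-$3$ process, namely that if $R$ is Bessel-$3$ from $0$ then so is $(\varepsilon^{-1}R_{\varepsilon^2 t})_{t\geq 0}$, whose first passage time at $a$ equals $\varepsilon^{-2}$ times the first passage time of $R$ at $\varepsilon a$, I conclude that under $\tilde{\e}^{(\varepsilon a)}$ the pair
\[
\left((\varepsilon^{-1}\tilde{\e}_{\varepsilon^2 t})_{0\leq t\leq\varepsilon^{-2}T_{\varepsilon a}},\,(\varepsilon^{-1}\tilde{\e}_{\sigma-\varepsilon^2 t})_{0\leq t\leq\varepsilon^{-2}\overline{T}_{\varepsilon a}}\right)
\]
has \emph{exactly} the law of $\left((R_t)_{0\leq t\leq T_a^{(3)}},(\overline{R}_t)_{0\leq t\leq\overline{T}_a^{(3)}}\right)$, so that the total variation distance in question is zero when $\tilde{\e}$ is replaced by $\tilde{\e}^{(\varepsilon a)}$.

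To finish, I would invoke that total variation distance does not increase under measurable push-forward: the total variation distance between the joint laws of the rescaled endpoints under $\tilde{\e}$ and under $\tilde{\e}^{(\varepsilon a)}$ is bounded by $d_{TV}\bigl(\mathrm{Law}(\tilde{\e}),\mathrm{Law}(\tilde{\e}^{(\varepsilon a)})\bigr)$, which tends to $0$ by Proposition \ref{dtv tronque} applied with parameter $\varepsilon a$. The only delicate step is the independence argument in the second paragraph, and it is precisely the impossibility of factoring the density of $\tilde{\e}$ itself across the three pieces that makes the passage through $\tilde{\e}^{(\varepsilon a)}$ necessary.
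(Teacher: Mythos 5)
Your proof is correct and follows essentially the same route as the paper's: replace $\tilde{\e}$ by the truncated version via Proposition \ref{dtv tronque}, note that the biasing density depends only on the middle piece so that Lemma \ref{decompo trois bouts} gives exact equality in law for the two extremal pieces, and conclude by Bessel scaling and contractivity of total variation under measurable maps. The paper's proof is stated for $a=1$ and is more terse, but the argument is identical; your explicit remark about why one must pass through the truncated $\tilde{\e}^{(\varepsilon a)}$ rather than trying to factor the density of $\tilde{\e}$ itself is a helpful clarification of the same idea.
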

\begin{proof}
   We give the proof for $a=1$. Fix $\varepsilon>0$.
By Proposition \ref{dtv tronque}, it is enough to prove the result with $\tilde{\e}^{({\varepsilon})}$ instead of $\tilde{\e}$. 
   Observe that the weight that appears in \eqref{excursion tronquee} is a function of $(\e^{(\varepsilon)}_s)_{T_{\varepsilon}\leq s\leq L_{\varepsilon}}$. However, by Proposition \ref{decompo trois bouts}, the law of $\left((\Tilde{\e}^{({\varepsilon})}_t)_{0\leq t\leq T_{\varepsilon}},(\Tilde{\e}^{({\varepsilon})}_{\sigma-t})_{0\leq t\leq \overline{T}_{\varepsilon})}\right)$ is exactly the law of 
$\left((R_t)_{0\leq t\leq T_{\varepsilon}^{(3)}},(\overline{R}_t)_{0\leq t\leq \overline{T}_{\varepsilon}^{(3)}})\right)$. The result follows from the scaling properties of Bessel processes. 
\end{proof}
Before stating our next result, we introduce some notations. In order to have more symmetry with respect to the element $0$, we will replace the excursion $\tilde{\e}$ on $[0,\sigma]$ by a function $\tilde{\e}'$, defined by
\[\tilde{\e}_t=\left\{
    \begin{array}{ll}
   \tilde{\e}_t\quad&\text{ if $t\in[0,\sigma/2]$} \\
     \tilde{\e}_{\sigma/2-t}\quad&\text{ if $t\in[-\sigma/2,0]$.}
    \end{array}\right. \]
Similarly, we replace the point measures $\cM_{-1}$ and $\overline{\cM}_{-1}$ of Definition \ref{conditioned measure} by two point measures $\cM_{-1}$ and $\overline{\cM}_{-1}$ on $[-\sigma/2,\sigma/2]\times\mathfrak{S}$ defined as follows. For $t\in[0,\sigma]$, set $f(t)=t$ if $t\in[0,\sigma/2]$, and $f(t)=t-\sigma$ otherwise. Then, if $I,J$ are sets indexing the atoms of $\cM_{-1}$ and $\overline{\cM}_{-1}$, set
\[\cM_{-1}'=\sum_{i\in I}\delta_{f(t_i),\omega_i},\quad\overline{\cM}_{-1}'=\sum_{j\in J}\delta_{f(t_j),\omega_j}.\]
Note that with these changes, we have 
\[\overline{T}_\varepsilon=-\inf\{t\geq0:\tilde{\e}_{-t}'=\varepsilon\}.\]
It is straightforward to check that these modifications do not change the law of the associated surface. Consequently, we will still denote by $\mathfrak{U}$ the random unicycle obtained from these processes, and keep the same notations for the processes.
Then, for every point measure $\cM$ on $\R\times\mathfrak{S}$ with 
\[\cM=\sum_{i\in I}\delta_{t_i,\omega_i},\] and for every $\varepsilon>0$, we define 
\[\Omega_\varepsilon(\cM)=\sum_{i\in I}\delta_{\varepsilon^{-2}t_i,\Theta_{\varepsilon^{-2}}(\omega_i)},\] where $\Theta$ is the scaling operator introduced in Section \ref{snake}. Finally, for every $a>0$, define 
\[T_a^{(\infty)}=\inf\{t\geq0:X_t=a\}\quad\text{and}\quad \overline{T}_a^{(\infty)}=-\inf\{t\geq0:X_{-t}=a\},\]
where $X$ is the process introduced at the beginning of Section \ref{section construction bigeo}.
\begin{proposition}\label{couplage mesure}
    For every $\delta>0$ and $a>0$, there exists $\varepsilon_0>0$ such that for every $0<\varepsilon<\varepsilon_0$, one can couple the point measures $(\cM_\infty,\overline{\cM}_\infty)|_{[-\overline{T}_a^{(\infty)},T_a^{(\infty)}]}$ and $(\cM_{-1},\overline{\cM}_{-1})|_{[-\overline{T}_{a\varepsilon},T_{a\varepsilon}]}$ in such a way that the equality
    \[\left(\cM_\infty|_{[-\overline{T}_a^{(\infty)},T_a^{(\infty)}]},\overline{\cM}_\infty|_{[-\overline{T}_a^{(\infty)},T_a^{(\infty)}]}\right)=\left(\Omega_\varepsilon(\cM_{-1}|_{[-\overline{T}_{a\varepsilon},T_{a\varepsilon}]}),\Omega_\varepsilon(\overline{\cM}_{-1}|_{[-\overline{T}_{a\varepsilon},T_{a\varepsilon}]})\right)\] holds with probability at least $1-\delta$. 
\end{proposition}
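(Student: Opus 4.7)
The plan is to construct the coupling from three ingredients: (i) a coupling of the rescaled spine $(\varepsilon^{-1}\tilde{\e}_{\varepsilon^2 \cdot})$ with the two-sided Bessel-$3$ process $X$ that drives $\cS_\infty$, (ii) a coupling of the two pairs of Poisson point measures, conditionally on the spine coupling, through a common thinning from a master Poisson measure, and (iii) a control showing that the two extra atoms $\delta_{t_1,\omega^{(1)}}$ and $\delta_{\bar t_1,\bar\omega^{(1)}}$ appearing in $\cM_{-1}$ and $\overline{\cM}_{-1}$ lie outside the region of restriction. Each of these steps can be arranged to fail with probability at most $\delta/3$, and a union bound will finish the argument.

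For step (i), I would apply Proposition \ref{couplage cycle} with an auxiliary parameter $a' > a$. For $\varepsilon$ sufficiently small this yields an event $E_1$ of probability at least $1 - \delta/3$ on which the rescaled spine coincides with the two-sided Bessel process up to the Bessel hitting times of $a'$. Because $T_a^{(\infty)} \leq T_{a'}^{(\infty)}$ and $\overline{T}_a^{(\infty)} \leq \overline{T}_{a'}^{(\infty)}$, on $E_1$ the identities $\varepsilon^{-2}T_{a\varepsilon} = T_a^{(\infty)}$ and $\varepsilon^{-2}\overline{T}_{a\varepsilon} = \overline{T}_a^{(\infty)}$ hold exactly, so the rescaled restriction regions match the ones of the infinite-volume model.

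For step (ii), I first identify the intensity of $\Omega_\varepsilon(\cM_{>-1})$ conditionally on $\tilde\e$. The scaling relations recalled in Section \ref{snake} give that the pushforward of $\N_x$ by $\Theta_{\varepsilon^{-2}}$ is $\varepsilon^{-2}\N_{\varepsilon^{-1}x}$ and that $W_*$ is multiplied by $\varepsilon^{-1}$; a direct change of variables then produces the intensity $2\,ds\,\1_{\{W_* > -\varepsilon^{-1}\}}\,\N_{\varepsilon^{-1}\tilde\e_{\varepsilon^2 s}}(dW)$. Restricted to $E_1$ and to $s \in [0, T_a^{(\infty)}]$, this is $2\,ds\,\1_{\{W_* > -\varepsilon^{-1}\}}\,\N_{X_s}(dW)$, which is a thinning of the intensity $2\,ds\,\N_{X_s}(dW)$ of $\cM_\infty|_{[0,T_a^{(\infty)}]}$. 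I would then realize both measures as sub-measures of a common Poisson measure $\cN$ with the full intensity, setting $\cM_\infty|_{[0,T_a^{(\infty)}]} := \cN$ and $\Omega_\varepsilon(\cM_{>-1}|_{[0,T_{a\varepsilon}]}) := \cN|_{\{W_* > -\varepsilon^{-1}\}}$. The two measures coincide as soon as $\cN$ has no atom satisfying $W_* \leq -\varepsilon^{-1}$; by formula \eqref{inf}, the conditional expected number of such atoms given $X$ is bounded by
\[\int_0^{T_a^{(\infty)}} \frac{3}{(\varepsilon^{-1} + X_s)^2}\,ds \;\leq\; 3\,\varepsilon^2\,T_a^{(\infty)},\]
which tends to $0$ in probability. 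Performing the same construction on the negative half-line and for $\overline{\cM}$ produces an event $E_2$ of probability at least $1 - \delta/3$ on which all four Poisson measures coincide with the corresponding restrictions of $\cM_\infty$ and $\overline{\cM}_\infty$.

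For step (iii), Definition \ref{conditioned measure} shows that the conditional density of $(t_1, \bar t_1)$ given $\tilde\e$ is proportional to $(\tilde\e_{t_1} + 1)^{-3}(\tilde\e_{\bar t_1} + 1)^{-3}$ and hence uniformly bounded; since $T_{a\varepsilon}$ and $\overline{T}_{a\varepsilon}$ both tend to $0$ almost surely as $\varepsilon \to 0$, the probability that $t_1$ or $\bar t_1$ land in the restricted region can be made smaller than $\delta/3$, defining an event $E_3$. On $E_1 \cap E_2 \cap E_3$ the required equality of restricted point measures holds by construction, and this intersection has probability at least $1 - \delta$. The main subtlety lies in step (ii): one must carefully keep track of how the scaling $\Omega_\varepsilon$ transforms the constraint $\{W_* > -1\}$ into $\{W_* > -\varepsilon^{-1}\}$, after which the bound \eqref{inf} immediately gives the vanishing of the thinning error.
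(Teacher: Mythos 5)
Your proposal is correct and follows essentially the same strategy as the paper's proof: couple the rescaled spine via Proposition \ref{couplage cycle}, decompose the Poisson measures by thinning at level $W_*=-\varepsilon^{-1}$ and match the kept parts through the scaling of $\N_x$, and show the discarded parts and the extra atoms $\delta_{t_1,\omega^{(1)}},\delta_{\bar t_1,\bar\omega^{(1)}}$ vanish from the restricted window with probability tending to $1$. The only cosmetic difference is that you bound the expected number of bad atoms directly (Markov) where the paper computes the Laplace functional, and your phrase ``uniformly bounded'' for the conditional density of $(t_1,\bar t_1)$ should really be ``a.s.\ finite,'' which still suffices by dominated convergence.
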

\begin{proof}
   Fix $\delta>0$. First, by Proposition \ref{couplage cycle}, we can couple the trajectories $\left((X_t)_{t\geq0},(X_{-t})_{t\geq0})\right)$ and $\left((\Tilde{\e}_t)_{0\leq t\leq \sigma},(\Tilde{\e}_{\sigma-t})_{0\leq t\leq \sigma}\right)$ so that the equality 
   \begin{equation}\label{egalite processes}
       \left((X_t)_{0\leq t\leq T_a^{(\infty)}},(X_{-t})_{0\leq t\leq \overline{T}_a^{(\infty)}})\right)=\left((\varepsilon^{-1}\Tilde{\e}_{\varepsilon^2 t})_{0\leq t\leq \varepsilon^{-2} T_{\varepsilon a}},(\varepsilon^{-1}\Tilde{\e}_{\sigma-\varepsilon^2 t})_{0\leq t\leq \varepsilon^{-2}\overline{T}_{\varepsilon a}}\right)
   \end{equation}
    holds with probability at least $1-\delta/2$. We denote this event by $E_\varepsilon$, and from now on, we work with this coupling of the trajectories. Observe that we can decompose our point measures in the following way.
   First, by standard properties of Poisson point measures, we have 
   \begin{equation*}       \cM_\infty=\cM_\infty^{>-1/\varepsilon}+\cM_\infty^{\leq-1/\varepsilon}\quad\text{ and }\quad\overline{\cM}_\infty=\overline{\cM}_\infty^{>-1/\varepsilon}+\overline{\cM}_\infty^{\leq-1/\varepsilon},
   \end{equation*}
   where, given $X$, $\cM_\infty^{>-1/\varepsilon}$ and $\overline{\cM}_\infty^{>-1/\varepsilon}$ are Poisson point measures with intensity 
   \[\1_{\{W_*>-1/\varepsilon\}}dt\N_{X_t}(dW),\]
   and where
   $\cM_\infty^{\leq-1/\varepsilon}$ and $\overline{\cM}_\infty^{\leq-1/\varepsilon}$ are Poisson point measures with intensity 
   \[\1_{\{W_*\leq-1/\varepsilon\}}dt\N_{X_t}(dW),\]
   all these measures being independent. On the other hand, as described in Definition \ref{conditioned measure}, we know that we have
  \[\cM_{-1}=\cM_{>-1}+\delta_{t_1,\omega^{(1)}}\quad\text{and}\quad\overline{\cM}_{-1}=\overline{\cM}_{>-1}+\delta_{\overline{t}_1,\overline{\omega}^{(1)}}.\]
   Using the scaling property of the measure $\N_0$, conditionally on \eqref{egalite processes}, it is easy to see that the measures $\left(\cM_\infty^{>-1/\varepsilon},\overline{\cM}^{>-1/\varepsilon}_\infty\right)|_{[-\overline{T}_{a},T_{a}]}$ and $\left(\Omega_\varepsilon(\cM_{>-1}|_{[-\overline{T}_{a\varepsilon},T_{a\varepsilon}]}),\Omega_\varepsilon(\overline{\cM}_{>-1}|_{[-\overline{T}_{a\varepsilon},T_{a\varepsilon}]})\right)$ have the same law. 
   
   Let $H_\varepsilon$ be the event that the point measures $\left(\cM_\infty^{\leq-1/\varepsilon},\overline{\cM}_\infty^{\leq-1/\varepsilon}\right)|_{[-\overline{T}_a^{(\infty)},T_a^{(\infty)}]}$ are empty and $t_1,\overline{t}_1\notin[-\overline{T}_{a\varepsilon},T_{a\varepsilon}]$. Then, conditionally on $H_\varepsilon$, we can couple the different random point measures in such a way that we have the equality  $(\cM_\infty,\overline{\cM}_\infty)|_{[-\overline{T}_{a},T_{a}]}=\left(\Omega_\varepsilon(\cM_{-1}),\Omega_\varepsilon(\overline{\cM}_{-1})\right)|_{[-\overline{T}_{a\varepsilon},T_{a\varepsilon}]}.$  For every $-\sigma/2\leq a\leq b\leq \sigma/2$, we define $\mathfrak{U}^{[a,b]}$ (or $\mathfrak{U}^{(a,b)}$) as the connected subset of $\mathfrak{U}$ composed of
\begin{equation}\label{subset}
    [a,b]\sqcup\left(\bigsqcup_{\substack{i\in I\cup J\\a\leq t_i\leq b}}\cT_{(\omega_i)}\right)\quad \text{or}\quad (a,b)\sqcup\left(\bigsqcup_{\substack{i\in I\cup J\\a\leq t_i\leq b}}\cT_{(\omega_i)}\right),
\end{equation}
   and we define $\mathfrak{U}_\infty^{[a,b]}$ and $\mathfrak{U}_\infty^{(a,b)}$ similarly.
   Note that we have 
   \[H_\varepsilon=E_\varepsilon\cap\left\{t_1,\overline{t}_1\notin[-\overline{T}_{a\varepsilon},T_{a\varepsilon}]\right\}\cap\left\{\inf_{u\in \mathfrak{U}^{[-\overline{T}_{a},T_{a}]}_\infty}\ell_u>-1/\varepsilon\right\}.\]
   Then, we have
   \[\P\left(\inf_{u\in \mathfrak{U}^{[-\overline{T}_{a},T_{a}]}_\infty}\ell_u>-1/\varepsilon\right)=\E\left[\exp\left(-2\int_{-\overline{T}_{a}}^{T_{a}}\N_{X_t}(W_*<-1/\varepsilon)dt\right)\right]=\E\left[\exp\left(-3\int_{-\overline{T}_{a}}^{T_{a}}\frac{1}{(X_t+1/\varepsilon)^2}dt\right)\right].\]
   Moreover, we also have 
   \[\P\left(t_1,\overline{t}_1\notin[-\overline{T}_{a\varepsilon},T_{a\varepsilon}]\right)=\frac{1}{8}\n\left(\left(\int_{T_{\varepsilon}}^{L_{\varepsilon}}\int_{T_{\varepsilon}}^{L_{\varepsilon}}\frac{dsdt}{(\e_s+1)^3(\e_t+1)^3}\right)\exp\left(-6\int_0^\sigma\frac{du}{(\e_u+1)^2}\right)\right).\]
   By monotone convergence, it is clear that these two probabilities converge to $1$ as $n$ goes to $\infty$. Therefore, by Proposition \ref{couplage cycle}, 
   \[\P(H_\varepsilon)\xrightarrow[\varepsilon\rightarrow 0]{}1,\] which concludes the proof. 
\end{proof} 

In particular, Proposition \ref{couplage cycle} and \ref{couplage mesure} imply that for every $\delta>0$, there exists $\varepsilon_0>0$ such that for every $0<\varepsilon<\varepsilon_0$, we can couple $\mathfrak{U}$ and $\mathfrak{U}_\infty$ in such a way that the two equalities 
\begin{equation*}
       \left((X_t)_{0\leq t\leq T_a^{(\infty)}},(X_{-t})_{0\leq t\leq \overline{T}_a^{(\infty)}})\right)=\left((\varepsilon^{-1}\Tilde{\e}_{\varepsilon^2 t})_{0\leq t\leq \varepsilon^{-2} T_{\varepsilon a}},\varepsilon^{-1}\Tilde{\e}_{\sigma-\varepsilon^2 t})_{0\leq t\leq (\varepsilon^{-2}\overline{T}_{\varepsilon a}}\right)
   \end{equation*}
   and
    \[\left(\cM_\infty|_{[-\overline{T}_a^{(\infty)},T_a^{(\infty)}]},\overline{\cM}_\infty|_{[-\overline{T}_a^{(\infty)},T_a^{(\infty)}]}\right)=\left(\Omega_\varepsilon(\cM_1|_{[-\overline{T}_{a\varepsilon},T_{a\varepsilon}]}),\Omega_\varepsilon(\overline{\cM}_1|_{[-\overline{T}_{a\varepsilon},T_{a\varepsilon}]})\right)\] hold simultaneously with probability at least $1-\delta$. To lighten notations, we will denote this event by
\[\left\{\mathfrak{U}^{[-\overline{T}_{a\varepsilon},T_{a\varepsilon}]}=\varepsilon\cdot\mathfrak{U}_\infty^{[-\overline{T}_{a},T_{a}]}\right\}.\]

\subsection{Localization lemma and convergence}

In this section, we prove that with high probability, the projection of the set $\mathfrak{U}^{[-\overline{T}_\varepsilon,T_\varepsilon]}$ (resp. $\mathfrak{U}_\infty^{[-\overline{T}_\varepsilon^{(\infty)},T_\varepsilon^{(\infty)}]}$) contains a small ball around $\Gamma(1)$ (resp. $\Gamma_\infty(0)$).  This will enable us to transfer results on $\mathfrak{U}$ and $\mathfrak{U}_\infty$ to results on the associated surfaces. We mention that this kind of result already appeared several times in the study of Brownian surfaces (see \cite[Lemma 5]{brownianplane} or \cite[Proposition 4.1]{Bigeodesicbrownianplane}).

\begin{proposition}\label{localisation}
   For every $\eta>0$, there exists $\delta>0$ such that for every $\varepsilon>0$ small enough, 
    \begin{align}
        \U^{(1,1)}\Big(B_{\delta\varepsilon}(\cS,\Gamma(1))&\subset p_{\cS}(\mathfrak{U}^{[-\overline{T}_\varepsilon,T_\varepsilon]})\Big)>1-\eta\\        \P\Big(B_{\delta\varepsilon}u(\cS_\infty,\Gamma_\infty(0))&\subset p_{\cS_\infty}(\mathfrak{U}_\infty^{[-\overline{T}_\varepsilon^{(\infty)},T_\varepsilon^{(\infty)}]})\Big)>1-\eta.
    \end{align}    
\end{proposition}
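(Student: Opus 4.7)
The plan is to establish the statement for $\cS_\infty$ first, and then transfer the conclusion to $\cS$ via the coupling built in Section \ref{local structure}. By the scale invariance of $\cS_\infty$ noted in Section \ref{section construction bigeo}, the second inequality reduces to the single case $\varepsilon=1$: rescaling all coding processes by $\varepsilon$ maps $T_1^{(\infty)}$ to $T_\varepsilon^{(\infty)}$ and distances to $\varepsilon\cdot D_\infty$, so it is enough to find $\delta>0$ such that
\[\P\left(B_\delta(\cS_\infty,\Gamma_\infty(0))\subset p_{\cS_\infty}\left(\mathfrak{U}_\infty^{[-\overline{T}_1^{(\infty)},T_1^{(\infty)}]}\right)\right)>1-\eta.\]
Equivalently, letting $R$ denote the supremum of all $r$ for which the inclusion holds, it suffices to prove $R>0$ almost surely and then pick $\delta$ smaller than the $\eta$-quantile of $R$.

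To show $R>0$ a.s., I would combine three ingredients. First, the universal bound $D_\infty^\circ(u,v)\geq|\ell_u-\ell_v|$, inherited from the labels, which propagates to the distance defined by infimum over chains. Second, the analog for $\cS_\infty$ of the separation Proposition \ref{frontier}: the bi-geodesic $\gamma_\infty$ separates the images of the two faces, so any continuous path in $\cS_\infty$ from $p_{\cS_\infty}(0)$ to a point whose only representatives lie in the outer region of $\mathfrak{U}_\infty$ must cross $\gamma_\infty$ at some $\gamma_\infty(s)$ with $|s|\geq T_1^{(\infty)}\wedge\overline{T}_1^{(\infty)}$, hence must cover distance at least $1$. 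Third, and most delicately, an identification lemma for $\cS_\infty$ in the spirit of Proposition \ref{identification2}, which controls when two points of $\mathfrak{U}_\infty$ are identified in $\cS_\infty$; this is used to rule out that the ``deep dips'' of the outer-tree snakes, which do bring labels arbitrarily close to $0$, can be identified in $\cS_\infty$ with points close to $p_{\cS_\infty}(0)$. Together with the fact that $\inf_{|t|>T_1^{(\infty)}}X_t>0$ a.s.\ (Bessel of dimension $3$ started from $1$ does not reach $0$), these imply that any $x$ with no representative in the central region is at positive distance from $\Gamma_\infty(0)$.

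For the transfer to $\cS$, I fix $\eta>0$ and choose $\delta$ from the infinite-model statement. For $\varepsilon$ small enough, Propositions \ref{couplage cycle} and \ref{couplage mesure} (applied with $a=1$) produce a coupling under which the event
\[\mathcal{C}_\varepsilon:=\left\{\mathfrak{U}^{[-\overline{T}_\varepsilon,T_\varepsilon]}=\varepsilon\cdot\mathfrak{U}_\infty^{[-\overline{T}_1^{(\infty)},T_1^{(\infty)}]}\right\}\]
holds with probability at least $1-\eta/2$. Intersecting $\mathcal{C}_\varepsilon$ with the $\cS_\infty$-localization event (of probability $\geq 1-\eta/2$, after mild shrinking of $\delta$), the ball $B_{\delta\varepsilon}(\cS_\infty,\Gamma_\infty(0))$ is entirely determined by the local unicycle, which is isometrically identified with the corresponding local part of $\mathfrak{U}$, forcing $B_{\delta\varepsilon}(\cS,\Gamma(1))\subset p_{\cS}(\mathfrak{U}^{[-\overline{T}_\varepsilon,T_\varepsilon]})$; a union bound then gives probability at least $1-\eta$. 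The main obstacle is the middle paragraph: one must rule out that a point with only outer representatives can be joined to $\Gamma_\infty(0)$ by a chain of arbitrarily small total $D_\infty^\circ$-cost, despite the presence of infinitely many outer trees whose labels come arbitrarily close to $0$. The resolution relies on combining the identification lemma with the geometric structure of simple geodesics, rather than on the pointwise label bound alone.
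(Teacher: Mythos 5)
The scaling reduction to $\varepsilon=1$ for the $\cS_\infty$ statement matches the paper, and so does the contradiction/compactness structure you envision. However, there are two genuine gaps.

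\smallskip
\noindent\textbf{The ``middle paragraph.''}\enspace Your separation argument via $\gamma_\infty$ is misapplied: the bigeodesic separates the images of the two faces, but a far-away point $u\in\cT_{\omega_i}$ with $t_i>T_1^{(\infty)}$ may lie in the \emph{same} face as the one you travel through, so a chain from $u$ to $0$ need never meet $\gamma_\infty$ at a far point. Crossing the separating cycle $\left(p_{\cS_\infty}(s)\right)_{s\in\R}$ at a far point is likewise not forced. The label bound $D_\infty^\circ(u,v)\geq|\ell_u-\ell_v|$ is indeed defeated by the deep dips of the outer snakes, as you note, but your proposed repair does not work: the identification lemma (Proposition \ref{identification2} and its $\cS_\infty$ analogue) only characterizes which pairs are at distance \emph{exactly zero}; it gives no quantitative lower bound on small positive distances, which is what is needed to prove Lemma \ref{a l'infini}. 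The paper's actual mechanism is different: beyond the last return time $L$ of $X$ to level $1$, it locates a tree $\cT_{i_*}$ whose snake dips below $\ell_*-1$, shows that every cheap chain from a far point to $0$ must traverse $\cT_{i_*}$, and then invokes Brownian slice estimates (\cite[Theorem 11]{Browniandisk} or \cite[Corollary 4.9]{Bigeodesicbrownianplane}) to bound below the crossing cost of that slice uniformly over the relevant compact range of boundary times. Nothing in your sketch produces such a quantitative shield.

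\smallskip
\noindent\textbf{The transfer to $\cS$.}\enspace The deduction of $B_{\delta\varepsilon}(\cS,\Gamma(1))\subset p_{\cS}(\mathfrak{U}^{[-\overline{T}_\varepsilon,T_\varepsilon]})$ from the coupling of local unicycles plus the $\cS_\infty$-localization is circular. Knowing that $\mathfrak{U}^{[-\overline{T}_\varepsilon,T_\varepsilon]}=\varepsilon\cdot\mathfrak{U}_\infty^{[-\overline{T}_1^{(\infty)},T_1^{(\infty)}]}$ does not determine $D(p_{\cS}(v),\Gamma(1))$ for a $v$ whose only representatives lie outside the central region, since $D$ is an infimum over chains ranging through all of $\mathfrak{U}$. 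Upgrading the unicycle coupling to a local isometry of balls is precisely Proposition \ref{correspondance distance}, and that proposition uses the event $\{B_{2\delta\varepsilon}(\cS,\Gamma(1))\subset p_{\cS}(\mathfrak{U}^{[-\overline{T}_{2\varepsilon},T_{2\varepsilon}]})\}$ as an input; you cannot use it to derive the $\cS$-localization. The paper instead proves the $\cS$-side by a parallel, self-contained argument (adapting \cite[Proposition 4.1]{Bigeodesicbrownianplane}), not by transfer from $\cS_\infty$.
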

We will only prove the statement about $\cS_\infty$. The proof of the other inequality is similar, and can be done by adapting the proof of \cite[Proposition 4.1]{Bigeodesicbrownianplane}. Note that, using the scaling properties of $\mathfrak{U}_\infty$ and $\cS_\infty$, we just need to prove the result for $\varepsilon=1$. The proof will be based on the following lemma.

\begin{lemme}\label{a l'infini}
   Almost surely, there exists some (random) $\delta>0$ and $\varepsilon>0$ such that for every $u\notin \mathfrak{U}_\infty^{(-\delta,\delta)}$, we have 
   \begin{equation*}
       D_\infty\left(p_{\cS_\infty}(u),0\right)>\varepsilon.
   \end{equation*}
\end{lemme}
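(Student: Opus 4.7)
The plan is to argue by contradiction, combining the cactus bound on $D_\infty$ with Bessel $3$ and Poisson intensity estimates, following the strategy of analogous localization results for Brownian surfaces (see \cite[Lemma 5]{brownianplane} and \cite[Proposition 4.1]{Bigeodesicbrownianplane}). Suppose the lemma fails on an event of positive probability: then for every $(\delta,\varepsilon) \in (0,\infty)^2$ one can find $u \notin \mathfrak{U}_\infty^{(-\delta,\delta)}$ with $D_\infty(p_{\cS_\infty}(u), p_{\cS_\infty}(0)) \leq \varepsilon$. Fixing some $\delta_0>0$ on this event and taking $\varepsilon_n \to 0$ gives a sequence $u_n$ with spine coordinate $|t(u_n)| \geq \delta_0$ and $D_\infty(p_{\cS_\infty}(u_n), p_{\cS_\infty}(0)) \to 0$. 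The definition of $D_\infty^\circ$ gives $\inf_{[u,v]}\ell \leq \min(\ell_u,\ell_v)$, so $D_\infty^\circ(u,v) \geq |\ell_u - \ell_v|$ whenever finite, and chaining yields the cactus bound $D_\infty(p_{\cS_\infty}(u), p_{\cS_\infty}(v)) \geq |\ell_u - \ell_v|$. Since $\ell_0 = X_0 = 0$, this forces $\ell_{u_n} \to 0$.

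I then split into cases. As $X$ consists of two independent Bessel $3$ processes from $0$, almost surely $X_t > 0$ for $t \ne 0$ and $X_t \to \infty$ as $|t| \to \infty$; hence $m_{\delta_0} := \inf_{|t| \geq \delta_0} X_t > 0$ a.s. If $u_n$ were on the spine, $\ell_{u_n} = X_{t(u_n)} \geq m_{\delta_0}$ would contradict $\ell_{u_n} \to 0$. So, up to extraction, $u_n$ lies in a tree $\cT_{\omega_{i_n}}$ grafted at position $t_{i_n}$. When $|t_{i_n}|$ stays bounded by some $T < \infty$, the snake minimum $W_*(\omega_{i_n})$ must approach $-X_{t_{i_n}} \leq -m_{\delta_0}$, and the Poisson intensity of trees in $\{\delta_0 \leq |t| \leq T\}$ whose snake reaches level $-m_{\delta_0}/2$ is at most
\[
\int_{\delta_0 \leq |t| \leq T} 4\, \N_{X_t}(W_* \leq -m_{\delta_0}/2)\, dt \;=\; \int_{\delta_0 \leq |t| \leq T} \frac{6}{(X_t + m_{\delta_0}/2)^2}\, dt \;<\; \infty \quad \text{a.s.},
\]
so only finitely many such trees occur. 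After further extraction, all $u_n$ lie in a single tree $\cT_{\omega_{i^*}}$, which is compact, so $u_n \to u^*$ with $D_\infty(p_{\cS_\infty}(u^*), p_{\cS_\infty}(0)) = 0$ and $|t(u^*)| \geq \delta_0$. Invoking an analog of Proposition \ref{identification2} for $\cS_\infty$, whose proof adapts the topological argument of \cite{TopologicalStructure}, this forces $u^* = 0$, a contradiction.

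The remaining case $|t_{i_n}| \to \infty$ is the most delicate, since Bessel $3$ transience gives $X_{t_{i_n}} \to \infty$, but the naive Poisson expectation of trees at large $|t|$ with low snake values is infinite because $\int dt / X_t^2 \asymp \int dt / t$ diverges almost surely. Here the analytic estimate degenerates, and one must use a bound strictly sharper than the cactus inequality, based on the simple geodesic structure of $\cS_\infty$ (the analog of Proposition \ref{simple geodesics}). The idea is that any chain realizing the small distance $D_\infty(p_{\cS_\infty}(u_n), p_{\cS_\infty}(0))$ must either travel along the external (or internal) arc from $u_n$ back to the spine near $0$—whose labels climb to $X_{t_{i_n}}$ before descending—or shortcut through the snake of the tree of $u_n$, whose descent has length at least $X_{t_{i_n}} - \ell_{u_n} \to \infty$. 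Either way, $D_\infty(p_{\cS_\infty}(u_n), p_{\cS_\infty}(0)) \gtrsim X_{t_{i_n}} \to \infty$, contradicting the assumption. This escape-to-infinity case is the main obstacle, and the crucial tool will be the bi-geodesic $\gamma_\infty$ passing through $p_{\cS_\infty}(0)$, whose presence effectively confines short chains to a prescribed region of the unicycle.
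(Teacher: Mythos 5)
Your proposal correctly identifies the structure of the problem and the key difficulty, but it leaves a genuine gap in the ``escape-to-infinity'' case $|t_{i_n}|\to\infty$, which you yourself flag as the main obstacle. The dichotomy you propose there (the chain either follows the spine, where labels climb to $X_{t_{i_n}}$, or shortcuts through the snake of the tree of $u_n$, whose descent has length $\gtrsim X_{t_{i_n}}$) is false: a chain can also shortcut through \emph{other} deep trees grafted between $t_{i_n}$ and $0$. You correctly computed that the Poisson intensity of such trees diverges, so there are infinitely many of them, and the chain may hop between them with small $D^\circ$ cost; nothing in your sketch rules this out. Appealing to ``the bi-geodesic $\gamma_\infty$ confining short chains'' is precisely the claim that needs proof, not a tool you can invoke.

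The paper's actual argument does not split into cases by $|t_{i_n}|$ and does not rely on compactness. Instead it constructs a \emph{barrier}: it sets $\ell_*$ to be the minimum label over the bounded piece $\mathfrak{U}_\infty^{[-\overline{T}_1^{(\infty)},T_1^{(\infty)}]}$ and $L=\sup\{t\ge 0:X_t=1\}$, and shows (using the same divergent Poisson integral you wrote down) that almost surely there is a \emph{first} tree $\cT_{i_*}$ beyond $L$ whose snake descends below $\ell_*-1$. The cactus bound together with a chain hypothesis $\sum D_\infty^\circ(u_{i-1},u_i)\le 3/4$ forces every visited label to stay above $\ell_*-1/2$, which in turn forces any step of the chain that crosses the region containing $\cT_{i_*}$ to actually \emph{enter} $\cT_{i_*}$ rather than jump over it (jumping over would incur $D_\infty^\circ\ge 1$). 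The proof then passes to the Brownian slice associated with $\cT_{i_*}$ and applies known quantitative lower bounds on the distance between its two geodesic boundaries at intermediate heights (from \cite{Browniandisk} and \cite{Bigeodesicbrownianplane}), which gives the desired a.s.\ positive lower bound uniformly over all far-away $u$. This is the ingredient your sketch is missing: a single, explicitly constructed slice through which every cheap chain must pass, rather than a case analysis on where the far point sits. Your bounded-case argument is also slightly shaky — it invokes an unstated analog of Proposition~\ref{identification2} for the non-compact space $\cS_\infty$ — but that could plausibly be patched; the unbounded case is the real gap.
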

\begin{proof}
The idea is to show that almost surely, there exists a region such that every point that is far away of $0$ in $\mathfrak{U}_\infty$ must cross this region in order to reach $\Gamma_\infty(0)$ in $\cS_\infty$.

    Let $\ell_*=\inf_{u\in \mathfrak{U}_\infty^{[-\overline{T}_1^{(\infty)},T_1^{(\infty)}]}}\ell_u$ and $L=\sup\{t\geq0\,:\,X_t=1\}$. We introduce the sets 
    \[B=\{i\in I,\,(\omega_{i})_*<\ell_*-1\text{ and }t_{i}>L\},\quad\overline{B}=\{j\in J,\,(\omega_j)_*<\ell_*-1\text{ and }t_j>L\}.\] 
    Note that these sets are never empty. Indeed, using properties of Poisson point measures and Bessel processes, we have 
    \begin{align*}
        \P(B=\emptyset)&=\E\left[\exp\left(-2\int_L^\infty\N_{X_t}(W_*<\ell_*-1)dt\right)\right]\\
        &=\E\left[\exp\left(-3\int_L^\infty\frac{dt}{(X_t+1-\ell_*)^2}\right)\right]\\
        &=0.
    \end{align*}
 Finally, let $i_*$ (resp. $j_*$) be the unique element of $B$ (resp. $\overline{B}$), such that $t_{i_*}=\inf_{i\in B}t_i$ (resp. $t_{j_*}=\inf_{j\in \overline{B}}t_j$.

Then, fix $u\in\mathfrak{U}_\infty^{(L,\infty)}$, and suppose that $D_\infty(p_{\cS_\infty}(u),0)<1/2$. First, note that the bound \eqref{bound} implies that $u\notin [L,\infty)$. Therefore, without loss of generality, we can suppose that $u\in \cT_i$ for some $i\in I$ with $t_i>L$. By assumption, there exists a sequence $u_0,...,u_p\in\mathfrak{U}_\infty$ such that $u_0=u,\,u_p=0$ and
\begin{equation}\label{assumption}    
\sum_{i=1}^p D_\infty^\circ(u_{i-1},u_i)\leq 3/4.
\end{equation}
Let $\tau=\sup\{i\geq0,\,u_i\in\mathfrak{U}_\infty^{(t_{i_*},\infty)}\}$ and $\eta=\inf\{i>\tau,\,u_i\notin\mathfrak{U}_\infty^{(t_{i_*},\infty)}\}$. As previously, the bound \eqref{bound} implies that for every $0\leq i\leq\tau$, $u_i\notin [L,\infty)$, which means that $u_\tau\in\cT_i$ for some $i\in I$ with $t_i>t_{i_*}$. Observe that the assumption \eqref{assumption} implies that $\ell_{u_i}>\ell_*-1/2$ for every $0\leq i\leq p$, and that 
\[D_\infty^\circ(u_\tau,u_\eta)\geq\ell_{u_\tau}+\ell_{u_\eta}-2(\ell_*-1)\geq1.\]
Therefore, we know that $\eta\neq\tau+1$, and that for every $\tau<i<\eta$, $u_i\in\cT_{i_*}$. Then, let $\overline{u_0}$ (resp. $\overline{u}_1$) be the last (resp. the first) element of $\cT_{i_*}$ with label $\ell_{\overline{u}_\tau}=\inf_{v\in[u_{\tau+1},u_\tau]}\ell_v$ and $\ell_{\overline{u}_\eta}=\inf_{v\in[u_{\eta},u_{\eta-1}]}\ell_v$. These elements can be defined properly using the exploration process $\cE$, but we omit the details. However, these elements always exist. Indeed, the bound \eqref{bound} imply that 
\[D^\circ_\infty(u_\tau,u_{\tau+1})\leq3/4\quad\Longrightarrow\quad\inf_{v\in[u_{\tau+1},u_\tau]}\ell_v\geq-\frac{7}{8}\geq\ell_*-1.\]
Then, it is easy to check that 
\[D^\circ_\infty(u_\tau,u_{\tau+1})=D^\circ_\infty(u_\tau,\overline{u}_\tau)+D^\circ_\infty(\overline{u}_\tau,u_{\tau+1}),\] and that a similar identity holds if we replace $\tau$ by $\eta$. Therefore, we can add $\overline{u}_\tau$ and $\overline{u}_\eta$ to our sequence, since it does not change \eqref{assumption}. 

Now, we can consider the \textbf{Brownian slice} associated to the tree $\cT_{i_*}$, denoted by $\left(\tilde{\cS},\tilde{D}\right)$, which is a random metric space with two geodesic boundaries. Brownian slices have been introduced in \cite{uniqueness}, and have been used several times to prove results about a Brownian sphere (see \cite{Browniandisk,Geodesicstars,Bigeodesicbrownianplane,Isoperimetric}). We refer to these references for the definition of Brownian slices. 

Let $\gamma,\overline{\gamma}$ be the geodesic boundaries of $\tilde{\cS}$. These paths have length $\chi=\ell_{t_{i_*}}-(\omega_{i_*})_*>2$, and for every $s\in[0,\chi],$ $\gamma(s)$ (resp $\overline{\gamma}(s)$) corresponds to to the projection of the last (resp. the first) element of $\cT_{i_*}$ with label $\ell_{t_{i_*}}-s$. Then, by \cite[Theorem 11]{Browniandisk} or \cite[Corollary 4.9]{Bigeodesicbrownianplane}, we have 
\begin{align*}
    \sum_{i=1}^pD_\infty^\circ(u_{i-1},u_i)&\geq  D_\infty^\circ(\overline{u}_\tau,u_{\tau+1})+\sum_{i=\tau+1}^{\eta-2}D_\infty^\circ(u_i,u_{i-1})+D_\infty^\circ(u_{\eta-1},\overline{u}_\eta)\\
    &\geq \tilde{D}\left(\gamma(\ell_{\overline{u}_\tau}+1),(\overline{\gamma}(\ell_{\overline{u}_\eta}+1\right)\\
    &\geq\inf_{s,t\in[\frac{1}{2},\frac{15}{8}]}\tilde{D}\left(\gamma(s),\overline{\gamma}(t)\right)
\end{align*}
where the last line follows from the inequality $-\frac{7}{8}\leq\ell_{\overline{u}_\tau}\leq\frac{1}{2}$. Finally, since $\gamma$ and $\overline{\gamma}$ do not intersect each other except at their endpoints, a compactness argument gives
\[\inf_{s,t\in[\frac{1}{2},\frac{15}{8}]}\tilde{D}\left(\gamma(s),\overline{\gamma}(t)\right)>0\quad a.s.\]
The key observation is that this lower bound holds for every sequence satisfying \eqref{assumption}. Consequently, for every $u\in\mathfrak{U}_\infty^{(T,\infty)}$, we have
\[D_\infty(p_{\cS_\infty}(u),0)\geq\frac{1}{2}\wedge\inf_{s,t\in[\frac{1}{2},\frac{15}{8}]}\tilde{D}\left(\gamma(s),\overline{\gamma}(t)\right)>0.\]   By symmetry, this concludes the proof. 
\end{proof}

\begin{proof}[Proof of Proposition \ref{localisation}]
    We argue by contradiction. Suppose that, with a positive probability, $p_{\cS_\infty}\left(\mathfrak{U}_\infty^{[-\overline{T}_1^{(\infty)},T_1^{(\infty)}]}\right)$ is not a neighborhood of $\Gamma_\infty(0)$. On this event, there exists a sequence $(u_n)_{n\geq1}$ such that for every $n\geq1$, $u_n\notin \mathfrak{U}_\infty^{[-\overline{T}_1^{(\infty)},T_1^{(\infty)}]}$ and $D_\infty(p_{\cS_\infty}(u_n),p_{\cS_\infty}(0))<\frac{1}{n}$. By Lemma \ref{a l'infini}, this sequence is bounded in $\mathfrak{U}_\infty$. Therefore, up to extracting a subsequence, we can suppose that this sequence converges toward an element $u_\infty\notin \mathfrak{U}_\infty^{[-\overline{T}_1^{(\infty)},T_1^{(\infty)}]}$. Moreover, it must satisfy $D_\infty(p_{\cS_\infty}(u_\infty),p_{\cS_\infty}(0))=0$. By Proposition \ref{localisation}, this means that $D^\circ_\infty(p_{\cS_\infty}(u_\infty),p_{\cS_\infty}(0))=0$, which is not possible. Therefore, almost surely, $p_{\cS_\infty}\left(\mathfrak{U}_\infty^{[-\overline{T}_1^{(\infty)},T_1^{(\infty)}]}\right)$ is a neighborhood of $\Gamma_\infty(0)$. This means that 
    \[\P\left(\text{ there exists $\delta>0$ such that }B_\delta(\cS_\infty,\Gamma_\infty(0))\subset p_{\cS_\infty}\left(\mathfrak{U}_\infty^{[-\overline{T}_1^{(\infty)},T_1^{(\infty)}]}\right)\right)=1.\] 
    Hence, for every $\varepsilon>0$, there exists $\delta>0$ such that \[\P\left(B_\delta(\cS_\infty,\Gamma_\infty(0))\subset p_{\cS_\infty}\left(\mathfrak{U}_\infty^{[-\overline{T}_1^{(\infty)},T_1^{(\infty)}]}\right)\right)>1-\varepsilon.\] 
    A scaling argument concludes the proof. 
\end{proof}

For every $\varepsilon,\delta>0$, we let $\cF_{\varepsilon,\delta}$ be the intersection of the following events:
\begin{itemize}[label=\textbullet]
    \item $\left\{\mathfrak{U}^{[-\overline{T}_{2\varepsilon},T_{2\varepsilon}]}=\varepsilon\cdot\mathfrak{U}_\infty^{[-\overline{T}_2^{(\infty)},T_2^{(\infty)}]}\right\}$
    \item $\left\{B_{\delta\varepsilon}(\cS,\Gamma(1))\subset p_{\cS}(\mathfrak{U}^{[-\overline{T}_\varepsilon,T_\varepsilon]})\right\}\cap\left\{B_{2\delta\varepsilon}(\cS,\Gamma(1))\subset p_{\cS}(\mathfrak{U}^{[-\overline{T}_{2\varepsilon},T_{2\varepsilon}]})\right\}$,
    \item $\left\{B_{\delta}(\cS_\infty,\Gamma(0))\subset p_{\cS_\infty}(\mathfrak{U}_\infty^{[-\overline{T}_1^{(\infty)},T_1^{(\infty)}]})\right\}\cap\left\{B_{2\delta}(\cS_\infty,\Gamma(0))\subset p_{\cS_\infty}(\mathfrak{U}_\infty^{[-\overline{T}_2^{(\infty)},T_2^{(\infty)}]})\right\}$
    \item $\{\inf_{u\in\mathfrak{U}}\ell_u<\inf_{u\in \mathfrak{U}^{[-\overline{T}_\varepsilon,T_\varepsilon]}}\ell_u\}$.
\end{itemize}
Note that by Propositions \ref{couplage cycle}, \ref{couplage mesure}, \ref{localisation}, and by monotone convergence, for every $\eta>0$, we can choose $\delta>0$ such that for every $\varepsilon>0$ small enough,
\[\P(\cF_{\varepsilon,\delta})>1-\eta.\]
\begin{proposition}\label{correspondance distance}
    Suppose that $\cF_{\varepsilon,\delta}$ holds. Then, there exists a bijection $\Phi:\mathfrak{U}^{[-\overline{T}_\varepsilon,T_\varepsilon]}\rightarrow \mathfrak{U}_\infty^{[-\overline{T}_1^{(\infty)},T_1^{(\infty)}]}$ such that, for every $u,v\in \mathfrak{U}^{[-\overline{T}_\varepsilon,T_\varepsilon]}$, we have
    \[D(p_{\cS}(u),p_{\cS}(v))=\varepsilon D_\infty(p_{\cS\infty}(\Phi(u),p_{\cS\infty}\Phi(v)).\]
\end{proposition}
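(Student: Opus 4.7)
The plan is to take $\Phi$ to be the scaling bijection furnished by item 1 of $\cF_{\varepsilon,\delta}$, which identifies $\mathfrak{U}^{[-\overline{T}_{2\varepsilon},T_{2\varepsilon}]}$ with $\varepsilon\cdot\mathfrak{U}_\infty^{[-\overline{T}_2^{(\infty)},T_2^{(\infty)}]}$ as labelled unicycles (preserving the cycle order and the tree grafting, and sending the label function $\ell$ of the $\cS$ side to $\varepsilon^{-1}\ell$ on the $\cS_\infty$ side), and then to restrict it to the sub-unicycle $\mathfrak{U}^{[-\overline{T}_\varepsilon,T_\varepsilon]}\subset\mathfrak{U}^{[-\overline{T}_{2\varepsilon},T_{2\varepsilon}]}$. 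Once $\Phi$ is set, the task reduces to showing $D(p_\cS(u),p_\cS(v))=\varepsilon D_\infty(p_{\cS_\infty}(\Phi(u)),p_{\cS_\infty}(\Phi(v)))$ for all $u,v\in\mathfrak{U}^{[-\overline{T}_\varepsilon,T_\varepsilon]}$.

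The first step is the pointwise identity $D^\circ(u,v)=\varepsilon D_\infty^\circ(\Phi(u),\Phi(v))$. Writing $D^\circ(u,v)=\ell_u+\ell_v-2\max(m_1,m_2,m_3,m_4)$ with $m_1,\ldots,m_4$ the label infima over the short-way and long-way intervals in the two faces, the two short-way intervals stay inside $\mathfrak{U}^{[-\overline{T}_\varepsilon,T_\varepsilon]}$, so their infima match under $\Phi$ with the factor $\varepsilon$. For the long-way infima: on the $\cS$ side, item 4 of $\cF_{\varepsilon,\delta}$ gives $\inf_\mathfrak{U}\ell<\inf_{\mathfrak{U}^{[-\overline{T}_\varepsilon,T_\varepsilon]}}\ell$, so each long-way infimum is strictly below every short-way one; on the $\cS_\infty$ side, since $\N_x(W_*<-M)=\tfrac{3}{2(x+M)^2}$ and $(X_t+M)^{-2}\sim t^{-1}$ for the Bessel-$3$ process $X$, the Poisson intensity $2\,dt\,\N_{X_t}(W_*<-M)$ integrated over the unbounded parts of the cycle diverges, so almost surely the long-way intervals carry trees with arbitrarily negative labels, and the corresponding infima equal $-\infty$. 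In both spaces the maximum in the formula for $D^\circ$ is thus attained by the short-way infima, and the identity follows from the coupling.

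To promote this to equality of $D$, the aim is to show that in each of $\cS$ and $\cS_\infty$, an almost-optimal sequence for $D$ between two points of the small region can be chosen to stay in the coupled region; the equality then follows by lifting such a sequence through $\Phi^{\pm1}$ and applying the previous step edge by edge. The main obstacle is precisely this localization, because the endpoints $u,v$ are not required to project close to $\Gamma(1)$, so items 2 and 3 of $\cF_{\varepsilon,\delta}$ cannot be applied naively to their projections. I would handle this by introducing the intrinsic distance $D_{\mathrm{coupled}}$ on $\mathfrak{U}^{[-\overline{T}_{2\varepsilon},T_{2\varepsilon}]}$ defined by restricting the infimum in the definition of $D$ to sequences inside the coupled region, and proving $D\circ p_\cS = D_{\mathrm{coupled}}$ on $\mathfrak{U}^{[-\overline{T}_\varepsilon,T_\varepsilon]}$ via a label-exchange argument: any excursion of a sequence outside the coupled region visits points whose projection lies outside $B_{2\delta\varepsilon}(\cS,\Gamma(1))$, and either the label gap from item 4 forces such a detour to carry non-negligible extra cost (so it cannot occur in an almost-optimal sequence), or item 2 allows one to replace the excursion by a direct edge inside the coupled region at no greater cost. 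Transporting the resulting identity through $\Phi$ then gives the proposition.
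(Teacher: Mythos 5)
Your proof takes the same route as the paper: define $\Phi$ as the scaling bijection coming from the coupling in item 1 of $\cF_{\varepsilon,\delta}$, establish the pointwise identity $D^\circ(u,v)=\varepsilon D_\infty^\circ(\Phi(u),\Phi(v))$ by comparing label infima over matching intervals, and then argue that the chain-infimum defining $D$ can be restricted to the coupled region. Where you go beyond the paper is in the first step: the paper writes $D^\circ(u,v)=\ell_u+\ell_v-2\inf_{[u,v]}\ell$ without comment, while you correctly observe that this requires the long-way infima to be dominated, and you supply the right reasons --- item 4 on the $\cS$ side, and the divergence of $\int 2\,dt\,\N_{X_t}(W_*<-M)$ over the unbounded cycle (from $\N_x(W_*<-M)=3/(2(x+M)^2)$ and the Bessel-$3$ growth) on the $\cS_\infty$ side. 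This is a genuine improvement in rigor over what is written in the paper. You also rightly flag that the localization from $D^\circ$ to $D$ is the delicate part: the paper's ``readily follows from item 2'' is terse, and item 2 alone does not immediately control chains between arbitrary $u,v\in\mathfrak{U}^{[-\overline{T}_\varepsilon,T_\varepsilon]}$ whose projections may lie far from $\Gamma(1)$.

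That said, your proposed repair of the localization step has a directional error. You assert that ``any excursion of a sequence outside the coupled region visits points whose projection lies outside $B_{2\delta\varepsilon}(\cS,\Gamma(1))$.'' Item 2 says $B_{2\delta\varepsilon}(\cS,\Gamma(1))\subset p_{\cS}\big(\mathfrak{U}^{[-\overline{T}_{2\varepsilon},T_{2\varepsilon}]}\big)$, i.e.\ every point of the ball \emph{has a preimage} in the coupled region; it does \emph{not} say that elements of $\mathfrak{U}$ outside the coupled region project outside the ball, since $p_{\cS}$ is not injective. Moreover, the ``label gap from item 4 forces a non-negligible extra cost'' is not quantified and is not obviously true as stated. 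The way to make the localization honest (and the way it is actually used in the proof of Theorem~\ref{equivalence definition}) is to only apply the identity to $u,v$ whose projections lie in $B_{\delta\varepsilon}(\cS,\Gamma(1))$; then any chain with sum below $D(u,v)+\eta$ has every $p_{\cS}(u_j)$ within $D$-distance roughly $3\delta\varepsilon$ of $\Gamma(1)$, hence inside the bigger ball, at which point item 2 lets you replace each $u_j$ by a representative in $\mathfrak{U}^{[-\overline{T}_{2\varepsilon},T_{2\varepsilon}]}$ --- and one must then check, with a little care, that $D^\circ$-costs do not increase under these replacements. Your ``label-exchange'' paragraph gestures at something like this but does not close the argument.
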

\begin{proof}
   Since we work on the event $\cF_{\varepsilon,\delta}$, we can associate to every element $u\in \mathfrak{U}^{[-\overline{T}_\varepsilon,T_\varepsilon]}$ the corresponding $\Phi(u)\in \mathfrak{U}_\infty^{[-\overline{T}_1^{(\infty)},T_1^{(\infty)}]}$, which yields a bijection. Furthermore, it is easy to see that $\Phi$ preserves intervals. Hence, for every $u,v\in \mathfrak{U}^{[-\overline{T}_\varepsilon,T_\varepsilon]}$ such that $[u,v]\subset \mathfrak{U}^{[-\overline{T}_\varepsilon,T_\varepsilon]}$, we have 
   \begin{align}\label{egalite distance2}
       D^\circ(u,v)&=\ell_u+\ell_v-2\inf_{w\in [u,v]}\ell_w\nonumber\\
       &=\varepsilon\left(\ell_{\Phi(u)}+\ell_{\Phi(v)}-2\inf_{w\in [\Phi(u),\Phi(v)]}\ell_w\right)\nonumber\\
       &=\varepsilon D^\circ_\infty(\Phi(u),\Phi(v)).
   \end{align}
   Then, note that 
    \begin{equation*}
        D(u,v)=\inf_{u_0,...,u_m}\sum_{i=0}^{m-1}D^\circ(u_i,u_{i+1}),
    \end{equation*}
    where the infimum is taken over all $p\in\N$ and sequences $u_0,...,u_m\in \mathfrak{U}^{[-\overline{T}_{2\varepsilon},T_{2\varepsilon}]}$ with $u_0=u$ and $u_m=v$. This readily follows from the fact that we work on the event $\left\{B_{2\delta\varepsilon}(\cS,\Gamma(1))\subset p_{\cS}(\mathfrak{U}^{[-\overline{T}_{2\varepsilon},T_{2\varepsilon}]})\right\}$. Similarly, for $u',v'\in \mathfrak{U}_\infty^{[-\overline{T}_1^{(\infty)},T_1^{(\infty)}]}$, we have 
    \begin{equation*}
        D(u,v)=\inf_{u_0,...,u_m}\sum_{i=0}^{m-1}D^\circ(u_i,u_{i+1}),
    \end{equation*}
    where the infimum is taken over all $p\in\N$ and sequences $u_0,...,u_m\in \mathfrak{U}_\infty^{[-\overline{T}_2^{(\infty)},T_2^{(\infty)}]}$ with $u_0=u'$ and $u_m=v'$. The result follows from these two formulas, together with \eqref{egalite distance2}.
\end{proof}
    
    We can finally prove the main result of this section.
    
    \begin{proof}[Proof of Theorem \ref{equivalence definition}]
        It is known \cite[Theorem 5.11]{Bigeodesicbrownianplane} that the bigeodesic Brownian plane is the local limit of the Brownian sphere under $\N_0(\cdot\,|\,W_*=-2)$ around $\Gamma(1)$. Therefore, by Theorem \ref{Representation libre} and uniqueness of the limit, we need to show that $$\left(\cS_\infty,D_\infty,p_{\cS_\infty}(0)\right)$$ is the local limit of the Brownian sphere (under $\U^{(1,1)}$) around $p_{\cS}(0)$.

        To do so, let us prove that for every $\eta>0$ and $\delta>0$, there exists $\varepsilon_0>0$ such that for every $0<\varepsilon<\varepsilon_0$, we can couple $\left(\cS,\cS_\infty\right)$ in such a way that is the equality
        \[B_\delta(\varepsilon^{-1}\cdot\cS,p_{\cS}(0))=B_\delta(\cS_\infty,\overline{\rho}_\infty)\]
        holds with probability at least $1-\eta$.

        We work on the event $\cF_{\varepsilon,\delta}$. Under this event, by Proposition \ref{localisation}, every $x\in B_\delta(\varepsilon^{-1}\cdot\cS,p_{\cS}(0))$ is of the form $p_{\cS}(u)$ with $u\in A_{\varepsilon}$. Then, set 
        \[\cI(p_{\cS}(u))=p_{\cS_\infty}(\Phi(u)).\]
        By Proposition \ref{correspondance distance}, this formula does not depend on the choice of $u$. Moreover, it is an isometry between $B_\delta(\varepsilon^{-1}\cdot\cS,p_{\cS}(0))$ and $B_\delta(\cS_\infty,\overline{\rho}_\infty)$, and it is easy to see that it satisfies $\mathcal{I}_*(\varepsilon^{-4}\mathrm{Vol})|_{B_\delta(\varepsilon^{-1}\cdot\cS,\Gamma_1)}=\overline{\mathrm{Vol}}|_{B_\delta(\cS_\infty,\overline{\rho}_\infty)}$. Since $\cI(\Gamma_1)=\overline{\rho}_\infty$, this concludes the proof. 
    \end{proof}

\subsection{Consequences}
    
Let us show how this new construction allows us to obtain a new invariance result for the bigeodesic Brownian plane. Recall that $\gamma_\infty$ is the only infinite bigeodesic in $\cS_\infty$. Let $\mathcal{HS}_\infty$ be the random metric space obtained when we replace the two independent Bessel processes in Section \ref{section construction bigeo} by a single one, and when we restrict $\cM_\infty$ and $\overline{\cM}_\infty$ to $[0,\infty)$. We omit the details, but this random space appears when we cut $\cS_\infty$ along $\gamma_\infty$ (see \cite[Section 6.3]{Bigeodesicbrownianplane}, where this space is called $\overline{\mathcal{HBP}}$). Conversely, consider two independent copies $\mathcal{HS}_\infty^{(1)},\mathcal{HS}_\infty^{(2)}$ with the law described above. It is easy to check that these spaces also have a unique bi-infinite geodesic $\gamma_\infty^{(1)},\gamma_\infty^{(2)}$, so we can see them as curve-decorated metric spaces. Then, the random curve-decorated metric space obtained by gluing $\mathcal{HS}_\infty^{(1)}$ and $\mathcal{HS}_\infty^{(2)}$ alongside $\gamma_\infty^{(1)}$ and $\gamma_\infty^{(2)}$. One may view $\cS_\infty$ as a curve decorated metric space. For every $t\in\R $, we define 
\[\overline{\gamma}^{(1)}_\infty(t)=\gamma^{(1)}_\infty(-t).\]
\begin{proposition}\label{Retournement}
     The curve-decorated metric space $(\mathcal{HS}^{(1)}_\infty,\overline{\gamma}^{(1)}_\infty)$ has the same distribution as $(\mathcal{HS}^{(1)}_\infty,{\gamma}^{(1)}_\infty)$.
\end{proposition}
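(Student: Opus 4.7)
The strategy is to exhibit an explicit symmetry of the coding triple that, on one hand, preserves the law of $(\mathcal{HS}^{(1)}_\infty, p_{\cS_\infty}(0))$ as a rooted metric space, but on the other hand reverses the parameterization of the boundary geodesic. This symmetry is the swap of the two Poisson point measures.

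Recall that $\mathcal{HS}^{(1)}_\infty$ is built from a triple $(X, \cM_\infty, \overline{\cM}_\infty)$ where, conditionally on $X$, the measures $\cM_\infty$ and $\overline{\cM}_\infty$ are two independent Poisson point measures on $[0,\infty)\times\mathfrak{S}$ with the \emph{same} intensity $2\,dt\,\N_{X_t}(dW)$. Consequently, the swap
\[(X, \cM_\infty, \overline{\cM}_\infty) \longmapsto (X, \overline{\cM}_\infty, \cM_\infty)\]
leaves the joint law invariant. Moreover, an inspection of the definition of $D^\circ_\infty$ in Section \ref{coding unicycles with point measures} shows that the roles of the external-face trees (indexed by $I$, coming from $\cM_\infty$) and the internal-face trees (indexed by $J$, coming from $\overline{\cM}_\infty$) are entirely symmetric: the distance forbids ``mixing'' the two sides except along the spine, and the infima over $[s,t]_{\mathrm{ext}}$ and $[s,t]_{\mathrm{int}}$ enter symmetrically. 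Therefore the swap induces a natural isometry $\sigma$ of $\mathfrak{U}_{\mathcal{HS}^{(1)}_\infty}$ fixing the spine pointwise, hence of the quotient space $\mathcal{HS}^{(1)}_\infty$, and fixing the root $p_{\cS_\infty}(0)$.

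The next step, which is the real content, is to compute how $\sigma$ acts on the bi-infinite geodesic $\gamma^{(1)}_\infty$. By the formulas inherited from Section \ref{section construction bigeo}, for $t\geq 0$ one has $\gamma^{(1)}_\infty(t) = p_{\cS_\infty}(\cE_{\tau_t})$, where $\cE$ is the exploration process of the \emph{external} face (built from $\cM_\infty$); and for $t\leq 0$ one has $\gamma^{(1)}_\infty(t) = p_{\cS_\infty}(\overline{\cE}_{\overline\tau_t})$, built from $\overline{\cM}_\infty$. After the swap, what plays the role of ``external'' is the old internal face, and vice versa; hence the new exploration processes are $\cE' = \overline{\cE}$ and $\overline{\cE}' = \cE$. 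Plugging these into the geodesic formulas shows that the swapped geodesic satisfies
\[\gamma^{(1)}_\infty{}'(t) = \gamma^{(1)}_\infty(-t) = \overline{\gamma}^{(1)}_\infty(t), \qquad t\in\R.\]

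Combining the three observations — equality in law of the coding triple under the swap, invariance of the rooted metric space under the induced isometry $\sigma$, and the explicit transformation rule $\sigma\circ \gamma^{(1)}_\infty = \overline{\gamma}^{(1)}_\infty$ — yields the desired equality in distribution of curve-decorated metric spaces. The only step requiring care is verifying the computation of $\gamma^{(1)}_\infty{}'$ above, but this is a straightforward unfolding of the definitions since the sign of $t$ in the geodesic formula is precisely what selects between the $\cM_\infty$ and $\overline{\cM}_\infty$ sides.
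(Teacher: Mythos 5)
The proposal is correct and takes essentially the same approach as the paper's proof: both rely on swapping the two Poisson point measures $\cM_\infty$ and $\overline{\cM}_\infty$, which preserves the law of the coding triple (and hence of the rooted metric space) while exchanging the exploration processes, thereby reversing the parameterization of the boundary geodesic.
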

\begin{proof}
      Let $\left(X^{(1)},\cM^{(1)}_\infty,\overline{\cM}^{(1)}_\infty\right)$ be the triple used to construct $\mathcal{HS}_\infty^{(1)}$. Then, let  $\overline{\mathcal{HS}}^{(1)}_\infty$ be the random surface obtained from the triple $(X^{(1)},\overline{\cM}^{(1)}_\infty,\cM^{(1)}_\infty)$ (we have switched the Poisson point measures). We can naturally couple $\mathcal{HS}^{(1)}_\infty$ and $\overline{\mathcal{HS}}^{(1)}_\infty$, and it is obvious that these two spaces have the same law. However, from the definitions of these spaces, we see that  $\gamma^{(1)}_\infty$ 
has the same role in $\mathcal{HS}_\infty^{(1)}$ as $\overline{\gamma}^{(1)}_\infty$ in $\overline{\mathcal{HS}}_\infty^{(1)}$.
\end{proof}
    As a direct consequence, we can recover 
    \cite[Theorem 6.3]{Bigeodesicbrownianplane}. This result was proved using an approximation sequence of the bigeodesic Brownian plane that had a symmetry with respect to the bigeodesic. However, it was not clear at all how to prove this result directly from the previous construction of the bigeodesic Brownian plane. On the opposite, this new proof only relies on the new construction of the limiting space.
\begin{corollary}
    The curve-decorated metric space $(\cS_\infty,\overline{\gamma}_\infty)$ has the same distribution as $(\cS_\infty,\gamma_\infty)$.
\end{corollary}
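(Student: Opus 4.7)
The plan is to reduce the statement to Proposition \ref{Retournement} by exploiting the gluing decomposition of $\cS_\infty$ introduced in the paragraph preceding the corollary. First, I would make the decomposition precise. In the construction of Section \ref{section construction bigeo}, the processes $(X_t)_{t\geq 0}$ and $(X_{-t})_{t\geq 0}$ are two independent Bessel$(3)$ processes, and, conditionally on $X$, the restrictions of $\cM_\infty$ and $\overline{\cM}_\infty$ to $[0,\infty)$ on one hand, and to $(-\infty,0]$ on the other, are independent Poisson point measures. This shows that the two halves of $\cS_\infty$ obtained by cutting along $\gamma_\infty$ are independent and each distributed as $\mathcal{HS}_\infty$; call them $\mathcal{HS}_\infty^{(1)}$ (corresponding to $t\geq 0$) and $\mathcal{HS}_\infty^{(2)}$ (corresponding to $t\leq 0$), with their own bigeodesics $\gamma_\infty^{(1)}$ and $\gamma_\infty^{(2)}$. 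Under this gluing identification, the bigeodesic $\gamma_\infty$ of $\cS_\infty$ is exactly the concatenation of $\gamma_\infty^{(1)}$ and $\gamma_\infty^{(2)}$ matched at $t=0$.

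Second, I would observe that the reversal $\overline{\gamma}_\infty(t)=\gamma_\infty(-t)$ corresponds, under this identification, to reversing the parametrization of the bigeodesic within each half (together with swapping the two halves, which is harmless since they are i.i.d.). Applying Proposition \ref{Retournement} independently to $\mathcal{HS}_\infty^{(1)}$ and $\mathcal{HS}_\infty^{(2)}$ then gives that the joint law of the two curve-decorated halves $(\mathcal{HS}_\infty^{(k)},\overline{\gamma}_\infty^{(k)})_{k=1,2}$ is identical to the joint law of $(\mathcal{HS}_\infty^{(k)},\gamma_\infty^{(k)})_{k=1,2}$. Since gluing along matched curves is a measurable functional of the two input curve-decorated spaces, this yields the desired equality in distribution between $(\cS_\infty,\overline{\gamma}_\infty)$ and $(\cS_\infty,\gamma_\infty)$.

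The main obstacle will be to formalize the gluing decomposition, in particular to check that the quotient pseudo-distance $D_\infty$ on $\mathfrak{U}_\infty$ coincides with the intrinsic distance obtained by first quotienting each half separately and then gluing along the bigeodesic. This amounts to verifying that any $D_\infty^\circ$-approximating chain between points in opposite halves of $\cS_\infty$ can be split into sub-chains, each contained in a single half and meeting the bigeodesic at their junctions, in the spirit of Proposition \ref{frontier} and Lemma \ref{a l'infini} adapted to the infinite-volume setting. Once this identification is in place, the argument above reduces the corollary to a direct application of Proposition \ref{Retournement}.
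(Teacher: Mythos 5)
Your proposal follows essentially the same route as the paper: decompose $(\cS_\infty,\gamma_\infty)$ by cutting along $\gamma_\infty$ into two independent copies of $\mathcal{HS}_\infty$, apply Proposition \ref{Retournement} to each half, and reglue. The paper states this more tersely (and, incidentally, does not need the ``swap of the two halves'' you mention, since reversing $\gamma_\infty$ reverses each half's bigeodesic in place), but the argument is the same, and your additional remark that one must verify that the quotient distance $D_\infty$ agrees with the glued intrinsic distance is a genuine point the paper glosses over.
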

\begin{proof}
  Since the space $(\cS,\gamma_\infty)$ is obtained by gluing $(\mathcal{HS}_\infty^{(1)},\gamma_\infty^{(1)})$ with $(\mathcal{HS}_\infty^{(2)},\gamma_\infty^{(2)})$, it follows that $(\cS,\overline{\gamma}_\infty)$ is obtained by gluing $(\mathcal{HS}_\infty^{(1)},\overline{\gamma}_\infty^{(1)})$ with $(\mathcal{HS}_\infty^{(2)},\overline{\gamma}_\infty^{(2)})$. However, by Proposition \ref{Retournement}, all these spaces have the same law, which gives the result. 
\end{proof}

\section{Discussions and remarks}

\subsection{About local limits}

Instead of looking at the behavior of $\mathfrak{U}^{(1,1)}$ around the point where the minimal label on the cycle is reached we could have study $\mathfrak{U}^{(1,1)}$ near a uniform point on its cycle. Note that such points are not typical in the Brownian sphere, since they have at least two disjoint outgoing geodesics (such points are called $2$-geodesic stars). As we did in Section \ref{construction}, this means replacing the Brownian excursion in the definition of $\mathfrak{U}^{(1,1)}$ by a Brownian bridge (and $\e$ by $\mathbf{b}-\inf \mathbf{b}$ in the Radon-Nikodym derivative). Then, we can repeat all the work done in Section \ref{section bigeodesique}, to find that the local limit around a uniform point in $p_\cS([0,\sigma])$ (under $\U^{(1,1)}$) is an explicit random metric space, which construction is the same as is the one presented in section \ref{section construction bigeo}, except that we need to replace the two Bessel processes by two independent Brownian motions. Furthermore, since we saw that $p_\cS([0,\sigma])$ corresponds to the boundary of the Voronoï cells of $x_*$ and $\overline{x}_*$, such a space describes the local behavior of the Brownian sphere around the boundary of Voronoï cells.

On the other hand, one could look at the behavior of the Brownian sphere around a point which is in $p_{\cS}(\mathrm{Int}(\cT))$ (under $\N_0$), namely a point which is in the projection of the skeleton of $\cT$. Using the Bismut decomposition of a Brownian excursion, from a point of the spine, we locally observe that a two-sided Brownian motion and two forests obtained from Poisson point measures. Therefore, locally, the boundary of Voronoï cells looks the same as the skeleton of the tree.

\subsection{About Voronoï cells}

Several results about Voronoï cells in the Brownian sphere are already known. For instance, it was shown in \cite{Guitter_statistics} that the perimeter of the Voronoï cells between two vertices taken uniformly at random in a large quadrangulations has a scaling limit. Moreover, in \cite{Guitter_Proof_Chapuy}, it is proved that the volume of one of the two Voronoï cells in the standard Brownian sphere obtained by choosing two points according to the volume measure is a uniform random variable in $[0,1]$. Let us mention that this statement is a particular case of Chapuy's conjecture \cite{Chapuy_Conjecture}, which states that for every $k\geq 2$, the law of the volumes of the $k$ Voronoï cells obtained by choosing $k$ points according to the volume measure of a compact Brownian surface of fixed genus is the same as the law the volumes of a uniform $k$-division of the unit interval.

As mentioned in Section \ref{section free model}, we believe that using the new construction of this paper, it might be possible to recover the results of \cite{Guitter_statistics,Guitter_Proof_Chapuy} directly in the continuum. We also mention that a different approach was initiated in \cite[Chapter 8]{rierathese}, where the author proposed another decomposition of the Brownian sphere into three pieces, and which is also closely related to Voronoï cells.

Finally, let us mention that throughout this paper, we have only considered the Brownian sphere with a fixed delay (or a delay chosen uniformly at random). However, one may wonder what happens as the delay varies. For instance, consider a Brownian sphere with two distinguished points at distance $1$. For every $\delta\in[-1,1]$, let $\eta^{(\delta)}$ denote the topological boundary of $\Theta_\delta$ and $\overline{\Theta}_\delta$. More precisely, it is easy to check that for every $\delta\in(-1,1)$, $\Theta_\delta/\text{Int}(\overline{\Theta}_\delta)=\overline{\Theta}_\delta/\text{Int}(\overline{\Theta}_\delta)$, and we call this set $\eta^{(\delta)}$. By Proposition \ref{simple curve}, for every fixed $\delta$, almost surely, $\eta^{(\delta)}$ is a simple curve. However, it is not clear at all that this property holds \textit{simultaneously} for every $\delta\in(-1,1)$. In fact, we believe that almost surely, there is a countable set of \textbf{exceptional delays} at which $\eta^{(\delta)}$ is not a simple curve, and cuts out a macroscopic part of the Brownian sphere. We hope to investigate further these questions in the future.

\printbibliography
\end{document}